\documentclass[11pt]{article} 

\usepackage{amsthm,amsmath,amsfonts,amssymb,enumerate}
\usepackage{graphicx,tikz}
\usepackage{wrapfig}
\usepackage{blindtext}
\usepackage{geometry}
\usepackage{float}
\usepackage{multicol}

\geometry{
	a4paper,
	total={160mm,250mm},
	left=25mm,
	top=20mm,
}

\usetikzlibrary{positioning,arrows}
\usepackage[utf8]{inputenc} 
\usepackage[T1]{fontenc}
\usepackage{hyperref}
\usepackage{theoremref}
\usepackage{tikz-cd}

\numberwithin{equation}{section}
\theoremstyle{plain}
\newtheorem{lem}[equation]{Lemma}

\newtheorem{prop}[equation]{Proposition}
\newtheorem{thm}[equation]{Theorem}
\newtheorem{cor}[equation]{Corollary}
\newtheorem{que}[equation]{Question}

\newtheorem{conj}[equation]{Conjecture}
\theoremstyle{definition}
\newtheorem{definition}[equation]{Definition}

\newtheorem{remark}[equation]{Remark}

\newtheorem{claim}[equation]{Claim}

\newtheorem*{claim*}{Claim}

\newcommand{\type}{\operatorname{Type}}
\newcommand{\cq}{\mathcal{Q}}
\newcommand{\od}{\widehat{\Sigma}}
\newcommand{\Si}{\Sigma}
\newcommand{\p}{\Pi}
\newcommand{\bD}{\mathbb D}

\newcommand{\bU}{\mathbb U} 
\newcommand{\bC}{\mathfrak C}
\newcommand{\bSD}{\mathbb S}

\newcommand{\wX}{\widehat X}
\newcommand{\wY}{\widehat Y}
\newcommand{\wZ}{\widehat Z}
\newcommand{\ca}{\mathcal {A}} 
\newcommand{\prj}{\operatorname{Proj}} 
 
\newcommand{\fan}{\operatorname{Fan}}
\newcommand{\vertex}{\operatorname{Vert}} 
\newcommand{\lk}{\operatorname{lk}} 
\newcommand{\ch}{\mathcal {H}} 
 
\newcommand{\cp}{\mathcal {P}} 
\newcommand{\ce}{\mathcal {E}}

\newcommand{\wE}{\widehat E}
\newcommand{\wtX}{\widetilde X}
\newcommand{\wtP}{\widetilde P}
\newcommand{\wtR}{\widetilde R}
\newcommand{\wtQ}{\widetilde Q}

\newcommand{\whC}{\widehat C}
\newcommand{\whZ}{\widehat Z}
\newcommand{\whX}{\widehat X}
\newcommand{\sX}{\mathsf X}
\newcommand{\sY}{\mathsf Y}
\newcommand{\wsX}{\widehat {\mathsf X}}
\newcommand{\wsY}{\widehat {\mathsf Y}}
\newcommand{\act}{\curvearrowright}
\newcommand{\si}{\sigma}
\newcommand{\cb}{\mathcal B}
\newcommand{\supp}{\operatorname{Supp}}

\begin{document}

\title{Cycles in spherical Deligne complexes and $K(\pi,1)$-conjecture for Artin groups}
\author{Jingyin Huang}
\maketitle
\begin{abstract}
We introduce a method of finding large non-positively curved subcomplexes in certain spherical Deligne complexes, which is effective for studying fillings of certain 6-cycles in spherical Deligne complexes. As applications, we show the $K(\pi,1)$-conjecture holds for all 3-dimensional hyperbolic type Artin groups, except one single example; and the conjecture holds for all quasi-Lann\'er hyperbolic type Artin groups up to dimension 4. In higher dimension, we show  the $K(\pi,1)$-conjecture for Artin groups whose Coxeter diagrams are complete bipartite (edge labels can be arbitrary), answering a question of J. McCammond.
\end{abstract}

\section{Introduction}

\subsection{Background and motivation}
A \emph{Coxeter group} $W_S$ is a group with generating set $S=\{s_1,\ldots,s_n\}$ and presentation
$$
\langle s_1,s_2,\ldots, s_n\mid (s_is_j)^{m_{ij}}=1\rangle
$$
with $m_{ii}=2$ for each $i$, $m_{ij}=m_{ji}\ge 2$ is either an integer or $\infty$ for $i\neq j$. Here $m_{ij}=\infty$ means no relation between $s_i$ and $s_j$.
There is an associated \emph{Coxeter diagram} $\Lambda$ (or \emph{Coxeter-Dynkin graph}) encoding the non-commutativity of generators of $W_S$: the collection of nodes of $\Lambda$ is $S$, and two nodes $s_i$ and $s_j$ are joined an edge labeled by $m_{ij}$ if $m_{ij}\ge 3$. 

For each Coxeter group $W_S$, there is an associated \emph{Artin group} $A_S$ (or \emph{Artin-Tits} group), whose generating set is $S$, and for each pair of generators $s_i\neq s_j$ with $m_{ij}<\infty$, there is a relator of form  $s_is_js_i\cdots=s_js_is_j\cdots$ with both sides being alternating words of length $m_{ij}$. We will also write $A_\Lambda$ and $W_\Lambda$ for the Artin group and Coxeter group with Coxeter diagram $\Lambda$.

A \emph{reflection} of $W_S$ is a conjugate of an element in $S$. Let $R$ be the collection of all reflections in $W_S$.
Recall that $W_S$ admits a canonical representation $\rho: W\to GL(n,\mathbb R)$, such that each element in $R$ acts as a linear reflection on $\mathbb R^n$, and the action $W_S\act \mathbb R^n$ stabilizes an open convex cone $I\subset \mathbb R^n$, called the \emph{Tits cone}, where the action of $W_S$ is properly discontinuous.
For each reflection $r\in W_S$, let $H_r$ be the set of fixed points of $\rho(r)$ in $I$. The collection of all such $H_r$ forms an arrangement of hyperplanes in $I$.
Let 
$$
M(W_S)= (I\times I)\setminus(\cup_{r\in R} (H_r\times H_r)). 
$$
The \emph{$K(\pi,1)$-conjecture for reflection arrangement complements}, due to Arnol'd, Brieskorn, Pham, and Thom, predicts that the space $M(W_S)$ is aspherical for any Coxeter group $W_S$. There is an induced action of $W_S$ on $M(W_S)$, which is free and properly discontinuous. The quotient has its fundamental group isomorphic to the Artin group $A_S$. 

Historically, the interests in $M(W_S)$ come from a connection with singularity theory: if $W_S$ is of type $A$, $D$, or $E$, then $M(W_S)/W_S$ is the complement of the discriminant of the semi-universal deformation of a simple singularity of the same type \cite{brieskorn1970singular}. More recently, it is known that many $M(W_S)$ are closely related to spaces of stability conditions on suitable triangulated categories \cite{bridgeland2009stability,ikeda2014stability,qiu2018contractible,august2022stability,heng2022categorification,dell2023fusion,qiu2023fusion,heng2024stability}, see also the survey article \cite{heng2024introduction}. In a few cases, $M(W_S)$ coincides with components of certain strata of the moduli space of abelian differential \cite{looijenga2012fine}, and moduli spaces of certain quadratic differential \cite[Appendix C]{qiu2024moduli}.

The $K(\pi,1)$-conjecture is a major unsettled problem in studying Artin groups. A well-known geometric approach to this conjecture reduces the $K(\pi,1)$-conjecture for \emph{any} Artin groups to certain properties of a short list of Artin groups, called \emph{spherical Artin groups}. These are Artin groups associated with finite Coxeter group, and they are completely classified, see the first two columns of Figure~\ref{fig:classical} for their Coxeter diagrams. The following combines \cite{CharneyDavis} and \cite{bowditch1995notes}.

\begin{thm}
	\label{thm:CharneyDavis}
	Suppose for any irreducible spherical Artin groups $A_S$, any loop of length $<2\pi$ in the associated spherical Deligne complex $\Delta_S$ with Moussong metric has a length non-increasing homotopy to the trivial loop. Then the $K(\pi,1)$-conjecture holds for \emph{any} Artin group. 
\end{thm}

Understanding loops of length $<2\pi$ in spherical Deligne complexes also leads to the solution of a highly non-trivial list of other important unsettled questions on more general Artin groups, like word problem, the intersection of parabolic subgroups, computation of symmetrical subgroups as well as normalizers and commensurators of elements, etc \cite{altobelli1998word,crisp2000symmetrical,godelle2007artin,morris2021parabolic}.

The main difficulty of this approach is to deal with the large space of loops (of length $<2\pi$) in spherical Deligne complexes, especially in high dimensions. To circumvent this difficulty, we proposed another approach in \cite{huang2023labeled} to $K(\pi,1)$-conjecture, aiming at reducing the $K(\pi,1)$-conjecture for any Artin groups to studying only short loops in the \emph{1-skeleton} of the spherical Deligne complexes $\Delta_S$. More precisely, we only need to deal with \emph{edge loops}, which are made of a sequence of edges. The meaning of ``short'' depends on the type of the associated irreducible spherical Artin groups (see Section~\ref{sec:cycle}), but they all have length $<2\pi$, and in many cases, we only need to deal with loops with $\le 6$ edges, which is much smaller compared to the collection of edge loops with length $<2\pi$ in high dimension. This approach in \cite{huang2023labeled} can be combined with the works mentioned above and \cite[Section 8]{haettel2021lattices} to answer importance questions on Artin groups other than the $K(\pi,1)$-conjecture, see \cite[Corollary 10.13]{huang2023labeled} for an example. 


Despite their important roles, the behavior of these short edge loops is far from being well-understood. In this article, we introduce a new method for studying the geometry of spherical Deligne complexes, which is effective for studying certain short edge loops. We discuss results on short edge loops in Section~\ref{subsec:fill 6-cycle}, applications to $K(\pi,1)$-conjecture in Section~\ref{subsec:K(pi,1) application}, and the method for understanding spherical Deligne complexes in Section~\ref{subsec:geometrization}.

\subsection{Cycles in some spherical Deligne complexes}
\label{subsec:fill 6-cycle}

Each finite Coxeter $W_S$ can be obtained by taking an appropriate fundamental domain in the round sphere $\mathbb S^n$ which is a convex spherical simplex, and considering the group of isometries generated by reflections along co-dimension 1 faces of this simplex. Then $\mathbb S^n$ is tessellated by copies of this simplex, and such tessellation is preserved by the action of $W_S$. The underlying simplicial complex of this tessellation is called the \emph{Coxeter complex} $\bC_S$ of $W_S$, which can be described algebraically as follows. Vertices of $\bC_S$ are in 1-1 correspondence with left cosets of form $\{gW_{S\setminus \{s\}}\}_{g\in W_S,s\in S}$, where $W_{S\setminus \{s\}}$ denotes the subgroup of $W$ generated by $S\setminus \{s\}$; a collection of vertices of $\bC_S$ span a simplex if the corresponding cosets have non-empty common intersection. Let $A_S$ be the associated spherical Artin groups. One can define a similar complex by considering the cosets $\{gA_{S\setminus \{s\}}\}_{g\in A_S,s\in S}$ instead, which gives the \emph{spherical Deligne complex} $\Delta_S$ associated with the spherical Artin group $A_S$. A vertex of $\Delta_S$ is of \emph{type $\hat s$} if it corresponds to $gA_{S\setminus \{s\}}$.
The complex $\Delta_S$ is a union of infinitely many round spheres, intersecting each other in an intricate pattern.

An $n$-cycle in $\Delta_S$ is an edge loop made of $n$-edges. The question we need to understand is that given a ``short'' $n$-cycle $\omega$ in $\Delta_S$, what is a way to fill in $\omega$ in the 2-skeleton of $\Delta_S$ by a 2-disk made of the least number of 2-simplices of $\Delta_S$. We will assume $\omega$ is embedded and \emph{induced}, i.e. two non-adjacent vertices in the cycle are also non-adjacent in $\Delta_S$, otherwise we are reduced to filling in $m$-cycles with $m<n$. The case $n=3$ follows from \cite[Lemma 4.3.2]{CharneyDavis}, where any such cycle bounds a 2-simplex - this plays a key role in the solution of $K(\pi,1)$-conjecture and word problem for type FC Artin groups. The case $n=4$ follows from \cite[Corollary 8.2]{huang2023labeled}, where any such cycle has a \emph{center}, namely a vertex in $\Delta$ which is adjacent to each vertex of $\omega$, hence $\omega$ is filled by a disk in the 2-skeleton made of four 2-simplices. As most of the cycles we need for $K(\pi,1)$-conjecture satisfy $n\le 6$ (see Section~\ref{sec:cycle}), in this article, we focus on the case $n=6$, which is much more subtle compared the cases $n=3$ and $n=4$. It is no longer true that any embedded and induced 6-cycle $\omega$ in $\Delta_S$ has a center. Actually, the combinatorics of minimal filling disks of embedded and induced $6$-cycles are highly sensitive to the type of the spherical Artin group $A_S$, as well as the types of vertices of $\omega$.

We introduce a geometric method to understand these cycles, see Section~\ref{subsec:geometrization}. It is different in nature from the Garside theoretic method \cite{huang2023labeled} which does not work when $n>4$, and the topological methods in \cite{charney2004deligne} and \cite{haettel2021lattices} (based on \cite[Section 2]{brady2005garside}) which do not work for exceptional Artin groups. This allows us to process cycles which is not accessible by previous methods.

\begin{thm}(=Theorem~\ref{thm:weaklyflagA}+Lemma~\ref{lem:weakflag})
	\label{thm:6 cycles with center}
	Suppose $A_S$ is of type $A_n,B_n,H_3,F_4$ with $n\ge 3$. Let $\omega$ be an embedded 6-cycle in $\Delta_S$ such that the type of its vertices alternates between $\hat s$ and $\hat t$, where $\{s,t\}\in S$ does not commute. Then $\omega$ has a center.  
\end{thm}

The conclusion of this theorem breaks down if $s$ and $t$ commute. 

The most interesting cases of Theorem~\ref{thm:6 cycles with center} are type $A_n$ and $H_3$. In the exception case $H_3$, we prove more. Recall that an induced 6-cycle $\omega$ in $\Delta_S$ has \emph{quasi-center}, if there is a vertex $x\in \Delta_S$ such that $x$ is adjacent to three vertices of $\omega$ which are pairwise non-adjacent. Having a quasi-center breaks the 6-cycle $\omega$ into three 4-cycles, hence minimal filling disks of $\omega$ can be understood through minimal filing disks of 4-cycles in \cite{huang2023labeled}.

\begin{thm}(=Theorem~\ref{thm:tripleH})
	Suppose $A_S$ is of type $H_3$. Let $\omega$ be an embedded and induced 6-cycle in $\Delta_S$. Then $\omega$ has a quasi-center.
\end{thm}

Note that the theorem is not true if one replaces ``quasi-center'' by ``center'' in the conclusion. It is also interesting to compare to the case when $A_S$ is of type $A_n$, where generally induced 6-cycles in $\Delta_S$ do not have a quasi-center - this can be seen by using Deligne's description of the spherical Deligne complex in \cite{deligne}.

\begin{thm}(=Proposition~\ref{prop:F4})
	Suppose $A_S$ is of type $F_4$, with $s\in S$ being a terminal node in its Coxeter diagram. Let $\omega$ be an embedded and induced 6-cycle in $\Delta_S$ without any vertices of type $\hat s$. Then $\omega$ has a quasi-center.
\end{thm}

Complementary to the above results, some 6-cycles in type $D_n$ case are treated in a companion article \cite{huang2024Dn}, as it uses a different method. 

As a comparison to previous work, we record the following intriguing consequence of Theorem~\ref{thm:6 cycles with center}. Recall that for any subset $S'\subset S$, the \emph{relative Artin complex} $\Delta_{S,S'}$ is the full subcomplex of $\Delta_S$ spanned by vertices of type $\hat s$ with $s\in S'$.

\begin{cor}(=Corollary~\ref{cor:cat(1)})
	Suppose $A_S$ is of type $A_n$ with $n\ge 3$. Let $S'\subset S$ be a subset made of three consecutive nodes in the Coxeter diagram of $A_S$. Then $\Delta_{S,S'}$ with its simplices equipped with $A_3$-shape is CAT$(1)$.
\end{cor}

The case $n=3$ is the main result of \cite{charney2004deligne}.

\begin{figure}
	\centering
	\includegraphics[scale=0.8]{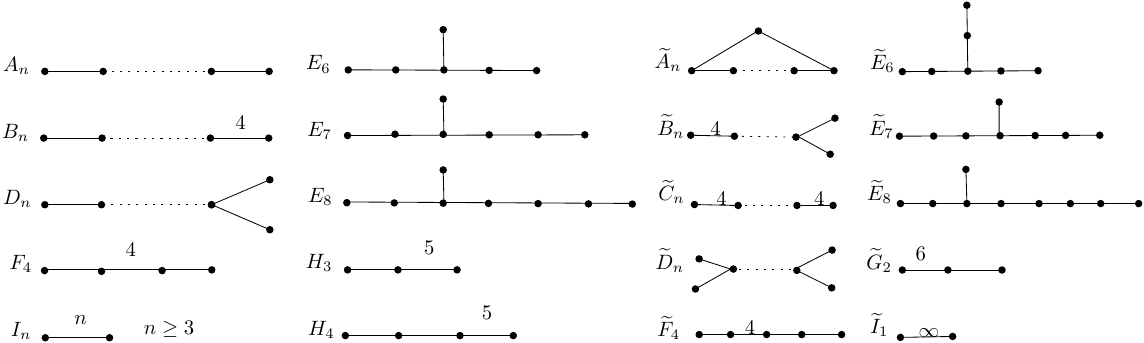}
	\caption{Spherical and affine diagrams: unlabeled edges are assumed to have label 3.}
	\label{fig:classical}
\end{figure}
\subsection{Application to the $K(\pi,1)$-conjecture}
\label{subsec:K(pi,1) application}
Applications of results in Section~\ref{subsec:fill 6-cycle} to $K(\pi,1)$-conjecture divides into two categories, those which are obtained without too much effort, and those that need substantially more work. In this article we discuss the first category. The second category of results will appear elsewhere.

\textbf{We caution the reader that our results are stated using Coxeter diagrams, which we also called Coxeter-Dynkin diagrams or Dynkin diagrams in \cite{garside,huang2023labeled}. There is a different graph used to represent Artin groups in the literature, called the defining graph or presentation graph, which we are not using here.} 

The $K(\pi,1)$-conjecture for an arbitrary Artin group reduces to the $K(\pi,1)$-conjecture for free-of-infinity Artin groups (i.e. there are no $\infty$-labeled edges in the Coxeter diagram) by work of Charney-Davis, Ellis-Sk{\"o}ldberg and Godelle-Paris \cite{CharneyDavis,ellis2010k,godelle2012k}. Among free-of-infinity Artin groups, the $K(\pi,1)$-conjecture is known in the 2-dimension case by Charney-Davis \cite{CharneyDavis}, and a few families that are not exactly 2-dimensional but have strong 2-dimensional features by work of Juhasz and Goldman \cite{juhasz2018relatively,goldman2022k,juhasz2023class}. 

Known cases for free-of-infinity Artin groups in dimension $\ge 3$ are limited. Charney proved the conjecture for Artin groups with irreducible spherical parabolics being 2-dimensional or $A_3$ \cite{charney2004deligne}. 
The conjecture is known for spherical Artin groups by Deligne \cite{deligne} and affine Artin groups by Paolini-Salvetti \cite{paolini2021proof} (based on McCammond-Sulway \cite{mccammond2017artin}). These two classes correspond to Coxeter diagrams in Figure~\ref{fig:classical}. The conjecture is also proved for Artin groups whose Coxeter diagrams are trees \cite{huang2023labeled} with some constraints on edge labeling.

As the $K(\pi,1)$ conjecture is settled for Artin groups in Figure~\ref{fig:classical} whose Coxeter groups acts on $\mathbb S^n$ and $\mathbb E^n$  \cite{deligne,paolini2021proof}, it is natural to ask the same question for \emph{hyperbolic type Artin groups}, i.e., Artin groups associated with Coxeter groups acting on the $n$-dimensional hyperbolic space $\mathbb H^n$ with finite covolume. This was completely understood in the case of $\mathbb H^2$ \cite{CharneyDavis}, with a recent alternative proof \cite{delucchi2022dual}. However, for $\mathbb H^3$, only a few sporadic examples of Artin groups are known to satisfy the $K(\pi,1)$-conjecture \cite{charney2004deligne,garside,huang2023labeled}. As an immediate consequence of Section~\ref{subsec:fill 6-cycle}, we give an almost complete treatment in the $\mathbb H^3$ case.
\begin{thm}(=Corollary~\ref{cor:3hyperbolic})
	\label{thm:3hyperbolic}
	Let $W_S$ be a Coxeter group acting properly on $\mathbb H^3$ by isometries with finite covolume. Assume $W_S\neq [3,5,3]$. Then the $K(\pi,1)$-conjecture holds for the associated Artin group $A_S$.
\end{thm}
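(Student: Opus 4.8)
The plan is to deduce Theorem~\ref{thm:3hyperbolic} from the paper's main technical results, which sharpen the criterion of \cite{huang2023labeled}: for a $3$-dimensional Artin group $A_S$, the $K(\pi,1)$-conjecture follows once certain short cycles -- in practice $4$- and $5$-cycles -- in the spherical Deligne complexes of the rank $\le 3$ parabolics of $A_S$, and in the relative Artin complexes of $A_S$, bound combinatorial minimal fillings of a prescribed type. The argument then runs in three steps: (1) reduce the class of all finite-covolume reflection groups on $\mathbb{H}^3$ to $3$-dimensional Artin groups with complete presentation graph; (2) establish the required fillings in the spherical Deligne complexes that occur; (3) establish them in the relevant relative Artin complexes for all of the groups arising, and pin down $[3,5,3]$ as the single case where the needed filling fails to exist.

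For step (1) I would invoke the reduction of Charney--Davis, Ellis--Sk{\"o}ldberg and Godelle--Paris. If $W_\Gamma$ acts on $\mathbb{H}^3$ with finite-covolume fundamental polytope $P$, then the Artin groups with complete presentation graph that it suffices to treat are the parabolics of $A_\Gamma$ on cliques of the facet-adjacency graph of $P$; since that graph is planar it contains no $K_5$, so these parabolics have rank $\le 4$. The ones of rank $\le 3$ are of spherical, Euclidean, or $2$-dimensional type and satisfy the conjecture by \cite{deligne,paolini2021proof,CharneyDavis}; a rank-$4$ one is either reducible -- hence reduced again to lower-dimensional pieces -- or irreducible of tetrahedron type (the Coxeter group of a finite-covolume or hyperideal-truncated hyperbolic tetrahedron). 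The latter do \emph{not} form a finite list, since their edge labels are unbounded, so the argument must be uniform in the labels; what makes this feasible is that the rank-$3$ spherical parabolics that can occur inside them are of only finitely many shapes -- $A_3$, $B_3$, $H_3$ and the reducible types $A_2\times A_1$, $A_1\times I_2(m)$, $A_1^3$.

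For steps (2) and (3): for the reducible spherical shapes the Deligne complexes are products or joins and the cycle conditions reduce to the rank $\le 2$ analysis; for $A_3$ and $B_3$ one uses the actions on CAT$(0)$, injective, or Helly spaces constructed in the body of the paper to fill the relevant $4$- and $5$-cycles. The genuinely delicate spherical case is $H_3$. For the relative Artin complexes the most demanding filling conditions again involve an $H_3$-parabolic: for every tetrahedron-type diagram except $[3,5,3]$ the corresponding non-positively curved action supplies the required filling, whereas for $[3,5,3]$ one can exhibit a short cycle -- forced by its particular combinatorics -- whose combinatorial minimal fillings are never of the admissible type.

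The main obstacle is exactly this dichotomy. On one side one must produce, uniformly in the labels, the requisite non-positively curved action -- and with it the desired fillings -- for every tetrahedron-type diagram other than $[3,5,3]$, the hardest instances being those carrying an $H_3$-parabolic; this is the bulk of the technical work. On the other side one must show that the failure for $[3,5,3]$ is \emph{intrinsic}: that no combinatorial minimal filling of the offending cycle of the prescribed form exists (equivalently, that no suitably equivariant realization is possible), so that $[3,5,3]$ is a genuine exception rather than a limitation of the method. Matching this single sharp lower bound against the constructions that cover every other diagram is where the case analysis ceases to be routine.
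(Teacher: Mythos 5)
Your reduction in step (1) matches the paper's: reduce to complete subgraphs of $\Gamma$ via \cite{ellis2010k}, observe that the facet-adjacency graph of the fundamental polytope is planar and hence has no $K_5$, so every complete subgraph has $\le 4$ vertices, and then cite a theorem about Artin groups on $\le 4$ generators (here Theorem~\ref{thm:4generator}). That part is right and is essentially what the paper does; the only delicate point you gloss over is why $[3,5,3]$, once excluded as the \emph{whole} diagram, cannot sneak back in as an induced $4$-vertex sub-Dynkin-diagram of a larger $\Gamma$, which the paper notes follows from the fact that a $3$-dimensional finite-covolume reflection group whose Dynkin diagram contains $[3,5,3]$ as an induced subgraph must itself equal $[3,5,3]$.

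However, your ``main obstacle'' paragraph contains a genuine misreading of what the theorem asserts and hence of what must be proved. You claim that ``one must show that the failure for $[3,5,3]$ is intrinsic: that no combinatorial minimal filling of the offending cycle of the prescribed form exists \ldots so that $[3,5,3]$ is a genuine exception rather than a limitation of the method.'' This is not required, and the paper does not prove anything of the kind. The theorem is a conditional positive statement: \emph{if} $W_\Gamma\ne[3,5,3]$, \emph{then} $K(\pi,1)$ holds. There is no claim that $K(\pi,1)$ fails for $[3,5,3]$, and no claim that its Deligne or relative Artin complex has an essentially unfillable short cycle. The exclusion is indeed a limitation of the method: the proposed $\ell^\infty$-type metric (Proposed metric II, Definition~\ref{def:metric2}) requires $S'$ to dominate a Euclidean diagram, and $[3,5,3]$ does not, so the cycle criteria (Theorem~\ref{thm:contractibleII}, Proposition~\ref{prop:ori link}) are simply unavailable in that case. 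No lower bound is matched against anything; the paper just has nothing to say about that one diagram. If you try to prove the nonexistence claim in your step (3) you will be stuck, because it is almost certainly false (the $K(\pi,1)$-conjecture is expected to hold for $[3,5,3]$ as well).

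A second, smaller issue: the combinatorial fillings that feed the criterion are of $4$-cycles (bowtie-free, labeled $4$-wheel) and $6$-cycles (one-sided flagness / weak flagness), not of ``$4$- and $5$-cycles.'' Relatedly, the actual dichotomy in the paper's proof of Theorem~\ref{thm:4generator} is not ``reducible vs.\ tetrahedron type'' but rather ``some rank-$3$ parabolic is non-spherical'' (handled by Proposition~\ref{prop:not compact} via a direct Moussong-style CAT$(0)$/CAT$(-1)$ metric on the relative Artin complex, with girth control on vertex links coming from Theorem~\ref{thm:tripleH}, Theorem~\ref{thm:triple}, etc.; this is where arbitrary edge labels are absorbed) versus ``all rank-$3$ parabolics are spherical'' (then $A_S$ is affine or one of the nine compact tetrahedral Lann\'er groups, a genuinely finite list, and each is treated by the $\ell^\infty$-orthoscheme machinery). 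Your worry about uniformity in labels is real but is discharged by Proposition~\ref{prop:not compact}, not by a uniform analysis of tetrahedron types.
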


Here $[3,5,3]$ denotes the Coxeter group whose Coxeter diagram is a linear graph with three edges, such that consecutive edges are labeled $3,5,3$. This remaining group has some exceptional feature and will be treated separately in \cite{HP} along with other classes.
Section~\ref{subsec:fill 6-cycle} also implies new results in dimension 4, with some restriction on the shape of the fundamental domain.

\begin{thm}(=Corollary~\ref{cor:quasilanner})
	\label{thm:quasilanner}
	Let $W_S$ be a Coxeter group acting properly on $\mathbb H^n$ by isometries with its fundamental domain being a finite volume non-compact simplex. Suppose $n\le 4$. Then the $K(\pi,1)$-conjecture holds for the associated Artin group $A_S$.
\end{thm}

For higher dimensional Artin groups, we prove the following.

\begin{thm}(=Theorem~\ref{thm:complete bipartite})
	\label{thm:bipartite}
	Let $A_S$ be an Artin group such that its Coxeter diagram is complete bipartite. Then $A_S$ satisfies the $K(\pi,1)$-conjecture.
\end{thm}

A \emph{complete bipartite graph} $\Lambda$ is a graph whose vertices can be
split into two non-empty subsets $V_1$ and $V_2$ so that $\Lambda$ has an edge between two vertices if and
only if one of these vertices belongs to $V_1$ and the other belongs to $V_2$.
We write $K_{m,n}$ to denote the complete bipartite graph with $m$ vertices on one
subset and $n$ vertices in the other.

In each dimension $\ge 5$, there are infinitely many free-of-infinity Artin groups satisfying the assumption of Theorem~\ref{thm:bipartite} whose $K(\pi,1)$-conjecture were not previously known. The edge labels of the diagrams in Theorem~\ref{thm:bipartite} is allowed to be arbitrary. While a few instances of Artin groups in Theorem~\ref{thm:bipartite} are of hyperbolic type, most of them are not, as hyperbolic type Artin groups exist only up to a certain dimension.

Artin groups with complete bipartite Coxeter diagrams were used by McCammond \cite[Section 8]{mccammond2017mysterious} to demonstrate how little we understand about Artin groups. When the Coxeter diagram is the star $K_{1,n}$ and all edges are labeled by $3$, the only previously known cases of $K(\pi,1)$-conjecture were $K_{1,1}$ (type $A_2$) and $K_{1,2}$ (type $A_3$) in \cite{MR150755}, $K_{1,3}$  (type $D_4$) in \cite{MR422674,deligne} and $K_{1,4}$ (type $\widetilde D_4$)  recently in \cite{paolini2021proof}. The $K(\pi,1)$-conjecture for the next diagram $K_{1,5}$, whose Coxeter group acts on $\mathbb H^5$ with finite volume fundamental domain, was already not known before. 

\medskip
We now remark briefly on the passage from results in Section~\ref{subsec:fill 6-cycle} to $K(\pi,1)$-results. An Artin group $A_S$ is \emph{almost spherical}, if it is not spherical, but $A_{S\setminus\{s\}}$ is spherical for any $s\in S$. Coxeter diagrams of irreducible almost spherical Artin groups are either contained in the last two columns of Figure~\ref{fig:classical}, or in \cite[Table 6.2]{davis2012geometry}.
A sub-diagram $\Lambda'$ of a Coxeter diagram $\Lambda$ is \emph{almost spherical}, if $A_{\Lambda'}$ is almost spherical.

The approach in \cite{huang2023labeled} studies each Artin group via the information of what are the almost spherical sub-diagrams of its Coxeter diagram, and how these sub-diagrams sit inside the ambient diagram. This information can be further related to the study of short cycles in spherical Deligne complexes. Hence combining \cite{huang2023labeled} and Section~\ref{subsec:fill 6-cycle} output results on $K(\pi,1)$-conjecture for Artin groups such that the types and relative positions of almost spherical sub-diagrams in their Coxeter diagrams satisfy certain restrictions, depending on the list of short cycles in spherical Deligne complexes we can handle. This gives the above theorems, as well as some more technical (but more general) results in the body of the article, see Corollary~\ref{cor:tree}, and Proposition~\ref{prop:reduction}. Instead of giving the full statements, we give some examples below to demonstrate them. 

\begin{figure}[h]
	\centering
	\includegraphics[scale=0.8]{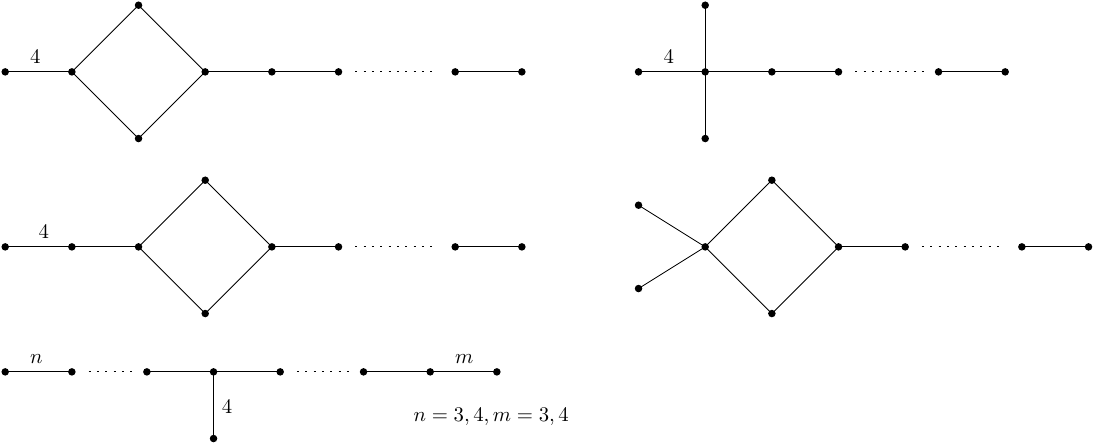}
	\caption{Some examples: unlabeled edges are assumed to have label 3.}
	\label{fig:seven families}
\end{figure}

\begin{cor}(=Corollary~\ref{cor:seven families})
	All Artin groups whose Coxeter diagrams belong to Figure~\ref{fig:seven families} satisfy the $K(\pi,1)$-conjecture.
\end{cor}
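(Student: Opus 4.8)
The plan is to deduce the corollary from Theorem~\ref{prop:intro_reduction} (and a mild variation of it) by exhibiting, for each of the seven families in Figure~\ref{fig:seven families}, a suitable core subdiagram and then checking the hypotheses on vertex links. First I would go through the seven families one at a time and locate inside each Dynkin diagram $\Lambda$ an induced subdiagram $\Lambda'$ which is the Dynkin diagram of a $3$-dimensional irreducible affine Coxeter group; concretely $\Lambda'$ is one of $\widetilde A_3$, $\widetilde B_3$, $\widetilde C_3$. In most of the families the subdiagram $\Lambda'$ is built in by construction -- each such family is obtained by attaching spherical or small-affine branches to a fixed affine triangle/square -- so spotting $\Lambda'$ is immediate. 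For the remaining families, where no single copy of a $3$-dimensional affine diagram sits as an induced subdiagram, or where a naive choice of $\Lambda'$ fails the link test at some vertex, I would invoke the small variation of Theorem~\ref{prop:intro_reduction} alluded to in the text: either one enlarges $\Lambda'$ to an affine diagram of a slightly different shape, or one uses a symmetry of $\Lambda$ together with an induction on the number of pendant branches, reducing to a base case already covered.

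With $\Lambda'$ fixed, I would then verify the link condition: for every vertex $s\in\Lambda'$, every connected component of $\Lambda\setminus\{s\}$ is either spherical or has type in $\{\widetilde A_3,\widetilde B_3,\widetilde C_n\}$. This is a finite, purely combinatorial check for each family -- delete each of the (at most four) vertices of $\Lambda'$, list the resulting components, and match each against the finite list of spherical Dynkin diagrams or against the permitted affine types, keeping track of edge labels along each branch (with the convention that unlabeled edges carry label $3$). The families in Figure~\ref{fig:seven families} are designed precisely so that deleting a vertex of the core severs the diagram into a spherical piece and at most one affine piece of the allowed kind; I would record the outcome in a short table with one row per vertex of $\Lambda'$, which makes the type identifications transparent.

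The main obstacle I anticipate is not any single family but the families requiring the ``small variation'': one must state precisely what that variation is and explain why the proof of Theorem~\ref{prop:intro_reduction} still goes through. The likeliest route is that the proof of Proposition~\ref{prop:reduction} uses the core $\Lambda'$ only through the geometry of the associated spherical Deligne complex / relative Artin complex -- the actions on injective, $\mathrm{CAT}(0)$, or Helly spaces coming from the minimal-filling analysis -- so replacing the precise isomorphism type of $\Lambda'$ by another affine diagram whose associated complex carries the same kind of action, or enlarging $\Lambda'$ while preserving the link condition, costs nothing. I would isolate exactly which property of $\Lambda'$ the argument consumes, check it for the variant cores, and then apply Theorem~\ref{prop:intro_reduction} (or its variant) separately to each of the seven families to conclude the $K(\pi,1)$-conjecture for all of them.
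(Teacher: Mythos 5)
Your proposal captures the high-level strategy correctly for most of Figure~\ref{fig:seven families}: locate an affine core $\Lambda'\subset\Lambda$, verify the link condition of Proposition~\ref{prop:reduction}, and conclude. And you rightly flag that some families will not fit the hypotheses of that proposition verbatim and will need a variation. But your guess about the nature of the variation is off. You propose replacing $\Lambda'$ with a different affine diagram "whose associated complex carries the same kind of action", or enlarging $\Lambda'$. In the paper the cores for the three exceptional families are exactly the thickened $\widetilde B_3$ (for $\mathcal C_2$) and $\widetilde A_3$ (for $\mathcal C_3,\mathcal C_4$) subgraphs one would naively choose --- the core is not the issue. What fails is hypothesis (2) of Proposition~\ref{prop:reduction}: after deleting a vertex $s\in\Lambda'$, a component of $\Lambda\setminus\{s\}$ can be of type $\widetilde B_4$, $\widetilde D_4$, or even another diagram from the same figure, none of which sit inside $\{\widetilde A_3,\widetilde B_3,\widetilde C_n\}$. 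Your "short table of vertex deletions" would correctly reveal this failure, but you don't say what to do once it fails.

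The actual repair is not a modified version of Proposition~\ref{prop:reduction}; it is a direct verification of the contractibility of $\Delta_{\Lambda,\Lambda'}$ by other means. For $\mathcal C_2$ the paper verifies the assumptions of Proposition~\ref{prop:ori link}, which requires weak flagness (supplied by Theorem~\ref{thm:weakflagD}) rather than full upward-and-downward flagness, together with Proposition~\ref{cor:propagation}\,(1) to propagate flagness from the $\widetilde B_3$ vertex links. For $\mathcal C_3$ and $\mathcal C_4$ the paper reduces, via Theorem~\ref{thm:contractibleII} and Lemma~\ref{lem:sc}, to showing bowtie-freeness of $\Delta_{\Lambda_s,\Lambda_s\cap\Lambda'}$ for each $s\in\Lambda'$, where $\Lambda_s$ ranges over $\{B_n,D_n,\widetilde B_4,\widetilde D_4\}$; the affine cases $\widetilde B_4$ and $\widetilde D_4$ are then handled by Proposition~\ref{cor:propagation}\,(3) (using Theorem~\ref{thm:downflagD4} and Lemma~\ref{lem:4wheel}) and Proposition~\ref{cor:propagation2} respectively. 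Without identifying these specific inputs --- in particular the role of the weakly-flag results and the propagation lemmas for $\widetilde B_4/\widetilde D_4$ links --- the proposal has a genuine gap: it acknowledges that three families escape the main theorem but does not supply the replacement argument or even correctly diagnose what has to change.
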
 

All these are new for $K(\pi,1)$-conjecture. Figure~\ref{fig:seven families} contains examples of any given dimension and examples with larger and larger irreducible spherical Artin subgroups.
The main difference between examples here and some previous examples in \cite[Figure 3]{huang2023labeled} is the types of almost spherical sub-diagrams they are allowed to contain.

We also obtain a corollary regarding the torsion and center of some Artin groups, using \cite{jankiewicz2023k}, which belongs to fundamental unsolved problems for Artin groups.

\begin{cor}
	Let $\Lambda$ be a connected Coxeter diagram satisfying the assumptions in one of the previous theorems. Then $A_\Lambda$ is torsion-free and it has a trivial center when $\Lambda$ is not spherical.
\end{cor}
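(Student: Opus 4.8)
The plan is to derive both assertions formally from the validity of the $K(\pi,1)$-conjecture for $A_\Lambda$ — which is precisely what the hypothesis supplies through one of the theorems above — combined with the structural consequences of the conjecture recorded in \cite{jankiewicz2023k}. For \emph{torsion-freeness} I would argue as follows. By Charney--Davis, the $K(\pi,1)$-conjecture for $A_\Lambda$ is equivalent to asphericity of the Salvetti complex $\operatorname{Sal}(A_\Lambda)$, which is a finite CW complex (here $S$ is finite) of dimension equal to the largest cardinality of a subset $T\subseteq S$ with $A_T$ of spherical type. Thus $A_\Lambda$ admits a finite-dimensional $K(\pi,1)$ which is a finite complex, hence has finite cohomological dimension; since any nontrivial finite subgroup has cohomology in arbitrarily high degrees and cohomological dimension is monotone under subgroups, $A_\Lambda$ must be torsion-free. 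This is one of the elementary consequences of the $K(\pi,1)$-conjecture isolated in \cite{jankiewicz2023k}.

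For the statement about the \emph{center}, note first that connectedness of $\Lambda$ is equivalent to $A_\Lambda$ being an irreducible Artin group, and by hypothesis $\Lambda$ is not spherical. The point is then that the description of the center of an Artin group — conjectured in general but known only in special cases — becomes available once the $K(\pi,1)$-conjecture is in hand: by the results of \cite{jankiewicz2023k}, the $K(\pi,1)$-conjecture for $A_\Lambda$ implies that $Z(A_\Lambda)$ is trivial whenever $A_\Lambda$ is irreducible and not of spherical type (for irreducible spherical type the center is instead infinite cyclic, by Deligne and Brieskorn--Saito, but that case is excluded here). Since the $K(\pi,1)$-conjecture for $A_\Lambda$ has been established in the theorems cited above, this applies directly and gives $Z(A_\Lambda)=1$.

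There is no genuine obstacle here; the only thing that requires care is bookkeeping, namely confirming that the diagram-theoretic input needed in \cite{jankiewicz2023k} (irreducibility, i.e.\ connectedness of $\Lambda$, and non-spherical type) together with the conclusion of the relevant theorem above (validity of $K(\pi,1)$) match exactly the hypotheses of the results being quoted. All of the mathematical content sits in the previously proved $K(\pi,1)$ statements, and the corollary is a direct application of them combined with \cite{jankiewicz2023k}.
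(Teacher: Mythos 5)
Your proposal is correct and takes essentially the same approach the paper intends: the corollary is stated without a separate proof precisely because it is the formal combination of the newly established $K(\pi,1)$ statements with the standard consequence (finite aspherical Salvetti complex $\Rightarrow$ finite cohomological dimension $\Rightarrow$ torsion-free) and with the main theorem of \cite{jankiewicz2023k} (the $K(\pi,1)$-conjecture for an irreducible non-spherical Artin group implies trivial center). Your observation that connectedness of the Dynkin diagram is equivalent to irreducibility is the right bookkeeping step, and nothing further is needed.
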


\subsection{Non-positive curvature pieces in spherical Deligne complexes}
\label{subsec:geometrization}

Now we discuss the main ingredient in this article which is used to prove results on filling short cycles in Section~\ref{subsec:fill 6-cycle}, as well as some ideas in the proof. A standard situation when one knows how to fill in cycles in the space, and has strong control over the geometry and possibly combinatorics of the filling disk, is that the space satisfies a fine notion of non-positively curvature, like CAT$(0)$. So it would be ideal if all the spherical Deligne complexes for spherical Artin groups could be metrized as CAT$(0)$ spaces. However, this naive hope breaks down as spherical Deligne complexes are homotopic to wedges of spheres \cite{deligne}, so they can not admit CAT$(0)$ metric. Nevertheless, this does not rule out the possibility of finding large non-positively curved ``islands'' inside spherical Deligne complexes, and our strategy still works if some island is large enough to contain the cycle we are interested in. This situation bears some similarity to the geometrization of 3-manifolds/orbifolds - while a 3-manifold might not be metrized by a single Thurston geometry, it contains maximal ``islands'' which can be indeed metrized by a single Thurston geometry. This consideration leads us to the following question.

\begin{que}
	\label{que:main}
	Given a spherical Artin group $A_S$ acting on its spherical Deligne complex $\Delta_S$. We want to find a subgroup $G\le A_S$, and an $G$-invariant and $G$-cocompact subcomplex $Y\subset \Delta_S$ such that $Y$ admits an $G$-invariant metric of ``non-positive curvature''. Moreover, we want $G$ and $Y$ to be as large as possible.
\end{que}


There is some flexibility of what we mean by ``non-positive curvature'' in Question~\ref{que:main}. Besides CAT$(0)$, one may consider other forms of non-positive curvature \cite{lang2013injective,descombes2015convex,weaklymodular,chalopin2020helly}. The bottom line is that such a form of non-positive curvature should give strong control on filling disks of cycles, so forms of non-positive curvature that are more ``coarse'' in nature might not work well for our purpose, see \cite{webb2020contractible}.

Now we describe the non-positively curved islands we found in detail.
Suppose $A_S$ is irreducible and spherical.
Let the associated pure Artin group be $PA_S$. Let $\overline{PA}_S$ be the central quotient of $PA_S$. As $A_S$ is spherical, we know $PA_S\cong \mathbb Z\times \overline{PA}_S$. Note that $\mathbb Z$ factor of $PA_S$ is canonical as it corresponds to the center of the group. However, the $\overline{PA}_S$ factor is not canonical - there are many different subgroups of $PA_S$ isomorphic to $\overline{PA}_S$, which are called \emph{$\overline{PA}_S$-sections} in $PA_S$.

Recall that quotient $\Delta_S/PA_{S}$ is isomorphic to the Coxeter complex $\mathfrak C_S$. This gives a folding map $\pi:\Delta_S\to \bC_S$. Let $H$ be a \emph{wall} in $\mathfrak C_S$ (i.e., fix points of a reflection), and let $K_H$ be the maximal subcomplex of $\mathfrak C_S$ contained in an open halfspace bounded by $H$. Then $\pi^{-1}(K_H)$ is not connected, and the stabilizer of each connected component of $\pi^{-1}(K_H)$ for the action $PA_S\curvearrowright \Delta_S$ is a $\overline{PA}_S$-section in $PA_S$.

A geodesic metric space is \emph{injective} if whenever a collection of closed metric balls in the space have pairwise non-empty intersection, then their common intersection is non-empty. This is a form of non-positive curvature \cite{lang2013injective}.

\begin{prop}(=Proposition~\ref{prop:An Helly})
	Suppose $A_S$ is of type $A_n$. Then for each wall $H$ of $\mathfrak C_S$, each component of $\pi^{-1}(K_H)$ admits a piecewise $\ell^\infty$ injective metric that is invariant under the action of the associated $\overline{PA}_S$-section.
\end{prop}

From a topological viewpoint, a component of $\pi^{-1}(K_H)$ arises naturally from the deconing of the associated $A_n$-type arrangement of hyperplanes.
When $\Delta_S$ is 2-dimensional, the component was considered by Falk in \cite{falk1995k} for a different purpose, with a different metric. 
Though the description of the complex in Falk's paper is different as well, it gives the same complex (see Section~\ref{subsec:deligne complex} for more details). Thus, from now on, we will refer to components of $\pi^{-1}(K_H)$ as \emph{Falk subcomplexes} of $\Delta_S$ associated with $H$.

There is a similar result about injective metric on Falk subcomplexes in type $B_n$ case, see Proposition~\ref{prop:Bn Helly}. We will not need Proposition~\ref{prop:Bn Helly} in this article, although we will need it in another article, so we put it in the appendix.

Now we turn to the case that $A_S$ is of type $H_3$, which is the most challenging case in this article. Let $\pi:\Delta\to \mathfrak C_S$ and $H$ as before. Unfortunately, those Falk subcomplexes do not appear to admit forms of non-positive curvature (i.e. CAT$(0)$, injective, conformally CAT$(0)$, etc) which are invariant under the action of the associated sections. However, this is not fatal to our strategy. We will solve instead a variation of Question~\ref{que:main}.

\begin{que}
	Given a spherical Artin group $A_S$ acting on its spherical Deligne complex $\Delta_S$. We want to find a subgroup $G\le A_S$, and an $G$-invariant and $G$-cocompact subcomplex $Y\subset \Delta_S$ together with a quotient homomorphism $q: G\to \bar G$, a simplicial action of $\bar G$ on a simplicial complex $\bar Y$, and an $q$-equivariant quotient simplicial map $Y\to \bar Y$ such that $\bar Y$ admits an $\bar G$-invariant metric of ``non-positive curvature''. Here we want $G$ and $Y$ to be as large as possible, and we want the fibers of $G\to \bar G$ and $Y\to \bar Y$ to be as small as possible.
\end{que}
The only difference between this question and Question~\ref{que:main} is that we do not require $Y$ to be non-positively curved on the nose. Instead, we are allowed to pass to an appropriate quotient $\bar Y$ of $Y$ which is non-positively curved. Answering this question still sheds lights on understanding minimal filling cycles in $\Delta_S$. Indeed, suppose we have a cycle $\omega$ that is in the ``island'' $Y$ of $\Delta_S$. Then we want to use the non-positive curvature in $\bar Y$ to fill in the image $\bar \omega$ of $\omega$, then try to lift at least part of the information of filling disk in $\bar Y$ back to $Y$. In order for the last step to work, we want the quotient map $Y \to \bar Y$ to lose as little information as possible.

When $A_S$ is of type $H_3$, we find several different pairs $\bar G$ and $\bar Y$, and combine their power to understand cycles in $\Delta_S$. The exact description of these pairs is a bit technical, which we refer to Section~\ref{sec:subarrangement}. Here we only explain the idea of finding such pairs. We take $G$ and $Y$ as before - $Y$ is a Falk subcomplex, and $G$ is the associated section. Here $G$ can be realized as the fundamental group of a manifold of form $\mathbb C^2-\cup_{\lambda\in \Lambda} H_\lambda$ where each $H_\lambda$ is an affine hyperplane in $\mathbb C^2$. If we consider a subset $\Lambda'\subset \Lambda$, then the inclusion $\mathbb C^2-\cup_{\lambda\in \Lambda} H_\lambda\hookrightarrow \mathbb C^2-\cup_{\lambda\in \Lambda'} H_\lambda$ naturally induces a quotient of $G$. The pairs $\bar G$ and $\bar Y$ are obtained by considering appropriate subsets $\Lambda'$ of $\Lambda$.

It remains to ask, what if the 6-cycle $\omega$ in $\Delta_S$ is not contained in any ``islands''? As these islands are chosen to be Falk subcomplexes, the failure of being contained in any islands implies the cycle has to be ``spread-out'' in an appropriate sense, hence can be handled effectively by the retraction map defined in \cite{godelle2012k,charney2014convexity,blufstein2023parabolic,godelle2023parabolic,digne2024retraction} from an Artin group to its parabolic subgroups.

\subsection{Structure of the article and reading guide}
The article has three parts. Part 1 consists of Section~\ref{sec:prelim2}, and it is a collection of preliminaries. Part 2 consists of Section~\ref{sec:F4} to Section~\ref{sec:H3}, where we study 6-cycles in different types of spherical Artin complexes. More precisely, Section~\ref{sec:F4} is about the type $F_4$ case. Section~\ref{sec:prelim1} and Section~\ref{sec:AD} are about the type $A_n$ case. Section~\ref{sec:subarrangement} and Section~\ref{sec:H3} are about the type $H_3$ case. Part 3 consists of Section~\ref{sec:weakly flag} to Section~\ref{sec:high}, where we deduce new cases of the $K(\pi,1)$-conjecture.


Part 2 and Part 3 are mostly independent from each other.
The main results of Part 2 are Proposition~\ref{prop:F4}, Theorem~\ref{thm:weaklyflagA}, and Theorem~\ref{thm:tripleH}.
After reading Part 1, if the reader is willing to take these three main results of Part 2 for granted, then the reader can go to Part 3 directly, to see how these three results imply new cases of the $K(\pi,1)$-conjecture. 


\subsection{Acknowledgment}
The author benefits greatly from several long and detailed conversations with the following colleagues on topics closely related to this article: Ruth Charney, Mike Davis, Thomas Haettel, Damian Osajda and Piotr Prytycki. The author also thanks Grigori Avramidi, Mladen Bestvina, Srivatsav Kunnawalkam Elayavalli, Michael Falk, Alexandre Martin, Jon McCommand, Katherine Goldman, Nima Hoda, Camille Horbez, Nick Salter, Rachel Skipper, Antoine Song, and Frank Wagner for helpful related conversation.

The author is partially supported by a Sloan fellowship and NSF DMS-2305411. The author thanks the Centre de Recherches Mathématiques in Montreal for the thematic program Geometric Group Theory in 2023 where part of the work was done. 
The author thanks the Institut Henri Poincaré (UAR 839 CNRS-Sorbonne
Université) for the trimester program Groups acting on fractals, Hyperbolicity and
Self-similarity in 2022 (through
LabEx CARMIN, ANR-10-LABX-59-01) where part of the work was done.

\setcounter{tocdepth}{1}
\tableofcontents

\section{$K(\pi,1)$-conjecture via relative Artin complexes}
\label{sec:prelim2}

This is a dual-purpose section. It serves as a summary of preliminaries for the whole article, as well as a review of an approach to $K(\pi,1)$-conjecture in \cite{huang2023labeled} which provides motivation for the preliminary material.
The presentation of the approach to $K(\pi,1)$-conjecture here has a few new parts compared to \cite{huang2023labeled}. While some parts of the strategy are still under construction, the main frame is already transparent, and it is simple.

\subsection{Artin complexes and relative Artin complexes}
Let $W_S$ and $A_S$ be the Coxeter group and Artin group with generating set $S$. If $\Lambda$ is a Coxeter diagram, then we will also write $W_\Lambda$ and $A_\Lambda$ for the Coxeter group and Artin group with Coxeter diagram $\Lambda$.
There is a homomorphism $A_S\to W_S$, whose kernel is called the \emph{pure Artin group}, and is denoted by $PA_S$.

Any $S'\subset S$ generates a subgroup of $A_S$ which is also an Artin group, whose Coxeter diagram is the full subgraph $\Lambda_{S'}$ of $\Lambda$ spanned by $S'$ \cite{lek}. This subgroup is called a \emph{standard parabolic subgroup} of type $S'$. A \emph{parabolic subgroup} of $A_S$ of type $S'$ is a conjugate of a standard parabolic subgroup of type $S'$. A parabolic subgroup of $A_S$ is \emph{reducible} if its type $S'$ admits a disjoint non-trivial decomposition $S'_1\sqcup S'_2$ such that each element in $S'_1$ commutes with every element in $S'_2$. If such decomposition does not exist, then the parabolic subgroup is \emph{irreducible}.

Recalled that the \emph{Artin complex}, introduced in \cite{CharneyDavis} and further studied in \cite{godelle2012k,cumplido2020parabolic}, defined as follows. For each $s\in S$, let $A_{\hat s}$ be the standard parabolic subgroup generated by $S\setminus\{s\}$. Let $\Delta_S$ be the simplicial complex whose vertex set is corresponding to left cosets of $\{A_{\hat s}\}_{s\in S}$. Moreover, a collection of vertices span a simplex if the associated cosets have a nonempty common intersection. It follows from \cite[Proposition 4.5]{godelle2012k} that $\Delta_S$ is a flag complex. The Artin complex is an analog of \emph{Coxeter complex} in the setting of Artin group. The definition of a Coxeter complex $\bC_S$ of a Coxeter group $W_S$ is almost identical to Artin complex, except one replaces $A_{\hat s}$ by $W_{\hat s}$, which is the standard parabolic subgroup of $W_S$ generated by $S\setminus \{s\}$. Each vertex of $\bC_S$ or $\Delta_S$ corresponding a left coset of $W_{\hat s}$ or $A_{\hat s}$ has a \emph{type}, which is defined to be $\hat s=S\setminus \{s\}$. The \emph{type} of each face of $\bC_S$ or $\Delta_S$ is defined to be the subset of $S$ which is the intersection of the types of the vertices of the face. In particular, the type of each top-dimensional simplex is the empty set. The quotient complex of $\Delta_S$ under the action of the pure Artin group is isomorphic to $\bC_S$.

Note that if each $A_{\hat s}$ is spherical, but $A_S$ is not spherical, then $\bD_S$ is isomorphic to the barycentric subdivision of $\Delta_S$. When $A_S$ is spherical, then sometimes $\Delta_S$ is also called \emph{spherical Deligne complex} - indeed, let $\ca$ be the reflection arrangement associated with the spherical Artin group $A_S$. Then the spherical Deligne complex $\bSD_\ca$ defined in Section~\ref{subsec:deligne complex} is isomorphic to $\Delta_S$.

\begin{definition}(\cite{huang2023labeled})
	Let $A_S$ be an Artin group with Coxeter diagram $\Lambda$. Let $S'\subset S$. The \emph{$(S,S')$-relative Artin complex $\Delta_{S,S'}$} is defined to be the induced subcomplex of the Artin complex $\Delta_S$ of $A_S$ spanned by vertices of type $\hat s$ with $s\in S'$. In other words, vertices of $\Delta_{S,S'}$ correspond to left cosets of $\{A_{\hat s}\}_{s\in S'}$, and a collection of vertices span a simplex if the associated cosets have nonempty common intersection.
	
	Let $\Lambda'$ be the induced subgraph of $\Lambda$ spanned by $S'$. Then we will also refer an $(S,S')$-relative Artin complex as $(\Lambda,\Lambda')$-relative Artin complex, and denote it by $\Delta_{\Lambda,\Lambda'}$.
\end{definition}

The links of vertices in relative Artin complexes can be computed via the following result from \cite[Lemma 6.4]{huang2023labeled}.

\begin{lem}
	\label{lem:link}
	Let $\Delta$ be the $(\Lambda,\Lambda')$-relative Artin complex, and let $v\in \Delta$ be a vertex of type $\hat s$ with $s\in \Lambda'$. Let $\Lambda_s$ and $\Lambda'_s$ be the induced subgraph of $\Lambda$ and $\Lambda'$ respectively spanned all the nodes which are not $s$. 
	Then the following are true.
	\begin{enumerate}
		\item There is a type-preserving isomorphism between $\lk(v,\Delta)$ and the $(\Lambda_s,\Lambda'_s)$-relative Artin complex.
		\item Let $I_s$ be the union of connected components of $\Lambda_s$ that contain at least one component of $\Lambda'_s$. Then $\Lambda'_s\subset I_s$ and there is a type-preserving isomorphism between $\lk(v,\Delta)$ and the $(I_s,\Lambda'_s)$-relative Artin complex.
		\item Let $\{I_i\}_{i=1}^k$ be the connected components of $I_s$. Then $\lk(v,\Delta)=K_1*\cdots*K_k$ where $K_i$ is the induced subcomplex of $\lk(v,\Delta)$ spanned by vertices of type $\hat t$ with $t\in I_i$.
	\end{enumerate}
\end{lem}

\begin{cor}(\cite[Corollary 6.5]{huang2023labeled})
	\label{cor:adj}
	Let $\Delta$ be the $(\Lambda,\Lambda')$-relative Artin complex. For $i=1,2,3$, let $x_i\in \Delta$ be a vertex of type $\hat s_i$ with node $s_i\in \Lambda'$. Suppose $s_1$ and $s_3$ are in different components of $\Lambda\setminus\{s_2\}$.
	
	If $x_1$ is adjacent to $x_2$, and $x_2$ is adjacent to $x_3$, then $x_1$ is adjacent to $x_3$.
\end{cor}

Recall the following regarding homotopy types of relative Artin complexes.
\begin{lem}(\cite[Lemma 7.1]{huang2023labeled})
	\label{lem:dr}
	Let $\Lambda_1\subset \Lambda_2$ be two induced subgraphs of $\Lambda$ such that $\Lambda_2\setminus\Lambda_1$ contain exactly one node, denoted by $s$. If $\lk(x,\Delta_{\Lambda,\Lambda_2})$ is contractible for some (hence any) vertex $x\in \Delta_{\Lambda,\Lambda_2}$ of type $\hat s$, then $\Delta_{\Lambda,\Lambda_2}$ deformation retracts onto $\Delta_{\Lambda,\Lambda_1}$.
\end{lem}

\subsection{$K(\pi,1)$-conjecture via relative Artin complexes}
Our point of departure is the following.

\begin{thm}\cite[Theorem 3.1]{godelle2012k}
	\label{thm:kpi1}
	If $\Delta_S$ is contractible and each $\{A_{\hat s}\}_{s\in S}$ satisfies the $K(\pi,1)$-conjecture, then $A_S$ satisfies the $K(\pi,1)$-conjecture.
\end{thm}

We caution the reader that Artin complexes are not always contractible. Recall that an Artin group is \emph{spherical}, if the associated Coxeter group is finite. The associated Coxeter complex is a sphere, and the associated Artin complex is also called the \emph{spherical Deligne complex}, which is homotopy equivalent to a wedge of spheres, so, in particular, it is not contractible \cite{deligne}. However, 
it is known that the Coxeter complex is contractible whenever the associated Coxeter group is not finite. It is natural to conjecture that the $\Delta_S$ is contractible whenever $A_S$ is not spherical, which will imply the $K(\pi,1)$-conjecture for all Artin groups by Theorem~\ref{thm:kpi1} and \cite{deligne}.

By Lemma~\ref{lem:link}, the link $\lk(v,\Delta_S)$ of a vertex of type $\hat s$ is a smaller Artin complex $\Delta_{\hat s}$ with $\hat s=S\setminus \{s\}$. Thus, if $\Delta_{\hat s}$ is contractible, then the link of each vertex of type $\hat s$ is contractible. So $\Delta_S$ can deformation retract onto the relative Artin complex $\Delta_{S,S\setminus\{s\}}\subset \Delta_S$ by Lemma~\ref{lem:dr}. If $\Delta_{S,S\setminus\{s\}}$ has vertices with contractible links, then we can deformation retract onto an even smaller relative Artin complex. We would like to keep performing this kind of deformation retraction until one reaches a ``core'' where such deformation retraction is not possible, which can be described as follows.

An Artin group $A_S$ is \emph{almost spherical} if it is not spherical, but for each $s\in S$, $A_{\hat s}$ is spherical. A subset $T\subset S$ is \emph{almost spherical} if $A_T$ is almost spherical. Almost spherical Artin groups are classified: they are either affine, or one of the hyperbolic Lann\'er types \cite[Table 6.2]{davis2012geometry}. Note that if $T\subset S$ is almost spherical, then the link of each vertex $x\in \Delta_{S,T}$ is isomorphic to $\Delta_{S/\{s\},T/\{s\}}$ by Lemma~\ref{lem:link}. As $T\setminus\{s\}$ is spherical, $\Delta_{S/\{s\},T/\{s\}}$ contain embedded top-dimensional spheres, which is not contractible. So whenever $T\subset S$ is almost spherical, links of vertices in $\Delta_{S,T}$ are no longer contractible. The subcomplex $\Delta_{S,T}$ will serve as a core subcomplex of $\Delta_S$ as a terminal subcomplex in our strategy of performing deformation retraction. We conjecture that the cores are contractible. 

\begin{conj}[\cite{huang2023labeled}]
	\label{conj:contractible}
	Suppose $S'\subset S$, and $S'$ is almost spherical. Suppose the Coxeter diagrams for $S'$ and $S$ are connected and do not have $\infty$-labeled edges.
	Then $\Delta_{S,S'}$ is contractible.
\end{conj}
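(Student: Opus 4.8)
Since Conjecture~\ref{conj:contractible} implies the full $K(\pi,1)$-conjecture, a complete proof is not realistic; the plan, carried out in this paper for as large a class of pairs $(S,S')$ as the present methods allow, is an induction on $|S|$ (together with a companion statement for relative Artin complexes whose distinguished subset has spherical Artin group). The inductive mechanism is already visible in the excerpt: the link $\lk(v,\Delta_{S,S'})$ of a vertex $v$ of type $\hat s$ is again a relative Artin complex, namely $\Delta_{\hat s, S'\setminus\{s\}}$ for the Artin group $A_{\hat s}$; and since $S'$ is almost spherical, the subsets $S'\setminus\{s\}$ arising this way have \emph{spherical} Artin groups. Thus the inductive hypothesis together with the combinatorics of spherical Deligne complexes --- directly, and through their own links --- controls every vertex link of $\Delta_{S,S'}$, and the task is to pass from this local information to contractibility of the whole complex.

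The first milestone is simple connectivity. Given a loop in the $1$-skeleton, represent it by a combinatorial disk diagram $D\to\Delta_{S,S'}$ and choose $D$ \emph{minimal} --- of least area, and then of least perimeter. The core of the argument is a local analysis of such a minimal $D$: minimality must forbid every interior configuration in which some vertex link contains a ``short'' cycle bounding a very small disk. Deciding which short cycles in a spherical Deligne complex can bound small disks amounts to deciding whether certain equations over the corresponding spherical Artin group admit only degenerate solutions, and this is precisely where non-positive curvature enters: one constructs actions of these spherical Artin groups, and of suitable quotients, on injective metric spaces, $\mathrm{CAT}(0)$ spaces, spaces with convex geodesic bicombing, Helly graphs, or weakly modular graphs, which rigidify the solution sets. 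Feeding the resulting local restrictions into a combinatorial Gauss--Bonnet count on $D$ forces $D$ to carry non-positive combinatorial curvature away from its boundary, while the structure of the original cycle prevents enough positive curvature from accumulating on $\partial D$; the Euler-characteristic identity for a disk then forces $D$ to be degenerate, i.e.\ the loop to be null-homotopic.

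To upgrade simple connectivity to contractibility, I would promote the local analysis to a genuine combinatorial non-positive curvature property of $\Delta_{S,S'}$, or of a graph naturally attached to it: weak modularity, Hellyness, or a weak-systolic/$SD$-type link condition. One checks, by the same induction, that every vertex link enjoys the chosen property, verifies that the property propagates from links to the ambient complex, and invokes the standard fact that a simply connected complex of this type is contractible --- obtaining as a bonus an action of $A_S$ on a non-positively curved space.

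The principal obstacle is the middle step: controlling minimal fillings of short cycles in the higher-rank spherical Deligne complexes that occur as links --- those of types $D_n$, $B_n$, $E_6,E_7,E_8$, $H_3$, $H_4$, and their relative versions. Their combinatorics are intricate, and the associated equations over spherical Artin groups have no evident tame description; taming them is exactly what the new geometric models are built to do, and whether a single uniform model suffices or each family requires its own is precisely why the results here, though substantial, remain case-by-case rather than establishing Conjecture~\ref{conj:contractible} in full.
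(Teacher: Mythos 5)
The statement you are addressing is a \emph{conjecture}, attributed to \cite{huang2023labeled}, and the present paper does not prove it; you correctly recognize this and offer a program rather than an argument. Your description of the skeleton --- induction on $|S|$, the observation via Lemma~\ref{lem:link} that vertex links of $\Delta_{S,S'}$ are relative Artin complexes whose distinguished subsets have spherical Artin groups (because $S'$ is almost spherical), some form of non-positive curvature as the bridge from local conditions to contractibility, and the spherical base cases as the main bottleneck --- matches the paper's declared strategy in Section~\ref{subsec:proof}. Two specifics, however, do not align with the paper's actual route. First, simple connectivity is not the ``first milestone'': the paper takes it as known, citing Lemma~\ref{lem:sc} (\cite[Lemma 4]{cumplido2020parabolic}), and all the effort goes into the upgrade to contractibility; your reduced-disk Gauss--Bonnet analysis does appear, but only as a tool inside individual spherical sub-lemmas (e.g.\ Lemma~\ref{lem:triple} and Lemma~\ref{lem:2dimensional}), not as the engine for simple connectivity of $\Delta_{S,S'}$. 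Second, the paper's non-positive-curvature device is more specific than a generic appeal to weak modularity, Hellyness, or weak systolicity: it equips $\Delta_{S,S'}$ with a concrete piecewise $\ell^\infty$ orthoscheme metric (Definition~\ref{def:metric2}), conjectures that this yields an injective metric space or one with a convex geodesic bicombing (Conjecture~\ref{conj:metric2}), and reduces that to Haettel's poset-theoretic link criteria (Theorems~\ref{thm:contractible} and~\ref{thm:contractibleII}) --- that the naturally ordered vertex set of each link be bowtie-free and one-sided flag. This is where the filling of $4$- and $6$-cycles enters (Conjectures~\ref{conj:compareB} and~\ref{conj:compareD}), and the propagation mechanism of Section~\ref{sec:propagation} is what carries the spherical results up to general relative Artin complexes. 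You identify the right bottleneck, but the poset/orthoscheme framework is the specific way the paper proposes to turn the local picture into contractibility, and that specificity is where the remaining open cases are pinned down.
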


Conjecture~\ref{conj:contractible} implies the $K(\pi,1)$-conjecture for all Artin groups \cite[Corollary 7.4]{huang2023labeled}. In the remaining subsections, we describe a strategy to prove Conjecture~\ref{conj:contractible}.

\subsection{Contractibility of core}
\label{subsec:core}
Let $\Delta_{S,S'}$ be as in Conjecture~\ref{conj:contractible}. 
We will assign a metric to each simplex of $\Delta_{S,S'}$, which induces a metric on $\Delta_{S,S'}$. The expectation is that with appropriate assignment of shapes of simplices of $\Delta_{S,S'}$, $\Delta_{S,S'}$ will be ``non-positively curved'', hence contractible.

We now discuss shapes of simplices in $\Delta_{S,S'}$. As showing ``non-positive curvature'' is sensitive to the shape of simplices, and the choice of shape depends on the type of the almost spherical Artin group/Coxeter group $A_{S'}$, we first discuss a reduction procedure to decrease the number of shapes we need to work with.

We say the Artin group $A_S$ \emph{dominates} another Artin group $A_{S'}$ if there exists a bijection $f:S\to S'$ such that for any pair $s\neq t\in S$, we have $m_{s,t}\ge m_{f(s),f(t)}$. The following is a consequence of \cite{MR42129}. We refer to Figure~\ref{fig:classical} for types of some Artin groups mentioned below.

\begin{lem}
	\label{lem:dominate}
	Suppose $A_S$ is an almost spherical type Artin group. Then 
	\begin{enumerate}
		\item either $A_S$ dominates an irreducible Artin group of Euclidean type, or $A_S$ has Coxeter diagram $[3,5,3]$ or $[5,3,3,3]$;
		\item if in addition $A_S$ is not of type $\widetilde E_6,\widetilde E_7,\widetilde E_8,\widetilde F_4,[3,5,3],[5,3,3,5]$, then $A_S$ dominates an irreducible Artin group of whose type belong to $\{\widetilde G_2,\widetilde A_n,\widetilde B_n,\widetilde C_n,\widetilde D_n\}$.
	\end{enumerate}
	
\end{lem}

Here $[3,5,3]$ is the linear Coxeter diagram whose consecutive edges are labeled by $3,5$ and $3$. Similarly, we define $[5,3,3,3]$.

\begin{definition}[Proposed metric]
	\label{def:metric2}
	Given an Artin group $A_S$, and let $S'\subset S$ be an almost spherical subset. To metrize $\Delta_{S,S'}$, it suffices to metrize the fundamental domain with respect to the action $A_S\act \Delta_{S,S'}$, which is a simplex $K_{S'}$ whose vertices are of type $\hat s$ with $s\in S'$. Now we choose a shape for this fundamental domain as follows.
	
	We assume the type of $S'$ does not belong to $\{\widetilde F_4,\widetilde E_6,\widetilde E_7,\widetilde E_8,[3,5,3],[5,3,3,3]\}$.
	By Lemma~\ref{lem:dominate} (2), there is an Artin group $A_{S''}$ dominated by $A_{S'}$ under a bijection $f:S'\to S''$ such that the type of $A_{S''}$ belongs to $\{\widetilde G_2,\widetilde A_n,\widetilde B_n,\widetilde C_n,\widetilde D_n\}$. Then associated Coxeter group $W_{S''}$ acts properly and cocompactly by on the Euclidean space $\mathbb E^n$. Let $d_{\mathbb E^n}$ be a metric on $\mathbb E^n$ invariant under the action of $W_{S''}$, to be determined later.
	The reflection hyperplanes (i.e. fixed points of conjugates of generators of $W_{S''}$) cut $\mathbb E^n$ into a simplicial complex, which is isomorphic to the Coxeter complex $\mathfrak C_{S''}$. Let $K_{S''}$ be a fundamental domain of the action $W_{S''}\act \mathfrak C_{S''}$, which is a simplex with an inherit metric from $d_{\mathbb E^n}$. Now we identify $K_{S'}$ with $K_{S''}$ with the vertex of type $\hat s$ in $K_{S'}$ with $s\in S'$ is identified with the vertex of type $\hat f(s)$ in $K_{S''}$. This gives a metric on $K_{S'}$ determined by $d_{\mathbb E^n}$.
	
	It remains to choose the metric $d_{\mathbb E^n}$. If $S''$ is of type $\widetilde G_2$, then $d_{\mathbb E^n}$ is chosen to be the standard Euclidean metric. If $S''$ is of type $\widetilde B_n,\widetilde C_n,\widetilde D_n$, then $d_{\mathbb E^n}$ is chosen to be the $\ell^\infty$-metric on $\mathbb E^n$ (which is invariant under the Coxeter group action). If $S''$ is of type $\widetilde A_n$, then the Coxeter complex $\mathfrak C_{S''}$ can be identified with the subset $H=\{x\in \mathbb E^{n+1}\mid x_1+x_2+\cdots+x_{n+1}=0\}$ cut out by the hyperplanes $x_i-x_j\in \mathbb Z$ for $1\le i\neq j\le n+1$. Then the metric $d_{\mathbb E_n}$ on $H$ is chosen to be the induced metric from $(\mathbb E^{n+1},d_{\ell^\infty})$. Again $d_{\mathbb E_n}$ is invariant under the action of the type $\widetilde A_n$ Coxeter group.
\end{definition}

In the special case of $S=S'\in \{\widetilde A_n,\widetilde C_n\}$, the metric in Definition~\ref{def:metric2} was defined by Haettel \cite{haettel2021lattices}, which is a source of inspiration for our metric.

Here is a more concrete description of the $\ell^\infty$-metric on $K_{S''}$ when $S''$ is of type $\{\widetilde B_n,\widetilde C_n,\widetilde D_n\}$. In the case of type $\widetilde C_n$, the Coxeter complex $\mathfrak C_{S''}$ is isomorphic to $\mathbb E^n$ cut out by the hyperplanes $x_i\pm x_j\in \mathbb Z$ and $x_i\in \frac{1}{2}\mathbb Z$. Then the fundamental domain $K_{S''}$ can be chosen to be the subset of $\mathbb E^n$ defined by $0\le x_1\le x_2\le\cdots\le x_n\le \frac{1}{2}$, equipped with the $\ell^\infty$ metric. If $S''$ is of type $\widetilde B_n$, then the subdivision of the associated Coxeter complex $\mathfrak C_{S''}$ as in Definition~\ref{def:subdivision} is isomorphic to the Coxeter complex of type $\widetilde C_n$, thus a type $\widetilde B_n$ fundamental domain is the union of two copies of type $\widetilde C_n$ fundamental domain along an appropriate codimension 1 face. If $S''$ is of type $\widetilde D_n$, then a subdivision of the associated Coxeter complex $\mathfrak C_{S''}$ as in Definition~\ref{def:subdivisionD} is isomorphic to the Coxeter complex of type $\widetilde C_n$. Hence a $\widetilde D_n$-fundamental domain is the union of four copies of  $\widetilde C_n$ fundamental domain.

Recall that a \emph{geodesic bicombing} in a metric space $X$, is the assignment of a geodesic segment from $x$ to $y$, for each ordered pair of points $(x,y)$ in $X$. We do not require the geodesic segment from $x$ to $y$ is the same as the geodesic segment from $y$ to $x$. Note that in Definition~\ref{def:metric2}, each top-dimensional simplex has geodesic bicombing coming from the linear structure on each simplex. We expect these geodesic bicombing to fit together to form a geodesic bicombings on the whole space such that the bicombing varies continuously with respect to their endpoints, hence one can show $\Delta_{S,S'}$ is contractible by using the geodesic contraction.

\begin{conj}
	\label{conj:metric2}
	Let $A_S$ be an Artin group and suppose $S$ contains a subset $S'$ which is almost spherical, but its type is not contained in $\{\widetilde F_4,\widetilde E_6,\widetilde E_7,\widetilde E_8,[3,5,3],[5,3,3,3]\}$. Then $\Delta_{S,S'}$ equipped with the metric in Definition~\ref{def:metric2} is a metric space with convex geodesic bicombing in the sense of \cite{descombes2015convex}, hence contractible.
\end{conj}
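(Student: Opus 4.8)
The plan is to verify the convex geodesic bicombing conditionby reducing to the combinatorial link data that the rest of the article is designed to control, using the cubical/$\ell^\infty$ structure implicit in Definition~\ref{def:metric2}. The first step is to normalize the geometry: for each of the types $\widetilde A_n,\widetilde B_n,\widetilde C_n,\widetilde D_n$ (the $\widetilde G_2$ case being $2$-dimensional and handled by hand), the fundamental domain $K_{S''}$ with its $\ell^\infty$-metric is, after the subdivisions of Definition~\ref{def:subdivision} and Definition~\ref{def:subdivisionD}, an orthoscheme simplex in a cube equipped with the sup-norm; hence $\Delta_{S,S'}$ (after the corresponding subdivision $\Delta'$) is a polysimplicial complex whose top cells are $\ell^\infty$-orthoschemes, exactly the setup of Theorem~\ref{thm:contractibleII} and Theorem~\ref{thm:contractible}. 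I would use the fact that a simply connected space built from $\ell^\infty$-orthoschemes admits a consistent convex geodesic bicombing (indeed is injective, resp. Helly) precisely when the vertex links satisfy the bowtie-free / (weakly) flag poset conditions; this is the content of Haettel's theorems as quoted, and of the ``thickening is Helly'' statement, together with the standard fact that the flag completion of a Helly graph — or an injective metric space — carries a consistent convex geodesic bicombing in the sense of \cite{descombes2015convex}. So the whole conjecture is converted into: for every almost spherical $S'$ of the admissible types, the subdivided relative Artin complex $\Delta'$ satisfies the hypotheses of Theorem~\ref{thm:contractibleII} (for the $\widetilde B_n,\widetilde C_n,\widetilde D_n$ cases, via Definition~\ref{def:subdivision}/\ref{def:subdivisionD}) or Theorem~\ref{thm:contractible} (for $\widetilde A_n$, with the cyclic order).

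The second step is the link reduction. By Lemma~\ref{lem:link}, the link of a vertex in a relative Artin complex is again a relative Artin complex over a \emph{smaller} ambient Dynkin diagram, and after subdivision the links of the new midpoint-type vertices are joins (Lemma~\ref{lem:poset}, Proposition~\ref{prop:ori link0}, Proposition~\ref{prop:ori link2}). Propositions~\ref{prop:ori link0}, \ref{prop:ori link2}, \ref{prop:ori link} and \ref{cor:propagation}, \ref{prop:different subdivision4}, \ref{prop:different subdivision}, \ref{cor:propagation2} are exactly the propagation machinery that lets one bootstrap bowtie-freeness and (weak) flagness from the spherical ``cores'' — type $A_n$ (Theorem~\ref{thm:weaklyflagA}), type $B_n$ (Theorem~\ref{thm:triple}), type $F_4$ (Proposition~\ref{prop:F4}), type $H_3$ (Theorem~\ref{thm:tripleH}), type $D_n$ (Theorem~\ref{thm:weakflagD}) — outward to arbitrary ambient $\Lambda$. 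Thus the proof of the conjecture in the cases where the relevant spherical cores have already been treated would run by induction on $|S|$: base cases are the spherical and small ($|S|\le 4$, Theorem~\ref{thm:4generator}) situations where $\Delta_{S,S'}$ is a round sphere or is handled directly; the inductive step applies Lemma~\ref{lem:link} to pass to vertex links, invokes the induction hypothesis there, and then feeds the resulting link conditions into the appropriate propagation proposition to conclude that $\Delta'$ meets the Haettel hypotheses, hence carries a consistent convex geodesic bicombing, which then descends to the non-subdivided $\Delta_{S,S'}$ since the subdivision is a $\sigma$-convex refinement.

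The main obstacle is that this program is currently \emph{incomplete} precisely where the article's Part~2 is incomplete: establishing bowtie-freeness/flagness of the spherical Deligne complex $\Delta_S$ for \emph{all} $S$ of type in $\{\widetilde G_2,\widetilde A_n,\widetilde B_n,\widetilde C_n,\widetilde D_n\}$ at the required length (cycles up to $6$, or up to $12$ in the $2$-dimensional case) is not known in general — what is available (Propositions/Theorems cited above) handles only \emph{certain} types of $6$-cycles, or certain small rank, or one choice of subdivision. Concretely, the $\widetilde A_n$ case for $n\ge 4$ needs full control of $6$-cycles in the type $A_{n+1}$ spherical Deligne complex, which is exactly the open problem flagged in Remark~\ref{rmk:summary}; the $\widetilde D_n$ case needs the full strength of \cite{huang2024Dn} plus propagation; and the propagation statements themselves (e.g.\ Proposition~\ref{prop:different subdivision4}) are only proven under hypotheses that are not yet verified for all cores. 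So the realistic shape of a theorem here is conditional: \emph{assuming} the spherical-core bowtie/flag inputs hold for the type in question, the conjecture follows by the reduction above; proving those inputs unconditionally is the genuine hard content and is the subject of ongoing work. A secondary, more technical obstacle is the $\widetilde A_n$ cyclic-order subtlety — Theorem~\ref{thm:contractible} requires bowtie-freeness of \emph{every} vertex link with respect to the cyclically induced order, and one must check the propagation arguments respect the cyclic (rather than linear) order, which is not automatic and would need a cyclic analogue of Lemma~\ref{lem:poset structure} and of the $\widetilde C_n$ computation in Corollary~\ref{cor:widetildeC_n}.
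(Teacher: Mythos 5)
The statement you are trying to prove is Conjecture~\ref{conj:metric2}, which the paper itself leaves open: there is no proof of it in the article. What the paper does provide is exactly the reduction you describe — Lemma~\ref{lem:link 6cycle} and the surrounding discussion in Section~\ref{sec:metric} convert the conjecture (in the admissible case) into bowtie-free/flagness conditions on vertex links, to be fed into Theorem~\ref{thm:contractibleII}, Theorem~\ref{thm:contractible}, Proposition~\ref{prop:ori link0} or Proposition~\ref{prop:ori link2} according to whether $S'$ dominates $\widetilde C_n$, $\widetilde A_n$, $\widetilde B_n$ or $\widetilde D_n$. Your proposal reproduces this strategy faithfully, and you are right, and commendably honest, that the genuine content — full control of $6$-cycles in the relevant spherical Deligne complexes (Conjectures~\ref{conj:compareB} and~\ref{conj:compareD}) and the propagation of those properties (Conjecture~\ref{conj:general}) — is open, so what you obtain is at best a conditional statement, not a proof of the conjecture.

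There is one concrete gap in your reduction beyond the acknowledged missing inputs. Conjecture~\ref{conj:metric2} places no admissibility hypothesis on $S'\subset S$ (Definition~\ref{def:admissible}), whereas every one of the link criteria you invoke — Lemma~\ref{lem:poset structure}, Lemma~\ref{lem:poset}, and hence Theorems~\ref{thm:contractibleII} and~\ref{thm:contractible} and Propositions~\ref{prop:ori link0} and~\ref{prop:ori link2} — requires admissibility in order for the relation $<$ on the vertex set to be a partial order in the first place. The paper flags this explicitly after Lemma~\ref{lem:link 6cycle}: when $\Lambda'$ is not admissible in $\Lambda$, the poset structure fails and the Haettel-type theorems do not apply even formally, so ``suitable replacements'' are needed. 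Your induction on $|S|$ via Lemma~\ref{lem:link} does not resolve this, since admissibility is not inherited or created by passing to links; any complete argument must either restrict the conjecture to admissible $S'$ or supply a substitute for the order-theoretic framework in the non-admissible case.
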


One naturally asks, in Definition~\ref{def:metric2}, why do not choose $d_{\mathbb E^n}$ to be the standard Euclidean metric in all cases. This is actually one of the most important points in our strategy: if $d_{\mathbb E^n}$ is the standard flat metric, then one would conjecture that $\Delta_{S,S'}$ with the induced metric is CAT$(0)$, hence contractible. Checking  $\Delta_{S,S'}$ is CAT$(0)$ reduces to checking the link $\lk(x,\Delta_{S,S'})$ of each vertex $x\in \Delta_{S,S'}$, endowed with the natural piecewise spherical metric from the ambient space, is CAT$(1)$. Checking $\lk(x,\Delta_{S,S'})$ further reduces to showing for any loop $\omega$ of length $<2\pi$ in $\lk(x,\Delta_{S,S'})$, $\omega$ admits a length non-increasing homotopy to the trivial loop \cite{bowditch1995notes}. However, in high dimensions, there are great difficulties to deal with the large spaces of loops of length $<2\pi$ on the highly singular space $\lk(x,\Delta_{S,S'})$. Actually, two previous approaches to $K(\pi,1)$-conjecture encounter exactly the same difficulty  \cite{CharneyDavis,MR3127810}.

To get around this difficulty, we choose different metric $d_{\mathbb E_n}$ as in Definition~\ref{def:metric2}. With such choice of metric, to verify Conjecture~\ref{conj:metric2}, we still need to check a condition on $\lk(x,\Delta_{S,S'})$ of each vertex $x\in \Delta_{S,S'}$. However, we only need to study loops in the 1-skeleton of $\lk(x,\Delta_{S,S'})$ of length $<2\pi$, which is a much smaller collection of loops. More precisely, we study \emph{edge loops}, which are loops made of a sequence of edges. An \emph{$n$-cycle} in $\lk(x,\Delta_{S,S'})$, is an edge loop made of $n$ edges. 

Still, the collection of all $n$-cycles in $\lk(x,\Delta_{S,S'})$ with length $<2\pi$ is larger than one might have expected. Due to some unusual features of high dimensional round spheres, there are edge loops of length $<2\pi$ in $\lk(x,\Delta_{S,S'})$ such that the number of edges in these loops goes to $\infty$ as the dimension of  $\lk(x,\Delta_{S,S'})$ goes to infinity. However, the good news is that in high dimensions, we only need to study certain $n$-cycles with $n\le 6$ in $\lk(x,\Delta_{S,S'})$. In the next two sections (Section~\ref{subsec:contractible} and Section~\ref{subsec:subdiv}), we explain what are exactly the link conditions one needs to check to prove Conjecture~\ref{conj:metric2}.

For the remaining types of $S'$, we have the following conjecture.
\begin{conj}
	\label{conj:exceptional}
	Let $A_S$ be an Artin group and suppose $S$ contains a subset $S'$ which is almost spherical, and its type is contained in $\{\widetilde F_4,\widetilde E_6,\widetilde E_7,\widetilde E_8,[3,5,3],[5,3,3,3]\}$. Then the associated Coxeter group $W_S$ acts properly and cocompactly by isometries on $X$, with $X=\mathbb E^n$ or $\mathbb H^n$. Let $E_{S'}$ be a fundamental domain of $W_S\curvearrowright X$ equipped with the metric inherited from the standard metric on $X$.
	We metrize the fundamental domain $K_{S'}$ of $\Delta_{S,S'}$ using the metric on $E_{S'}$ as in Definition~\ref{def:metric2}, which gives a piecewise Euclidean or piecewise hyperbolic metric on $\Delta_{S,S'}$.
	
	Then $\Delta_{S,S'}$ equipped with such metric is CAT$(0)$, hence contractible.
\end{conj}

By a combination of \cite[Theorem II.4.1 and Theorem II.5.5]{BridsonHaefliger1999} and Lemma~\ref{lem:sc} in Section~\ref{subsec:subdiv}, verifying Conjecture~\ref{conj:exceptional} reduces to verifying for each vertex $x\in \Delta_{S,S'}$, $\lk(x,\Delta_{S,S'})$ equipped with the induced piecewise spherical metric is CAT$(1)$. Note that in the context of Conjecture~\ref{conj:exceptional}, the dimension of $\lk(x,\Delta_{S,S'})$ is $\le 7$, hence it avoids difficulties appearing in very high dimensions. Though similar to the previous discussion, we firmly believe there is a much easier condition to check on $\lk(x,\Delta_{S,S'})$ compared to CAT$(1)$ which would lead to the contractibility of $\Delta_{S,S'}$.

\subsection{Two contractibility criterion for simplicial complexes}
\label{subsec:contractible}
In this subsection we recall two contractibility criteria for simplicial complexes by Haettel, and explain the relevance to Conjecture~\ref{conj:contractible}.
We start by recalling some terminology about posets.
Let $P$ be a poset, i.e. a partially ordered set.
Let $S\subset P$. An \emph{upper bound} (resp. lower bound) for $S$ is an element $x\in P$ such that $s\le x$ (resp. $s\ge x$) for any $s\in S$. The \emph{join} of $S$ is an upper bound $x$ of $S$ such that $x\le y$ for any other upper bound $y$ of $S$. The \emph{meet} of $S$ is a lower bound of $x$ of $S$ such that $x\ge y$ for any other lower bound $y$ of $S$. We will write $x\vee y$ for the join of two elements $x$ and $y$, and $x\wedge y$ for the meet of two elements (if the join or the meet exists). A poset is \emph{bounded} if it has a maximal element and a minimal element. We say $P$ is \emph{lattice} if $P$ is a poset and any two elements in $P$ have a join and have a meet. 

A chain in $P$ is any totally ordered
subset, subsets of chains are subchains and a maximal chain is one that is not a proper subchain of any other chain. A poset has rank $n$ if it is bounded, every chain is
a subchain of a maximal chain and all maximal chains have length $n$. For $a,b\in P$ with $a\le b$, the \emph{interval}  between $a$ and $b$, denoted by $[a,b]$, is the collection of all elements $x$ of $P$ such that $a\le x$ and $x\le b$. The poset $P$ is \emph{graded} if every interval in $P$ has a rank.

Recall that a \emph{bowtie} in a poset $P$ is a subset $\{x_1,x_2,y_1,y_2\}$ such that $y_i$ covers $x_j$ for $1\le i,j\le 2$. The name comes from that if we draw $y_1,y_2$ above $x_1,x_2$ in the Hasse diagram, then we obtain a bowtie-shaped configuration.

\begin{definition}
	Let $P$ be a poset. We say $P$ is \emph{bowtie free} if any subset $\{x_1,x_2,y_1,y_2\}\subset P$ made of mutually distinct elements with $x_i<y_j$ for $i,j\in\{1,2\}$, there exists $z\in P$ such that $x_i\le z\le y_j$ for any $i,j\in\{1,2\}$.
\end{definition}

The interest in the bowtie-free condition lies in the following observation.

\begin{lem}\cite[Proposition 1.5]{brady2010braids}
	\label{lem:posets}
	If $P$ is a bowtie free graded poset, then any pair of elements in $P$ with a lower bound have a join, and any pair of elements in $P$ with an upper bound have a meet.
	
	Let $P$ be a bounded graded poset. Then $P$ is lattice if it is bowtie free. 
\end{lem}

\begin{definition}
	\label{def:flag}
	A poset $P$ is \emph{upward flag} if any three pairwise upper bounded elements have an upper bound. A poset is \emph{downward flag} if any three pairwise lower bounded elements have a lower bound. A poset is \emph{flag} if it is both upward flag and downward flag.
\end{definition}

We say $x\in P$ is a \emph{maximal element} if there does not exist $y\in P$ such that $x<y$.

\begin{definition}
	\label{def:weak flag}
	A poset $P$ is \emph{weakly upward flag} if for each triple $\{x,y,z\}$ satisfying the following two properties have a common upper bound:
	\begin{enumerate}
		\item $x,y,z$ are not \emph{maximal} element in $P$;
		\item each pair in $\{x,y,z\}$ have a upper bound in $P$ which is not maximal.
	\end{enumerate}
	Similarly, we can define weakly downward flag and weakly flag for posets.
\end{definition}

Let $S=\{s_1,s_2,\ldots,s_n\}$. A simplicial complex $X$ is of \emph{type $S$} if all the maximal simplices of $X$ has dimension $n-1$ and there is a type function $\type$ from the vertex set of $X$ to $\{\hat s_1,\hat s_2,\ldots,\hat s_n\}$ such that $\type(x)\neq\type(y)$ whenever $x$ and $y$ are adjacent vertices of $X$. This labeling induces a bijection between $\{\hat s_1,\hat s_2,\ldots,\hat s_n\}$ and the vertex set of each maximal simplex of $X$. 

Note that if $A_{S'}$ is an Artin group, and $S$ is a subset of the set of generators $S'$, then the relative Artin complex $\Delta_{S',S}$ is a simplicial complex of type $S$. We will be interested in more general simplicial complexes of type $S$, for some $S$ not necessarily made of generators of some Artin groups.

\begin{definition}
	\label{def:order}
	Let $X$ be a simplicial complex of type $S$.
	We put a total order on $S$, and define a relation $<$ on the vertex set $V$ of $X$ induced by this total order as follows: $x<y$ if $x$ and $y$ are adjacent, and $\type(x)<\type(y)$.
\end{definition}

As all maximal simplices of $X$ have the same dimension, we know the following:
\begin{itemize}
	\item for each $x\in V$ of type $\hat s$ such that $s$ is not the smallest element in $S$, there exists $x'\in V$ with $x'<x$;
	\item for each $x\in V$ of type $\hat s$ such that $s$ is not the biggest element in $S$, there exists $x'\in V$ with $x'>x$.
\end{itemize}

Now we discuss a situation when the relation $<$ on $V$ is a poset.

\begin{definition}
	\label{def:admissible}
	An induced subgraph $\Lambda'$ of $\Lambda$ is \emph{admissible} if for any node $x\in \Lambda'$, if $x_1,x_2\in \Lambda'$ are in different connected components of $\Lambda'\setminus\{x\}$, then they are in different components of $\Lambda\setminus\{x\}$.
\end{definition}

\begin{lem}(\cite[Lemma 6.6]{huang2023labeled})
	\label{lem:poset structure}
	Suppose $\Lambda'$ is an admissible linear subgraph of the Coxeter diagram $\Lambda$ of $A_S$ and suppose the consecutive nodes of $\Lambda'$ are $S'=\{s_i\}_{i=1}^n$.  Let $\Delta$ be the $(\Lambda,\Lambda')$-relative Artin complex. Let $V$ be the vertex set of $\Delta$. We order $S'$ such that it is compatible with one of the two linear orders on $\Lambda'$. Then the induced relation $<$ on $V$ is a graded poset.
\end{lem}

Now we are ready to state Haettel's result. Let $S=\{s_1,\ldots,s_n\}$ with a total order $s_1<s_2<\cdots<s_n$. The vertex set of a simplicial complex of type $S$ is endowed with the relation induced from such total order on $S$. The following is a consequence of \cite[Section 4.3, Theorem B]{haettel2022link} and \cite[Theorem 1.15]{haettel2021lattices}.

\begin{thm}	\cite{haettel2021lattices,haettel2022link}
	\label{thm:contractibleII}
	Let $X$ be a simplicial complex of type $S$.  Assume that
	\begin{enumerate}
		\item $X$ is simply connected;
		\item the relation $<$ on the vertex set $V$ of $X$ is a partial order;
		\item for each $x\in V$, let $V_{\ge x}$ be the collection of vertices that is $\ge x$, then $V_{\ge x}$ is bowtie free and upward flag;
		\item for each $x\in V$, let $V_{\le x}$ be the collection of vertices that is $\le x$, then $V_{\le x}$ is bowtie free and downward flag.
	\end{enumerate}
	Then $X$ is contractible, and $X$ admits a piecewise $\ell^\infty$ injective metric.
	
	Moreover, let $Y$ be a graph whose vertex set is the same as the vertex set of $X$, and two vertices $y_1,y_2\in Y$ are adjacent if there exist vertices $z_1\in X$ of type $\hat s_1$ and $z_2\in X$ of type $\hat s_n$ such that $z_1\le y_i\le z_2$ for $i=1,2$. Then $Y$ is a Helly graph.
\end{thm}

The graph $Y$ in the above theorem is called the \emph{thickening} of $X$. Given a simplicial graph $Z$ endowed with the path metric such that each edge has length $1$, a \emph{combinatorial ball} in $Z$ is the collection of vertices in the metric ball $B(x,r)$ of radius $r$ centered at a vertex $x$. The graph $Z$ is \emph{Helly} if whenever a collection of combinatorial balls in $Z$ have non-empty pairwise intersection, then the common intersection of these balls is non-empty. Helly graphs are closely related to the notion of \emph{injective metric spaces}, which are continuous versions of Helly graphs. A geodesic metric space is \emph{injective} if whenever a collection of closed metric balls in the space have non-empty pairwise intersection, then the common intersection of these balls is non-empty.

Now we discuss a variation of Theorem~\ref{thm:contractibleII}.
Put a cyclic order $s_1<s_2<\cdots s_n<s_1$ on $S$. 
For each vertex $x$ in $X$ of type $\hat s_i$, we consider a relation $<_x$ in $\lk(x,X)$ as follows. We identify vertices in $\lk(x,X)$ as vertices in $X$ which are adjacent to $x$. For each $s_i\in S$, this cyclic order induces an order on $S\setminus\{s_i\}$ by declaring $s_{i+1}<\cdots<s_n<s_1<\cdots s_{i-1}$. 
For $y,z\in \lk(x,X)$, define $y<_x z$ if $y$ is adjacent to $z$ and $\type(y)<\type(z)$ in $S\setminus \{s_i\}$. The following is a consequence of \cite[Section 4.2, Theorem A]{haettel2022link}.

\begin{thm}\cite{haettel2021lattices,haettel2022link}
	\label{thm:contractible}
	Let $X$ be a simplicial complex of type $S$, with $S$ being a cyclically ordered set as above. Suppose the following are true.
	\begin{enumerate}
		\item $X$ is simply-connected.
		\item For each vertex $x\in X$, the relation $<_x$ on the vertex set of $\lk(x,X)$ is a partial order.
		\item For each vertex $x\in X$, the set of vertex in $\lk(x,X)$ with this partial order is bowtie free.
	\end{enumerate}
	Then $X$ is contractible. 
	Moreover, if a group $G$ acts on $X$ by type-preserving automorphisms, then $X$ can be equipped with a metric with an $G$-equivariant consistent convex geodesic bicombing $\sigma$ such that each simplex of $X$ is $\sigma$-convex.
\end{thm}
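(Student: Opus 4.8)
The plan is to prove this by recognizing it as a local-to-global (Cartan--Hadamard-type) statement for a piecewise Euclidean metric on $X$ that is non-positively curved in the sense of carrying a consistent convex geodesic bicombing, following Haettel's orthoscheme-complex technology together with the bicombing machinery of Descombes--Lang \cite{descombes2015convex} and Miesch. First I would metrize $X$: identify each maximal simplex $\Theta$ with a fixed $(n-1)$-dimensional orthoscheme equipped with the $\ell^\infty$-metric, where the identification at $\Theta$ is dictated by the linear order that the cyclic order on $S$ induces on $S\setminus\{s_i\}$ for any vertex $v\in\Theta$ of type $\hat s_i$ (cone off the corresponding identification of $\lk(v,\Theta)$). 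Because the relations $<_x$ on the various vertex links are, by construction, mutually compatible re-readings of this one cyclic order, and are genuine partial orders by hypothesis (2), and because every maximal simplex has dimension $n-1$, these simplexwise identifications agree on shared faces; they glue to a geodesic metric $d$ on $X$ for which each simplex is convex.

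Next I would analyze the local structure at a vertex $x$ of type $\hat s_i$. Here $L:=\lk(x,X)$ is a simplicial complex of type $S\setminus\{s_i\}$, now \emph{linearly} ordered by $s_{i+1}<\cdots<s_n<s_1<\cdots<s_{i-1}$, and by (2)--(3) its vertex set is a bowtie-free poset; it is graded because every maximal simplex of $L$ has dimension $n-2$, so any chain refines inside a top-dimensional simplex to one of length equal to the difference of its extreme types. By Lemma~\ref{lem:posets} such a poset has the join/meet-with-bounds property of a lattice, and by Lemma~\ref{lem:connect} the bowtie-free condition is equivalent to the labeled $4$-wheel condition on $L$; iterating down the link filtration (a link inside $L$ is the link of a higher-dimensional simplex of $X$, hence again satisfies conditions (2)--(3) for the appropriate induced orders, cf.\ Lemma~\ref{lem:inherit and girth}), one obtains by induction on $n$ that the induced piecewise-spherical metric on $L$ is non-positively curved in precisely the way needed: small metric balls $B(x,\varepsilon)\subset(X,d)$ carry consistent convex geodesic bicombings agreeing on overlaps. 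This is Haettel's characterization of which orthoscheme complexes carry bicombings, transported from the linearly ordered setting of Theorem~\ref{thm:contractibleII} to the cyclically ordered one.

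Finally I would globalize and conclude. Since $X$ has finitely many isometry types of cells, $(X,d)$ is a complete length space; it is simply connected by (1); and by the previous step it locally admits consistent convex geodesic bicombings. The local-to-global theorem for convex geodesic bicombings then furnishes a global consistent convex geodesic bicombing $\sigma$ on $X$; convexity of simplices from the first step persists, so each simplex is $\sigma$-convex, and the straight-line homotopy to a point along $\sigma$ shows $X$ is contractible. The entire construction is natural in the metric data, so a group $G$ acting by type-preserving automorphisms, which preserves the cyclic type function and hence $d$ and the local bicombings, yields a $G$-equivariant $\sigma$. The main obstacle is the combination of the gluing claim of step one with the local analysis of step two: unlike in Theorem~\ref{thm:contractibleII} there is no global partial order on the vertex set of $X$, only the family $\{<_x\}$ of local orders, and one must check these are coherent enough both for the orthoscheme metrics on maximal simplices to glue and for the ``small balls are bicombable'' statement to survive the recursion through vertex links --- re-establishing Haettel's orthoscheme-complex criterion in this cyclic form is the technical heart, after which the bicombing local-to-global machinery applies essentially as a black box.
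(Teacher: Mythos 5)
The paper does not supply its own proof of this statement: it quotes it from Haettel's work, identifying it just above the theorem as ``a consequence of \cite[Section~4.2, Theorem~A]{haettel2022link}.'' So there is no in-paper argument to compare against; your proposal is really an attempt to reconstruct Haettel's proof. At the level of strategy it captures that proof correctly: metrize $X$ by $\ell^\infty$-type orthoschemes, show the combinatorial link hypotheses imply local non-positive curvature in the bicombing sense, and invoke the Descombes--Lang local-to-global theorem for consistent convex geodesic bicombings.

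Two of your steps, however, are asserted where they are in fact the substantive content. The gluing claim in step one is false as you state it. If you identify each maximal simplex with the standard orthoscheme $\{0\le y_1\le\cdots\le y_{n-1}\le 1\}\subset\mathbb E^{n-1}$ carrying the $\ell^\infty$-metric, then the cyclic shift of the vertex ordering is \emph{not} an $\ell^\infty$-isometry of that orthoscheme: already for $n=3$ the unique affine map realizing the cyclic permutation of the three vertices has linear part $\begin{pmatrix}-1&1\\-1&0\end{pmatrix}$, of $\ell^\infty$-operator norm $2$, and more structurally the hyperoctahedral group $\mathrm{Isom}(\ell^\infty(\mathbb R^2))\cong D_4$ has no element of order $3$. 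Hence the metric you assign to a given simplex genuinely depends on which vertex you cut the cycle at, and the simplexwise identifications do not agree on shared faces. The cyclic symmetry \emph{does} hold for the orthoscheme realized as the fundamental alcove of the affine $\widetilde A_{n-1}$ Coxeter complex sitting inside the hyperplane $\{x_1+\cdots+x_n=0\}\subset\mathbb E^n$ with the metric restricted from $\ell^\infty(\mathbb E^n)$ (compare Definition~\ref{def:metric2}); there the cyclic shift of alcove vertices is a coordinate cyclic permutation composed with a translation, hence an $\ell^\infty$-isometry. But the induced norm on the alcove's affine span has a polytopal (``hexagonal'') unit ball rather than a cubical one, so it is a genuinely different metric from the standard $\ell^\infty$-orthoscheme. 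Getting this choice right is precisely what distinguishes the cyclic case of Theorem~\ref{thm:contractible} from the linear case of Theorem~\ref{thm:contractibleII}, and it cannot be waved through. The second gap, which you flag yourself, is the passage from ``each vertex link is a bowtie-free graded poset'' to ``small metric balls admit a consistent convex geodesic bicombing.'' That implication is the heart of Haettel's theorem, and your argument does not re-derive it; the Cartan--Hadamard step you quote is indeed usable as a black box, but only after the local statement is established. As written the proposal is a correct outline with the two hardest ingredients left out.
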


\subsection{Subdivision of some relative Artin complexes}
\label{subsec:subdiv}
In this subsection, we explain the connection between Conjecture~\ref{conj:contractible} and the two contractibility criteria in Section~\ref{subsec:contractible}. This is done by adjusting the procedure of handling Euclidean buildings and certain Artin complexes in Haettel's work \cite{haettel2021lattices} to certain relative Artin complexes.

\begin{definition}
	\label{def:bowtie free}
	Suppose $\Lambda'$ is an admissible linear subgraph of the Coxeter diagram $\Lambda$ of $A_S$ with consecutive nodes of $\Lambda'$ being $\{s_i\}_{i=1}^n$. We say the $(\Lambda,\Lambda')$-relative Artin complex $\Delta_{\Lambda,\Lambda'}$ is \emph{bowtie free} if the poset in Lemma~\ref{lem:poset structure} is bowtie free.  We  $\Delta_{\Lambda,\Lambda'}$ is \emph{flag} or \emph{weakly flag}, if the poset in Lemma~\ref{lem:poset structure} is flag or weakly flag respectively. Note that these definitions do not depend on the choice of the linear order on $\Lambda'$.
\end{definition}

\begin{definition}
	\label{def:subdivision}
	Suppose $A_\Lambda$ is an Artin group whose Coxeter diagram $\Lambda$ contain induced subgraph $\Lambda'$ such that $\Lambda'$ is a copy of Coxeter diagram of type $\widetilde B_n$, though possibly with different edge labeling. Let $\{b_i\}_{i=1}^n$ be nodes of $\Lambda'$ as in Figure~\ref{fig:BD} left.
	
	Let $\Delta=\Delta_{\Lambda,\Lambda'}$ be the associated relative Artin complex. We subdivide each edge of $\Delta$ connecting a vertex of type $\hat b_1$ and a vertex of type $\hat b_2$. We say the middle point of such edge is of type $m$. Cut each top dimensional simplex in $\Delta$ into two simplices along the codimensional 1 simplex spanned by vertices of type $m$ and $\{\hat b_i\}_{i=3}^{n+1}$. This gives a new simplicial complex, which we denoted by $\Delta'$. Define a map $t$ from the vertex set $V\Delta'$ of $\Delta'$ to $\{1,2,\ldots,n,n+1\}$ by sending vertices of type $\hat b_1,\hat b_2$ to $1$, vertices of type $m$ to $2$, vertices of type $\hat b_i$ to $i$ for $i\ge 3$. We will then view $\Delta'$ as a simplicial complex of type $S=\{1,2,\ldots,n+1\}$. We define a relation $<$ on $V\Delta'$ as follows. For $x,y\in V\Delta'$, $x<y$ if $x$ and $y$ are adjacent and $t(x)<t(y)$. The simplicial complex $\Delta'$, together with the relation $<$ on its vertex set, is called the \emph{$(b_1,b_2)$-subdivision of $\Delta_{\Lambda,\Lambda'}$}. 
\end{definition}

\begin{figure}[h]
	\centering
	\includegraphics[scale=1]{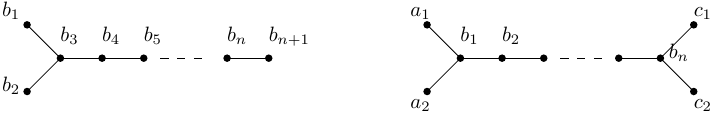}
	\caption{Diagrams of type $\widetilde B$ and $\widetilde D$.}
	\label{fig:BD}
\end{figure}

\begin{lem}
	\label{lem:poset}
	Suppose $\Lambda'$ is an admissible subgraph of $\Lambda$. Then the relation $<$ is a partial order.
\end{lem}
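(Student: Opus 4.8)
The plan is to reduce the statement to the fact (Lemma~\ref{lem:poset structure}) that the relation on the vertex set of the original relative Artin complex $\Delta_{\Lambda,\Lambda'}$ is already a graded poset whenever $\Lambda'$ is admissible, and then to check by hand that the subdivision operation and the relabeling $t$ do not destroy transitivity or antisymmetry. Recall that $\Delta'$ is obtained from $\Delta=\Delta_{\Lambda,\Lambda'}$ by inserting a midpoint of type $m$ on each edge joining a type $\hat b_1$ vertex to a type $\hat b_2$ vertex and then cutting each top simplex in two; the function $t$ collapses types $\hat b_1$ and $\hat b_2$ to the single value $1$, sends $m$ to $2$, and shifts $\hat b_i$ to $i$ for $i\ge 3$.

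First I would record the combinatorial structure of the new edges of $\Delta'$: each such edge is either an old edge of $\Delta$ both of whose endpoints have type in $\{\hat b_3,\dots,\hat b_{n+1}\}$ (these keep their old comparabilities with $t$-values shifted by one, so comparability among them is inherited verbatim from $\Delta$), an old edge with exactly one endpoint of type $\hat b_1$ or $\hat b_2$ (again comparabilities are inherited, since $t$ is order-compatible with the total order on $\{1,\dots,n+1\}$ used on the image), or one of the two halves of a subdivided $\hat b_1\hat b_2$-edge, joining the new type-$m$ vertex to a type-$1$ vertex. Crucially, the two old endpoints of a subdivided edge now both have $t$-value $1$, hence are \emph{incomparable} in $\Delta'$ (they are no longer adjacent), so the only comparabilities involving $m$-vertices are $x<y$ with $t(x)=1<2=t(y)$ from a half-edge, or $y<z$ with $t(y)=2<t(z)$ where $z$ has type $\hat b_i$, $i\ge 3$, coming from the cutting simplex. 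This is exactly the local picture one needs.

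Next I would verify antisymmetry and transitivity. Antisymmetry is immediate: $x<y$ forces $t(x)<t(y)$, so one cannot have both $x<y$ and $y<x$; and reflexivity is excluded since $<$ requires adjacency of distinct vertices. For transitivity, suppose $x<y<z$ in $\Delta'$. If none of $x,y,z$ is an $m$-vertex, then all three are images of vertices of $\Delta$, pairwise adjacent in $\Delta$ unless the pair is an old $\hat b_1\hat b_2$-pair — but an old $\hat b_1\hat b_2$-pair has equal $t$-value, contradicting $t(x)<t(y)<t(z)$, so in fact $x,y,z$ are pairwise adjacent in $\Delta$ with strictly increasing original types, whence $x<z$ in $\Delta$ by Lemma~\ref{lem:poset structure}; since the original types are distinct the edge $\overline{xz}$ survives in $\Delta'$ and $t(x)<t(z)$, giving $x<z$ in $\Delta'$. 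If exactly one of the three is an $m$-vertex, the remaining cases ($y=m$, or $x=m$, or $z=m$) are handled by the local description above: e.g. if $y$ is of type $m$ then $x$ has type $1$, $z$ has type $\hat b_i$ with $i\ge 3$, and $x,y,z$ all lie in a single cut simplex of $\Delta'$, so $x$ and $z$ are adjacent there with $t(x)=1<i=t(z)$; the subcase $x=m$ forces $y,z\in\{\hat b_3,\dots,\hat b_{n+1}\}$-types in a common cut simplex so again $x$ is adjacent to $z$; and $z=m$ forces $t(y)<2$, i.e. $y$ of type $1$ and then $x$ of type $1$ too, impossible since $x<y$ needs $t(x)<t(y)$. (Two or three of $x,y,z$ being $m$-vertices is impossible, as $m$-vertices are pairwise non-adjacent.)

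The main obstacle I anticipate is \emph{not} the bookkeeping above but making precise the claim that whenever $x<y<z$ with the original types strictly increasing and no $\hat b_1\hat b_2$-pair among them, the triple actually spans a simplex (equivalently, is pairwise adjacent) in $\Delta$ so that Lemma~\ref{lem:poset structure} applies to the pair $(x,z)$ — in $\Delta'$ adjacency of $x$ and $y$ and of $y$ and $z$ does not a priori give adjacency of $x,y,z$ as a face of $\Delta$. This is where one must use that $\Delta'$ was built by subdividing and cutting simplices of $\Delta$, so that any edge of $\Delta'$ lies in (the subdivision of) a simplex of $\Delta$, together with the admissibility of $\Lambda'$ which, via Lemma~\ref{lem:poset structure}, is precisely what guarantees that the relation on $\Delta$ propagates across faces. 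Once that structural point is isolated, the rest is the routine case analysis sketched above, and I would present it compactly as a short lemma-by-cases argument.
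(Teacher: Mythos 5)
Your proposal correctly identifies antisymmetry as trivial and correctly isolates transitivity as the real content, but the case analysis does not actually prove the step it keeps using. In each branch you assert that $x,y,z$ ``all lie in a single cut simplex'' or ``are pairwise adjacent in $\Delta$,'' when in fact the hypotheses $x<y$ and $y<z$ only give adjacency of the pairs $(x,y)$ and $(y,z)$; the adjacency of $x$ and $z$ is precisely what must be established, and your sketch asserts it rather than derives it. You flag this yourself at the end as the ``main obstacle,'' so you are aware the argument is incomplete, but the proposed fix does not work as stated: you invoke Lemma~\ref{lem:poset structure}, which is stated only for \emph{linear} admissible subgraphs, whereas here $\Lambda'$ is of type $\widetilde B_n$ and has a branch vertex, so that lemma does not apply directly; moreover it cannot say anything about the new $m$-vertices at all, since $m$ is not a type in $\Lambda'$ and those vertices do not live in $\Delta_{\Lambda,\Lambda'}$.

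The tool the paper actually uses to close the gap is Lemma~\ref{lem:link}(3), not Lemma~\ref{lem:poset structure}. Fixing $y$ with $t(y)\ge 3$, one observes that $\lk(y,\Delta')=\lk(y,\Delta)$ is (type-preservingly) a relative Artin complex $\Delta_{I_s,\Lambda'_s}$, and admissibility of $\Lambda'$ in $\Lambda$ forces the connected components of $\Lambda'_s=\Lambda'\setminus\{s\}$ (where $y$ has type $\hat s$) to land in distinct connected components of $\Lambda\setminus\{s\}$; Lemma~\ref{lem:link}(3) then gives a join decomposition $\lk(y,\Delta)=K_1*\cdots*K_k$ indexed by these components. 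Since $\lk_-(y,\Delta')$ and $\lk_+(y,\Delta')$ are spanned by vertices whose types lie on opposite sides of $s$ in the tree $\Lambda'$, they fall into distinct join factors, which is exactly the adjacency of $x$ and $z$ you need. For $t(y)=2$ the join decomposition is immediate because the $m$-vertex $y$ is the midpoint of an edge of $\Delta$, so $\lk(y,\Delta')$ is by construction the join of the two endpoints of that edge with the rest of the link. This is a shorter and structurally cleaner route than the element-by-element case analysis: rather than proving $x\sim z$ triple by triple, one proves once and for all that each middle vertex has a link that is a join of its $\lk_-$ and $\lk_+$. I would recommend reorganizing your argument around this single link-is-a-join claim (with the two cases $t(y)=2$ and $t(y)\ge 3$), citing Lemma~\ref{lem:link} in the latter case, and dropping the appeal to Lemma~\ref{lem:poset structure}.
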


\begin{proof}
	Take a vertex $x\in \Delta'$. Let $\lk_+(x,\Delta')$ (resp. $\lk_-(x,\Delta')$) be the full subcomplex of $\lk(x,\Delta')$ spanned by vertices $y$ of $\Delta'$ with $t(y)>t(x)$ (resp. $t(y)<t(x)$).	
	The lemma follows from the claim that $\lk(x,\Delta')$ is a join of $\lk_+(x,\Delta')$ and $\lk_-(x,\Delta')$. Now we prove this claim.
	It is clear when $t(x)=2$ as in this case as $x$ is the midpoint of an edge of $\Delta$. When $t(x)\ge 3$, the assumption of admissible in this lemma and Lemma~\ref{lem:link} imply the claim.
\end{proof}

We will say the $(b_1,b_2)$-subdivision $\Delta'$ of $\Delta$ is \emph{upward or downward flag or bowtie free}, if $(V\Delta',<)$ is a poset which is upward or downward flat or bowtie free.

\begin{lem}(\cite[Lemma 6.2]{huang2023labeled})
	\label{lem:sc}
	Suppose $|S'|\ge 3$. Then the relative Artin complex $\Delta_{S,S'}$ is simply-connected.
\end{lem} 

\begin{prop}
	\label{prop:ori link0}
	Suppose $A_\Lambda$ is an Artin group whose Coxeter diagram $\Lambda$ contain induced subgraph $\Lambda'$ such that $\Lambda'$ is a copy with Coxeter diagram of type $\widetilde B_n$, though possibly with different edge labeling. Let $\{b_i\}_{i=1}^{n+1}$ be nodes of $\Lambda'$ as in Figure~\ref{fig:BD} left. Suppose $n\ge 3$.
	Suppose $\Lambda'$ is an admissible subgraph of $\Lambda$.
	
	For $i=1,2,n+1$, let $\Lambda_{i}$ (resp. $\Lambda'_{i}$) be the connected component of $\Lambda\setminus\{b_i\}$ (resp. $\Lambda'\setminus\{b_i\}$) that contains $b_3$. 
	Suppose that the following holds:
	\begin{enumerate}
		\item the $(b_1,b_2)$-subdivision of $\Delta_{\Lambda_{n+1},\Lambda'_{n+1}}$ is bowtie free and downward flag;
		\item for $i=1,2$, the vertex set of the relative Artin complex $\Delta_{\Lambda_{i},\Lambda'_{i}}$, endowed with the order induced from $b_i<b_3<b_4<\cdots<b_{n+1}$ is bowtie free and upward flag for $i=1,2$.
	\end{enumerate}
	Then the simplicial complex $\Delta'$, viewed as a simplicial complex of type $S$ with $S=\{1,\ldots,n+1\}$, satisfies all the assumptions of Theorem~\ref{thm:contractibleII}. Hence $\Delta'$ is contractible. Thus $\Delta$ is contractible.
\end{prop}

The proof is very similar to \cite[Section 7]{haettel2021lattices}, we provide details for the convenience of the reader.
\begin{proof}
	Let $P=(V\Delta',\le)$, which is a poset by  Lemma~\ref{lem:poset}. Note that $\Delta'$ is simply connected by Lemma~\ref{lem:sc}. To verify Theorem~\ref{thm:contractibleII} (3), it suffices to consider the case $t(x)=1$. 
	Then the full subcomplex of $\Delta'$ spanned by $P_{>x}$ is $\lk(x,\Delta')$. However, $\lk(x,\Delta')\cong \lk(x,\Delta)\cong \Delta_{\Lambda_i,\Lambda'_i}$ by Lemma~\ref{lem:link}, where $i=1$ or $2$. Moreover, the order of vertices in $\lk(x,\Delta')$ inherited from $P$ and the order of vertices in $\Delta_{\Lambda_i,\Lambda'_i}$ as in Assumption 1 of the proposition, are consistent under the isomorphism. Thus $P_{>x}$ is bowtie free and upward flag by Assumption 2.  
	To verify Theorem~\ref{thm:contractibleII} (4), it suffices to consider the case $t(x)=n+1$. The full subcomplex of $\Delta'$ spanned by $P_{<x}$ is $\lk(x,\Delta')$. By Lemma~\ref{lem:link}, $\lk(x,\Delta')$ is order-preserving isomorphic to the $(b_1,b_2)$-subdivision of $\lk(x,\Delta)\cong \Delta_{\Lambda_{n+1},\Lambda'_{n+1}}$, which finishes the proof by Assumption 1.
\end{proof}

\begin{definition}
	\label{def:subdivisionD}
	Suppose $A_\Lambda$ is an Artin group whose Coxeter diagram $\Lambda$ contain induced subgraph $\Lambda'$ such that $\Lambda'$ is a copy with Coxeter diagram of type $\widetilde D_m$ with $m\ge 4$, though possibly with different edge labeling. Let $a_1,a_2,\{b_i\}_{i=1}^n,c_1,c_2$ be nodes of $\Lambda'$ as in Figure~\ref{fig:BD} right.
	
	Let $\Delta=\Delta_{\Lambda,\Lambda'}$ be the associated relative Artin complex. We subdivide each edge of $\Delta$ connecting a vertex of type $\hat a_1$ (resp. $\hat c_1$) and a vertex of type $\hat a_2$ (resp. $\hat c_2$). We say the middle point of such edge is of type $\hat a$ (resp. $\hat c$).
	Cut each top dimensional simplex in $\Delta$ into four simplices whose vertex set is of type $\{\hat a_i,\hat a,\hat b_1,\ldots,\hat b_n,\hat c,\hat c_j\}_{1\le i,j\le 2}$. This gives a new simplicial complex, which we denoted by $\Delta'$. Define a map $t$ from the vertex set $V\Delta'$ of $\Delta'$ to $\{1,\ldots,n+4\}$ by sending vertices of type $\hat a_i,\hat a,\hat b_1,\ldots,\hat b_n,\hat c,\hat c_j$ to $1,2,3,\ldots,n+2,n+3,n+4$ respectively.  We will then view $\Delta'$ as a simplicial complex of type $S=\{1,\ldots,n+4\}$. We define a relation $<$ on $V\Delta'$ as follows. For $x,y\in V\Delta'$, $x<y$ if $x$ and $y$ are adjacent and $t(x)<t(y)$. Similar to Lemma~\ref{lem:poset}, we know $(V\Delta',\le)$ is a poset, under the additional assumption that $\Lambda'$ is admissible in $\Lambda$.
\end{definition}

\begin{prop}
	\label{prop:ori link2}
	Suppose $A_\Lambda$ is an Artin group whose Coxeter diagram $\Lambda$ contains induced subgraph $\Lambda'$ such that $\Lambda'$ is a copy with Coxeter diagram of type $\widetilde D_m$ with $m\ge 4$, though possibly with different edge labeling. Let $a_1,a_2,\{b_i\}_{i=1}^n,c_1,c_2$ be nodes of $\Lambda'$ as in Figure~\ref{fig:BD} right.	Suppose $\Lambda'$ is an admissible subgraph of $\Lambda$.
	
	Let $\Lambda_{a_i}$ (resp. $\Lambda'_{a_i}$) be the connected component of $\Lambda\setminus\{a_i\}$ (resp. $\Lambda'\setminus\{a_i\}$) that contains $\{b_i\}_{i=1}^n$. Similarly we define $\Lambda_{c_i}$ and $\Lambda'_{c_i}$. 
	Suppose that the following holds:
	\begin{enumerate}
		\item the $(a_1,a_2)$-subdivision of $\Delta_{\Lambda_{c_i},\Lambda'_{c_i}}$ is bowtie free and downward flag for $i=1,2$;
		\item the $(c_1,c_2)$-subdivision of $\Delta_{\Lambda_{a_i},\Lambda'_{a_i}}$ is bowtie free and downward flag for $i=1,2$.
	\end{enumerate}
	Then the simplicial complex $\Delta'$, viewed as a simplicial complex of type $S$ with $S=\{1,\ldots,n+4\}$, satisfies all the assumptions of Theorem~\ref{thm:contractibleII}. Hence $\Delta'$ is contractible. Thus $\Delta$ is contractible.
\end{prop}

The proof is similar to that of Proposition~\ref{prop:ori link0}, and is left to the reader.

\begin{cor}
	\label{cor:link 6cycle}
	Suppose $\Lambda$ is a connected Coxeter diagram, and $\Lambda'$ is an admissible subgraph of $\Lambda$ in the sense of Definition~\ref{def:admissible}. Suppose $\Lambda'$ is almost spherical with vertex set $S'$ and $\Lambda$ has vertex set $S$. Suppose $\Lambda'$ is not of type $\{\widetilde F_4,\widetilde E_6,\widetilde E_7,\widetilde E_8,[3,5,3],[5,3,3,3]\}$.  Then there is a criterion only involving cycles in the 1-skeleton in the link of each vertex of $\Delta_{S,S'}$ such that as long as such link criterion is satisfied, then Conjecture~\ref{conj:contractible} holds true, in particular $\Delta_{S,S'}$ is contractible. More precisely,
	\begin{enumerate}
		\item if $S'$ dominates type $\widetilde A_n$ (in which case the Coxeter diagram of $S'$ must be a cycle, hence elements in $S'$ has a natural cyclic order), then we need to check $\Delta_{S,S'}$, viewed as a simplicial complex of type $S'$, satisfies the assumption 3 of Theorem~\ref{thm:contractible};
		\item if $S'$ dominates type $\widetilde C_n$ (in which case the Coxeter diagram of $S'$ is a line, hence $S'$ has two linear orders), then we need to check if $S'$ is equipped with one of these linear orders, then $\Delta_{S,S'}$ satisfies the assumptions 3 and 4 of Theorem~\ref{thm:contractibleII};
		\item if $S'$ dominates type $\widetilde B_n$ (in which case the Coxeter diagram of $S'$ is isomorphic to Figure~\ref{fig:BD} left), then we need to check assumptions of Proposition~\ref{prop:ori link0} are satisfied;
		\item if $S'$ dominates type $\widetilde D_n$ (in which case the Coxeter diagram of $S'$ is isomorphic to Figure~\ref{fig:BD} right), then we need to check assumptions of Proposition~\ref{prop:ori link2} are satisfied;
		\item if $S'=\{a,b,c\}$ dominates type $\widetilde G_2$ with $m_{ab}\ge 3$ and $m_{bc}\ge 6$, then we need to check $\lk(x,\Delta_{S,S'})$ is a graph with girth $\ge 6$ whenever $x$ is of type $\hat c$ and $\lk(x,\Delta_{S,S'})$ is a graph with girth $\ge 12$ whenever $x$ is of type $\hat a$.
	\end{enumerate}
\end{cor}

\begin{proof}
	Assertions 1 and 2 follow from Lemma~\ref{lem:sc}, Lemma~\ref{lem:poset}, Theorem~\ref{thm:contractible} and Theorem~\ref{thm:contractibleII}. Assertions 3 and 4 follows from Proposition~\ref{prop:ori link0} and Proposition~\ref{prop:ori link2}. For Assertion 5, we metrize triangles with flat metric, with angle $\pi/2$ at vertices of type $\hat b$, angle $\pi/6$ at vertices of type $\hat a$ and angle $\pi/3$ at vertices of type $\hat c$. The assumptions imply $\Delta_{S,S'}$ is locally CAT$(0)$, hence we are done by Lemma~\ref{lem:sc}.
\end{proof}

\begin{remark}
	When $\dim(\Delta_{S,S'})>2$, the link conditions in Corollary~\ref{cor:link 6cycle} amounts to understand certain $n$-cycles in $\Delta_{S,S'}$ with $n\le 6$. More precisely, each bowtie in $\lk(x,\Delta_{S,S'})$ gives rise to a 4-cycle in $\lk(x,\Delta_{S,S'})$. Checking the bowtie free amounts to check if such 4-cycle is embedded and induced, then it has a center in the sense explained in Section~\ref{subsec:fill 6-cycle}. See Lemma~\ref{lem:4wheel} for a precise statement. In particular, each such $4$-cycle can be filled by a disk in the 2-skeleton made of four 2-simplices. To check a flagness condition, say we have vertices $x_1,x_2,x_3$ in $\lk(x,\Delta_{S,S'})$ which are pairwisely upper bounded. Let $y_i$ be an upper bound for $x_i$ and $x_{i+1}$ for $i\in \mathbb Z/3\mathbb Z$. Then $x_1y_1x_2y_2x_3y_3$ forms a 6-cycle in $\lk(x,\Delta_{S,S'})$. Checking upward flagness amounts to find a vertex $z\in \lk(x,\Delta_{S,S'})$ adjacent to each of $\{x_1,x_2,x_3\}$ which is a common upper bound of them. Note that the new vertex $z$ breaks down the 6-cycle into three 4-cycles, namely $zx_1y_1x_2$, $zx_2y_2x_3$ and $zx_3y_3x_1$ such that each of them can be filled in as in the previous paragraph if we know the bowtie free condition. Thus checking the flagness condition is reduced to showing certain 6-cycles can be filled in the 2-skeleton by disks with specific combinatorial types.
\end{remark}

\begin{definition}
	\label{def:labeled 4-wheel}
	Let $\Lambda$ be a Coxeter diagram which is a tree, with its set of nodes $S$. Let $Z$ be a simplicial complex of type $S$.
	Let $X$ be the 1-skeleton of $Z$ with its vertex types as explained above. We say $Z$ satisfies the \emph{labeled 4-wheel condition} if for any induced 4-cycle in $X$ with consecutive vertices being $\{x_i\}_{i=1}^4$ and their types being $\{\hat s_i\}_{i=1}^4$, there exists a vertex $x\in X$ adjacent to each of $x_i$ such that the type $\hat s$ of $x$ satisfies that the node $s$ is in the smallest subtree of $\Lambda'$ containing all of $\{s_i\}_{i=1}^4$.
\end{definition}

The following is a consequence of \cite[Lemma 6.14 and Proposition 6.17]{huang2023labeled}. 
\begin{lem}
	\label{lem:4wheel}
	Suppose $\Lambda'$ is an admissible tree subgraph of $\Lambda$. Then the relative Artin complex $\Delta_{\Lambda,\Lambda'}$ satisfies the labeled 4-wheel condition if and only if for all maximal linear subgraph $\Lambda''\subset \Lambda'$, $\Delta_{\Lambda,\Lambda''}$ is bowtie free.
\end{lem}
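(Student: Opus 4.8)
The plan is to prove the two implications separately, in each case reducing to the linear situation governed by Lemma~\ref{lem:connect}, and isolating the one genuinely nontrivial point as a citation to \cite{huang2023labeled}. First I would record two elementary facts about trees. \emph{(a)} If $\Lambda'$ is an admissible tree subgraph of $\Lambda$ and $\Lambda''\subseteq\Lambda'$ is a connected linear subgraph, then $\Lambda''$ is an admissible linear subgraph of $\Lambda$: for $x\in\Lambda''$ and $x_1,x_2\in\Lambda''$ lying in different components of $\Lambda''\setminus\{x\}$, the unique arc of $\Lambda'$ from $x_1$ to $x_2$ is the corresponding sub-arc of the path $\Lambda''$, hence passes through $x$, so $x_1,x_2$ lie in different components of $\Lambda'\setminus\{x\}$, and admissibility of $\Lambda'$ then puts them in different components of $\Lambda\setminus\{x\}$; in particular Lemma~\ref{lem:poset structure}, and hence Lemma~\ref{lem:connect}, applies to $\Delta_{\Lambda,\Lambda''}$. \emph{(b)} A connected linear subgraph is a convex subset of a tree, so the smallest subtree of $\Lambda'$ spanned by any set of vertices contained in $\Lambda''$ is already contained in $\Lambda''$; moreover every connected linear subgraph of a finite tree extends to a maximal one.

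For the forward implication, assume $\Delta_{\Lambda,\Lambda'}$ satisfies the labeled $4$-wheel condition and fix a maximal linear subgraph $\Lambda''\subseteq\Lambda'$. Since $\Delta_{\Lambda,\Lambda''}$ is an induced subcomplex of $\Delta_{\Lambda,\Lambda'}$, any induced $4$-cycle of $\Delta_{\Lambda,\Lambda''}$ is also an induced $4$-cycle of $\Delta_{\Lambda,\Lambda'}$; the labeled $4$-wheel condition then produces a vertex $x$ adjacent to its four vertices whose type $\hat s$ has $s$ in the smallest subtree of $\Lambda'$ spanned by the four types, which by \emph{(b)} lies inside $\Lambda''$, so $x$ is a vertex of $\Delta_{\Lambda,\Lambda''}$. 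Hence $\Delta_{\Lambda,\Lambda''}$ satisfies the labeled $4$-wheel condition (with respect to $\Lambda''$), and Lemma~\ref{lem:connect} gives that $\Delta_{\Lambda,\Lambda''}$ is bowtie free.

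For the converse, assume that for every maximal linear subgraph $\Lambda''\subseteq\Lambda'$ the complex $\Delta_{\Lambda,\Lambda''}$ is bowtie free — equivalently, by Lemma~\ref{lem:connect}, satisfies the labeled $4$-wheel condition — and take an induced $4$-cycle $x_1y_1x_2y_2$ of $\Delta_{\Lambda,\Lambda'}$ with types $\hat s_{x_1},\hat s_{y_1},\hat s_{x_2},\hat s_{y_2}$. The crucial input, supplied by \cite[Proposition 6.15]{huang2023labeled}, is that because $\Lambda'$ is admissible and, in the induced $4$-cycle, $x_1$ is not adjacent to $x_2$ and $y_1$ is not adjacent to $y_2$ while the remaining pairs are adjacent, the four types must all lie on a single path of $\Lambda'$; that is, the smallest subtree $T\subseteq\Lambda'$ spanning them is linear. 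Extend $T$ to a maximal linear subgraph $\Lambda''\subseteq\Lambda'$ using \emph{(b)}. All four vertices of the cycle then lie in $\Delta_{\Lambda,\Lambda''}$, where it is still an induced $4$-cycle, so the labeled $4$-wheel condition there yields a vertex $x$ adjacent to all four with type $\hat s$, $s$ in the smallest subtree of $\Lambda''$ — equivalently of $\Lambda'$ — spanning $\{s_{x_1},s_{y_1},s_{x_2},s_{y_2}\}$; this is exactly the labeled $4$-wheel condition for $\Delta_{\Lambda,\Lambda'}$. The main obstacle is precisely the one step delegated to \cite[Proposition 6.15]{huang2023labeled}, namely controlling the mutual position in $\Lambda'$ of the four types occurring in an induced $4$-cycle of a relative Artin complex over an admissible tree (that they are collinear); everything else is bookkeeping with admissibility, convexity of paths in trees, and the bowtie-free/labeled $4$-wheel dictionary of Lemma~\ref{lem:connect}.
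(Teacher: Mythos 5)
Your forward implication is correct and matches the intended structure: facts \emph{(a)} and \emph{(b)} are elementary, and Lemma~\ref{lem:connect} (plus the admissibility of linear subgraphs inherited via \emph{(a)}) converts the labeled $4$-wheel condition on $\Delta_{\Lambda,\Lambda''}$ into bowtie freeness.

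The converse direction, however, rests entirely on the claim you attribute to \cite[Proposition~6.15]{huang2023labeled}, namely that the four types of an induced $4$-cycle in $\Delta_{\Lambda,\Lambda'}$ lie on a single path of $\Lambda'$; and I do not think this is what Proposition~6.15 says, nor is the claim an easy consequence of admissibility and inducedness as your phrasing suggests. Consider a $4$-cycle whose cyclic sequence of types is $\hat s_1,\hat s_2,\hat s_1,\hat s_3$, where the smallest subtree $T\subseteq\Lambda'$ containing $\{s_1,s_2,s_3\}$ is a tripod whose branch vertex $c$ is not among $\{s_1,s_2,s_3\}$. For this configuration, the join decomposition of vertex links supplied by Lemma~\ref{lem:link}~(3) does \emph{not} force a diagonal: from the link of either type-$\hat s_1$ vertex, $s_2$ and $s_3$ lie in the same component of $\Lambda'\setminus\{s_1\}$ (as $s_1$ is a leaf of $T$), so the opposite pair is not automatically adjacent; and from the link of the type-$\hat s_2$ (resp.\ $\hat s_3$) vertex, the opposite pair has equal type and cannot be adjacent. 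Moreover such a $4$-cycle does not lie in any $\Delta_{\Lambda,\Lambda''}$ with $\Lambda''$ linear, since no single linear subgraph of $\Lambda'$ contains $s_1,s_2,s_3$; so the bowtie-free hypothesis cannot be applied to it directly. Either ruling out this configuration or producing a hub for it (necessarily of type $\hat s$ with $s$ interior to $T$, e.g.\ $\hat c$) is exactly the hard content of the converse direction.

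There is also evidence in the paper that Proposition~6.15 is something stronger than a collinearity statement. In the proof of Lemma~\ref{lem:bowtie free} the author writes ``By Lemma~\ref{lem:star} and \cite[Proposition~6.15]{huang2023labeled}, $\Delta_{\Lambda_3}$ satisfies the labeled four wheel condition,'' where Lemma~\ref{lem:star}~(2) supplies exactly the bowtie-free hypothesis for all (two-edge, i.e.\ maximal linear) subgraphs of the star $\Lambda_3$. That is, Proposition~6.15 is being used there as the full implication ``bowtie free on all maximal linear restrictions $\Rightarrow$ labeled $4$-wheel on the tree complex'' — precisely the converse direction you are trying to prove. If that is indeed its statement, then citing it in your argument makes the converse direction circular, and the collinearity reduction you interpose — which is unproven and, as argued above, not automatic — is a genuine gap rather than bookkeeping. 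You should check the exact statement of \cite[Proposition~6.15]{huang2023labeled}: if it is collinearity, your proof stands; if it is the implication itself, your argument needs to supply the missing case of non-collinear types rather than deferring it.
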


Note that Lemma~\ref{cor:link 6cycle} requires $\Lambda'$ to be an admissible subgraph of $\Lambda$. If $\Lambda'$ is not an admissible subgraph of $\Lambda$, then the link criterion in Lemma~\ref{cor:link 6cycle} does not apply directly. For example, in Case 2 of Lemma~\ref{cor:link 6cycle}, if $\Lambda'$ is not admissible, then $\Delta_{S,S'}$ with the relation induced from the linear order from $S'$ is not a poset. So Assumption 2 of Theorem~\ref{thm:contractibleII} is not satisfied. Similar problems happen with the other cases. This leads to us to formulate the following somewhat less precise conjecture.
\begin{conj}
	Under the same assumption of Lemma~\ref{cor:link 6cycle}, even if $\Lambda'$ is not admissible in $\Lambda$, there still exists a criterion only involving cycles in the 1-skeleton in the link of each vertex of $\Delta_{S,S'}$ such that as long as such link criterion is satisfied, then $\Delta_{S,S'}$ is contractible.
\end{conj}

For the remaining types of $\Lambda'$, we ask the following.
\begin{que}
	Suppose $\Lambda$ is a connected Coxeter diagram, and $\Lambda'$ is an induced subgraph of $\Lambda$ in the sense of Definition~\ref{def:admissible}. Suppose the type of $\Lambda'$ is one of  $\{\widetilde F_4,\widetilde E_6,\widetilde E_7,\widetilde E_8$, $[3,5,3],[5,3,3,3]\}$. 
	Is it true that there is a criterion only involving cycles in the 1-skeleton in the link of each vertex of $\Delta_{S,S'}$ such that as long as such link criterion is satisfied, then $\Delta_{S,S'}$ is contractible?
\end{que}

We firmly believe the answer to this question is yes - this is ongoing work of P. Przytycki and the author, and another ongoing work of K. Goldman and P. Przytycki. This question is of independent interests outside $K(\pi,1)$-conjecture, as it would lead to new form of non-positively curvature which can be possibly applied to other classes of groups or spaces.

\subsection{Link conditions at the base case}
\label{sec:cycle}
Let $S'\subset S$ as before with $S'$ almost spherical. We are finally in the position to discuss checking link conditions on  $\lk(x,\Delta_{S,S'})$, which is the core of our approach to $K(\pi,1)$-conjecture. 
Our plan is to reduce the study of cycles in general relative Artin complexes to the \emph{base case} of studying cycles in Artin complexes associated with spherical Artin groups (i.e. cycles in spherical Deligne complexes), so eventually proving $K(\pi,1)$-conjecture for general Artin groups can be reduced to specific properties of spherical Artin groups.

For a vertex $x$ of type $\hat s$ in $\Delta_{S,S'}$, by Lemma~\ref{lem:link}, $\lk(x,\Delta_{S,S'})\cong \Delta_{T,T'}$ with $T=S\setminus\{s\}$ and $T'=S'\setminus\{s\}$. If $A_T$ is  spherical, then we are already in the base case. If $A_T$ is not spherical, then we can find $T''\subset T$ such that $A_{T''}$ is almost spherical. If we understand certain collections of cycles in the vertex links of $\Delta_{T,T''}$, then we can conclude that $\Delta_{T,T''}$ is non-positively curved in an appropriate sense, using some of the criterion in Section~\ref{subsec:contractible} and Section~\ref{subsec:subdiv}. While $T''$ might not equal $T'$, $A_{T}$ acts on both $A_{T,T''}$ and $A_{T,T'}$, this allows us to encode an $n$-cycle in $\Delta_{T,T'}$ as a collection of convex subspaces of $\Delta_{T,T''}$, and use the non-positive curvature of $\Delta_{T,T''}$ to analyze the configuration of these subspaces. This gives a way to use our knowledge of cycles in the vertex links of $\Delta_{T,T''}$ associated to a smaller Artin group $A_T$ to understand cycles in the vertex links of $\Delta_{S,S'}$ associated with a larger Artin group $A_S$. We can keep doing this until $A_T$ is spherical. We refer to \cite[Section 9.1]{huang2023labeled} for an example demonstrating this strategy.

Now we look at the base case $A_T=A_{S\setminus\{s\}}$ being spherical in greater detail.
By translating the meaning of the link conditions in Corollary~\ref{cor:link 6cycle} in the base case, we are led to the following statements about spherical Artin groups.

\begin{conj}
	\label{conj:compareB}
	Suppose $A_S$ is an irreducible spherical Artin group with Coxeter diagram $\Lambda$. Let $\Lambda'$ be a linear subdiagram of $\Lambda$ with consecutive vertices $\{s_i\}_{i=1}^n$ such that the edge between $s_{n-1}$ and $s_n$ has label $\ge 4$.  Then the vertex set of $\Delta_{\Lambda,\Lambda'}$ equipped with the relation induced from $s_1<s_2<\cdots<s_n$ (as in Definition~\ref{def:order}) is a bowtie free and upward flag poset.
\end{conj}

\begin{conj}
	\label{conj:compareD}
	Suppose $A_S$ is an irreducible spherical Artin group. Let $S'\subset S$ such that $A_{S'}$ has Coxeter diagram isomorphic to the type $D_n$ Coxeter diagram for $n\ge 3$ (the isomorphism does not need to preserve edge labels). Let $\{b_i\}_{i=1}^{n+1}$ be vertices in $S'$ as in Figure~\ref{fig:BD} left. Then the $(b_1,b_2)$-subdivision of $\Delta_{S,S'}$ (in the sense of Definition~\ref{def:subdivision}) is a bowtie free and downward flag poset.
\end{conj}

In Conjecture~\ref{conj:compareD}, we allow the case when the Dynkind diagram $A_{S'}$ is isomorphic to the type $D_3$ diagram (though edge labels might not be preserved). While $D_3$ diagram is the same as $A_3$ diagram, but we are considering a subdivision of $\Delta_{S,S'}$ by viewing the Coxeter diagram as $D_3$ rather than $A_3$, as explained in Definition~\ref{def:subdivision}.

Now we summarize previous results on these conjectures. The bowtie free part of both conjectures is already known and is a consequence of Theorem~\ref{thm:bowtie free} and Theorem~\ref{thm:4 wheel}.

\begin{thm}(\cite[Theorem 8.1]{huang2023labeled})
	\label{thm:bowtie free}
	Suppose $A_\Lambda$ is an irreducible spherical Artin group. Then for any linear subgraph $\Lambda'\subset\Lambda$, $\Delta_{\Lambda,\Lambda'}$ is bowtie free. 
\end{thm}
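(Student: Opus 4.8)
The plan is to translate bowtie-freeness of the poset on $V=\vertex\Delta_{\Lambda,\Lambda'}$ into the existence of a common point of a system of standard parabolic cosets in the spherical Artin group $A_\Lambda$, and to produce that point using the Garside lattice structure of $A_\Lambda$ together with an analysis of short cycles in the spherical Deligne complex $\Delta_\Lambda$.

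Fix a linear order $s_1<\dots<s_n$ on the consecutive vertices of $\Lambda'$ compatible with Lemma~\ref{lem:poset structure}, so that $V$ is a graded poset whose covering relation refines the adjacency relation of $\Delta_{\Lambda,\Lambda'}$. Unwinding the definitions via Remark~\ref{rmk:alternative} and Definition~\ref{def:ncycle}, a vertex of type $\hat s_i$ is a coset of $A_{\hat s_i}$, the relation $x\le y$ means the corresponding cosets are joined by a type-increasing edge path, and a bowtie $\{x_1,x_2,y_1,y_2\}$ with $x_a<y_b$ for all $a,b$ yields four cosets $g_aA_{\hat s_{i_a}}$ and $h_bA_{\hat s_{j_b}}$ ($a,b\in\{1,2\}$) with $i_a\le j_b$ and with all four pairwise intersections $g_aA_{\hat s_{i_a}}\cap h_bA_{\hat s_{j_b}}$ nonempty. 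A resolving element $z$ is a coset $fA_{\hat s_m}$, $\max(i_1,i_2)\le m\le\min(j_1,j_2)$, meeting all four given cosets; by gradedness and the fact that meeting a coset of higher type produces an order relation, it suffices to produce one such $f$ and $m$, with a routine separate check in the degenerate cases where $m$ must equal one of the $i_a$ or $j_b$ and one of the four elements already resolves the bowtie.

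The engine is that $A_\Lambda$, being spherical, is a Garside group: its set of standard parabolic cosets forms a lattice under the natural order, any two such cosets with nonempty intersection meet in a coset of a (smaller) standard parabolic subgroup, and intersections of parabolic subgroups are parabolic. Applying this to the four cosets, I would form $C=\bigcap_{a,b}\big(g_aA_{\hat s_{i_a}}\cap h_bA_{\hat s_{j_b}}\big)$, show that it is a nonempty coset of a standard parabolic $A_R$ with $R\subseteq S\setminus\{s_m\}$ for some $m$ in the admissible range, and take $z$ to be the vertex corresponding to the unique coset of $A_{\hat s_m}$ containing $C$. Alternatively, one can run the whole argument in the universal cover of the Salvetti complex $\od_\ca$, replacing cosets by lifts of standard subcomplexes and locating $z$ through the gate and projection structure of the zonotope $\Si_\ca$ (Lemmas~\ref{lem:gate} and~\ref{lem:pair gate}). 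Once bowtie-freeness is established, Lemma~\ref{lem:posets} immediately upgrades $V$ to a lattice, which is the form in which the conclusion is used downstream.

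The hard part is the nonemptiness of $C$ — equivalently, the assertion that the corresponding (a priori long) cycle in $\Delta_\Lambda$ bounds a disk of the expected combinatorial type — and this is precisely where irreducibility of the spherical type matters. Here I would follow the strategy described for equation~\eqref{eq} when $n=4$: the intersection conditions become a system of relations in $A_\Lambda$ that can be reduced to whether certain pairs of elements commute, and this is settled by a case analysis over the irreducible spherical Dynkin diagrams with the rank-$2$ dihedral case as base. The reduction from a general irreducible spherical $\Lambda$ to smaller rank is carried out by deleting vertices of $\Lambda$, which is legitimate because standard subcomplexes of $\od_\ca$ are $\pi_1$-injective (Lemma~\ref{lem:injective}) and because the retractions of Definition~\ref{def:retraction} carry the relevant cosets onto cosets in the smaller parabolic subgroups. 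I expect the delicate point to be organizing these deletions so that they can be performed simultaneously without lengthening the cycle, and handling the exceptional types $F_4$, $E_6$, $E_7$, $E_8$, $H_3$, $H_4$.
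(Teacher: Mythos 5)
The core mechanism does not close. You want to show $C=\bigcap_{a,b}\big(g_aA_{\hat s_{i_a}}\cap h_bA_{\hat s_{j_b}}\big)\neq\emptyset$ and then read off $z$ from a parabolic coset containing $C$. But $C$ is a genuine four-fold intersection of the original cosets, and it is empty in the most common bowtie configurations: whenever $x_1\neq x_2$ have the same type $\hat s_i$ (which already happens for $|\Lambda'|=3$, with $x_1,x_2$ of type $\hat s_1$ and $y_1,y_2$ of type $\hat s_3$), the cosets $g_1A_{\hat s_i}$ and $g_2A_{\hat s_i}$ are distinct cosets of the same subgroup, hence disjoint, so $C=\emptyset$ --- yet bowtie-freeness is supposed to hold there, producing a $z$ of strictly intermediate type $\hat s_2$. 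What is required is not a common point of the four cosets but a single coset of intermediate type that meets each of the four \emph{pairwise}. That is exactly the labeled $4$-wheel condition of Definition~\ref{def:labeled 4-wheel}, and the Garside/lattice machinery you invoke (which is well suited to locating common points of cosets, or proving that intersections of parabolics are parabolic) is not set up to produce a common neighbour.

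The paper in fact does not reprove this statement. Theorem~\ref{thm:bowtie free} is cited from \cite{huang2023labeled}, and the one-line route given in the text is: apply Theorem~\ref{thm:4 wheel} (which is \cite[Proposition~2.8]{huang2023labeled}) to obtain the labeled $4$-wheel condition for $\Delta_\Lambda$; note that when $\Lambda'$ is linear the resolving vertex automatically lies in the subcomplex $\Delta_{\Lambda,\Lambda'}$ because its type is forced into the convex hull of the cycle's types inside $\Lambda'$; and then translate $4$-wheel into bowtie-free via Lemma~\ref{lem:connect}, whose poset hypothesis is supplied by Lemma~\ref{lem:poset structure} since a linear subgraph of a tree Dynkin diagram is admissible. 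Your broader intuition --- reducing the induced $4$-cycle to a commutation problem, using Garside normal forms, a rank-$2$ base case, and reduction by deleting vertices controlled by $\pi_1$-injectivity of standard subcomplexes --- is indeed the spirit of the proof of Proposition~2.8 in \cite{huang2023labeled}, but it has to be aimed at the ``single common neighbour'' assertion rather than at nonemptiness of $C$. Two smaller points: Lemma~\ref{lem:posets} only gives meets and joins of bounded pairs, not that $V$ is a lattice (it is unbounded); and irreducibility is used inside the proof of the $4$-wheel condition itself, not merely as a reduction convenience.
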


This theorem is a consequence of Theorem~\ref{thm:4 wheel} below and Lemma~\ref{lem:4wheel}.

\begin{thm}(\cite[Proposition 2.8]{huang2023labeled})
	\label{thm:4 wheel}
	Suppose $A_S$ is an irreducible spherical Artin group. Then $\Delta_S$ satisfies the labeled 4-wheel condition.
\end{thm}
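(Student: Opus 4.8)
The plan is to reduce the statement to a combinatorial claim about four pairwise-intersecting parabolic cosets, convert that claim into a single length-four equation over $A_S$, and resolve the equation using the Garside structure of spherical Artin groups while carefully tracking the type of the vertex produced.

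By Lemma~\ref{lem:connect} and Lemma~\ref{lem:4wheel}, and using that every irreducible spherical Dynkin diagram is a tree, the labeled $4$-wheel condition for $\Delta_S = \Delta_{\Lambda,\Lambda}$ is equivalent to the bowtie-freeness of $\Delta_{\Lambda,\Lambda''}$ for every maximal linear subgraph $\Lambda'' \subset \Lambda$. So I would fix such a $\Lambda''$ with consecutive vertices $s_1 < \cdots < s_k$ and show: whenever $x_1, x_2, y_1, y_2$ are four distinct vertices of $\Delta_{\Lambda,\Lambda''}$ with $x_i < y_j$ for all $i,j$, there is a vertex $z$ with $x_i \le z \le y_j$ for all $i,j$. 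After normalizing one may take $x_1 = A_{\hat a}$, where $\hat a$ is the type of $x_1$; running around the induced $4$-cycle $x_1 y_1 x_2 y_2$ via Definition~\ref{def:ncycle} (equivalently, via the description of $\Delta_\Lambda$ in terms of lifts of standard subcomplexes of $\od_\ca$ in Remark~\ref{rmk:alternative}) encodes the configuration as an equation $w_1 w_2 w_3 w_4 = 1$ in $A_\Lambda$, with each $w_i$ lying in a standard parabolic prescribed by the types of the four vertices, together with genericity constraints reflecting that the four cosets are pairwise distinct. Finding $z$ amounts to finding a common refinement of this equation witnessed by an element of a parabolic $A_{\hat s}$ with $s_1 \le s \le s_k$.

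The core step is the analysis of this length-four equation, where I would follow \cite{huang2023labeled}: rewrite it as $w_1 w_2 = (w_3 w_4)^{-1}$, so the common value lies in $A_{\hat s_1} A_{\hat s_2} \cap A_{\hat s_4} A_{\hat s_3}$, and control this overlap. Passing to the Garside monoid $A_\Lambda^+$ (via the section $s : W_\Lambda \to A_\Lambda$ and normal forms), all meets and joins exist, one has $A_T^+ \cap A_{T'}^+ = A_{T\cap T'}^+$, and intersections of parabolic subgroups and their cosets are again parabolic with controlled type by the theory of parabolic subgroups of Artin groups \cite{godelle2012k,cumplido2020parabolic}. Taking appropriate left and right divisors of $w_1 w_2$ and intersecting should yield an element of a parabolic submonoid $A_T$ with $T$ inside the smallest subtree of $\Lambda$ containing $\{s_1,s_2,s_3,s_4\}$; reinterpreting this element as a coset $z = g A_{\hat s}$ gives a vertex adjacent to all four of $x_1, x_2, y_1, y_2$, and reading off $\supp(z)$ gives $s_1 \le s \le s_k$. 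The bookkeeping device that keeps the types under control is the folding map $\Delta_\Lambda \to \bC_\Lambda$ onto the Coxeter complex, which restricts to an isomorphism on every apartment; the analogue of the desired statement for $\bC_\Lambda$, namely that intersections of parabolic cosets in $W_\Lambda$ are parabolic cosets, is elementary and serves as the model.

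The remaining work is an induction on $|S|$: by Lemma~\ref{lem:link}, the link of any vertex of $\Delta_S$ is again a spherical Artin complex of strictly smaller rank (a join of such), so any bowtie whose four types fail to span $\Lambda''$ can be pushed into a link and handled inductively. This leaves the base cases of rank $\le 3$ --- the dihedral types $I_2(m)$ together with $A_3$, $B_3$, $H_3$ --- where $\Delta_S$ is at most two-dimensional and the filling can be obtained directly, e.g.\ from the $\mathrm{CAT}(1)$ geometry of the two-dimensional Deligne complex in the spirit of \cite{CharneyDavis}, or by a finite computation in the relevant Garside monoid. I expect the main obstacle to be controlling the \emph{type} of $z$ rather than its mere existence: producing some common neighbour of the four vertices is comparatively soft (and can also be approached through a disk-diagram plus combinatorial Gauss--Bonnet argument), but forcing $\supp(z)$ into the prescribed subtree requires the precise compatibility between the Garside meet operation and the tree geometry of $\Lambda$, and it is this labeled refinement --- not the plain $4$-wheel property --- that forms the technical heart of the argument.
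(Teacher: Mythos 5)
Your reduction to bowtie-freeness of $\Delta_{\Lambda,\Lambda''}$ for maximal linear $\Lambda''$ via Lemma~\ref{lem:4wheel} is a legitimate starting move, and the encoding of an induced $4$-cycle as an equation $w_1w_2w_3w_4=1$ via Definition~\ref{def:ncycle} is correct. But there is a genuine gap in the inductive structure, and the core technical step is not actually carried out.

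The inductive claim is wrong as stated. You assert that any bowtie whose types do not span $\Lambda''$ can be pushed into a link and handled inductively, leaving only rank $\le 3$ base cases. But the relevant reduction tool, Lemma~\ref{lem:bowtie free criterion}, has \emph{two} hypotheses: (2) bowtie-freeness of links at extreme-type vertices, which is where the induction lives, and (3) fillability of embedded $4$-cycles alternating between the extreme types $\hat s_1$ and $\hat s_n$. Condition~(3) is a fresh obligation at \emph{every} rank $n$; it is not discharged by the induction and does not reduce to dihedral or rank-3 cases. Those extreme $4$-cycles are precisely the hard content of the theorem, and your proposal leaves them unaddressed above rank $3$. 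Relatedly, there is no mechanism in your sketch for ``pushing a bowtie into a link'' — to place $\{x_1,x_2,y_1,y_2\}$ in $\lk(v,\Delta)$ you need a common neighbour $v$, which is essentially what the bowtie/$4$-wheel condition produces, so invoking it here is circular.

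The Garside step is also not a proof. Rewriting $w_1w_2 = (w_3w_4)^{-1}$ places the common value in a set of the form $A_{\hat s_1}A_{\hat s_2}\cap A_{\hat s_4}A_{\hat s_3}$, but these are products of parabolic subgroups, not subgroups, and such double-coset intersections are not controlled by the facts you cite ($A_T^+\cap A_{T'}^+ = A_{T\cap T'}^+$, intersections of parabolics are parabolic). ``Taking appropriate left and right divisors and intersecting'' does not yield the required common refinement, and you do not indicate how it would. According to the discussion in Section~\ref{subsec:proof}, the actual argument in \cite{huang2023labeled} does not proceed through meets and joins in the Garside lattice: the length-four equation is reduced to a question about when two elements of $A_S$ \emph{commute}, and the classification of commuting pairs (not Garside meets of products of parabolics) is what forces the common vertex $z$ to exist with its type in the prescribed subtree. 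That commutation step is the specific idea your proposal is missing, and it is also exactly the reason the paper singles out $n=4$ as the only length for which this method works.
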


For the upward flag part of Conjecture~\ref{conj:compareB}, the case when $A_S=A_{S'}$ is type $B_n$ follows from the work of Haettel \cite[Proposition 6.6]{haettel2021lattices}.
\begin{thm}
	\label{thm:triple}
	Let $A_S$ be the Artin group of type $B_n$. Let $S=\{s_1,s_2,\ldots,s_n\}$ such that $s_i$ and $s_{i+1}$ are adjacent in the Coxeter diagram and $m_{s_{n-1},s_n}=4$. We put a total order on $S$ by $s_1<s_2<\ldots<s_n$. Let the vertex set $V$ of $\Delta_S$ be endowed with the relation $<$ induced from this partial order on $S$. Then $(V,\le)$ is an upward flag poset.
\end{thm}

For Conjecture~\ref{conj:compareD}, the downward flag part is true when $A_S=A_{S'}$ is of type $D_4$, see \cite[Corollary 7.7]{huang2024Dn}. The downward flag part is also known when $A_S$ is of type $D_n$, and $S'=\{b_1,b_2,b_3\}$ in Figure~\ref{fig:BD}, see Theorem~\ref{thm:weakflagD} below and Lemma~\ref{lem:weakly flag equivalent}.

\begin{thm} (\cite[Theorem 1.2]{huang2024Dn})
	\label{thm:weakflagD}
	Let $\Lambda$ be the Coxeter diagram of type $D_n$. Let $\Lambda'\subset\Lambda$ be the subgraph spanned by $\{\delta_1,\delta_2,\delta_3\}$ in Figure~\ref{fig:ad}. Then $\Delta_{\Lambda,\Lambda'}$ is weakly flag.
\end{thm}

\begin{figure}[h]
	\centering
	\includegraphics[scale=1]{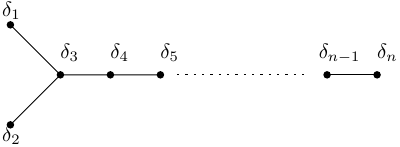}
	\caption{Coxeter diagram of type $D_n$.}
	\label{fig:ad}
\end{figure}

\section{One side flagness in type $F_4$}
\label{sec:F4}

The main goal of this section is Proposition~\ref{prop:F4}. The proof is this proposition is inspired by  \cite[Proposition 6.6]{haettel2021lattices}.

\begin{lem}
	\label{lem:fixed point}
Suppose $A_\Lambda$ is a spherical Artin group with Coxeter diagram $\Lambda$ and suppose $\alpha:A_\Lambda\to A_\Lambda$ be an automorphism induced by an automorphism of $\Lambda$. Let $C$ be a left coset of a standard parabolic subgroup of $A_\Lambda$. If $\alpha(C)=C$, then $\alpha$ fixes an element in $C$.
\end{lem}

\begin{proof}
Let $A^+$ be the positive monoid in $A_\Lambda$. Let $\preccurlyeq_\ell$ the prefix order on $A^+$ and let $\preccurlyeq_r$ be the suffix order on $A^+$. We define a partial order $\preccurlyeq$ on $A^+\times A^+$ by $(a_1,b_1)\preccurlyeq (a_2,b_2)$ if $a_1\preccurlyeq_\ell a_2$ and $a_1\neq a_2$ or $a_1=a_2$ and $b_1\preccurlyeq_r b_2$. Recall that each element $a\in A_\Lambda$ be written uniquely as $bc^{-1}$ with $b,c\in A^+$ and $b$ and $c$ do not have non-trivial common suffix in $A^+$ \cite[Section 2]{MR1314589}. This gives a (set theoretical) embedding $A\to A^+\times A^+$, hence $A$ inherits a partial order $\preccurlyeq_R$ from $A^+\times A^+$. As $\alpha(A^+)=A^+$, $\alpha$ respects the partial orders $\preccurlyeq_\ell$ and $\preccurlyeq_r$ on $A^+$. Hence $\alpha$ induces an automorphism of the poset $(A,\preccurlyeq_R)$. By \cite[Theorem 1]{altobelli1998word}, there is an element $c\in C$ such that $c\preccurlyeq_R c'$ for any $c'\in C$. Such $c$ is unique in $C$ by definition. Thus $\alpha(c)=c$, as desired.
\end{proof}

\begin{prop}
	\label{prop:F4}
Let $S=\{s_1,s_2,s_3,s_4\}$ be consecutive nodes in the Coxeter diagram $\Lambda$ of the Artin group of type $F_4$. Let $S'=\{s_1,s_2,s_3\}$. We consider the relative Artin complex $\Delta_{S,S'}$, whose vertex set is endowed with the order inherited from $s_1<s_2<s_3$. Then the vertex set of $\Delta_{S,S'}$ forms a upward flag poset.
\end{prop}

\begin{proof}
Let $\Lambda_1$ be the Coxeter diagram of type $E_6$. Let $\{t_i\}_{i=1}^5$ be the consecutive nodes of the linear subgraph $\Lambda'_1$ of length $4$ in $\Lambda_1$. Let $t$ be the node in $\Lambda\setminus \Lambda_1$. Consider the homomorphism $\phi: A_\Lambda\to A_{\Lambda_1}$ by sending $s_i$ to $t_it_{6-i}$ for $i=1,2$, $s_3$ to $t_3$ and $s_4$ to $t$. By \cite{crisp2000symmetrical},  $\phi$ is an injective homomorphism, whose image is the fixed subgroup of the automorphism $\sigma$ of $A_{\Lambda_1}$ such that $\sigma(t_i)=t_{6-i}$ for $1\le i\le 5$ and $\sigma(t)=t$. We define $P_i=A_{\hat s_i}$, the subgroup of $A$ generated by all generators except $s_i$, and $Q_i=A_{\hat t_i}$.

Let $\Delta=\Delta_{S,S'}$ and $\Delta_1=\Delta_{\Lambda_1,\Lambda'_1}$. Note that $\sigma$ induces an automorphism $\bar \si$ of $\Delta_1$ by sending $gQ_i$ to $\sigma(g)Q_{6-i}$. The map $\phi$ induced a map $\Delta^{(0)}\to \Delta^{(0)}_1$, by sending a vertex associated with $gP_i$ to a vertex associated with $\phi(g)Q_i$. 
As $\phi(gP_i)\subset \phi(g)Q_i$, if two cosets associated with vertices of $\Delta$ have nonempty intersection, then their associated cosets through $\phi$ in $A_{\Lambda_1}$ have nonempty intersection. Thus $\phi$ respects the order of vertices in $\Delta$ and $\Delta_1$, hence extends to a simplicial map $\bar\phi:\Delta\to \Delta_1$. For any vertex $\bar x$ in the image of $\bar\phi$, we have $\bar x\le\bar\si(\bar x)$, as $\si$ is identity on the image of $\phi$.

Given three vertices $x_1,x_2,x_3$ which are pairwise upper bounded. Let $\bar x_i=\bar\phi(x_i)$. We claim $\bar x_i\le \bar \sigma(\bar x_j)$ for $1\le i,j\le 3$. Indeed, suppose $x_{ij}\in \Delta$ is an upper bound for $x_i$ and $x_j$. Then $\bar x_i\le \bar x_{ij}\le \bar\sigma (\bar x_{ij})\le \bar\sigma (\bar x_j)$ as $\bar\sigma$ is order-reversing. Let $\bar x$ be the join of $\bar x_1,\bar x_2,\bar x_3$ - the existence of $\bar x$ follows from Theorem~\ref{thm:bowtie free} and Lemma~\ref{lem:posets}. Then $\bar\sigma(\bar x)$ is the meet of $\bar\si(\bar x_1),\bar\si(\bar x_1),\bar\si(\bar x_1)$. Note that $\bar x\le \bar\si(\bar x)$. Suppose $\bar x=\gamma Q_i$. Then $\bar \si(\bar x)=\sigma(\gamma) Q_{6-i}$. As $\gamma Q_i\le \sigma(\gamma) Q_{6-i}$, we know $i\le 6-i$, hence $i\le 3$. Let $C=\gamma Q_i\cap \sigma(\gamma) Q_{6-i}\neq \emptyset$. As $\sigma$ is an involution, $\sigma(C)=C$. As intersection of standard parabolic subgroups is a standard parabolic subgroup \cite{lek}, we know $C$ is a left coset of a standard parabolic subgroup. By Lemma~\ref{lem:fixed point}, $\sigma(c)=c$ for an element $c\in C$. By replacing $\gamma$ by $c$, we can assume $\sigma(\gamma)=\gamma$. Thus there exists $x\in \Delta$ such that $\bar x=\bar\phi(x)$. As $\bar\phi$ is an order-preserving embedding, we know $x$ is a common upper bound of $x_1,x_2,x_3$.
\end{proof}

\section{Complexes for hyperplane arrangements}
\label{sec:prelim1}
We recall the definition of several cell-complexes associated with a given hyperplane arrangement, and mention some of their properties, as a preparation for later sections.
\subsection{Real arrangements and their dual polyhedrons}\label{subsec:zonotope}
Recall that a \emph{hyperplane arrangement} in a real vector space $\mathbb R^n$ is a locally finite collection $\mathcal A$ of affine hyperplanes in $\mathbb R^n$. 
Let $\cq(\ca)$ be the collection of affine subspaces coming from intersections of elements in $\ca$ (here $\mathbb R^n$ itself is considered an element of $\cq(\ca)$ as it comes from the empty intersection). Each point $x$ in $\mathbb R^n$ is contained a unique element in $\cq(\ca)$ which is minimal with respect to containment. This element is called the \emph{support} of $x$. A \emph{fan} of $\ca$ is a maximal connected subset of $\mathbb R^n$ which is made of points with the same support. Each fan is convex and polyhedral. Denote that collection of all fans of $\ca$ by $\fan(\ca)$.
Note that $\mathbb R^n$ is a disjoint union of all elements in $\fan(\ca)$.
For each $U\in \fan(\ca)$, choose a point $x_U$ in the relative interior of $U$. The partial order on $\fan(\ca)$ is defined by $U_1<U_2$ if $U_1$ is contained in the closure of $U_2$ and in this case, we also write $x_{U_1}<x_{U_2}$. Let $b\Si_{\ca}$ be a simplicial complex whose vertices are $\{x_U\}_{U\in\fan(\ca)}$, and whose simplices correspond to chains of form $x_{U_1}<x_{U_2}<\cdots<x_{U_k}$. There is a piecewise linear embedding of $b\Si_{\ca}$ as a subset of  $\mathbb R^n$. Now we assemble simplicies of $b\Si_{\ca}$ to form another cell complex, which we denote by $\Si_{\ca}$. The closed cells of $\Si_{\ca}$ are in one-to-one correspondence with vertices of $b\Si_{\ca}$. We identify the face of $\Si_{\ca}$ associated with vertex $x_U\in b\Si_{\ca}$ with the union of all simplices of $b\Si_{\ca}$ corresponding to chains whose smallest element is $x_U$.  In this way each vertex of $b\Si_{\ca}$ can also be regarded as the barycenter of a face of $\Si_{\ca}$.
For $B\in \cq(\ca)$, a face $F$ of $\Si_{\ca}$ is \emph{dual} to $B$ if $F\cap B=\{b_F\}$, where $b_F$ denotes the barycenter of $F$.

The 1-skeleton of $\Si_{\ca}$ is endowed with a path metric $d$ such that each edge has length 1. Given $x,y\in \vertex \Si_{\ca}$ it turns out that $d(x,y)$  is the number of hyperplanes separating $x$ and $y$ (cf.\ \cite[Lemma 1.3]{deligne}).  

\begin{lem}\cite[Lemma 3]{s87}
	\label{lem:gate}
	Let $x$ be a vertex in $\Si_{\ca}$ and $F$ be a face of $\Si_{\ca}$. Then there exists a unique vertex $x_F\in F$ such that $d(x,x_F)\le d(x,y)$ for any vertex $y\in F$. The vertex $x_F$ is called the \emph{projection} of $x$ to $F$, and is denoted $\prj_F(x)$. 
\end{lem}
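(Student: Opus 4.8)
\textbf{Proof proposal for Lemma~\ref{lem:gate}.}

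The plan is to exhibit the projection vertex $x_F$ directly and then verify both its defining inequality and its uniqueness using the key fact that $d(\cdot,\cdot)$ on the $1$-skeleton of $\Si_{\ca}$ counts separating hyperplanes. First I would fix the face $F$ of $\Si_{\ca}$, and recall that $F$ is dual to a unique subspace $B\in\cq(\ca)$; the vertices of $F$ correspond to the fans of $\ca$ whose closures meet $B$ in the barycenter $b_F$, equivalently to the chambers of the restricted arrangement near $b_F$. The hyperplanes of $\ca$ split into two kinds relative to $F$: those containing $B$ (call them the \emph{essential} hyperplanes of $F$), which separate various vertices of $F$ from each other, and those not containing $B$, each of which has all of $F$ on one side. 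For a vertex $y\in F$ and an arbitrary vertex $x$, the count $d(x,y)$ decomposes as the number of non-essential hyperplanes separating $x$ from $F$ (this is a constant independent of $y\in F$) plus the number of essential hyperplanes separating $x$ from $y$.

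Hence minimizing $d(x,y)$ over $y\in F$ amounts to minimizing, over the chambers of the essential sub-arrangement represented by $F$, the number of essential hyperplanes separating $x$ from $y$. Now I would let $x_F$ be the vertex of $F$ obtained by taking, for each essential hyperplane $H$, the side of $H$ containing $x$ (if $x\in H$, either side is allowed and one checks the resulting choices still land on a genuine vertex of $F$ — here one uses that $F$, being dual to $B$, realizes \emph{all} sign vectors of the essential sub-arrangement compatible with being near $b_F$, i.e.\ the combinatorics of $F$ is that of the full restricted arrangement). With this choice, no essential hyperplane separates $x$ from $x_F$, so $d(x,x_F)$ equals the constant contribution from the non-essential hyperplanes, which is a lower bound for $d(x,y)$ for every $y\in F$. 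This establishes the inequality $d(x,x_F)\le d(x,y)$.

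For uniqueness, suppose $y\in F$ also attains the minimum; then no essential hyperplane separates $x$ from $y$, so $y$ and $x_F$ lie on the same side of every essential hyperplane, and therefore have the same sign vector with respect to the essential sub-arrangement. Since distinct vertices of $F$ are distinguished precisely by these sign vectors, $y=x_F$. The main obstacle I anticipate is the bookkeeping in the previous paragraph: making precise the claim that the face $F$, as a cell of $\Si_{\ca}$, has its vertex set in natural bijection with the chambers of the restricted arrangement of the essential hyperplanes (equivalently, that the construction ``pick the $x$-side of each essential hyperplane'' always yields an actual vertex of $F$, including the degenerate case $x\in B$ or $x$ on some essential hyperplane). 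This is where one must carefully unwind the definition of $\Si_{\ca}$ via $\fan(\ca)$ and the duality between faces and elements of $\cq(\ca)$ set up in Section~\ref{subsec:zonotope}; once that dictionary is in place, the metric computation and the uniqueness argument are routine. Alternatively, one can bypass the explicit construction by a convexity/exchange argument: if $y_1,y_2\in F$ are two distinct minimizers, a geodesic in the $1$-skeleton from $x$ to $y_1$ together with an edge of $F$ toward $y_2$ would force some hyperplane to be counted twice or to separate $x$ from both $y_1$ and $y_2$ inconsistently, contradicting minimality — but I expect the sign-vector argument above to be the cleanest route.
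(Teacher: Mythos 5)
Your proof is correct, and it is essentially the standard argument: the paper itself does not prove this lemma but cites it as Lemma~3 of Salvetti's paper \cite{s87}, and Salvetti's proof proceeds via exactly the hyperplane-counting decomposition you use (split $\ca$ into hyperplanes through $B$ and hyperplanes not through $B$, observe the latter contribute a constant, and minimize the former by matching sign vectors).

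The one point you flag as ``the main obstacle'' --- that picking the $x$-side of every essential hyperplane lands on a genuine vertex of $F$ --- in fact resolves cleanly and does not require a separate combinatorial argument about $F$ realizing ``all'' sign vectors. Since $x$ is a vertex of $\Si_\ca$, it lies in the relative interior of a chamber $V$ of $\ca$, and in particular $x$ lies on no hyperplane (so the ``$x\in H$'' degenerate case never occurs). The chamber $V$ is contained in a unique chamber $V'$ of the sub-arrangement of essential hyperplanes, and the essential sign vector of $V'$ is by definition the ``$x$-side'' sign vector. Because the non-essential hyperplanes all miss $b_F$, chambers of $\ca$ whose closure contains $b_F$ are in bijection with chambers of the essential sub-arrangement via inclusion, and $V'$ therefore corresponds to exactly one vertex of $F$. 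That vertex is your $x_F$. With this observation filled in, both the existence inequality and the uniqueness via sign vectors go through exactly as you wrote them.
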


The proof of the following lemma is left to the reader.

\begin{lem}
	Let $\vertex F$ be the vertex set of a face $F$ of $\Si_\ca$. Let $E$ be another face of $\Si_\ca$. Then $\prj_E(\vertex F)=\vertex E'$ for some face $E'\subset E$. In this case we write $E'=\prj_E(F)$.
\end{lem}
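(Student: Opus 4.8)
The statement asserts that the projection map $\prj_E \colon \vertex \Si_\ca \to \vertex E$, restricted to the vertex set of another face $F$, has image equal to the vertex set of some subface $E' \subseteq E$. The plan is to show that the image $\prj_E(\vertex F)$ is the vertex set of a face of $E$ by checking it is ``gated-convex'' inside the cube-like complex $\Si_\ca$, or more directly by exploiting the parallelism/hyperplane structure already set up in Section~\ref{subsec:zonotope}. Concretely, I would first recall from Lemma~\ref{lem:gate} that for a single vertex $x$, $\prj_E(x)$ is characterized as the unique vertex of $E$ minimizing $d(x,\cdot)$, and that $d$ counts separating hyperplanes. The key combinatorial fact to extract is a description of $\prj_E(x)$ in terms of which side of each hyperplane dual to an edge of $E$ the vertex $x$ lies on: for each parallel class of edges of $E$ (equivalently, each hyperplane $H \in \ca$ dual to an edge of $E$), the coordinate of $\prj_E(x)$ in that direction is determined by the side of $H$ containing $x$ (if $x$ and $E$ lie on a common side, that is the chosen side; the hyperplanes of $\ca$ not dual to an edge of $E$ do not separate the vertices of $E$ and so are irrelevant). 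This gives a ``coordinatewise'' formula for $\prj_E$.

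Second, I would use this coordinatewise description to analyze $\prj_E(\vertex F)$. A face $F$ of $\Si_\ca$ is itself determined by fixing a side for every hyperplane \emph{not} dual to an edge of $F$, and letting the edge-directions of $F$ vary freely. Pushing $\vertex F$ forward by $\prj_E$: for a hyperplane $H$ dual to an edge of $E$ but \emph{not} to an edge of $F$, every vertex of $F$ lies on the same side of $H$, so all of $\prj_E(\vertex F)$ shares one fixed coordinate in the $H$-direction; for a hyperplane $H$ dual to edges of both $E$ and $F$, the vertices of $F$ realize both sides of $H$, hence $\prj_E(\vertex F)$ realizes both coordinate values in that direction; and directions of $E$ not shared with $F$ are again pinned down. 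Therefore $\prj_E(\vertex F)$ is exactly the set of vertices of $E$ obtained by fixing certain coordinates and letting a specified subset of the edge-directions of $E$ vary freely — which is precisely the vertex set of the subface $E' \subseteq E$ spanned by those directions (with the fixed coordinates as its ``location''). One then checks $E'$ is a genuine face of $\Si_\ca$ contained in $E$, which is immediate since $\Si_\ca$ is a (subdivided) zonotopal complex closed under taking faces.

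Third, as an alternative or supporting argument, I would verify the conclusion by noting that $\prj_E(\vertex F)$ is closed under the ``median-type'' operation internal to $E$: given $x,y \in \vertex F$, their projections $\prj_E(x), \prj_E(y)$ and any vertex of $E$ lying coordinatewise between them (in the directions dual to edges of $E$) is again of the form $\prj_E(z)$ for some $z$ on a gallery from $x$ to $y$ in $F$; this uses that galleries in $F$ realize every intermediate sign vector on the hyperplanes dual to $F$, together with the gate property. A subset of the vertex set of a cube (or here, of a face of the zonotopal complex $\Si_\ca$) that is closed under this betweenness is a subface, giving the desired $E'$.

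The main obstacle I anticipate is pinning down the exact coordinatewise formula for $\prj_E(x)$ and justifying that hyperplanes not dual to an edge of $E$ play no role — this requires care with the fact that $\Si_\ca$ is the \emph{dual} cell complex to the arrangement (built from fans), so ``coordinates'' are indexed by parallel classes of edges rather than by an ambient cube structure, and one must be sure the local picture at $E$ is genuinely that of a face of a cube (or product of simplices after the barycentric-type assembly of $b\Si_\ca$ into $\Si_\ca$). Once the local combinatorial model of a face of $\Si_\ca$ and of the gate map onto it is correctly identified, the rest is the routine ``projection of a subcube onto a subcube is a subcube'' argument, which the authors have (reasonably) left to the reader.
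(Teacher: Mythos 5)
The paper gives no proof here (it is explicitly left to the reader), so there is no written argument to compare against; I can only assess whether your sketch would close the gap.

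Your key observation is right: via Lemma~\ref{lem:gate} and the fact that $d$ counts separating hyperplanes, $\prj_E(x)$ is the unique vertex of $E$ agreeing with $x$ on every hyperplane of $\ca$ dual to an edge of $E$, and the hyperplanes not dual to an edge of $E$ are irrelevant since they do not separate vertices of $E$. From there you correctly split $\mathcal W(E)$ into $\mathcal W(E)\setminus\mathcal W(F)$ (on which all of $\vertex F$, hence all of $\prj_E(\vertex F)$, has a constant sign) and $\mathcal W(E)\cap\mathcal W(F)$ (on which both signs occur).

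The genuine gap is the jump from ``both signs of each $H\in\mathcal W(E)\cap\mathcal W(F)$ are realized'' to ``$\prj_E(\vertex F)$ is exactly the vertex set of a face.'' A face $E'$ of $\Si_\ca$ contained in $E$ is a Coxeter cell, not a cube: its vertices are the chambers of the central sub-arrangement $\mathcal W(E)\cap\mathcal W(F)$ with prescribed signs on $\mathcal W(E)\setminus\mathcal W(E')$. Knowing that each individual hyperplane is ``free'' does not, in a zonotopal cell, give you all realizable sign vectors, and you need precisely that the sign vectors of $\vertex F$ restricted to $\mathcal W(E)\cap\mathcal W(F)$ hit \emph{every} chamber of that sub-arrangement. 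This does hold, but it requires an argument you have not supplied: since $\mathcal W(E)\cap\mathcal W(F)\subseteq\mathcal W(F)$ and $b_F$ lies in the common intersection $B'$ of all hyperplanes of $\mathcal W(E)\cap\mathcal W(F)$, the restriction map from chambers of $\mathcal W(F)$ near $b_F$ to chambers of $\mathcal W(E)\cap\mathcal W(F)$ is surjective. A clean way to package the whole thing is to take a generic point $q$ on the open segment $(b_E,b_F)$ close to $b_E$: the fan $U_q$ has support $B'$, satisfies $U_E\subset\overline{U_q}$, and the face of $\Si_\ca$ associated with $U_q$ is the desired $E'\subset E$; the sign-vector computation then shows $\prj_E(\vertex F)=\vertex E'$ once surjectivity onto chambers of $\mathcal W(E)\cap\mathcal W(F)$ is in hand. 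You do flag the cube-vs-zonotope distinction as an anticipated obstacle at the end, but the proposal does not resolve it.

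Your third paragraph's fallback is actually false as stated: a subset of the vertices of a zonotopal face that is closed under the interval (median/betweenness) operation need not be the vertex set of a subface. For instance, three consecutive vertices $\{1,2,3\}$ of the hexagonal $A_2$-cell are interval-closed ($[1,3]=\{1,2,3\}$) but do not span a face. So that alternative route would not repair the gap in the second paragraph.
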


\begin{definition}
	\label{def:projection1}
	Let $F$ be a fact of $\Si_\ca$. Lemma~\ref{lem:gate} gives a map $\pi:\vertex\Si_\ca\to\vertex F$ which extends to a retraction $\Pi_F:\Si_\ca\to F$ as follows. Note that for each face $E$ of $\Si_\ca$, $\pi(\vertex E)$ is the vertex set of a face $E'\subset F$. Then we extends $\pi$ to a map $\pi'$ from the vertex set of $b\Si_\ca$ to the vertex set of $bF$, by sending the barycenter of $E$ to the barycenter of $E'$. As $\pi'$ map vertices in a simplex to vertices in a simplex, it extends linearly to a map $\Pi_F:b\Si_\ca\cong \Si_\ca\to bF\cong F$.
\end{definition}

\subsection{Salvetti complex}
Let $\mathcal A,b\Sigma_{\ca},\Si_\ca$ be as before. Let $\mathcal P$ be the poset of faces of $\Si_\ca$ (under containment), and let $V$ be the vertex set of $\Si_\ca$. We now define the \emph{Salvetti complex} $\widehat\Si_\ca$ associated with $\mathcal A$, as follows.
Consider the set of pairs $(F,v)\in \cp \times V$.  Define  an equivalence relation $\sim$ on this set by $$(F,v)\sim (F,v') \iff F=F' \text{\ and\ } \prj_F(v') = \prj_F(v).$$
Denote the equivalence class of $(F,v')$ by $[F,v']$ and let $\ce(\ca)$ be  the set of equivalence classes.   Note  that each equivalence class $[F,v']$ contains a unique representative of the form $(F,v)$, with $v\in \vertex F$.   In  \cite{s87} the \emph{Salvetti complex} $\widehat\Si_\ca$ of $\ca$ is defined as the regular CW complex given by taking  $\Si_\ca\times V$ (i.e., a disjoint union of copies of $\Si_\ca$) and then identifying faces $F\times v$ and $F\times v'$ whenever $[F,v]=[F,v']$, i.e.,
\begin{equation}
	\widehat\Si_\ca=( \Si_\ca\times V)/ \sim \ .
\end{equation}
For example, for each edge $F$ of $\Si_\ca$ with endpoints $v_0$ and $v_1$, we get two $1$-cells $[F,v_0]$ and $[F,v_1]$ of $\widehat\Si_\ca$ glued together along their endpoints $[v_0,v_0]$ and $[v_1,v_1]$.  So, the $0$-skeleton of $\widehat\Si_\ca$ is equal to the $0$-skeleton of $\widehat\Si_\ca$ while its $1$-skeleton is formed from the $1$-skeleton of $\widehat\Si_\ca$ by doubling each edge.  
There is a natural map $p:\widehat\Si_\ca\to\Si_\ca$ defined by ignoring the second coordinate. 

For each subcomplex $Y$ of $\Si_\ca$, we write $\widehat Y=p^{-1}(Y)$ and call $\widehat Y$ the subcomplex of $\od_\ca$ associated with $Y$.
A \emph{standard subcomplex} of $\widehat\Si_\ca$ is a subcomplex of $\od_\ca$ associated with a face of $\Si_\ca$. In other words, if $F\subset \Si_\ca$ is a face, then $\widehat F$ is the union of faces of form $F\times v$ in $\widehat \Si_\ca$ with $v$ ranging over vertices in $\Si_\ca$.

\begin{lem}
	\label{lem:compactible}
	Take faces $E$ and $F$ of $\Si_\ca$.
	If $[E,v_1]=[E,v_2]$, then $[\prj_F(E),v_1]=[\prj_F(E),v_2]$.
\end{lem}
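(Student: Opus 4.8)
The statement to prove is Lemma~\ref{lem:compactible}: for faces $E$ and $F$ of $\Si_\ca$, if $[E,v_1]=[E,v_2]$ then $[\prj_F(E),v_1]=[\prj_F(E),v_2]$. The plan is to unwind both equivalences via the defining relation $(G,v)\sim(G,v')\iff \prj_G(v)=\prj_G(v')$, so that the claim becomes the purely combinatorial statement
\[
\prj_E(v_1)=\prj_E(v_2)\ \Longrightarrow\ \prj_{E'}(v_1)=\prj_{E'}(v_2),
\]
where $E'=\prj_F(E)$ is the face of $F$ produced by Lemma~\ref{lem:pair gate}. Since projection to a face is computed vertex-by-vertex in the $1$-skeleton with the hyperplane-separation metric $d$, the essential point is a transitivity (or ``composition'') property: projecting a vertex $v$ first close to $E$ and then close to $E'\subset F$ should agree with projecting $v$ directly to $E'$, at least on the level of which vertex of $E'$ one lands on.

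First I would record the composition identity for gate projections: for a vertex $v$ and faces $E$, $E'$ with $E'=\prj_F(E)\subset F$, one has $\prj_{E'}(v)=\prj_{E'}\bigl(\prj_E(v)\bigr)$. This is the standard ``gated sets compose'' fact (the relevant references, \cite{dress1987gated} and \cite{s87}, are already cited in the excerpt, and indeed Lemma~\ref{lem:pair gate} is stated with exactly this toolkit in mind); concretely it follows because $E'$ is the gate-image of $E$ inside $F$, so the nearest vertex of $E'$ to $v$ is the nearest vertex of $E'$ to $\prj_E(v)$ — any hyperplane that could distinguish them has already been ``resolved'' when passing through $E$, using part (4) of Lemma~\ref{lem:pair gate} which identifies the walls dual to edges of $E'$ with $\mathcal W(E)\cap\mathcal W(F)$. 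Once this identity is in hand, the lemma is immediate: from $[E,v_1]=[E,v_2]$ we get $\prj_E(v_1)=\prj_E(v_2)$, hence
\[
\prj_{E'}(v_1)=\prj_{E'}\bigl(\prj_E(v_1)\bigr)=\prj_{E'}\bigl(\prj_E(v_2)\bigr)=\prj_{E'}(v_2),
\]
which says exactly $[\prj_F(E),v_1]=[\prj_F(E),v_2]$, i.e. $[E',v_1]=[E',v_2]$.

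The main obstacle is proving the composition identity $\prj_{E'}=\prj_{E'}\circ\prj_E$ cleanly rather than by a messy hyperplane bookkeeping argument. I would handle it by the separation description of $d$: $\prj_E(v)$ is characterized as the unique vertex of $E$ on the same side as $v$ of every wall in $\mathcal W(E)$ (walls not dual to an edge of $E$ don't separate vertices of $E$ at all), and similarly for $\prj_{E'}$ with $\mathcal W(E')$. Since $\mathcal W(E')=\mathcal W(E)\cap\mathcal W(F)\subset\mathcal W(E)$ by Lemma~\ref{lem:pair gate}(4), moving from $v$ to $\prj_E(v)$ changes the side of $v$ only for walls in $\mathcal W(E)$ that are not in $\mathcal W(E')$ (this uses that $\prj_E(v)$ and $v$ lie on the same side of every wall of $\mathcal W(E')$, because those walls separate two vertices of $E$ and the gate is the closest such vertex), so $v$ and $\prj_E(v)$ lie on the same side of every wall in $\mathcal W(E')$; therefore they have the same gate in $E'$. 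A small subtlety to check is that $\prj_F(E)$ really is the face $E'$ supplied by Lemma~\ref{lem:pair gate} so that part (4) applies, which is just the definition recorded there; with that, the argument is complete and short.
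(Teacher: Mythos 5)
Your argument is correct and relies on the same two ingredients as the paper's proof: the characterization that $[G,v_1]=[G,v_2]$ if and only if $v_1$ and $v_2$ lie on the same side of every wall in $\mathcal W(G)$, and the containment $\mathcal W(\prj_F(E))\subset\mathcal W(E)$ from Lemma~\ref{lem:pair gate}(4); the paper simply applies these directly to $\prj_F(E)$ without routing through the gate-composition identity. One clause in your justification is misstated — the walls whose side changes when passing from $v$ to $\prj_E(v)$ are exactly the walls \emph{not} in $\mathcal W(E)$, rather than a subset of $\mathcal W(E)$ — but your parenthetical supplies the correct reasoning (walls in $\mathcal W(E')$ lie in $\mathcal W(E)$ and hence do not separate $v$ from its gate in $E$), so the argument stands.
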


\begin{proof}
	Note that $[E,v_1]=[E,v_2]$ if and only if for each hyperplane $H\in\ca$ with $H\cap E\neq\emptyset$, we know $v_1$ and $v_2$ are in the same side of $H$. Thus for each hyperplane $H\in\ca$ dual to $\prj_F(E)$, $v_1$ and $v_2$ are in the same side of $H$. Now the lemma follows.
\end{proof}

The following is a construction originated in \cite{godelle2012k}.
\begin{definition}
	\label{def:retraction}
	Let $F$ be a face in $\Si_\ca$. Then there is a retraction map $\Pi_{\widehat F}:\widehat\Si_\ca\to \widehat F$ defined as follows. Recall that $\widehat\Si_\ca=( \Si_\ca\times V)/ \sim$. For each $v\in V$, let $(\Si_\ca)_v$ be the union of all faces in $\widehat\Si_\ca$ of form $E\times v$ with $E$ ranging over faces of $\Si_\ca$. By Definition~\ref{def:projection1}, there is a retraction $(\Pi_F)_v:(\Si_\ca)_v\to F\times v$ for each $v\in V$. It follows from Lemma~\ref{lem:compactible} that these maps $\{(\Pi_F)_v\}_{v\in V}$ are compatible in the intersection of their domains. Thus they fit together to define a retraction $\Pi_{\widehat F}:\widehat\Si_\ca\to \widehat F$.
\end{definition}
The following is a direct consequence of the definition.
\begin{lem}
	\label{lem:retraction property}
	Take faces $E$ and $F$ of $\Si_\ca$. Then $\Pi_{\widehat F}(\widehat E)=\widehat{\Pi_F(E)}$.
\end{lem}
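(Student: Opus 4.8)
\textbf{Proof proposal for Lemma~\ref{lem:retraction property}.}

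The plan is to unwind both sides of the claimed identity $\Pi_{\widehat F}(\widehat E)=\widehat{\Pi_F(E)}$ in terms of the slice decomposition $\widehat\Si_\ca = (\Si_\ca\times V)/\!\sim$ used in Definition~\ref{def:retraction}. First I would recall that $\widehat E = p^{-1}(E)$ is by definition the union of the faces $E\times v$ (for $v$ ranging over $V$) inside $\widehat\Si_\ca$, and that $\widehat F$ decomposes, slice by slice, as the union of the $F\times v$. Since $\Pi_{\widehat F}$ was constructed by gluing together the slice-level retractions $(\Pi_F)_v\colon (\Si_\ca)_v\to F\times v$, it suffices to compute $\Pi_{\widehat F}$ on each slice $(\Si_\ca)_v$ separately and then take the union over $v\in V$. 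Thus the whole statement reduces to the slice-level assertion: $(\Pi_F)_v(E\times v) = \Pi_F(E)\times v$, where on the right $\Pi_F$ is the combinatorial retraction of Definition~\ref{def:projection1} and $\Pi_F(E)=E'$ is the face of $F$ identified in Lemma~\ref{lem:pair gate} (the notation $E'=\prj_E(F)$ there, or equivalently the face of $F$ whose vertex set is $\prj_F(\vertex E)$).

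The slice-level assertion is essentially the content of Definition~\ref{def:projection1} itself: the map $\Pi_F\colon \Si_\ca\to F$ is built by sending the barycenter of a face $E$ to the barycenter of the face $E'\subset F$ with $\vertex E' = \pi(\vertex E) = \prj_F(\vertex E)$, then extending linearly over simplices of $b\Si_\ca$. Restricting this construction to the subcomplex $\widehat E = p^{-1}(E)$ (inside a fixed slice), every simplex of $b\widehat E$ is a chain whose smallest element is (the barycenter of) $E$; the barycenters appearing in such a chain are barycenters of faces containing $E$, and their images under $\pi'$ are barycenters of faces of $F$ containing $E'$. Hence the linear extension carries $b\widehat E$ onto exactly $bE' = b\,\Pi_F(E)$, i.e. onto $\widehat{\Pi_F(E)}$ within that slice. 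Assembling over all slices $v\in V$ gives $\Pi_{\widehat F}(\widehat E) = \widehat{\Pi_F(E)}$.

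There is one genuine point to check, and it is the only place where something could go wrong: the slice-level maps must be \emph{compatible} with the identifications $\sim$ used to form $\widehat\Si_\ca$, so that their union is well-defined on $\widehat E$ and lands coherently in $\widehat F = (F\times V)/\!\sim$. This is precisely Lemma~\ref{lem:compactible}: if $[E,v_1]=[E,v_2]$ then $[\prj_F(E),v_1]=[\prj_F(E),v_2]$, which says that the face $E\times v_1$ of $\widehat E$ and the face $E\times v_2$ (identified in $\widehat\Si_\ca$) have images $\Pi_F(E)\times v_1$ and $\Pi_F(E)\times v_2$ that are likewise identified in $\widehat F$. So the already-established compatibility of $\{(\Pi_F)_v\}_{v\in V}$ in Definition~\ref{def:retraction} is exactly what makes the fibrewise computation glue to the stated equality. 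I expect the main (and really only) obstacle to be bookkeeping: making sure the combinatorial face $\Pi_F(E)$ as defined via barycenters in Definition~\ref{def:projection1} agrees with the face $E'\subset F$ with $\vertex E'=\prj_F(\vertex E)$ from Lemma~\ref{lem:pair gate}, which is immediate from the two definitions but should be stated explicitly so the identification $\Pi_F(E)=\prj_F(\vertex E)$-as-a-face is unambiguous.
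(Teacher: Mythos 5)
Your proposal is correct, and since the paper simply asserts this lemma as ``a direct consequence of the definition'' without giving an argument, your slice-by-slice unwinding is exactly the reasoning the author is implicitly invoking. The key points you isolate --- that $\Pi_{\widehat F}$ restricted to the slice $(\Si_\ca)_v$ sends $E\times v$ to $\Pi_F(E)\times v$ by construction of $\Pi_F$ in Definition~\ref{def:projection1}, and that Lemma~\ref{lem:compactible} guarantees these slice-level images glue consistently across the identifications defining $\widehat\Si_\ca$ --- are precisely what make the equality immediate from the definitions, so nothing is missing.
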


Let $\mathcal A\otimes \mathbb C$ be the complexification of $\mathcal A$, which is a collection of affine complex hyperplanes in $\mathbb C^n$. Define
$$M(\mathcal A\otimes \mathbb C)=\mathbb C^n - \cup_{H\in\mathcal A} (H\otimes \mathbb C).$$
It follows from \cite{s87} that $\widehat\Si_\ca$ is homotopic equivalent to $M(\mathcal A\otimes \mathbb C)$, thus they have isomorphic fundamental groups. 

In the rest of this subsection, we look at the case when $W_S$ is a finite Coxeter group with its canonical representation $\rho:W_S\to GL(n,\mathbb R)$ \cite{davis2012geometry}. Recall a \emph{reflection} in $W_S$ is a conjugate of a standard generator of $W_S$. Each reflection fixes a hyperplane in $\mathbb R^n$, which we call a \emph{reflection hyperplane}. Let $\mathcal A$ be the collection of all reflection hyperplanes in $\mathbb R^n$. The hyperplane arrangement $\ca$ is called the \emph{reflection arrangement} associated with $W_S$. The following facts are standard, see e.g. \cite{paris2014k}.

\begin{enumerate}
	\item The fundamental group $\pi_1(M(\ca\otimes \mathbb C))$ is isomorphic to the pure Artin group $PA_S$ \cite{lek};
	\item As the action of $W_S$ permutes elements in $\mathcal A$, there is an induced action $W_S\act M(\ca\otimes \mathbb C)$ and an induced action $W_S\act \widehat\Si_\ca$ which are free. The quotient of each of these two spaces have $\pi_1$ isomorphic to $A_S$.
	\item The 2-skeleton of $\widehat\Si_\ca/W_S$ is isomorphic to the presentation complex of $A_S$.
\end{enumerate}

\begin{definition}
	\label{def:support}
	When $\ca$ is the reflection arrangement associated with a finite Coxeter group $W_S$, we will also write $\Si_\ca$ and $\od_{\ca}$ as $\Si_S$ and $\od_S$. Note that the 1-skeleton of $\od_\ca$ is isomorphic to the Cayley graph of $W_S$, and the 1-skeleton of $\Si_S$ is isomorphic to the unoriented Cayley graph of $W_S$ (by smashing each double edge of the usual Cayley graph to a single edge). Thus edges of $\od_\ca$ and $\Si_\ca$ are labeled by elements of $S$. Let $K$ be a subset, or an edge path in $\od_\ca$ or $\Si_\ca$. Then $\supp(K)$ is defined to the collection of vertices of $S$ which appear as the label of an edge which is contained in $K$. The \emph{type} of a standard subcomplex of $\Si_S$ or $\od_S$ is defined to be the support of this subcomplex.
\end{definition}

Note that the Coxeter complex $\bC_S$ of $W_S$ is homeomorphic to a sphere. More precisely, elements in $\ca$ cuts the unit sphere of $\mathbb R^n$ into a simplicial complex, which is isomorphic to $\bC_S$. From this, we know that $\bC_S$ and $\Si_\ca$ are dual complexes of each other.
We record the following description of the Artin complex $\Delta_S$ in terms of $\od_\ca$, which will be used later.
\begin{remark}
	\label{rmk:alternative}
	Let $X$ be the universal cover of $\od_\ca$. A \emph{lift} of a standard subcomplex in $\od_\ca$ is a connected component of the inverse image of this subcomplex under the map $X\to\od_\ca$.
	Vertices of $\Delta_S$ are in 1-1 correspondence with lifts standard subcomplexes of $\od_\ca$ of type $\hat s$ for some $s\in S$. A collection of vertices span a simplex if their associated lifts have non-trivial common intersection. 
\end{remark}

\subsection{Falk complexes for affine arrangements}
\label{subsec:deligne complex}
Let $\ca$ be a real affine arrangement. Let $D_\ca$ be the union of all elements in $\fan(\ca)$ which are bounded in $\mathbb R^n$. Note that $D_\ca$ is polyhedron complex, whose open cells are elements in $\fan(\ca)$ that are contained in $D_\ca$. A \emph{face} of $D_\ca$ is defined to be the closure of an open cell of $D_\ca$. Each face of $D_\ca$ is a disjoint union of fans. Top dimensional faces in $D_\ca$ are in 1-1 correspondence with components of $\mathbb R^n\setminus \cup_{H\in \ca} H$ which are bounded. A face $F$ of $D_\ca$ is \emph{dual} to a face $F'$ of $\Si_\ca$ if the barycenter of $F'$ (as defined in the beginning of Section~\ref{subsec:zonotope}) is contained in the interior of $F$. In this case, we will also say $F\subset D_\ca$ is dual to the standard subcomplex $\widehat F'$ of $\od_\ca$.

We define a simple complex of group structure $\mathcal U$ on $D_\ca$ as follows (see \cite[Definition II.12.11]{BridsonHaefliger1999}). For a face $F$ of $D_\ca$, the local group at $F$ is defined to be the fundamental group of the standard subcomplex of $\od_\ca$ which is dual to $F$.
The morphisms between the local groups are induced by inclusions of the associated subcomplexes. By Lemma~\ref{lem:injective}, all the morphisms between the local groups are injective, hence $\mathcal U$ is a simple complex of groups. Moreover, it follows from the retraction in Definition~\ref{def:retraction} that each local group injects into $\pi_1 (\od_\ca)$. Thus $\mathcal U$ is developable with $\pi_1 (\od_\ca)=\pi_1 \mathcal U$. 

Let $\bD_\ca$ be the development complex of $\mathcal U$ (see \cite[Theorem II.12.18]{BridsonHaefliger1999}).  Then $\bD_\ca$ is simply-connected (\cite[Corollary II.12.21]{BridsonHaefliger1999}). The complex $\bD_\ca$ is defined to be the \emph{Falk complex} of the affine arrangement $\ca$, motivated by the work of Falk \cite{falk1995k}.
We now give an alternative description of $\bD_\ca$.  Let $\widetilde K$ be the universal cover of $\od_\ca$. We start with a disjoint union of a collection $\mathcal C$ of multiple copies of faces of $D_\ca$ as follows: for each face $F$ of $D_\ca$, the copies of $F$ in $\mathcal C$ are in 1-1 correspondence with lifts of $\widehat E$ in $\widetilde K$, where $\widehat E$ is dual to $F$. Now we identify an element $F_1$ as a face of another element $F_2$ of $\mathcal C$, if the subcomplex of $\widetilde K$ associated with $F_2$ is contained in the subcomplex of $\widetilde K$ associated with $F_1$. Then we obtained $\bD_\ca$ from $\mathcal C$ after all such identifications. In particular, there is natural map $\bD_\ca\to D_\ca$, coming from quotienting $\bD_\ca$ by the action of $\pi_1\mathcal U$.

When $\ca$ is a central arrangement, there is another associated complex, called the \emph{spherical Deligne complex}, defined as follows. Let $S_\ca$ be the unit sphere, endowed with the polyhedron complex structure coming the the intersection of the unit sphere with $\ca$. Then there is 1-1 correspondence between open cells in $S_\ca$ and elements in $\fan(\ca)$ which are not 0-dimensional. A face $F$ of $S_\ca$ is dual to a face $F'$ of $\Si_\ca$ if the barycenter of $F'$ is contained in the fan associated with $F$. In this case, we also say $F$ is dual to the standard subcomplex $\widehat F'$ of $\od_\ca$. This gives a complex of group structure on $S_\ca$ as before, where the local group on $F$ is the fundamental group of $\widehat F'$. Then the \emph{spherical Deligne complex} for the central arrangement $\ca$, denoted $\bSD_\ca$, is defined to the development complex of this complex of group over $S_\ca$. This gives a natural map $\bSD_\ca\to S_\ca$.
The complex $\bSD_\ca$ also has a similar alternative description in terms of lifts of standard complexes of $\od_\ca$ in the universal cover as in the previous paragraph. The complex $\bSD_\ca$ is simply-connected if $S_\ca$ is simply-connected, i.e. $n\ge 3$.

Let $\ca$ be an arbitrary affine arrangement. Let $x$ be a vertex in $D_\ca$. The \emph{local arrangement of $\ca$ at $x$}, denoted by $\ca_x$, is defined to be the collection of elements of $\ca$ that contain $x$. Note that $\ca_x$ is an central arrangement, and $\lk(x,D_\ca)$ can be naturally identified as a subcomplex of $S_{\ca_x}$. The following is consequence of the description of $\bD_\ca$ and $\bSD_{\ca_x}$.
\begin{lem}
	\label{lem:link deligne}
	Let $x'\in \bD_\ca$ be a vertex which is mapped to $x\in D_\ca$ under $\bD_\ca\to D_\ca$. Let $N$ be the inverse image of $\lk(x,D_\ca)$ (viewed as a subset of $S_{\ca_x}$) under the map $\bSD_{\ca_x}\to S_{\ca_x}$. Then $\lk(x',\bD_\ca)\cong N$.
\end{lem}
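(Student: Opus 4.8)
The statement is essentially a ``link of a point in a developed complex of groups is again a developed complex of groups over the link'' result, specialized to the concrete combinatorial models of $\bD_\ca$ and $\bSD_{\ca_x}$ given just before the statement. The plan is to use the alternative descriptions of both complexes in terms of lifts of standard subcomplexes of the Salvetti complex $\od_\ca$ (respectively $\od_{\ca_x}$) in the universal cover, rather than the abstract complex-of-groups machinery. First I would fix the vertex $x' \in \bD_\ca$ lying over $x \in D_\ca$. Under the map $\bD_\ca \to D_\ca$, the star of $x'$ is assembled from copies of faces $F$ of $D_\ca$ with $x$ a vertex of $F$; by the alternative description, these copies are indexed by lifts to $\widetilde K$ of the dual standard subcomplexes $\widehat F$, and the copies containing $x'$ are exactly those lifts whose intersection pattern passes through the chosen lift $\widehat x$ of $\widehat x$ over $x'$. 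The faces of $D_\ca$ through $x$ are in natural bijection with faces of $\lk(x,D_\ca) \subset S_{\ca_x}$, via $F \mapsto \lk(x,F)$, and this bijection is order-reversing/order-preserving compatibly with the duality assignments for $\ca$ at $x$ versus the local arrangement $\ca_x$.

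The key steps, in order, would be: (1) identify $\lk(x, D_\ca)$ with a subcomplex of $S_{\ca_x}$, which is already recorded in the excerpt, and note that for a face $F$ of $D_\ca$ through $x$, the standard subcomplex $\widehat F$ of $\od_\ca$ dual to $F$ and the standard subcomplex of $\od_{\ca_x}$ dual to $\lk(x,F)$ have canonically isomorphic fundamental groups --- both are generated by the reflections/hyperplanes through $x$ that meet $F$, i.e. they ``see'' only the local arrangement $\ca_x$; (2) deduce that the complex-of-groups structure $\mathcal U$ restricted to the star of $x$ in $D_\ca$ coincides, under $F \leftrightarrow \lk(x,F)$, with the complex-of-groups structure over $S_{\ca_x}$ defining $\bSD_{\ca_x}$, restricted to the subcomplex $\lk(x,D_\ca)$; (3) translate this equality of local complexes of groups into an equality of developments: the link $\lk(x',\bD_\ca)$ is, by the standard local structure of developments of complexes of groups (cf. \cite[Chapter II.12]{BridsonHaefliger1999}), the development of the local complex of groups at $x$, which by step (2) is the part of $\bSD_{\ca_x}$ lying over $\lk(x,D_\ca) \subset S_{\ca_x}$, i.e. precisely the set $N$ in the statement. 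Finally I would check the identification is simplicial and respects the maps to $D_\ca$ and $S_{\ca_x}$, so it is an isomorphism of complexes, not merely a homotopy equivalence.

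The main obstacle is step (1)--(2): one must verify carefully that the local group assignments and the inclusion-induced morphisms match up under $F \mapsto \lk(x,F)$. This requires knowing that the fundamental group of the standard subcomplex $\widehat F$ of $\od_\ca$ dual to a face $F$ through $x$ depends only on the hyperplanes of $\ca$ passing through $x$ (so that it equals the fundamental group of the corresponding standard subcomplex of $\od_{\ca_x}$), and that the inclusions $\widehat{F_1} \hookrightarrow \widehat{F_2}$ for $F_2 \subset F_1$ induce the same maps on $\pi_1$ as the local inclusions in $\od_{\ca_x}$. This is where one genuinely uses that $\ca$ is an affine arrangement and $\ca_x$ its localization --- a face of $D_\ca$ through $x$ is locally, near $x$, the same as a fan of $\ca_x$, and the Salvetti/fundamental-group data is local in this sense. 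Once this local compatibility is in hand, the passage to developments in step (3) is formal, using that both $\bD_\ca$ and $\bSD_{\ca_x}$ are the canonical developments (equivalently universal covers of the complexes of groups, which are simply connected/developable by the retraction argument preceding the statement).
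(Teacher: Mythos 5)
Your proof is correct and follows the route the paper itself indicates: the statement is declared a ``consequence of the description of $\bD_\ca$ and $\bSD_{\ca_x}$,'' and your steps (1)--(3) are precisely the unwinding of those two descriptions, using that $\widehat{\{x\}}\cong \od_{\ca_x}$ carries each standard subcomplex $\widehat F$ (for $F\ni x$) to the standard subcomplex of $\od_{\ca_x}$ dual to $\lk(x,F)$, so that lifts of the former inside a fixed lift of $\widehat{\{x\}}$ correspond bijectively to lifts of the latter. The only nitpick is terminological: the identification should be verified to be an isomorphism of polyhedral (not simplicial) complexes, and ``development of the local complex of groups at $x$'' is best read as the coset geometry of the local groups inside $G_x=\pi_1(\widehat{\{x\}})$, which is exactly what the lift description encodes.
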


\subsection{Collapsing hyperplanes}
\label{ss:col}
Let $\mathcal A$ be an arrangement of affine hyperplanes in $\mathbb R^n$. Let $\mathcal A'\subset \mathcal A$ be a sub-collection of hyperplanes. Then there is a cellular map $\widehat \Si_{\mathcal A}\to \widehat \Si_{\mathcal A'}$ defined as follows. First we define a map $\Si_{\mathcal A}\to \Si_{\mathcal A'}$. Note that each fan of $\ca$ is contained in a unique fan of $\ca'$. As vertices of the barycentric subdivision $b\Si_{\ca}$ of $\Si_\ca$ are in one to one correspondence with fans of $\ca$, this gives a map from the vertex set of $b\Si_\ca$ to the vertex set of $b\Si_\ca'$. One readily checks that vertices of a simplex are mapped to vertices of another simplex. So we can extend linearly to obtain a map $\kappa:b\Si_\ca\to b\Si_{\ca'}$, which can also be viewed as a map $\kappa:\Si_\ca\to\Si_{\ca'}$. Note that 
\begin{enumerate}
	\item $\kappa$ maps a face of $\Si_\ca$ onto a face of $\Si_{\ca'}$;
	\item for a face $F\subset\Si_\ca$ and two vertices $v,v'$ of $\Si_\ca$ satisfying $\prj_F(v')=\prj_F(v)$, we have $\prj_E(\kappa(v'))=\prj_E(\kappa(v))$ where $E=\kappa(F)$.
\end{enumerate}
Thus $\kappa:\Si_\ca\cong b\Si_\ca\to b\Si_{\ca'}\cong \Si_{\ca'}$ induces a continuous map $\hat \kappa:\od_\ca\to \od_{\ca'}$.

The map $\hat \kappa$ restricted to $\widehat \Si_{\mathcal A}^{(1)}$ has a more straightforward description. 
As each component of $\mathbb R^n - \cup_{H\in\mathcal A} H$ lies in a unique component of $\mathbb R^n - \cup_{H\in\mathcal A'} H$, this gives $\hat \kappa:\widehat \Si_{\mathcal A}^{(0)}\to \widehat \Si_{\mathcal A'}^{(0)}$. Moreover, two adjacent components of $\mathbb R^n - \cup_{H\in\mathcal A} H$ are either contained in the same component of $\mathbb R^n - \cup_{H\in\mathcal A'} H$ or correspond to two adjacent components of $\mathbb R^n - \cup_{H\in\mathcal A'} H$. This gives $\hat \kappa:\widehat \Si_{\mathcal A}^{(1)}\to \widehat \Si_{\mathcal A'}^{(1)}$, where an edge of $\widehat \Si_{\mathcal A}^{(1)}$ is collapsed to a single point if its endpoints are sent to the same point of $\widehat \Si_{\mathcal A'}^{(1)}$.

\subsection{Some properties of central arrangements}
Let $\ca$ be a finite arrangement of affine hyperplanes in $\mathbb R^n$. We say $\ca$ is \emph{central} if the intersection of all hyperplanes in $\ca$ is non-empty, in which case we can assume all hyperplanes in $\ca$ pass through the origin. Now we assume $\ca$ is central.
Take $H\in \ca$. We define the \emph{deconing} of $\ca$ with respect to $H$ to an affine hyperplane arrangement in $\mathbb R^{n-1}$ as follows. Note that the collection $\ca$ of hyperplanes in $\mathbb R^n$ give rise to a collection $\ca_p$ of $\mathbb R\mathbb P^{n-2}$ in $\mathbb R\mathbb P^{n-1}$. Let $H_p\in \ca_p$ be the copy of $\mathbb R\mathbb P^{n-2}$ corresponding to $H$. Then $\mathbb R\mathbb P^{n-1}\setminus H_p=\mathbb R^{n-1}$, and the intersection of elements in $\ca_p-\{H_p\}$ with $\mathbb R^{n-1}$ is a collection $\ca_H$ of affine hyperplanes in $\mathbb R^{n-1}$. This affine hyperplane arrangement $\ca_H$ in $\mathbb R^{n-1}$ is defined to be the deconing of $\ca$ with respect to $H$.

It is a well-known fact that $M(\mathcal A\otimes \mathbb C)$ is homeomorphic to $M(\ca_H\otimes \mathbb C)\times \mathbb C^*$ where $\mathbb C^*=\mathbb C-\{0\}$, see e.g. \cite{orlik2013arrangements}. Thus $\pi_1M(\mathcal A\otimes \mathbb C)\cong\pi_1M(\ca_H\otimes \mathbb C)\oplus\mathbb Z$. It is also possible to see this isomorphism on the level of Salvetti complex, see the following lemma, where $\widehat U_H$ in the lemma is isomorphic to the Salvetti complex of the deconing of $\ca$ with respect to $H$. 

\begin{lem}
	\label{lem:injective}
	Let $\mathcal A$ be a central arrangement in $\mathbb R^n$. Take a hyperplane $H$ of $\mathcal A$. Let $U_H$ be a maximal subcomplex of $\Si_{\mathcal A}$ contained in one side of $H$. Let $\widehat U_H$ be the associated subcomplex of $\widehat \Si_{\mathcal A}$. Then the inclusion $i:\widehat U_H\to \widehat \Si_{\mathcal A}$ is $\pi_1$-injective. Moreover, there is a $\mathbb Z$-subgroup of $Z\le \pi_1\Si_{\mathcal A}$ such that $\pi_1 \Si_{\mathcal A}= i_*(\pi_1\widehat U_H)\oplus Z$.
\end{lem}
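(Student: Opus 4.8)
\textbf{Proof proposal for Lemma~\ref{lem:injective}.}

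The plan is to exploit the well-known product decomposition $M(\ca\otimes\mathbb C)\cong M(\ca_H\otimes\mathbb C)\times\mathbb C^*$ and realize it combinatorially on the level of Salvetti complexes, using $\widehat U_H$ as a model for the Salvetti complex of the deconing $\ca_H$. First I would fix a linear functional $\phi$ on $\mathbb R^n$ whose zero set is $H$, so that the two open halfspaces are $\{\phi>0\}$ and $\{\phi<0\}$, and take $U_H$ to be the maximal subcomplex of $\Si_\ca$ supported on, say, $\overline{\{\phi\ge 0\}}$; concretely $\vertex U_H$ consists of the vertices of $\Si_\ca$ (components of $\mathbb R^n-\cup\ca$) lying in the closed positive halfspace, and a face $F$ of $\Si_\ca$ lies in $U_H$ iff all its vertices do. The key geometric observation is that the hyperplane $H$ together with the central point (origin) lets one build a combinatorial ``flow'': there is a cellular retraction $r:\Si_\ca\to U_H$ pushing everything to the positive side. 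One clean way to produce it is via the projection maps of Lemma~\ref{lem:gate}: for any vertex $v$ of $\Si_\ca$, letting $v^-$ be the antipodal chamber (through the origin), the gate of $v$ onto any face, and in particular the combinatorics of crossing $H$, is controlled by which side of $H$ a chamber lies on. I would then lift $r$ to a cellular retraction $\widehat r:\od_\ca\to\widehat U_H$ using exactly the compatibility statement of Lemma~\ref{lem:compactible} (this is the same mechanism as Definition~\ref{def:retraction}, applied to the ``halfspace'' rather than to a single face). Since $\widehat r\circ i=\mathrm{id}_{\widehat U_H}$ on the nose, $i_*$ is automatically injective, giving the first claim.

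For the splitting, I would produce an explicit $\mathbb Z$-subgroup $Z\le\pi_1\od_\ca$ that is complementary to $i_*(\pi_1\widehat U_H)$. Geometrically $Z$ is the $\pi_1$ of the $\mathbb C^*$-factor in the product decomposition; combinatorially it is generated by a loop in $\od_\ca$ that crosses $H$ exactly twice in the two possible directions and crosses no other hyperplane of $\ca$ an odd number of times — equivalently, an ``elementary $H$-segment'' from a chamber $C$ on the positive side to the adjacent chamber $C'$ on the negative side across $H$, followed by its Salvetti-complex partner edge back. I would check that this loop is central in $\pi_1\od_\ca$ (this reflects that the $\mathbb C^*$-factor is a direct factor, not merely a semidirect one) and that killing $Z$ recovers $\pi_1\widehat U_H$, i.e. the natural map $\pi_1\od_\ca\to\pi_1\od_\ca/Z$ is inverse to $i_*$ on the relevant groups. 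A cleaner route that avoids fiddling with explicit generators: use that $\hat c:\od_\ca\to\od_{\ca'}$ from Section~\ref{ss:col} with $\ca'=\ca_H$ (reinterpreted appropriately after deconing), or directly the homotopy equivalence $\od_\ca\simeq M(\ca\otimes\mathbb C)\simeq M(\ca_H\otimes\mathbb C)\times\mathbb C^*\simeq\widehat U_H\times S^1$, to conclude $\pi_1\od_\ca\cong\pi_1\widehat U_H\times\mathbb Z$, and then identify the $\pi_1\widehat U_H$-factor with $i_*(\pi_1\widehat U_H)$ via the retraction $\widehat r$ already constructed.

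The main obstacle I anticipate is the identification $\widehat U_H\cong\mathrm{Sal}(\ca_H)$ and the verification that the inclusion $i$ corresponds, under the homotopy equivalence $\od_\ca\simeq M(\ca\otimes\mathbb C)$, to the inclusion of one $M(\ca_H\otimes\mathbb C)\times\{\mathrm{pt}\}$ slice: the deconing picture lives in $\mathbb{RP}^{n-1}$ and one must match its cell structure (intersections of $\ca_p-\{H_p\}$ with the affine chart $\mathbb{RP}^{n-1}\setminus H_p$) with the chambers of $\Si_\ca$ lying strictly on one side of $H$. This is essentially bookkeeping — chambers of $\ca$ on the positive side of $H$ biject with chambers of the affine arrangement $\ca_H$, and adjacency across a hyperplane $K\ne H$ is preserved — but it requires care with faces that touch $H$. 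Once that dictionary is set up, everything else is either the retraction argument above or a direct appeal to the product decomposition of $M(\ca\otimes\mathbb C)$, so I would present the retraction $\widehat r$ first (it gives $\pi_1$-injectivity with no reference to deconing at all) and then invoke the known homeomorphism $M(\ca\otimes\mathbb C)\cong M(\ca_H\otimes\mathbb C)\times\mathbb C^*$ together with $\widehat r$ to pin down the direct-sum decomposition.
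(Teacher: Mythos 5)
Your second route --- $\od_\ca\simeq M(\ca\otimes\mathbb C)\cong M(\ca_H\otimes\mathbb C)\times\mathbb C^*$, together with the identification of $\widehat U_H$ with the Salvetti complex of the deconing $\ca_H$ --- is exactly the paper's intended argument; the paper states the lemma without a separate proof, immediately after invoking this product decomposition and pointing out that $\widehat U_H$ is isomorphic to $\operatorname{Sal}(\ca_H)$. The ``main obstacle'' you flag (matching the inclusion $i$, under this homotopy equivalence, to the slice inclusion $M(\ca_H\otimes\mathbb C)\times\{\mathrm{pt}\}\hookrightarrow M(\ca_H\otimes\mathbb C)\times\mathbb C^*$) is indeed the only real content, and your plan to track the cell-level dictionary between faces of $U_H$ and chambers of $\ca_H$ is the right way to nail it down.

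The first route has a genuine gap. You assert a cellular retraction $r\colon\Si_\ca\to U_H$ produced ``via the projection maps of Lemma~\ref{lem:gate}'' and then lifted to $\widehat r\colon\od_\ca\to\widehat U_H$ ``by the same mechanism as Definition~\ref{def:retraction}.'' But that mechanism is specific to \emph{faces} of $\Si_\ca$, which are gated; the halfspace subcomplex $U_H$ is convex in $\Si_\ca^{(1)}$ but is \emph{not} gated, so there is no nearest-point projection to feed into Lemma~\ref{lem:compactible}. Concretely, take $\ca$ to be three distinct lines through the origin in $\mathbb R^2$ and $H$ one of them; then $\Si_\ca^{(1)}$ is a hexagon, $U_H$ is a path of three consecutive vertices $C_6,C_1,C_2$, and the antipodal chamber $C_4$ has $d(C_4,C_6)=d(C_4,C_2)=2<d(C_4,C_1)=3$, so the metric projection of $C_4$ onto $U_H$ is the two-element set $\{C_2,C_6\}$ rather than a single vertex. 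Thus the retraction you want cannot be built this way, and the claim that $\pi_1$-injectivity follows ``with no reference to deconing at all'' does not go through. (Even granting some ad hoc retraction, it would only yield a split surjection, so you would still need the centrality check you mention; the product decomposition gives you this for free.) I would drop the first route entirely and develop the second one.
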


\section{Weakly flagness in type $A_n$}
\label{sec:AD}
The goal of this section is to prove weakly flagness for certain relative Artin complexes associated to Artin groups of type $A_n$, see Theorem~\ref{thm:weaklyflagA}.
\subsection{A lattice theoretical lemma}
\begin{lem}
	\label{lem:big lattice}
	Suppose $X$ and $(V,\le)$ satisfy the assumptions of Theorem~\ref{thm:contractibleII}.  
	Then $(V,\le)$ is bowtie free and flag.
\end{lem}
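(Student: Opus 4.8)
The plan is to exploit the fact that, under the hypotheses of Theorem~\ref{thm:contractibleII}, every interval of $(V,\le)$ is a lattice, and that the short cyclic configurations witnessing a failure of bowtie-freeness or of flagness are confined to a single folder of $X$, where the lattice operations then produce the required element.

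First I would record the structural input. Since $X$ is a simplicial complex of type $S$ with $S$ linearly ordered, adjacent vertices have distinct types and every edge extends to a maximal simplex realizing all of $S$; hence for $x<y$ any maximal chain from $x$ to $y$ has length equal to the number of elements of $S$ strictly between $\type(x)$ and $\type(y)$ (a shorter chain can always be refined by inserting a vertex of intermediate type taken from a common maximal simplex). Thus every interval $[a,b]$ of $(V,\le)$ is a bounded graded poset; being an interval of $V_{\ge a}$, it is bowtie free (intervals of bowtie-free posets are bowtie free), so by Lemma~\ref{lem:posets} every interval of $(V,\le)$ is a \emph{lattice}. In particular every folder of $X$ --- the union of the maximal simplices containing a fixed edge from a type-$\hat s_1$ vertex $z_1$ to a type-$\hat s_n$ vertex $z_2$, whose vertex set is the interval $[z_1,z_2]$ --- is a lattice.

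Now for bowtie-freeness, take distinct $x_1,x_2,y_1,y_2$ with $x_i<y_j$ for all $i,j$. If two of these are comparable, e.g. $x_1<x_2$ (resp. $y_1<y_2$), then $z=x_2$ (resp. $z=y_1$) is already the desired filler, so I may assume the $4$-cycle $x_1y_1x_2y_2$ in $X^{(1)}$ is induced. I would then show this $4$-cycle lies in a single folder $F=[z_1,z_2]$. Granting this, the argument finishes: inside the lattice $F$ both $y_1$ and $y_2$ are upper bounds of $\{x_1,x_2\}$, so $z:=x_1\vee x_2$ computed in $F$ satisfies $x_i\le z\le y_j$ for all $i,j$. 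Flagness is handled in the same spirit: for upward flagness, given $a,b,c$ pairwise upper bounded with witnesses $w_{ab}\ge a,b$, $w_{bc}\ge b,c$, $w_{ca}\ge c,a$, I would show the hexagonal configuration $\{a,b,c,w_{ab},w_{bc},w_{ca}\}$ lies in a single folder $F$, and then take $a\vee b\vee c$ inside the lattice $F$; downward flagness is dual.

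The main obstacle is exactly the step ``such a short cyclic configuration lies in one folder''. This is where hypotheses (1)--(4) of Theorem~\ref{thm:contractibleII} are genuinely used: they are the local link conditions (bowtie-free and one-sided flag vertex links, yielding girth $\ge 8$ in the relevant links by Lemma~\ref{lem:inherit and girth}) under which the folders of $X$ form a cell-Helly family and the thickening $Y$ of $X$ is a Helly graph. In a complex whose distinguished cells are cell-Helly, a null-homotopic edge loop meeting few cells is filled inside a single cell; concretely, I would take a reduced disk diagram $D\to X$ for the $4$-cycle (resp. for the hexagon), which exists because $X$ is simply connected, and use the cell-Helly property of the folders together with the link conditions to collapse $D$ onto a single folder. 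Alternatively one can sidestep disk diagrams and deduce the filling directly from the control on filling cycles in Helly graphs (\cite{weaklymodular}) applied to $Y$, translating back through the identification of folders with intervals of $(V,\le)$. A mild point to keep in mind is that the natural metric making $X$ non-positively curved in this situation is the $\ell^\infty$-orthoscheme metric rather than a piecewise-Euclidean one, so this last argument is carried out combinatorially (via cell-Helly and Helly graphs) rather than through the Gauss--Bonnet formula of Section~\ref{ss:disk}.
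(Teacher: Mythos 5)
Your proposal identifies the right structural input (that intervals of $(V,\le)$ are lattices, which follows from Lemma~\ref{lem:posets}) and correctly recognizes that the Helly graph $Y$ / cell-Helly structure of the folders should play the decisive role. However, the crucial step you isolate as "the main obstacle" -- that a bowtie or hexagonal configuration lies entirely inside a single folder $F=[z_1,z_2]$ -- is actually \emph{false} in exactly the cases that matter. A folder $[z_1,z_2]$ has a unique vertex of type $\hat s_1$ (namely $z_1$) and a unique vertex of type $\hat s_n$ (namely $z_2$). Lemma~\ref{lem:bowtie free criterion} reduces the bowtie-free claim to $4$-cycles in which $x_1,x_2$ both have type $\hat s_1$ and $y_1,y_2$ both have type $\hat s_n$; if this $4$-cycle sat in one folder, then $z_1\le x_1$ and $z_1\le x_2$ with all three of type $\hat s_1$ would force $x_1=z_1=x_2$, contradicting that the cycle is induced. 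Similarly, the base case of downward flagness takes $x_1,x_2,x_3$ all of type $\hat s_n$, and these can never sit in a single folder for the same reason. So there is no hope of "collapsing a reduced disk diagram onto one folder" -- the required folder does not exist.

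The paper's proof avoids the folder-containment claim entirely. For bowtie-freeness it invokes Lemma~\ref{lem:bowtie free criterion} to reduce to the extremal-type $4$-cycle, then applies the $4$-wheel condition of the Helly graph $Y$ to produce a vertex $z$ adjacent in $Y$ to all four; because $x_i$ has minimal type and $y_j$ has maximal type, $Y$-adjacency with $z$ forces $x_i\le z\le y_j$ in $(V,\le)$ directly. For flagness the argument is an induction on $\type(x_1)+\type(x_2)+\type(x_3)$: the base case (all of type $\hat s_n$) uses the Helly property of combinatorial balls of radius $1$ in $Y$, again translating $Y$-adjacency into $X$-adjacency via the extremal type; the inductive step lowers the complexity by passing from $x_1$ to some $x_1'>x_1$, and then uses the already-established bowtie-free property plus Lemma~\ref{lem:posets} to take joins with the witnesses $y_1,y_3$, landing in $V_{\le x_1'}$ where assumption~(4) of Theorem~\ref{thm:contractibleII} applies. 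What your approach is missing is precisely this reduction/induction machinery that lets one exploit the hypothesis \emph{locally} in $V_{\le x}$ or $V_{\ge x}$, rather than trying to localize the whole configuration at once.
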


\begin{proof}
	First we show $(V,\le)$ is bowtie free.	
	We verify the assumptions of Lemma~\ref{lem:bowtie free criterion}. Assumption 1 is a direct consequence of Assumptions 2 and 3 of Theorem~\ref{thm:contractibleII} (note that if $x$ is of type $\hat s_1$, then $V_\ge x$ is exactly the vertex set of $\lk(x,X)$). Let $x_1y_1x_2y_2$ be a 4-cycle in $X$ as in Assumption 2 of Lemma~\ref{lem:bowtie free criterion}. Let $Y$ be the Helly graph as defined in Theorem~\ref{thm:contractibleII}.
	As any Helly graph satisfies the 4-wheel condition (see e.g. \cite[Proposition 3.25]{weaklymodular}), there exists $z\in Y$ such that $z$ is adjacent to each of $\{x_1,y_1,x_2,y_2\}$ in $Y$. As $x_1,x_2$ are of type $\hat s_1$ and $y_1,y_2$ are of type $\hat s_n$, by the definition of edges in $Y$ we know $z$ is also adjacent to each of $\{x_1,y_1,x_2,y_2\}$ in $X$.
	
	Now we show $(V,\le)$ is downward flag. Let $\{x_1,x_2,x_3\}$ be three pairwise distinct elements in $V$ such that $x_i$ and $x_{i+1}$ has a lower bound $y_i$ for $i\in \mathbb Z/3\mathbb Z$. We assume $\{y_1,y_2,y_3\}$ are pairwise distinct, otherwise we can clearly find a lower bound for $\{x_1,x_2,x_3\}$. It follows from the assumption that we can assume $y_i$ is type $\hat s_1$ for $1\le i\le 3$. For simplicity, we will say $x\in V$ is of type $i$ if it is of type $\hat s_i$. We will use a downward induction on $\type (x_1)+\type (x_2)+\type (x_3)$.
	
	The base case of the induction is that each of $\{x_1,x_2,x_3\}$ is type $\hat s_n$. As $y_i$ is adjacent to both $x_i$ and $x_{i+1}$ in $X$, the same is true in $Y$. Thus $d(x_i,x_j)\le 2$ for $i\neq j$ in $Y$. As $x_i\neq x_j$ for $i\neq j$, it follows from the definition of $Y$ that $d(x_i,x_j)=2$. Thus the combinatorial balls of radius 1 centered at $x_1,x_2,x_3$ pairwise intersect. It follows from the Helly property of combinatorial balls that there exists a vertex $z\in Y$ such that $z$ is adjacent to $x_i$ in $Y$ for $i=1,2,3$. As $x_i$ is of type $\hat s_n$, it follows from the definition of $Y$ that $z$ is adjacent to each $x_i$ in $X$. Thus $z$ is a common lower bound for $\{x_1,x_2,x_3\}$.
	
	Now we assume the downward flagness is verified for any $\{x_1,x_2,x_3\}$ with $$\sum_{i=1}^3\type(x_i)\ge k.$$ Take $\{x_1,x_2,x_3\}$ with $\sum_{i=1}^3\type(x_i)=k-1$. By assumption, there exists $1\le i\le 3$ and $x'_i$ such that $x'_i>x_i$. We assume without loss of generality that $i=1$. Then by induction, $\{x'_1,x_2,x_3\}$ has a common lower bound in $V$, denoted by $z'$. As $y_1$ and $z'$ have at least one common upper bound (e.g. $x'_1$ and $x_2$) and $(V,\le)$ is bowtie free, by Lemma~\ref{lem:posets}, $y_1,z'$ have a join, denoted $z_1$. Then $z_1\le x'_1$ and $z_1\le x_2$. Similarly, $y_3$ and $z'$ have a join $z_3$ and $z_3\le x'_1$, $z_2\le x_3$. Note that each pair from $\{z_1,z_3,x_1\}$ have a lower bound, and $\{z_1,z_3,x_1\}\subset V_{\le x'_1}$. By assumption 4 of Theorem~\ref{thm:contractibleII}, there is a common lower bound $z$ for $\{z_1,z_3,x_1\}$. Note that $z\le z_1$ and $z_1\le x_2$, thus $z\le x_2$. Similarly, $z\le x_3$. Thus $z$ is a lower bound for $\{x_1,x_2,x_3\}$, as desired. 
	
	The proof of upward flagness of $(V,\le)$ is similar. 
\end{proof}

The following is an immediate consequence of Lemma~\ref{lem:big lattice} and Theorem~\ref{thm:triple}. 
\begin{cor}
	\label{cor:widetildeC_n}
	Suppose $\Lambda$ is a Coxeter diagram of type $\widetilde C_n$ with consecutive nodes being $\{s_i\}_{i=1}^n$. Then the vertex set of $\Delta_\Lambda$, endowed with the order from $s_1<s_2<\cdots<s_n$, is a bowtie free and flag poset. 
\end{cor}

\subsection{An injective simplicial complex for type $A_n$}
\label{subsec:injective}
In the rest of this subsection, $\Lambda$ will be the Coxeter diagram of type $A_n$.
Let consecutive nodes in $\Lambda$ be $\{s_1,\ldots, s_n\}$.
Let $\ca$ be the reflection arrangement of type $A_n$. Up to a linear transformation, elements in $\ca$ are $x_i=0$ for $1\le i\le n$, and $x_i=x_j$ for $1\le i\neq j\le n$.
Let $H$ be the hyperplane $x_1=0$. Then a simple calculation implies that deconing of $\ca$ with respect to $H$ gives the affine arrangement $\cb$ in $\mathbb R^{n-1}$ made of $y_i=1$ for $1\le i\le n-1$, $y_i=0$ for $1\le i\le n-1$ and $y_i=y_j$ for $1\le i\neq j\le n-1$.

Let $S_\ca$ and $D_\cb$ be defined in Section~\ref{subsec:deligne complex}. Then $S_\ca$ is isomorphic to the Coxeter complex of the associated Coxeter group, hence each vertex of $S_\ca$ has a type $\hat s_i$ for some $i$. The complex $D_\cb$ is a unit cube subdivided into orthoschemes (\cite[Definition 4.1]{brady2010braids}).
Note that $D_\cb$ can be realized as the maximal subcomplex of $S_\ca$ which is contained in the interior of a hemisphere bounded by $H\cap S_\ca$. Thus it makes sense to talk about types of vertices of $D_\cb$, using this embedding $D_\cb\to S_\ca$. We assume without loss of generality that the vertex of $D_\cb$ with coordinate $(0,\ldots,0)$ is of type $\hat s_1$. Then a vertex of $D_\cb$ is of type $\hat s_i$ if and only if the coordinate of this vertex has $i-1$ nonzero entries.

Let $\Si_\cb$ and $\Si_\ca$ be as in Section~\ref{subsec:zonotope}.
Note that $\Si_\cb$ can be identified as a maximal subcomplex of $\Si_\ca$ contained in one side of $H$. This embedding $\Si_\cb\to \Si_\ca$ is dual to $D_\cb\to S_\ca$. We record the following consequence of Lemma~\ref{lem:injective}.
\begin{lem}
	\label{lem:injective1}
The embedding $D_\cb\to S_\ca$ induces an embedding $\od_\cb\to \od_\ca$ which is $\pi_1$-injective.
\end{lem}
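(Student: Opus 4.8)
The statement will follow by combining the duality just recorded with Lemma~\ref{lem:injective}, so the plan is essentially to set up the right identifications and then quote that lemma. Recall that $D_\cb$ has been realized as the maximal subcomplex of $S_\ca$ contained in the interior of a hemisphere bounded by $H\cap S_\ca$, and that dually $\Si_\cb$ has been identified with a maximal subcomplex $U_H$ of $\Si_\ca$ lying on one side of $H$, with the embedding $\Si_\cb\hookrightarrow\Si_\ca$ being the dual of $D_\cb\hookrightarrow S_\ca$. Since the Salvetti complex is built from the zonotopal dual $\Si$ rather than from the sphere, passing to Salvetti complexes turns the embedding $D_\cb\to S_\ca$ into a map $\od_\cb\to\od_\ca$, and under the identification $\Si_\cb\cong U_H$ this map is precisely the inclusion of the associated subcomplex $\widehat U_H=p^{-1}(U_H)$ of $\od_\ca$. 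Here I would invoke the remark preceding Lemma~\ref{lem:injective}, which says that $\widehat U_H$ is canonically isomorphic to the Salvetti complex of the deconing of $\ca$ with respect to $H$, i.e.\ to $\od_\cb$, this isomorphism being the one induced by the dual of $D_\cb\to S_\ca$.

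Granting this identification, the conclusion is immediate: Lemma~\ref{lem:injective}, applied to the central arrangement $\ca$ and the hyperplane $H$, states exactly that the inclusion $\widehat U_H\to\od_\ca$ is $\pi_1$-injective (in fact it splits off a free $\mathbb Z$-factor, which is consistent with $M(\ca\otimes\mathbb C)\cong M(\cb\otimes\mathbb C)\times\mathbb C^*$). Hence $\od_\cb\to\od_\ca$ is $\pi_1$-injective, as claimed.

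The only point that needs care — and I expect it to be bookkeeping rather than a genuine obstacle — is matching the combinatorics of the two dualities (sphere versus zonotope for the central arrangement $\ca$, bounded fans versus zonotope for the affine arrangement $\cb$), so that the map \emph{induced by} $D_\cb\to S_\ca$ on Salvetti complexes really does agree with the subcomplex inclusion $\widehat U_H\hookrightarrow\od_\ca$. Concretely this amounts to checking that the identification $\Si_\cb\cong U_H$ is compatible with the projection maps $\prj_F$ that define the equivalence relation used in the construction of the Salvetti complex; but this compatibility is already contained in the dual statement recorded above (together with the remark before Lemma~\ref{lem:injective}), so no new work is required. Everything substantive is contained in Lemma~\ref{lem:injective}.
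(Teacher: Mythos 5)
Your proposal is correct and takes essentially the same route as the paper: identify $\Si_\cb$ with the maximal subcomplex $U_H$ of $\Si_\ca$ on one side of $H$ (the dual of the embedding $D_\cb\hookrightarrow S_\ca$), observe that the induced map on Salvetti complexes is the inclusion $\widehat U_H\hookrightarrow\od_\ca$, and invoke Lemma~\ref{lem:injective}. The paper in fact records this lemma without further proof precisely as "a consequence of Lemma~\ref{lem:injective}," so your reasoning matches it exactly.
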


Let $\bD_\cb$ and $\bSD_\ca$ be the Falk complex and spherical Deligne complex defined in Section~\ref{subsec:deligne complex}. As $\od_\cb\to \od_\ca$ is $\pi_1$-injective, by the description of $\bD_\cb$ and $\bSD_\ca$ in terms of certain subcomplexes of the universal covers of $\od_\cb$ and $\od_\ca$ in Section~\ref{subsec:deligne complex}, we know there is an embedding $\bD_\ca\to \bSD_\ca$ which is equivariant with respect to $\pi_1\od_\cb\curvearrowright \bD_\cb$, $\pi_1\od_\ca\curvearrowright \bSD_{\ca}$ and $\pi_1\od_\cb\hookrightarrow \pi_1\od_\ca$. Hence we can view $\bD_\ca$ as a subcomplex of $\bSD_\ca$, moreover, this subcomplex can be alternatively described as a connected component of the inverse image of $D_\cb$ (viewed as a subcomplex of $S_\ca$) under the map $\bSD_\ca\to S_\ca$.

A vertex of $\bD_\cb$ is of type $\hat s_i$ if it maps to a vertex of type $\hat s_i$ under $\bD_\cb\to D_\cb$. Similarly, we define types of vertices in $\bSD_\ca$. Note that $\bD_\cb$ and $\bSD_\ca$ are simplicial complexes of type $S=\{s_1,\ldots,s_n\}$. Moreover, $\bSD_\ca$ is isomorphic to the Artin complex of type $A_n$ which preserves the types of vertices.

\begin{prop}
	\label{prop:An Helly}
The vertex set of $\bD_\cb$, endowed with the relation $<$ induced from $s_1<s_2<\cdots<s_n$ as in Definition~\ref{def:order}, is a poset satisfying all the assumptions of Theorem~\ref{thm:contractibleII}.
\end{prop}

\begin{proof}
Since the vertex set of $\bSD_\ca$ (which is the Artin complex of type $A_n$) with the induced order is a poset by Lemma~\ref{lem:poset structure}, so is the vertex set of $\bD_\cb$. 

As each vertex of $\bD_\cb$ is lower bounded by a vertex of type $\hat s_1$ and upper bounded by a vertex of type $\hat s_n$, it suffices to verify Theorem~\ref{thm:contractibleII} (3) for type $\hat s_1$ vertices and Theorem~\ref{thm:contractibleII} (4) for type $\hat s_n$ vertices. Take a vertex $x$ of $\bD_\cb$ of type $\hat s_1$. Then $x$ maps to $\bar x=(0,0,\ldots,0)$ under $\bD_\cb\to D_\cb$. Note that the local arrangement of $\cb$ at $\bar x$ (as defined in Section~\ref{subsec:deligne complex}) is an arrangement of type $A_{n-1}$. By Lemma~\ref{lem:link An} below and Lemma~\ref{lem:link deligne}, there is a vertex type preserving isomorphism between $\lk(x,\bD_\cb)$ and the Artin complex of type $B_{n-1}$, and we are done by Theorem~\ref{thm:triple}. The verification of Theorem~\ref{thm:contractibleII} (4) for type $\hat s_n$ vertices is similar.
\end{proof}

Now we consider the Artin group of type $B_n$ with consecutive nodes in the Coxeter diagram being $\{s_1,\ldots,s_n\}$ and the edge between $s_{n-1}$ and $s_n$ is labeled by $4$.
Consider the reflection arrangement $\ca'$ of type $B_n$ made of the following hyperplanes: $x_i=0$ for $1\le i\le n$, and $x_i\pm x_j=0$ for $1\le i\neq j\le n$. Then vertices of the complexes $S_{\ca'}$ and $\bSD_{\ca'}$ (cf. Section~\ref{subsec:deligne complex}) are labeled by $\hat s_i$ for $1\le i\le n$.

\begin{lem}
	\label{lem:link An}
Let $S'_\ca$ be the subcomplex of $S_\ca$ made of points with non-negative coordinates. We assume the vertex of $S'_\ca$ in the first octant corresponding to the line $x_1=x_2=\cdots=x_n$ is of type $\hat s_n$. Let $\bSD'_{\ca}$ be the inverse image of $S'_\ca$ under the map $\bSD_{\ca}\to S_{\ca}$. Then there is a simplicial isomorphism between $\bSD'_{\ca}$ and $\bSD_{\ca'}$ which preserve types of vertices.
\end{lem}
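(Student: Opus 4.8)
Throughout write $\Lambda$ for the type $A_n$ diagram with standard generators $s_1,\dots,s_n$, and $\Lambda'$ for the type $B_n$ diagram with standard generators $s'_1,\dots,s'_n$, $m_{s'_{n-1}s'_n}=4$. Recall the identifications $\bSD_\ca\cong\Delta_\Lambda$ (the Artin complex) and $\bSD_{\ca'}\cong\Delta_{\Lambda'}$, and that the folding map $\pi\colon\Delta_\Lambda\to S_\ca=\bC_\Lambda$ is $A_\Lambda$-equivariant with quotient by $PA:=PA_\Lambda$ equal to $\bC_\Lambda$; write $\varphi\colon A_\Lambda\to W_\Lambda$ for the quotient map. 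The plan is to build the isomorphism by hand at the level of cosets, using the classical embedding of the $B_n$ Artin group into the $A_n$ one as braids fixing a strand. First I would pin down the distinguished vertex: the ray $\mathbb R_{>0}(1,\dots,1)$ is a vertex $v_0$ of $S_\ca$ whose $W_\Lambda$-stabilizer is $W_{\hat s_n}$ (the subgroup permuting coordinates), hence of type $\hat s_n$; this is the vertex fixed in the statement. A direct inspection of the arrangement gives $S'_\ca=\operatorname{Star}(v_0,S_\ca)$ — the $n!$ top cells in the open positive orthant are exactly the chambers of $S_\ca$ incident to $v_0$ — and, since the hyperplanes $x_i=0$ do not meet this star, $S'_\ca\cong\operatorname{Cone}_{v_0}(\bC_{A_{n-1}})$, with $v_0$ its unique vertex of type $\hat s_n$. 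As $S_\ca$ is flag, $\operatorname{Star}(v_0)$ is a full subcomplex, so $\bSD'_\ca=\pi^{-1}(S'_\ca)$ is a full subcomplex of $\Delta_\Lambda$; and every type $\hat s_n$ vertex of $\bSD'_\ca$ maps onto $v_0$, so the $A_\Lambda$-type function restricts to a type function on $\bSD'_\ca$ with the same values $\hat s_1,\dots,\hat s_n$. It is with respect to this type function, matched with that of $\Delta_{\Lambda'}$ via $\hat s_i\leftrightarrow\hat s'_i$, that we prove the isomorphism.

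Next I would set up the group-theoretic input. Let $\iota\colon A_{\Lambda'}\to A_\Lambda$ be defined by $\iota(s'_i)=s_i$ for $i\le n-1$ and $\iota(s'_n)=s_n^2$; the braid relation $s_{n-1}s_ns_{n-1}=s_ns_{n-1}s_n$ gives $(s_{n-1}s_n^2)^2=(s_{n-1}s_n)^3=(s_ns_{n-1})^3=(s_n^2 s_{n-1})^2$, and $s_n^2$ commutes with $s_1,\dots,s_{n-2}$, so $\iota$ is well defined; it is injective with image the subgroup $H:=\langle s_1,\dots,s_{n-1},s_n^2\rangle$ of the braid group $A_\Lambda=\mathrm{Br}_{n+1}$ consisting of braids fixing the last strand, by the classical presentation of the annular braid group (cf.\ \cite{crisp2000symmetrical}). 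Two facts then suffice. (i) $H=\varphi^{-1}(W_{\hat s_n})$: clearly $\varphi(H)=W_{\hat s_n}$, and $PA\subseteq H$ because every standard pure-braid generator $A_{jk}$ lies in $H$ — for $k\le n$ it lies in $\langle s_1,\dots,s_{n-1}\rangle$, and $A_{j,n+1}$ is a $\langle s_1,\dots,s_{n-1}\rangle$-conjugate of $s_n^2$. (ii) For every $i$, $\iota(A_{\hat s'_i})=A_{\hat s_i}\cap H$: the standard parabolic $A_{\hat s_i}$ of $\mathrm{Br}_{n+1}$ is $\mathrm{Br}_{\{1,\dots,i\}}\times\mathrm{Br}_{\{i+1,\dots,n+1\}}$, the last strand lies in the second factor, and intersecting with the braids fixing it gives $\langle s_1,\dots,s_{i-1}\rangle\times\langle s_{i+1},\dots,s_{n-1},s_n^2\rangle$ for $i\le n-1$ (and $\langle s_1,\dots,s_{n-1}\rangle$ for $i=n$), which is precisely $\iota(A_{\hat s'_i})$.

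Now define $\Phi\colon\Delta_{\Lambda'}\to\bSD'_\ca$ on vertices by $gA_{\hat s'_i}\mapsto\iota(g)A_{\hat s_i}$. Since $\iota(A_{\hat s'_i})\subseteq A_{\hat s_i}$, $\Phi$ is well defined, and since $\varphi(\iota(g))\in W_{\hat s_n}$ the target vertex maps into $\operatorname{Star}(v_0)$, so $\Phi$ lands in $\bSD'_\ca$ and is type-preserving. Injectivity is immediate from (ii). For surjectivity: a vertex $aA_{\hat s_i}$ of $\bSD'_\ca$ satisfies $\varphi(a)\in W_{\hat s_n}W_{\hat s_i}$; writing $\varphi(a)=w_1w_2$ accordingly and replacing $a$ by $a\,s(w_2)^{-1}\in H\cap aA_{\hat s_i}$ (with $s\colon W_\Lambda\to A_\Lambda$ the canonical set-theoretic section) exhibits the vertex as $\Phi$ of $\iota^{-1}(a\,s(w_2)^{-1})A_{\hat s'_i}$. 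That $\Phi$ is simplicial in both directions is the heart: the forward direction again uses $\iota(A_{\hat s'_i})\subseteq A_{\hat s_i}$; for the reverse, a simplex of $\bSD'_\ca$ with chamber set $aA_T$ ($A_T=\bigcap_k A_{\hat s_{i_k}}$ a standard parabolic) lies in the \emph{full} subcomplex $\bSD'_\ca$, so its $\pi$-image together with $v_0$ spans a simplex of $\bC_\Lambda$; choosing a chamber of $\bC_\Lambda$ containing both and lifting it through $s$ produces an element of $aA_T\cap H$, whence by (ii) the $\Phi$-preimages of the vertices of the simplex have a common point in $A_{\Lambda'}$. Thus $\Phi$ is a type-preserving simplicial isomorphism $\Delta_{\Lambda'}\xrightarrow{\ \sim\ }\bSD'_\ca$, and composing with $\bSD_{\ca'}\cong\Delta_{\Lambda'}$ proves the lemma.

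I expect the delicate points to be: (a) the geometry fixing $v_0$ — that $S'_\ca$ is exactly $\operatorname{Star}(v_0)$, that it is a cone over $\bC_{A_{n-1}}$, and that all type $\hat s_n$ vertices of $\bSD'_\ca$ sit over $v_0$, so that the re-typing is the identity on indices; (b) the reverse direction of simpliciality, i.e.\ lifting a chamber of $\bC_\Lambda$ into $H$ via the section and the fullness of $\bSD'_\ca$; and (c) citing the precise form of the embedding $A_{B_n}\hookrightarrow A_{A_n}$ and of fact (ii), both of which are standard statements about strand-fixing subgroups of braid groups but whose exact reference needs care. An alternative, less hands-on route would be to show that $H$ is the full $A_\Lambda$-stabilizer of $\bSD'_\ca$, that it acts chamber-transitively with a single simplex as quotient and with face stabilizers the standard parabolics of $H\cong A_{\Lambda'}$, and that $\bSD'_\ca$ is simply connected, then invoke the characterization of the Artin complex as the development of its canonical complex of groups; but verifying simple connectivity of $\bSD'_\ca$ directly makes this no shorter.
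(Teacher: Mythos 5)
Your proof is correct, and it takes a genuinely different route from the paper. The paper works analytically, via Allcock's description of the spherical Deligne complex as $p_\ca^{-1}(S_\ca)$ inside the metric completion $\widehat M(\ca)$ of the universal cover of the complexified arrangement complement, and uses the finite branched covering $(z_1,\dots,z_n)\mapsto(z_1^2,\dots,z_n^2)$ from $M(\ca')$ to $M(\ca)$ to induce a biLipschitz homeomorphism $\widehat M(\ca')\to\widehat M(\ca)$, from which the desired identification is read off by matching the image of $S_{\ca'}$ with (a radial projection of) $S'_\ca$. You instead work combinatorially with the Artin-complex model $\bSD_\ca\cong\Delta_\Lambda$ and the classical algebraic fact that $A_{B_n}$ embeds into $\mathrm{Br}_{n+1}=A_{A_n}$ as the subgroup $H=\varphi^{-1}(W_{\hat s_n})$ of braids pure at the last strand, together with the compatibility $\iota(A_{\hat s'_i})=A_{\hat s_i}\cap H$; the coset map $gA_{\hat s'_i}\mapsto\iota(g)A_{\hat s_i}$ then does the job, with the only geometric input being the observation that $S'_\ca$ is the closed star of $v_0$ in $\bC_\Lambda$. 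Your approach is more elementary and self-contained (no need for metric completions or Allcock's theorem), and has the virtue that type-preservation is manifest from the construction rather than checked at the end; the paper's approach is arguably more conceptual and transfers more readily to other reflection groups where a clean "strand-fixing" picture is less available. Two small points worth tightening in your write-up: (a) the citation for the strand-fixing embedding should not be \cite{crisp2000symmetrical}, which treats the diagram-folding embeddings $A_{B_n}\hookrightarrow A_{A_{2n-1}}$; the fact you want — that $s'_i\mapsto s_i$, $s'_n\mapsto s_n^2$ gives an isomorphism onto the last-strand-fixing subgroup — is standard but appears elsewhere (e.g.\ work of Crisp on injective maps between Artin groups, Kent--Peifer, or Lambropoulou). (b) In the reverse-simpliciality step it is worth saying explicitly that, having produced $a'\in aA_T\cap H$, the vertex $\iota^{-1}(a')A_{\hat s'_{i_k}}$ is the unique $\Phi$-preimage of $aA_{\hat s_{i_k}}$ (by the injectivity argument), so the given simplex pulls back to the simplex on these vertices — as written, the final sentence slightly elides this.
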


\begin{proof}
For the proof, we need an alternative description of $\bSD_{\ca}$ and $\bSD_{\ca'}$, due to Allcock \cite[Theorem 7.3]{MR3127810}.
Let $M(\ca)=\mathbb C^n\setminus (\cup_{H\in \ca} H\otimes \mathbb C)$ be the complement of complexified arrangement associated with $\ca$. Similarly we define $M(\ca')$. We endow $M(\ca)$ with the Euclidean metric induced from $\mathbb C^n$, which makes $M(\ca)$ an incomplete metric space whose metric completion (denoted $\bar M(\ca)$) is $\mathbb C^n$. Let $\bar M_{\mathbb R}(\ca)$ be the real part of   $\bar M(\ca)\cong \mathbb C^n$.
Let $\widetilde M(\ca)$ be the universal cover of $M(\ca)$, with the induced length metric from $M(\ca)$; and let $\widehat M(\ca)$ be the metric completion of $\widetilde M(\ca)$. Note that the covering map $\widetilde M(\ca)\to M(\ca)$ induced a map $p_\ca:\widehat M(\ca)\to \bar M(\ca)$. We view $S_\ca$ as the unit sphere in $\bar M_{\mathbb R}(\ca)$, hence $S_\ca$ is a subset of $\bar M(\ca)$. Then $\bSD_\ca$ is isomorphic to $p^{-1}_\ca(S_\ca)$. A similar discussion applies if we replace $\ca$ by $\ca'$.
There is a finite sheeted covering map $f:M(\ca')\to M(\ca)$ induced by $$(z_1,\ldots, z_n)\to (z^2_1,\ldots,z^2_n)$$ see \cite[Section 4]{allcock2002braid}. This gives a homeomorphism $\widetilde f: \widetilde M(\ca')\to \widetilde M(\ca)$ with the following diagram commutes:
\[ \begin{tikzcd}
	\widetilde M(\ca')\arrow{r}{\widetilde f} \arrow[swap]{d}{p_{\ca'}} &\widetilde M(\ca) \arrow{d}{p_\ca} \\%
	 M(\ca') \arrow{r}{f}&  M(\ca)
\end{tikzcd}
\]
As there is $L>0$ such that each point in $M(\ca')$ has an open neighborhood where $f$ restricts to an $L$-biLipschitz map, thus the same property holds for $\widetilde f$. It follows that $\widetilde f$ is a biLipschitz homeomorphism with respect to the induced length metric, hence it extends to a biLipschitz homeomorphism $\widehat f:\widehat M(\ca')\to \widehat M(\ca)$ which fits into
the following commutative diagram:
\[ \begin{tikzcd}
\widehat M(\ca')\arrow{r}{\widehat f} \arrow[swap]{d}{p_{\ca'}} &\widehat M(\ca) \arrow{d}{p_\ca} \\%
	\bar M(\ca') \arrow{r}{\bar f}& \bar M(\ca)
\end{tikzcd}
\]
Let $K=\bar f(S_{\ca'})$. As each point in $K$ has non-negative real coordinates, $S_{\ca'}=\bar f^{-1}(K)$. Then the above commutative diagram implies that $\widehat f$ induces a homeomorphism $p^{-1}_{\ca'}(S_{\ca'})\to p^{-1}_{\ca}(K)$. Now we consider the map $\alpha: \bar M_{\mathbb R}(\ca)-\{0\}\to S_{\ca}$ by sending $x$ to $\frac{x}{||x||}$. Note that $\alpha$ induces a homeomorphism $\alpha_{|K}:K\to S'_{\ca}$, which gives a homeomorphism $p^{-1}_{\ca}(K)\to p^{-1}_\ca(S'_\ca)$. Thus
$$
\bSD_{\ca'}\cong p^{-1}_{\ca'}(S_{\ca'})\cong p^{-1}_{\ca}(K)\cong p^{-1}_\ca(S'_\ca)\cong \bSD'_\ca.
$$
One readily verifies that the type of vertices are preserved, as $\bar f$ is type-preserving.
\end{proof}

\begin{thm}
	\label{thm:weaklyflagA}
	Suppose $\Lambda$ is a Coxeter diagram of type $A_n$. Let $\Lambda'\subset \Lambda$ be a linear subgraph made of three nodes. Then $\Delta_{\Lambda,\Lambda'}$ is weakly flag.
\end{thm}

\begin{proof}
In the following proof, we assume $V\Delta_\Lambda$ is endowed with the order induced from $s_1<s_2<\cdots<s_n$.	
Assume nodes of $\Lambda'$ are $t_1=s_i,t_2=s_{i+1}$ and $t_3=s_{i+3}$. We will only prove upward weakly flagness here,  as the proof of downward weakly flagness is almost identical. Take vertices $\{x_i\}_{i=1}^3$ in $\Delta_{\Lambda,\Lambda'}$ of type $\hat t_1$ and $\{y_i\}_{i=1}^3$ of type $\hat t_2$. Suppose $y_i$ is a common upper bound for $x_i$ and $x_{i+1}$. Then we need to show $\{x_1,x_2,x_3\}$ have a common upper bound in $\Delta_{\Lambda,\Lambda'}$.

Let $\omega$ be the 6-cycle $x_1y_1x_2y_2x_3y_3$ in $\Delta_{\Lambda,\Lambda'}$. Let $\pi:\Delta_\Lambda\to\mathfrak C_\Lambda\cong S_\ca$ be the map induced by quotienting $\Delta_\Lambda$ by the action of the pure Artin group. By \cite[Proposition 3.1]{huang2024Dn}, it suffices to consider the case when $\pi(\omega)$ is a single edge.

We can assume without loss of generality that $\pi(\omega)\subset D_\cb\subset S_\ca$. As $\bD_\cb$ can be identified with the subcomplex of $\bSD_\ca\cong \Delta_\Lambda$ which is a component of the inverse image of $D_\cb$ under the map $\bSD_\ca\to S_\ca$, we can assume $\omega\subset \bD_\cb$. By Proposition~\ref{prop:An Helly} and Lemma~\ref{lem:big lattice}, there is a vertex $z\in \bD_\cb$ which is the common upper bound for $\{x_1,x_2,x_3\}$ in $(V\bD_\cb,\le)$. As $(V\bD_\cb,\le)$ is bowtie free, we know $y_i$ is the join of $x_i$ and $x_{i+1}$. Thus $y_i<z$ in $(V\bD_\cb,\le)$. Then $z$ is of type $s_j$ with $j\ge i+2$. If $j=i+2$, then $z\in \Delta_{\Lambda,\Lambda'}$ and we are done. Now assume $j>i+2$. Let $\lk^-(z,\Delta_\Lambda)$ be the full subcomplex of $\lk(z,\Delta_\Lambda)$ spanned by vertices $<z$. Then $\omega\subset \lk^-(z,\Delta_\Lambda)$. Note that $\lk^-(z,\Delta_\Lambda)\cong \Delta_{\Lambda_j}$ where $\Lambda_j$ is a Coxeter diagram of type $A_{j-1}$. By the same  argument as before, we find $z'\in \lk^-(z,\Delta_\Lambda)$ such that $\omega\subset \lk^-(z',\Delta_\Lambda)$. Note that $z'<z$. Repeating this procedure finitely many times will eventually give $z_0\in \Delta_\Lambda$ of type $\hat t_3$ such that $\omega\subset \lk^-(z_0,\Delta_\Lambda)$. Then $z_0\in \Delta_{\Lambda,\Lambda'}$, as desired.
\end{proof}

The $n=3$ case of Theorem~\ref{thm:weaklyflagA} was previously known \cite[Lemma 4.2]{charney2004deligne}.

By Theorem~\ref{thm:weaklyflagA}, Theorem~\ref{thm:bowtie free} and \cite[Theorem 5.2]{goldman2023cat}, we have the following.
\begin{cor}
	\label{cor:cat(1)}
		Suppose $\Lambda$ is a Coxeter diagram of type $A_n$. Let $\Lambda'\subset \Lambda$ be a linear subgraph made of three nodes. Then $\Delta_{\Lambda,\Lambda'}$ with its simplices equipped with $A_3$-shape is CAT$(1)$.
\end{cor}

\section{Some sub-arrangements of the $H_3$-arrangement}
\label{sec:subarrangement}
It is natural to ask whether Falk complexes associated deconing of arrangement of type $H_3$ can be equipped with an equivariant non-positive curved metric, similar to what happens in Section~\ref{subsec:injective} for type $A_n$. This is unfortunately not clear. However, if we are willing to consider the deconing of sub-arrangements of type $H_3$, then it is possible to arrange non-positive curved metric on the associated Falk subcomplex. However, it is rather subtle to decide which sub-arrangements we should use - if the sub-arrangement is too small, then it is useless for our ultimate goal, namely to understand 6-cycles in the Artin complex of type $H_3$; if the sub-arrangement is too large, then there might be no way to metrize the associated Falk complex to make it non-positively curved. 

This section is a preparation of Section~\ref{sec:H3}. More precisely, we discuss two sub-arrangements of the $H_3$-arrangements and prove some useful properties on the structure of their Falk complexes, which will be used in Section~\ref{sec:H3}. The reader can start with Section~\ref{sec:H3}, and refer back to this section if necessary.

\subsection{Auxiliary arrangement I}
\label{subsec:aug1}
Let $W_S$ be the Coxeter group of type $H_3$. Suppose $S=\{a,b,c\}$, with $m_{ab}=5$, $m_{bc}=3$ and $m_{ac}=2$.
Let $\ca$ be the collection of reflection hyperplanes in $\mathbb R^3$ arising from the canonical representation $W_S\to GL(3,\mathbb R)$.

Let $\bC$ be the simplicial complex obtained by intersecting the unit sphere of $\mathbb R^3$ with elements in $\ca$. Then $\bC$ is the Coxeter complex associated with $W_S$. Each element in $\ca$ gives a \emph{wall} of $\bC$. 

\begin{definition}[Auxiliary sub-arrangement I]
	\label{def:arrangment}
	We define a sub-collection of walls in $\bC$ as follows. Takes three consecutive vertices $\{\theta_i\}_{i=1}^3$ in a wall of $\bC$ such that $\theta_1$ and $\theta_3$ are of type $\hat c$, and $\theta_2$ is type $\hat b$. Let $\mathcal H$ be the collection of walls of $\bC$ which passes at least one of $\theta_1,\theta_2$ or $\theta_3$. See Figure~\ref{fig:1} left for $\mathcal H$. We also think $\mathcal H$ as a central arrangement in $\mathbb R^3$.
	
	Let $H\subset\mathcal H$ be a wall passing through $\theta_1$. We consider the deconing of the arrangement $\mathcal H$ with respect to $H$. This gives arrangement $\mathcal H'$ of affine hyperplanes in $\mathbb R^2$, in Figure~\ref{fig:1} right.
\end{definition}

\begin{figure}[h]
	\centering
	\includegraphics[scale=0.7]{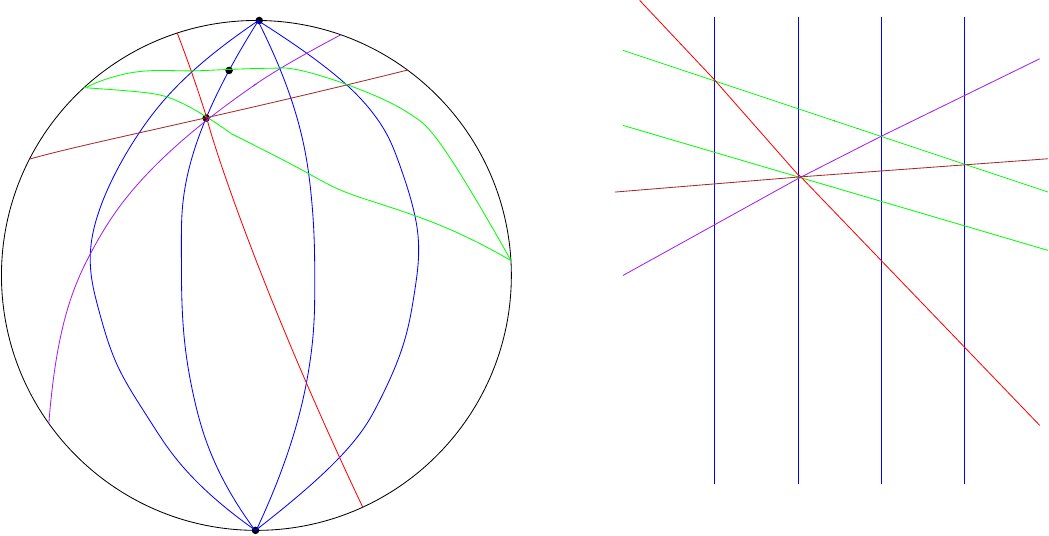}
	\caption{Auxiliary sub-arrangement I.}
		\label{fig:1}
\end{figure}	

Let $\Si_{\mathcal H'}$ (resp. $\Si_{\mathcal H}$) be the dual polyhedron (cf. Section~\ref{subsec:zonotope}) associated with $\mathcal H'$ (resp. $\mathcal H$), see Figure~\ref{fig:2} left.
Recall that there is an embedding $i_H:\Si_{\ch'}\to \Si_\ch$ whose image is a maximal subcomplex of $\Si_\ca$ which is contained in one side of $H$.
Let $\widehat \Si_{\mathcal H'}$ (resp. $\widehat \Si_{\mathcal H}$) be the associated Salvetti complex. Let $X=\Sigma_{\mathcal H'}$ and $\wX=\widehat \Sigma_{\mathcal H'}$. Then the embedding $i_H:\Si_{\ch'}\to \Si_\ch$ induces an embedding $\hat i_H:\od_{\ch'}\to\od_\ch$ which is $\pi_1$-injective by Lemma~\ref{lem:injective}.

We now define a collection of subcomplexes of $X$ and $\wX$.
Denote the four vertical walls of $\mathcal H'$ by $h_1,h_2,h_3,h_4$ (from left to right). Let $X_i$ be the union of all closed cell of $X$ which has non-trivial intersection with $h_i$. Let $\widehat X_i$ be the subcomplex of $\wX$ associated with $X_i$. For $i=1,2,3$, let $\widehat Y_i=\widehat X_i\cap \widehat X_{i+1}$.

We define a family of subcomplexes $\{X_{ij}\}_{1\le i\le 4,1\le j\le 2}$ of $X$ as follows. For $i=1,3$, $X_{i1}$ is the subspace in $X_i$ colored white in Figure~\ref{fig:2} (i.e. the hexagonal face), and $X_{i2}$ is the subspace of $X_i$ colored gray in Figure~\ref{fig:2} (i.e. the union of three squares). For $i=2,4$, $X_{i1}$ is the subspace in $X_i$ colored black in $X_i$ (i.e. the square on top), and $X_{i2}$ is the subspace in $X_i$ colored white. Let $\wX_{ij}$ be the subcomplex of $\wX$ associated with $X_{ij}$.

\begin{figure}[h]
	\centering
	\includegraphics[scale=0.85]{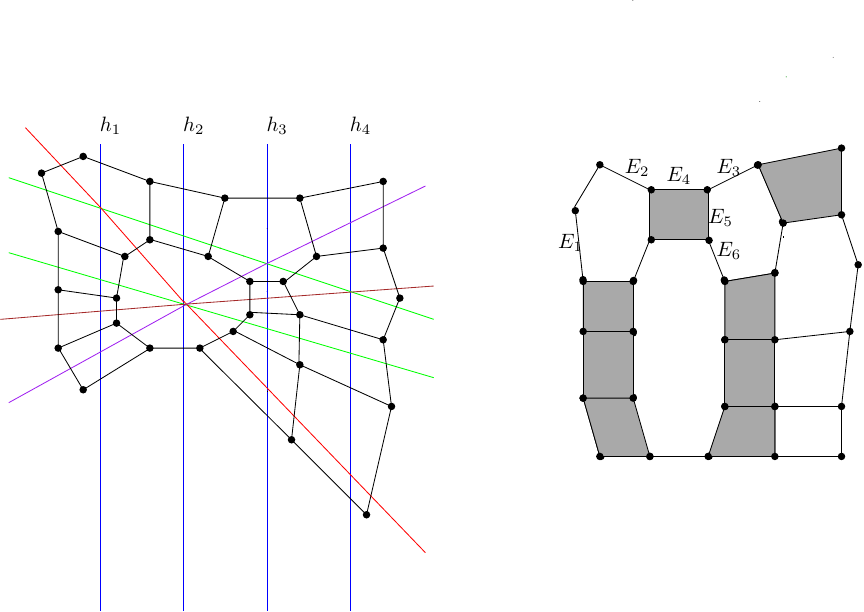}
	\caption{Dual complex.}
		\label{fig:2}
\end{figure}	

\begin{lem}
	\label{lem:BS}
	The group $\pi_1 \wX$ splits as a graph of groups whose underlying graph is a linear graph with four vertices. The vertex groups (from left to right) are $\pi_1 \wX_1,\pi_1 \wX_2$, $\pi_1 \wX_3$ and $\pi_1 \wX_4$; and the edge groups (from left to right) are $\pi_1 \wY_1$, $\pi_1\wY_2$ and $\pi_1 \wY_3$. Moreover, the inclusion $\whX_{ij}\to \whX$ is $\pi_1$-injective for $1\le i\le 4$ and $1\le j\le 2$.
\end{lem}

\begin{proof}
	For the first assertion of the lemma, it suffices to show the maps $\pi_1\wY_i\to \pi_1\wX_i$ and $\pi_1\wY_i\to \pi_1\wX_{i+1}$ induced by inclusion are injective for $i=1,2,3$. We only show this for $\pi_1\wY_1\to \pi_1\wX_1$, as the other inclusions are similar. Note that $\pi_1 \wX_1$ splits as amalgamation $\pi_1 \wX_{11}*_{\pi_1 A}\pi_1\wX_{12}$ where $A=\wX_{11}\cap \wX_{12}$, as $\pi_1 A\to \pi_1 \wX_{11}$ is injective (because of the retraction $\wX_{11}\to A$ as in Definition~\ref{def:retraction}) and $\pi_1 A\to \pi_1\wX_{12}$ is injective (because $\wX_{12}$ is a product of $A$ and $\wX_{12}\cap \wY_1$).
	By \cite[Page 6, Proposition 3]{serre2002trees}, the injectivity of $\pi_1\wY_1\to \pi_1\wX_1$ would follow if we can show $\pi_1 A \cap \pi_1(\wX_{11}\cap \wY_1)$ is trivial in $\pi_1 \wX_{11}$, and $\pi_1 A \cap \pi_1(\wX_{12}\cap \wY_1)$ is trivial in $\pi_1 \wX_{12}$. The first statement follows by considering the retraction $\wX_{11}\to A$ which maps $\wX_{12}\cap \wY_1$ to a single point, and the second statement follows from the product structure of $\wX_{12}$. 
	
	For the second assertion of the lemma, the previous paragraph already implies that $\wX_{11}\to \wX_1$ and $\wX_1\to \wX$ is $\pi_1$-injective, hence the same holds for $\wX_{11}\to \wX$. The case of $\wX_{ij}$ is similar.
\end{proof}

\begin{lem}
	\label{lem:connected}
	Let $\widetilde K$ be the universal cover of $\whX_1\cup \whX_2$. Let $T_1$ be a lift of $\whX_{ij}$ in $\widetilde K$, and let $T_2$ be a lift $\whX_{i'j'}$, with $1\le i,i',j,j'\le 2$. If $T_1\cap T_2\neq\emptyset$, then $T_1\cap T_2$ is connected.
\end{lem}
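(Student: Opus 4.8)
The plan is to analyze directly how lifts of the various pieces $\whX_{ij}$ sit inside the universal cover $\widetilde K$ of $\whX_1\cup\whX_2$, using the graph-of-groups decomposition already established in Lemma~\ref{lem:BS} (restricted to the two vertex pieces $\whX_1,\whX_2$ glued along $\whY_1$) together with the standard description of the universal cover as a tree of spaces. First I would set up the Bass--Serre tree $\mathcal T$ dual to the splitting $\pi_1(\whX_1\cup\whX_2)=\pi_1\whX_1 *_{\pi_1\whY_1}\pi_1\whX_2$; every lift of $\whX_i$ to $\widetilde K$ is a component of the preimage $\rho^{-1}(\whX_i)$ under $\rho:\widetilde K\to \whX_1\cup\whX_2$ and corresponds to a vertex of $\mathcal T$, and every lift of $\whY_1$ corresponds to an edge. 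The key input is that each $\whX_i$ itself decomposes, by the argument inside the proof of Lemma~\ref{lem:BS}, as an amalgam $\whX_{i1}*_{A_i}\whX_{i2}$ with injective edge maps (coming from the retractions of Definition~\ref{def:retraction} and the product structures). Thus each lift of $\whX_i$ in $\widetilde K$ is itself a tree of spaces whose vertex spaces are the lifts of $\whX_{i1}$ and $\whX_{i2}$ and whose edge spaces are lifts of $A_i$.

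The heart of the argument is then a case analysis on the pair $(T_1,T_2)$ according to whether the two lifts lie over the same $\whX_i$ or over different ones. If $T_1$ and $T_2$ are lifts of pieces in the \emph{same} $\whX_i$ (say both over $\whX_1$), then either they are lifts lying in the same lift of $\whX_1$ — in which case connectedness of $T_1\cap T_2$ follows from the tree-of-spaces structure of that lift of $\whX_1$, since the intersection of two adjacent (or equal) vertex spaces along an edge space in a tree of spaces is the edge space, which is connected, being a lift of the connected complex $A_i$ — or they lie in different lifts of $\whX_1$, which are separated in $\mathcal T$ by at least one lift of $\whY_1$, forcing $T_1\cap T_2$ to be contained in a single connected lift of $\whY_1\cap\whX_{ij}$ (or be empty). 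If $T_1$ is a lift of a piece of $\whX_1$ and $T_2$ a lift of a piece of $\whX_2$, then any intersection must occur inside a lift of $\whY_1=\whX_1\cap\whX_2$; here I would use that $\whY_1$ (and its intersections with the $\whX_{ij}$) are connected and $\pi_1$-injective, so a lift of $\whY_1$ is connected, and $T_1\cap T_2$ is the intersection of $T_1$, $T_2$ with this lift, which I would identify with a lift of $\whX_{1j}\cap\whY_1$ or $\whX_{2j'}\cap\whY_1$ — again connected by inspection of Figure~\ref{fig:2}. Throughout, the geometric content reduces to checking, for the finitely many pairs of pieces $X_{ij}$ and $X_{i'j'}$ with $1\le i,i',j,j'\le 2$, that the relevant intersection subcomplex of $X$ (hexagon, union of squares, single square, edges) is connected and that the retraction maps collapse the complementary pieces as needed — this is a finite, explicit verification using the pictures in Figure~\ref{fig:2}.

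The main obstacle I expect is bookkeeping rather than conceptual: one must be careful that ``lift'' is taken with respect to the correct universal cover (that of $\whX_1\cup\whX_2$, not of the ambient $\wX$ or $\bSD_\ca$), and that the edge spaces of the inner amalgams $\whX_i = \whX_{i1}*_{A_i}\whX_{i2}$ genuinely inject — which we already know from the proof of Lemma~\ref{lem:BS} — so that the combined object $\widetilde K$ is honestly a tree of spaces with all maps $\pi_1$-injective and all intersection patterns of lifts governed by the combinatorics of $\mathcal T$. Once that framework is in place, connectedness of each $T_1\cap T_2$ is forced by the general principle that in a tree of spaces with connected edge spaces the intersection of any two vertex/edge subspaces is again a single edge/vertex subspace (hence connected) or empty. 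I would organize the write-up by first proving this general principle as a short lemma, then dispatching the finitely many geometric cases by reference to Figure~\ref{fig:2}.
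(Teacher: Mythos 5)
Your approach is genuinely different from the paper's and, as sketched, has a gap at its crux. The paper does not use the Bass--Serre tree or a tree-of-spaces decomposition at all. Instead, it argues directly by contradiction using the retraction map $\Pi_{\widehat F}$ of Definition~\ref{def:retraction}: assuming $T_1\cap T_2$ has two components $S_1,S_2$, it takes paths $\wtP_i\subset T_i$ between a point of $S_1$ and a point of $S_2$, projects them to homotopic-rel-endpoints paths $P_1,P_2$ in $\whX_1\cup\whX_2$, applies the retraction $r\colon\whX\to\whX_{11}$ (which fixes $P_1\subset\whX_{11}$ pointwise and sends $\whX_{22}$ into $\whX_{11}\cap\whX_{22}$ by Lemma~\ref{lem:retraction property}), and concludes that $P_1$ is homotopic rel endpoints in $\whX_{11}$ to a path inside $\whX_{11}\cap\whX_{22}$, forcing $S_1=S_2$. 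The retraction does all the work in one stroke and avoids any analysis of how the universal cover is assembled.

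The gap in your argument is in the main case, when $T_1$ is a lift of $\whX_{11}$ and $T_2$ a lift of $\whX_{22}$. You correctly reduce to the situation where both sit inside adjacent vertex spaces of the outer Bass--Serre tree, so that $T_1\cap T_2$ lies inside a single lift $\widetilde Y$ of $\whY_1$. But you then assert that $T_1\cap T_2$ ``must occur inside a single connected lift'' and is ``again connected by inspection of Figure~\ref{fig:2}.'' This does not follow from the general tree-of-spaces principle you invoke: that principle governs intersections of the vertex and edge spaces of the decomposition you actually set up, and the $\whX_{ij}$ are \emph{not} vertex or edge spaces of the splitting $\pi_1(\whX_1\cup\whX_2)=\pi_1\whX_1*_{\pi_1\whY_1}\pi_1\whX_2$. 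To make your route rigorous you would need to establish, separately, that $T_1\cap\widetilde Y$ and $T_2\cap\widetilde Y$ are connected and that their intersection is connected --- which is essentially the same shape of problem one level down, now inside $\widetilde Y$. One could in principle set up a single finer tree of spaces whose vertex spaces are lifts of the $\whX_{ij}$, but this requires reconciling the inner amalgams $\whX_i=\whX_{i1}*_{A_i}\whX_{i2}$ with the outer gluing along $\whY_1$ (in particular, verifying that $\whY_1$ sits compatibly with these amalgams), and this is not an inspection-of-the-figure matter; it is exactly the bookkeeping you flagged as the ``main obstacle,'' but it is not merely bookkeeping. The paper's retraction argument is both more direct and more robust, and I would recommend you study it: it also reappears, in nearly identical form, in the proofs of Lemma~\ref{lem:transfer1}, Lemma~\ref{lem:piangle}, and several places in Section~\ref{sec:H3}, so internalizing that technique pays off later in the paper.
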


\begin{proof}
	We assume $\whX_{ij}=\whX_{11}$ and $\whX_{i'j'}=\whX_{22}$. The other cases are similar. Suppose $T_1\cap T_2$ is not connected. Then we take two connected components $S_1$ and $S_2$ of $T_1\cap T_2$. Then $S_1$ and $S_2$ are two different lifts of $\whX_{11}\cap \whX_{22}$. For $i=1,2$, let $\wtP_i\subset T_i$ be a path from $x_1\in S_1$ to $x_2\in S_2$. Let $P_i\subset \whX_1\cup \whX_2$ be the image of $\wtP_i$ under the covering map. Then $P_1$ and $P_2$ are homotopic rel endpoints in $\whX_1\cup \whX_2$. Now consider the retraction map $r:\whX\to \whX_{11}$ (cf. Definition~\ref{def:retraction}) which restricts to a retraction map $r:\whX_1\cup \whX_2\to \whX_{11}$. Then $r(P_1)=P_1$ and $r(P_2)$ are homotopic rel endpoints in $\whX_{11}$. As $r(\whX_{22})=\whX_{11}\cap \whX_{22}$, we know $r(P_2)\subset \whX_{11}\cap \whX_{22}$. Thus $P_1$ is homotopic rel endpoints in $\whX_{11}$ to a path in $\whX_{11}\cap \whX_{22}$. Hence $S_1=S_2$, which is a contradiction, and the lemma is proved.
\end{proof}

We now define a simple complex of group structure $\mathcal U$ on $\pi_1 (\wX_1\cup \wX_2)$ as in Figure~\ref{fig:3}, where the underlying complex $U$ is a union of two triangles and all the groups over 2-faces are trivial. Local groups over vertices and edges are fundamental groups of subcomplexes of $\wX$ as labeled in Figure~\ref{fig:3}. The morphisms between the local groups are induced by inclusions of the associated subcomplexes. By Lemma~\ref{lem:injective}, all the morphisms between the local groups are injective, hence $\mathcal U$ is a simple complex of groups. It follows from Lemma~\ref{lem:BS} that each local group injects into $\pi_1 (\wX_1\cup \wX_2)$. Thus $\mathcal U$ is developable with $\pi_1 (\wX_1\cup \wX_2)=\pi_1 \mathcal U$. 

\begin{figure}[h]
	\label{fig:3}
	\centering
	\includegraphics[scale=0.7]{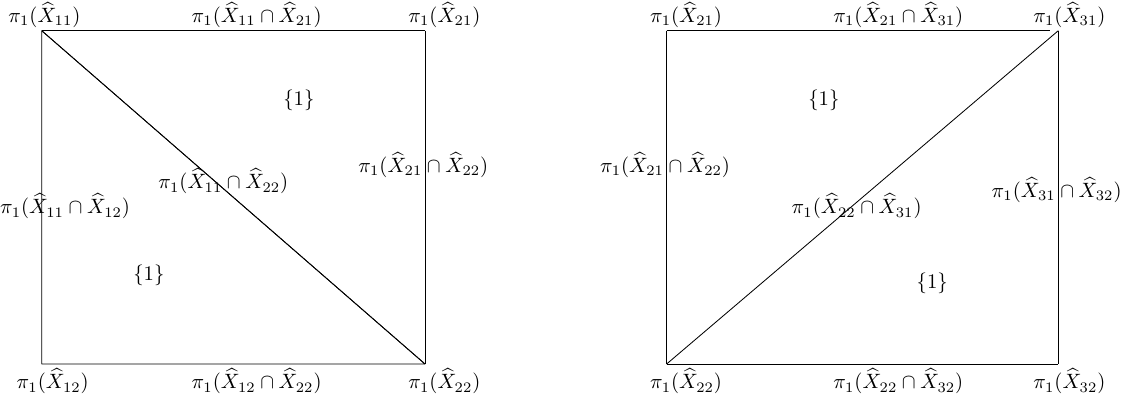}
\end{figure}

Let $\bU$ be the development complex of $\mathcal U$ (see \cite[Theorem II.12.18]{BridsonHaefliger1999}). We now give an alternative description of $\bU$. Let $\widetilde K$ be the universal cover of $\whX_1\cup \whX_2$. Vertices of $\bU$ are in 1-1 correspondence with lifts of $\whX_{ij}$ in $\widetilde K$ ($1\le i\le 2,1\le j\le 2$). These lifts are called \emph{standard subcomplexes} of $\widetilde K$.
Each vertex of $\bU$ has a well-defined \emph{type}, which is one of $\{\whX_{ij}\}_{1\le i\le 2,1\le j\le 2}$. Two vertices of $\bU$ are adjacent in $\bU$ if their associated subcomplexes of $\widetilde K$ have non-empty intersection, and their types correspond to adjacent vertices in $U$. Three mutually adjacent vertices of $\bU$ form a triangle, if their associated subcomplexes have non-empty common intersection.

There is a natural action $\pi_1(\whX_1\cup \whX_2)\act \bU$, with the quotient complex being $U$. We metrize $U$ such that it is made of two flat right-angled isosceles triangles of equal size, with right-angles at the vertices labeled by $\pi_1(\whX_{21})$ and $\pi_1(\whX_{12})$. This pulls back to a piecewise Euclidean metric on $\bU$.

\begin{lem}
	$\bU$ is CAT$(0)$.
\end{lem}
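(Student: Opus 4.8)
The plan is to show $\bU$ is CAT$(0)$ by verifying the link condition (Gromov's criterion): $\bU$ is a simply connected piecewise Euclidean complex with finitely many shapes, so by \cite[Chapter II.5]{BridsonHaefliger1999} it suffices to check that the link of every vertex is CAT$(1)$. Since simple connectivity of $\bU$ was already established from the description via the simply connected universal cover $\widetilde K$, the work is entirely local. Because $U$ is built from two flat right-angled isosceles triangles glued along an edge, with right angles at the vertices of type $\pi_1(\whX_{21})$ and $\pi_1(\whX_{12})$ and angles $\pi/4$ at the other vertices, the links of vertices in $\bU$ are metric graphs whose edges have length $\pi/4$ or $\pi/2$; a graph all of whose cycles have length $\ge 2\pi$ is CAT$(1)$, so the task reduces to a girth estimate on each vertex link.

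First I would describe the four types of vertex links explicitly. For a vertex $v$ of type $\whX_{21}$ (the right-angle vertex in one triangle), every corner of a triangle at $v$ contributes an edge of length $\pi/2$; the link is a bipartite-type graph recording which lifts of $\whX_{11}$ and which lifts of $\whX_{22}$ meet the given lift of $\whX_{21}$, with an edge whenever the three have a common intersection point. By Lemma~\ref{lem:connected}, any two such standard subcomplexes of $\widetilde K$ meet in a connected set, which forces the link to contain no bigon between a fixed pair of vertices, hence girth $\ge 4$ edges $= 2\pi$. The same argument handles the type $\whX_{12}$ vertex. For vertices of type $\whX_{11}$ or $\whX_{22}$ (the $\pi/4$ corners), the link is a graph with edges of length $\pi/4$ coming from the two triangles; here I would need to see a cycle must have at least $8$ edges. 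This again should come down to Lemma~\ref{lem:connected} together with the fact that adjacency among standard subcomplexes of the relevant types, organized according to the combinatorics of $U$, cannot produce short cycles — I would unwind the definition: a cycle of length $2k$ in such a link corresponds to an alternating chain of $k$ lifts of $\whX_{21}$ (or $\whX_{12}$) and $k$ lifts of the opposite type, cyclically sharing intersection points with the fixed lift of $\whX_{11}$, and project this chain into $\whX_1\cup \whX_2$ via the covering map, using the retraction $r:\whX\to\whX_{11}$ of Definition~\ref{def:retraction} as in the proof of Lemma~\ref{lem:connected} to rule out the short cases.

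The main obstacle I expect is the $\pi/4$-corner case, i.e.\ verifying girth $\ge 8$ in the links of vertices of type $\whX_{11}$ and $\whX_{22}$: unlike the right-angle case, ruling out $4$-cycles and $6$-cycles is not immediate from Lemma~\ref{lem:connected} alone and requires a genuine argument about how lifts of the various subcomplexes can intersect inside $\widetilde K$. The strategy there is to exploit the product structure of $\whX_{12}$ (it is a product of $A=\whX_{11}\cap\whX_{12}$ with $\whX_{12}\cap\whY_1$) and the graph-of-groups decomposition of Lemma~\ref{lem:BS}, pushing any hypothetical short cycle forward under the retraction $r$ to a loop in $\whX_{11}$ that is forced to be nullhomotopic inside a single intersection subcomplex, contradicting that the lifts involved were distinct. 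Once all four link computations are in place, Gromov's link condition gives that $\bU$ is locally CAT$(0)$, and combined with simple connectivity this yields that $\bU$ is CAT$(0)$, completing the proof.
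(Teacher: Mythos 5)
Your overall strategy — Gromov's link condition on a simply connected piecewise Euclidean complex — matches the paper, and you correctly flag which links are easy and which are hard. The treatment of the right-angle vertices (types $\whX_{12}$ and $\whX_{21}$) is fine in spirit: the paper observes that the product decomposition of $\whX_{12}$ (resp.\ $\whX_{21}$) forces the link to be a complete bipartite graph with $\pi/2$-edges, hence CAT$(1)$; your bigon-free argument via Lemma~\ref{lem:connected} plus the bipartite structure coming from the combinatorics of $U$ amounts to the same thing.

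The genuine gap is in the hard case: the links of vertices of type $\whX_{11}$ and $\whX_{22}$, where you need girth $\ge 8$ (edges have length $\pi/4$ and the graph $\Gamma$ is bipartite since $D_1$ and $D_3$ are only adjacent to $D_2$, so you must rule out $4$- and $6$-cycles). You propose "pushing any hypothetical short cycle forward under the retraction $r$ to a loop in $\whX_{11}$ that is forced to be nullhomotopic inside a single intersection subcomplex," but this retraction trick is precisely what proves Lemma~\ref{lem:connected} and it only kills bigons; it does not see $4$- or $6$-cycles. The paper's actual argument has two ingredients you do not mention. First, it invokes Falk's lemma \cite[Lemma 3.6]{falk1995k}: a null-homotopic edge loop $P$ in $\wX_{22}$ that does not backtrack must contain at least two disjoint $H$-segments for every hyperplane $H$ dual to $X_{22}$. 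This forces any embedded cycle in $\Gamma$ to visit at least two distinct $D_1$-vertices, two distinct $D_2$-vertices, and a $D_3$-vertex — killing all $4$-cycles outright and leaving exactly one combinatorial type of $6$-cycle ($D_1,D_2,D_3,D_2,D_1,D_2$) to exclude. Second, to exclude that $6$-cycle the paper passes to the splitting $\pi_1(\wX_{22}) \cong \pi_1(D_2\cup D_3)\oplus \mathbb{Z}$ from Lemma~\ref{lem:injective}, rewrites the putative relation as an equality of two words in the free product $\pi_1(D_2)*\pi_1(D_3)$, and compares geodesic lengths in the Bass–Serre tree of $D_2\cup D_3$ to derive a contradiction. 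Your proposal — "exploit the product structure of $\whX_{12}$ and the graph-of-groups decomposition of Lemma~\ref{lem:BS}" — points at the wrong graph of groups (Lemma~\ref{lem:BS} splits $\pi_1\wX$ over the $\wY_i$, not $\pi_1(D_2\cup D_3)$ over a point) and omits the Falk-lemma count entirely, which is what narrows the problem to a single configuration in the first place. Without that input, the retraction argument you sketch has nothing concrete to contradict, and the proof does not go through.
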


\begin{proof}
	As $\bU$ is simply-connected \cite[Corollary II.12.21]{BridsonHaefliger1999}, we need to show the link of each vertex is CAT$(1)$. As $\whX_{12}$ splits as a product of $\whX_{11}\cap \whX_{12}$ and $\whX_{12}\cap \whX_{22}$, we know the link of a vertex of type $\whX_{22}$ is a complete bipartite graph with each edge having length $=\pi/2$. Thus this link is CAT$(1)$. Similarly the link at any vertex of type $\whX_{21}$ is CAT$(1)$.
	
	Now we look at a vertex $v\in\bU$ of type $\wX_{22}$. Let $\Gamma$ be its link.
	Let 
	\begin{center}
	$D_1=\wX_{22}\cap \wX_{21}$, $D_2=\wX_{22}\cap \wX_{11}$ and $D_3=\wX_{22}\cap \wX_{12}$.
	\end{center}
 Then vertices of $\Gamma$ are in one to one correspondence with lifts of $D_i$ in $\wtX_{22}$ for $1\le i\le 3$. Two vertices are adjacent if and only if the associated lifts have non-trivial intersection. We say a vertex of $\Gamma$ is of type $D_i$ if it corresponds to a lift of $D_i$. Each edge of $\Gamma$ has length $=\pi/4$.
	
	Take an embedded cycle in $\Gamma$ with consecutive vertices  $\{w_i\}_{i\in \mathbb Z/n\mathbb Z}$. Let $\wtX_i$ be the subcomplex corresponding to $w_i$. Let $\wtP_i$ be an edge path from $\wtX_{i-1}\cap \wtX_i$ to $\wtX_i\cap \wtX_{i+1}$. Let $\wtP$ be the loop obtained by the concatenation of $\{\wtP_i\}_{i=1}^n$, which projects to a loop $P=P_1P_2\cdots P_n$ in $\wX_{22}$ which is null-homotopic in $\wX_{22}$. We can assume $P$ does not backtrack locally. Given a hyperplane $H\in \mathcal H'$ as in Definition~\ref{def:arrangment1}, an $H$-segment of $P$ is a maximal subpath of $Q\subset P$ such that the image of $Q$ under the projection to the dual polyhedron $\wX\to X$ is a single edge dual to $H$. By \cite[Lemma 3.6]{falk1995k}, for each hyperplane $H$ of $\mathcal H'$ dual to $X_{22}$, $P$ contains at least two disjoint $H$-segments.
	
	It follows that $\{w_i\}_{i\in \mathbb Z/n\mathbb Z}$ contains at least two different members of type $D_1$, two different members of type $D_2$ and one member of type $D_3$. As each edge of $\Gamma$ has length $\pi/4$ and $\Gamma$ is bipartite, the only possibility of such cycle having length $<2\pi$ is the case of the cycle has 6 vertices, with $w_1,w_5$ being of type $D_1$, $w_2,w_4,w_6$ being of type $D_2$ and $w_3$ being of type $D_3$. In this case $P_1,P_5$ are loops based at $D_1\cap D_2$, $P_3$ is a loop based at $D_2\cap D_3$ and $P_6$ is a loop based at $D_1\cap D_2$. 
	
	By consider the image of $P$ under the retraction (cf. Definition~\ref{def:retraction}) $\Pi_{D_1}:\wX_{22}\to D_1$, we know the concatenation $P_1P_5$ is null-homotopic in $D_1$. Thus we can assume $P_5=P^{-1}_1$ (i.e. $P_5$ is the inverse path of $P_1$). As $P$ is null-homotopic in $\wX_{22}$, we know $[P_2P_3P_4]=[P^{-1}_1P^{-1}_6P_1]$ represents the same element in $\pi_1(\wX_{22},x_0)$ where $x_0=D_1\cap D_2$ (we use $[\cdots]$ to denotes homotopy class of paths in $\wX_{22}$ rel end points). By Lemma~\ref{lem:injective}, $[P_1]=[QR]$ where $Q,R$ are loops, $Q\subset D_2\cup D_3$ and $[R]$ is in the center of $\pi_1(\wX_{22},x_0)$. Thus $[P^{-1}_1P^{-1}_6P_1]=[Q^{-1}P^{-1}_6Q]$.  
	
	We decompose $Q$ into subsegments alternating between $D_2$ and $D_3$ as $Q=Q_1Q_2\cdots Q_n$ such that each $Q_i$ is a maximal sub-segment of $Q$ in $D_2$ or $D_3$. We can assume each $Q_i$ is either a homotopically non-trivial loop in the associated $D_2$ or $D_3$, or a path with two different endpoints. As $Q$ is a loop, we know that $n$ is always an odd number. Moreover $n\ge 3$ as $Q$ is not homotopic rel endpoints in $\wX_{22}$ to a loop contained in $D_2$ - otherwise $P_1$ and $P_6$ give commuting elements in $\pi_1(\wX_{22})$, which is a contradiction as Lemma~\ref{lem:injective} implies that $P_1$ and $P_6$ generate a free group in $\pi_1(\wX_{22})$.

	Now we can write $$[Q^{-1}_nQ^{-1}_{n-1}\cdots Q^{-1}_1P^{-1}_6 Q_1\cdots Q_n]=[P_2P_3P_4].$$ By Lemma~\ref{lem:injective},
	\begin{center}
		$Q^{-1}_nQ^{-1}_{n-1}\cdots Q^{-1}_1P^{-1}_6 Q^{-1}_1\cdots Q_n$ and $P_2P_3P_4$
	\end{center}  represent the same element in $\pi_1(D_2\cup D_3)$. As $n\ge 3$, we know $$Q^{-1}_nQ^{-1}_{n-1}\cdots Q^{-1}_1P^{-1}_6 Q_1\cdots Q_n$$ is a concatenation of at least 5 sub-segments alternating in $D_2$ and $D_3$ such that each of them is either a homotopically non-trivial loop in the associated $D_2$ or $D_3$ (note that $Q^{-1}_1P^{-1}_6 Q_1$ is homotopically non-trivial in $D_2$ as $P^{-1}_6$ is), or a path with two different endpoints. On the other hand, $P_2P_3P_4$ only has 3 such sub-segments. 
	
	We view $D_2\cup D_3$ as a graph of spaces, with the underlying graph being a single edge. The two vertex spaces are $D_2$ and $D_3$, and the edge space is a single point $D_2\cap D_3$. Let $T$ be the associated Bass-Serre tree. Then $Q^{-1}_nQ^{-1}_{n-1}\cdots Q_1P^{-1}_6 Q_1\cdots Q_n$ gives a geodesic segment in $T$ with at least 5 vertices, while $P_2P_3P_4$ gives a geodesic segment with 3 vertices. Thus we can not have $[Q^{-1}_nQ^{-1}_{n-1}\cdots Q_1P^{-1}_6 Q_1\cdots Q_n]=[P_2P_3P_4]$. It follows that $\Gamma$ is CAT$(1)$.
	
	It remains to verify the link at a vertex of type $\whX_{11}$. Each edge in the link again has length $=\pi/4$. The girth of the link is $\ge 8$ by \cite[Lemma 3.6]{falk1995k} and the argument as before.
\end{proof}

Let $\wZ=\wX_{11}\cup \wX_{22}\cup \wX_{21}$. Then there is an embedding $\wZ\to \od_\ca$ defined as follows. Consider $\wZ\subset \wX=\od_{\ch'}\to \od_{\ch}$ where the second map is the map $\hat i_H$ after Definition~\ref{def:arrangment}. Let $\kappa:\Si_\ca\to \Si_\ch$ and $\hat \kappa:\od_{\ca}\to \od_{\ch}$ be as in Section~\ref{ss:col}. Note that there is a unique face $F$ of $\Si_\ca$ such that $\kappa(F)=i_H(X_{11})$. Moreover, the collection of elements of $\ca$ dual to $F$ is identical to the collection of elements of $\ch$ dual to $i_H(X_{11})$. Thus $\kappa$ maps $F$ homeomorphically to $i_H(X_{11})$, and consequently $\hat \kappa$ map $\widehat F$ homeomorphically to $\hat i_H (\wX_{11})$. Similarly discussion applies to $\hat i_H (\wX_{22})$ and $\hat i_H (\wX_{21})$, which gives a lift $\hat i_H(\wZ)$ to a subcomplex of $\od_{\ca}$. Let $i:\wZ\to \od_{\ca}$ be such embedding.

\begin{lem}
	\label{lem:transfer1}
	Let $P\subset \wZ$ be a loop. If $i(P)$ is null-homotopic in $\od_\ca$, then $P$ is null-homotopic in $\wX_1\cup\wX_2$.
\end{lem}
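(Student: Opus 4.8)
The plan is to use the retraction $\Pi_{\widehat F}$ from Definition~\ref{def:retraction} applied to a suitable subcomplex of $\od_\ca$, together with the graph-of-groups structure from Lemma~\ref{lem:BS}, to transfer the null-homotopy of $i(P)$ in $\od_\ca$ back to a null-homotopy of $P$ in $\wX_1\cup\wX_2$. First I would recall that $\wZ=\wX_{11}\cup\wX_{22}\cup\wX_{21}$ and that $i$ was built out of the collapsing map $\hat c:\od_\ca\to\od_\ch$ together with the $\pi_1$-injective embedding $\hat i_H:\od_{\ch'}\to\od_\ch$, each factoring homeomorphically through the relevant faces. So it suffices to show that $\pi_1(\wZ)\to\pi_1(\od_\ca)$ (via $i$) is injective \emph{and} that the image of $\pi_1(\wZ)$ inside $\pi_1(\wX_1\cup\wX_2)$ under the inclusion $\wZ\hookrightarrow\wX_1\cup\wX_2$ is what controls $P$; concretely, the statement we want is that the composite $\pi_1(\wZ)\to\pi_1(\wX_1\cup\wX_2)$ is injective on the subgroup normally generated by (the class of) $P$, which since we only care about a single loop reduces to injectivity of $\pi_1(\wZ)\to\pi_1(\wX_1\cup\wX_2)$ composed with the fact that $i(P)\simeq\ast$ forces $[P]=1$ in $\pi_1(\wZ)$.

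Thus the real work is two separate injectivity statements. For the first, $\pi_1(\wZ)\to\pi_1(\od_\ca)$: decompose $\wZ$ as the amalgam $\pi_1\wX_{11}\ast_{\pi_1(\wX_{11}\cap\wX_{22})}\pi_1\wX_{22}\ast_{\pi_1(\wX_{22}\cap\wX_{21})}\pi_1\wX_{21}$ (the three pieces meeting in a line, as in Figure~\ref{fig:2}), using that each inclusion of an intersection into a piece is $\pi_1$-injective by the product structure of the pieces and the retractions of Definition~\ref{def:retraction}, exactly as in the proof of Lemma~\ref{lem:BS}. Then observe that $i$ sends each $\wX_{ij}$ homeomorphically onto a standard subcomplex $\widehat F$ of $\od_\ca$, so each local group injects into $\pi_1\od_\ca$ by the retraction $\Pi_{\widehat F}$; applying the retraction onto, say, $\widehat{i(\wX_{11})}$ and using Lemma~\ref{lem:retraction property} to see where the other pieces go, one runs the standard normal-form argument for graphs of groups (Serre, \cite[Page~6, Proposition~3]{serre2002trees}) to conclude injectivity. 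For the second, $\pi_1(\wZ)\to\pi_1(\wX_1\cup\wX_2)$: again $\wX_1\cup\wX_2$ carries the graph-of-groups structure $\pi_1\wX_1\ast_{\pi_1\wY_1}\pi_1\wX_2$ of Lemma~\ref{lem:BS}, and $\wZ=\wX_{11}\cup\wX_{21}\cup\wX_{22}$ with $\wX_{11},\wX_{21}\subset\wX_1$ and $\wX_{22}\subset\wX_2$, so a loop in $\wZ$ that is trivial in $\pi_1(\wX_1\cup\wX_2)$ must, after putting it in normal form with respect to the edge group $\pi_1\wY_1$, already be trivial inside $\pi_1\wX_1$ or $\pi_1\wX_2$ by Lemma~\ref{lem:connected} (connectedness of intersections of lifts, which prevents the loop from ``escaping and returning'' along disconnected fibers). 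Combining the two: $i(P)\simeq\ast$ in $\od_\ca$ $\Rightarrow$ $[P]=1$ in $\pi_1\wZ$ (first injectivity) $\Rightarrow$ $P\simeq\ast$ in $\wX_1\cup\wX_2$ (the inclusion $\wZ\hookrightarrow\wX_1\cup\wX_2$ on $\pi_1$, together with $[P]=1$ already).

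Actually, the cleanest route is: (a) show $i_\ast:\pi_1\wZ\to\pi_1\od_\ca$ is injective; (b) note $[P]=1$ in $\pi_1\od_\ca$ hence $[P]=1$ in $\pi_1\wZ$; (c) observe $\wZ$ is a subcomplex of $\wX_1\cup\wX_2$, so $[P]=1$ in $\pi_1\wZ$ gives $[P]=1$ in $\pi_1(\wX_1\cup\wX_2)$ directly — no second injectivity statement is needed, only the inclusion of fundamental groups, which is automatic. I expect the main obstacle to be step (a): carefully matching the amalgam decomposition of $\pi_1\wZ$ with the graph-of-groups decomposition of $\pi_1\od_\ca$ via the retractions, and in particular verifying that the images of the three pieces $\wX_{11},\wX_{22},\wX_{21}$ under $i$ are ``sub-complex-separated'' enough inside $\od_\ca$ that the edge groups $\pi_1(\wX_{11}\cap\wX_{22})$ and $\pi_1(\wX_{22}\cap\wX_{21})$ remain malnormal in the appropriate sense — this is where one invokes the explicit retraction computations (Lemma~\ref{lem:compactible}, Lemma~\ref{lem:retraction property}) and the fact, from the discussion just before the lemma, that the set of hyperplanes of $\ca$ dual to the relevant face $F$ agrees with the set of hyperplanes of $\ch$ dual to $i_H(X_{11})$.
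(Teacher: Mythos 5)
Your proposed ``cleanest route'' (a)--(b)--(c) is genuinely different from the paper's argument, and step (a) contains a real gap. First, the decomposition of $\pi_1\wZ$ you write down — an amalgam over a line, $\pi_1\wX_{11}\ast_{\pi_1(\wX_{11}\cap\wX_{22})}\pi_1\wX_{22}\ast_{\pi_1(\wX_{22}\cap\wX_{21})}\pi_1\wX_{21}$ — is not correct: the three pieces $\whX_{11},\whX_{22},\whX_{21}$ intersect \emph{pairwise} (this is exactly the triangle $\whX_{11}$--$\whX_{21}$--$\whX_{22}$ of the complex of groups $\mathcal U$ in Figure~\ref{fig:3}, and Lemma~\ref{lem:piangle}~(3) explicitly uses $\whX_{11}\cap\whX_{21}\neq\emptyset$), so $\wZ$ is a \emph{triangle} of groups, not a two-edge amalgam. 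Second, even granting a correct decomposition, knowing that each vertex group injects into $\pi_1\od_\ca$ via the retractions $\Pi_{\widehat F}$ does \emph{not} give injectivity of the whole colimit: one would need a normal-form or tree-action argument on the target side compatible with the decomposition, i.e.\ some malnormality/separation of the images of the edge groups, which the retraction computations you cite do not establish and which is considerably harder than anything Lemma~\ref{lem:BS} or Lemma~\ref{lem:connected} supply. So as written, (a) — injectivity of $i_\ast:\pi_1\wZ\to\pi_1\od_\ca$ — is unproven, and it is a strictly stronger assertion than what the lemma needs: the lemma only asks that $P$ die in $\pi_1(\wX_1\cup\wX_2)$, a potentially larger group.

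The paper takes a much shorter and very different route that avoids the injectivity of $i_\ast$ entirely. It post-composes with the collapsing map $\hat c:\od_\ca\to\od_\ch$; on the image of $i$ one has $\hat c\circ i=\hat i_H$, so $\hat i_H(P)$ is null-homotopic in $\od_\ch$. Lemma~\ref{lem:injective} says the deconing inclusion $\hat i_H:\od_{\ch'}\to\od_\ch$ is $\pi_1$-injective, so $P$ is null-homotopic in $\wX=\od_{\ch'}$. Finally, Lemma~\ref{lem:BS} gives $\pi_1\wX$ a linear graph-of-groups structure in which $\pi_1(\wX_1\cup\wX_2)$ is the fundamental group of a subtree, hence embeds; since $P\subset\wZ\subset\wX_1\cup\wX_2$ and $[P]=1$ in $\pi_1\wX$, it follows that $[P]=1$ in $\pi_1(\wX_1\cup\wX_2)$. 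You mention $\hat c$ and $\hat i_H$ in your opening paragraph but never actually invoke them in the argument; routing the proof through $\od_\ch$ and $\od_{\ch'}$ is the key simplification, because there the $\pi_1$-injectivity (Lemma~\ref{lem:injective}) and the Bass--Serre structure (Lemma~\ref{lem:BS}) are already in place, whereas inside $\od_\ca$ one would need to re-establish them from scratch for the union of three standard subcomplexes.
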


\begin{proof}
	Note that if $i(P)$ is null-homotopic in $\od_\ca$, then $\hat \kappa\circ i(P)=\hat i_H(P)$ is null-homotopic in $\od_\ch$. Thus $P$ is null-homotopic in $\wX=\od_{\ch'}$ by Lemma~\ref{lem:injective}. Now Lemma~\ref{lem:BS} implies $P$ is null-homotopic in $\wX_1\cup \wX_2$.
\end{proof}

We end this section by recording the following lemma for later use. For three points $x,y,z$ in a CAT$(0)$ space, we use $\angle_y(x,z)$ to denote the Alexandrov between the geodesic segments $\overline{yx}$ and $\overline{yz}$ (cf. \cite[Chapter II.3.1]{BridsonHaefliger1999}). Let $E_1,E_2$ be two edges of $X_{11}$ as in Figure~\ref{fig:2}, and let $E_3,E_5,E_6$ be the edges of $X_{13}$ in Figure~\ref{fig:2}. Let $E_4$ be the edge in $X_{12}$ in Figure~\ref{fig:2}.
For $1\le i\le 6$, let $\wE_i$ be the associated subcomplexes of $\wX$.
\begin{lem}
	\label{lem:piangle}
	Let $\widetilde K$ be the universal cover of $\wX_{1}\cup \wX_{2}$.
	Let $\{z_i\}_{i=1}^3$ be three consecutive vertices in $\mathbb U$. Let $\{T_i\}_{i=1}^3$ be the associated standard subcomplexes in $\widetilde K$. We assume one of the following situations holds:
	\begin{enumerate}
		\item $z_1,z_2,z_3$ are of type $\wX_{12},\wX_{11},\wX_{12}$ respectively and there is a path from $T_1\cap T_2$ to $T_3\cap T_2$ which projects to a homotopically nontrivial loop in $\wX_{11}\cap \wX_{21}$ or $\wE_1$;
		\item  $z_1,z_2,z_3$ are of type $\wX_{21},\wX_{22},\wX_{21}$ respectively and there is a path from $T_1\cap T_2$ to $T_3\cap T_2$ which projects to a homotopically nontrivial loop in $\hat E_6$;
		\item  $z_1,z_2,z_3$ are of type $\wX_{22},\wX_{11},\wX_{22}$ or type $\wX_{21},\wX_{11},\wX_{21}$  respectively and there is a path from $T_1\cap T_2$ to $T_3\cap T_2$ which projects to a homotopically nontrivial loop in $\wE_2$.
	\end{enumerate}
	Then $\angle_{z_2}(z_1,z_3)=\pi$.
\end{lem}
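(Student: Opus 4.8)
\textbf{Proof plan for Lemma~\ref{lem:piangle}.}

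The plan is to reduce the statement to a computation in the link $\Gamma$ of the central vertex $z_2$, using the CAT$(0)$ structure of $\bU$ established in the previous lemma. Recall that each edge of $\bU$ lies in exactly two triangles, so $\lk(z_2,\bU)$ is a graph each of whose edges has length $\pi/4$ (when $z_2$ has type $\wX_{11}$) or $\pi/4$ (when $z_2$ has type $\wX_{22}$ or $\wX_{21}$, since the right-angled isosceles triangles have angles $\pi/4$ at these two vertex types as well — here one must be careful: the right angle sits at $\wX_{21}$ and $\wX_{12}$, so the angle at $\wX_{22}$ inside each triangle is $\pi/4$). The key point is that $\angle_{z_2}(z_1,z_3)$ equals the length of the shortest path in $\Gamma$ from the direction of $z_1$ to the direction of $z_3$, and since $\bU$ is CAT$(0)$ this link is CAT$(1)$; thus it suffices to show that the two points of $\Gamma$ corresponding to $z_1$ and $z_3$ are at distance $\geq \pi$ in $\Gamma$. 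Since $\pi$ equals four edge-lengths, and $\Gamma$ is bipartite (alternating between the relevant $D_i$-types as in the proof that $\Gamma$ is CAT$(1)$), the statement will follow from a girth/separation estimate: any path in $\Gamma$ between these two directions must use at least four edges.

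First I would translate each of the three hypotheses into a homotopy statement about a loop in the appropriate vertex space $\wX_{ij}$, exactly as in the proof that $\bU$ is CAT$(0)$. In case (1), the two directions $z_1, z_3$ (both of type $\wX_{12}$) are vertices of the link of a type-$\wX_{11}$ vertex; the hypothesis that the connecting path projects to a homotopically nontrivial loop in $\wX_{11}\cap \wX_{21}$ or $\wE_1$ is precisely the condition that prevents these two link-vertices from being joined by a path of length $< \pi$, because a short path would produce a loop in $\wX_{11}$ decomposing into fewer than the minimal number of $D_i$-alternating subsegments forced by \cite[Lemma 3.6]{falk1995k}. Concretely, I would argue: if $\angle_{z_2}(z_1,z_3) < \pi$, then there is an embedded cycle of length $< 2\pi$ in $\Gamma$ through both directions (or the directions coincide), contradicting either the CAT$(1)$-ness of $\Gamma$ (established in the previous lemma, where the only length-$<2\pi$ embedded cycle was explicitly ruled out) or the nontriviality hypothesis. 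In cases (2) and (3) the argument is identical after replacing the relevant subcomplexes: case (2) concerns the link of a type-$\wX_{22}$ vertex with $z_1, z_3$ of type $\wX_{21}$ and the obstruction coming from a nontrivial loop in $\widehat E_6$; case (3) concerns the link of a type-$\wX_{11}$ vertex with the obstruction a nontrivial loop in $\wE_2$.

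The main obstacle I anticipate is bookkeeping rather than conceptual: one must verify in each of the three cases that the homotopical nontriviality hypothesis is exactly strong enough to force the minimal alternating-decomposition length (as in the $n \geq 3$ and "at least 5 subsegments" arguments in the CAT$(0)$ proof) to exceed what a path of length $< \pi$ in $\Gamma$ would allow, using the retractions of Definition~\ref{def:retraction}, Lemma~\ref{lem:injective} (to control when generators commute or generate free subgroups), and Lemma~\ref{lem:connected} (to know the relevant intersections are connected, so that "the" connecting path is well-defined up to homotopy). I expect the cleanest route is: suppose for contradiction $\angle_{z_2}(z_1,z_3) \leq 3\pi/4$; then there is a path $\gamma$ in $\Gamma$ of at most three edges from $z_1$ to $z_3$; concatenating the standard-subcomplex paths along $\gamma$ with the hypothesized path from $T_1 \cap T_2$ to $T_3 \cap T_2$ yields a loop in the vertex space $\wX_{ij}$ that is null-homotopic there; applying $\pi_1$-injectivity and the Bass--Serre tree argument for $D_2 \cup D_3$ (as in the CAT$(0)$ proof) gives a geodesic in the tree that is simultaneously too long and too short — a contradiction. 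Hence $\angle_{z_2}(z_1,z_3) \geq \pi$, and since angles in a CAT$(0)$ space are at most $\pi$, equality holds.
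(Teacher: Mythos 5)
Your overall frame is right — reduce to bounding the angular distance in $\lk(z_2,\bU)$ below by $\pi$, then derive a contradiction from the nontriviality hypothesis — and the edge-length computation ($\pi/4$ in all three cases, since $z_2$ never has type $\wX_{12}$ or $\wX_{21}$) is correct. But two points keep the proposal from closing. First, you consider the possibility of paths of $\leq 3$ edges, but since $z_1$ and $z_3$ always have the same type, they project to the same vertex $\bar z_1 = \bar z_3$ of $U$; any path from $z_1$ to $z_3$ in $\lk(z_2,\bU)$ then projects to a closed walk in the segment $\lk(\bar z_2,U)$, so $\angle_{z_2}(z_1,z_3)$ is automatically a multiple of $\pi/2$, and only the two-edge case (angle $\pi/2$) needs to be excluded. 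Your bipartiteness remark gestures at this but never commits, leaving you with extraneous cases (which are in fact type-incompatible, but you should say so).

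Second, and more importantly, the mechanism you propose for the contradiction — the ``$\pi_1$-injectivity and the Bass--Serre tree argument for $D_2\cup D_3$ as in the CAT$(0)$ proof'' — does not transfer. The sets $D_1,D_2,D_3$ there were defined inside the link of a type-$\wX_{22}$ vertex, and the tree argument was tailored to a specific 6-cycle in that link; in case (1) of this lemma $z_2$ has type $\wX_{11}$ and those spaces are not the relevant ones. The argument that actually closes the two-edge case (and is uniform across the three cases) is the retraction $\Pi_{\widehat F}$ of Definition~\ref{def:retraction}. Concretely, a two-edge path produces a vertex $z$ (of type $\wX_{22}$ in the $\widehat E_1$ sub-case of (1)) spanning triangles with $z_1,z_2$ and with $z_2,z_3$, hence nonempty triple intersections $x_1 = T_1\cap T_2\cap T$ and $x_2 = T_2\cap T\cap T_3$. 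Using Lemma~\ref{lem:connected}, one replaces $\wtP$ by a path $\wtP'$ through $x_1,x_2$ that is homotopic rel endpoints to $\wtP$ inside $T_2$; projecting to $\wX_{11}$ and applying the retraction $r\colon \wX_{11}\to\widehat E_1$ fixes the nontrivial loop $P$ while sending $P'\subset(\wX_{11}\cap\wX_{12})\cup(\wX_{11}\cap\wX_{22})$ to a point, contradicting the homotopy. This retraction step is the key technical move the proposal is missing; the Bass--Serre detour is neither necessary nor correctly set up.
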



\begin{proof}
	Let $\bar z_i$ be the projection of $z_i$ under the map $p:\bU\to U$ induced by the action of $\pi_1(\whX_1\cup\whX_2)$. We view $z_1$ and $z_3$ as vertices in the link $\lk(z_2,\bU)$ of $z$ in $\bU$. Then any path in $\lk(z_2,\bU)$ joining $z_1$ and $z_3$ projects to path in $\lk(\bar z_2,U)$ joining $\bar z_1$ and $\bar z_3$. 
	For (1), note that $\bar z_1=\bar z_3$, thus $\angle_{z_2}(z_1,z_3)$ is a multiple of $\pi/2$. Now look at the $\widehat E_1$ case of (1). If $\angle_{z_2}(z_1,z_3)=\pi/2$, then there is a vertex $z\in\bU$ such that it is adjacent to $z_1,z_2,z_3$. Let $T$ be the subcomplex of $\widetilde K$ associated with $z$. Note that $z$ has type $\wX_{22}$. As $z_1,z_2,z$ span a triangle, then $T_1\cap T_2\cap T\neq \emptyset$. Hence this triple intersection is a vertex, which we denoted by $x_1$. Similarly, let $x_2=T_2\cap T\cap T_3$.

	Let $\wtP$ be the path from a point in $y_1\in T_1\cap T_2$ to a point in $y_2\in T_2\cap T_3$ as in (1). Let $\wtP'$ be the concatenation of a path $\wtP'_1\subset T_1\cap T_2$ from $y_1$ to $x_1$, a path $\wtP'_2\subset T_2\cap T$ from $x_1$ to $x_2$ and a path $\wtP'_3\subset T_2\cap T_3$ from $x_2$ to $y_2$. These paths exist by Lemma~\ref{lem:connected}. Let $P$ and $P'$ be the image of $\wtP$ and $\wtP'$ in $\wX_1\cup \wX_2$ under the covering map. Then $P\subset \widehat E_1$ and $P'\subset (\wX_{11}\cap \wX_{12})\cup (\wX_{11}\cap \wX_{22})$. Now we consider the retraction $r:\wX\to \widehat E$ as in Definition~\ref{def:retraction} which restricts to $r:\wX_{11}\to \widehat E_{1}$. As $P$ and $P'$ are homotopic rel endpoints in $\wX_1\cup \wX_2$, we know $r(P)=P$ and $r(P')$ are homotopic rel endpoints in $\widehat E_1$. However, this is a contradiction as $r(P')$ is a point. This finishes the proof of the $\widehat E_1$ case of (1).
	The $\whX_{11}\cap \whX_{21}$ case of (1), as well as (2) and (3) follows from a  similar argument.
\end{proof}

\subsection{Auxiliary arrangement II}
\label{subsec:AuII}
Let $\bC$ and $\ca$ be as before, defined from the Coxeter group of type $H_3$.
\begin{definition}[Auxiliary sub-arrangement II]
	\label{def:arrangment1}
	We define a sub-collection of walls in $\bC$ as follows. Takes four consecutive vertices $\{\theta_i\}_{i=1}^4$ in a wall of $\bC$ such that $\theta_1$ is of type $\hat c$, $\theta_2,\theta_4$ are of type $\hat a$ and $\theta_3$ is of type $\hat b$. Let $\mathcal K$ be the collection of walls of $\bC$ which passes at least one of $\{\theta_i\}_{i=1}^4$. See Figure~\ref{fig:5} left for $\mathcal K$. We also think $\mathcal K$ as a central arrangement in $\mathbb R^3$.
	
	Let $H\subset\mathcal K$ be the wall passing through $\theta_1$ as in the boundary circle of Figure~\ref{fig:5} left. We consider the deconing of the arrangement $\mathcal K$ with respect to $H$. This gives us a planar arrangement $\mathcal K'$, depicted in Figure~\ref{fig:5} right.
\end{definition}

\begin{figure}[h]
	\centering
	\includegraphics[scale=0.7]{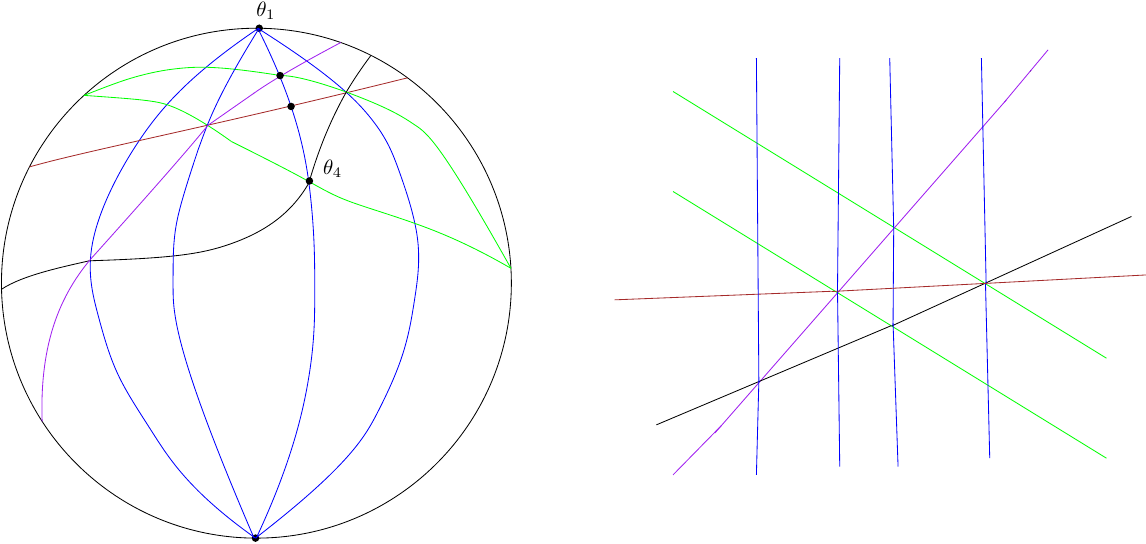}
	\caption{Auxiliary sub-arrangement II.}
		\label{fig:5}
\end{figure}	

Let $\Si_{\mathcal K'}$ (resp. $\Si_{\mathcal K}$) be the cell complex dual to $\mathcal K'$ (resp. $\mathcal K$), see Figure~\ref{fig:2} left. Let $\widehat \Si_{\mathcal K'}$ (resp. $\widehat \Si_{\mathcal K}$) be the associated Salvetti complex. Let $\sX=\Sigma_{\mathcal K'}$ and $\wsX=\widehat \Sigma_{\mathcal K'}$. We now define a collection of subcomplexes of $\sX$ and $\wsX$.

We refer to Figure~\ref{fig:6}. Denote the four vertical walls of $\mathcal K'$ by $h_1,h_2,h_3,h_4$ (from left to right). Let $\sX_i$ be the union of all closed cell of $X$ which has non-trivial intersection with $h_i$. Let $\sX_{31}$ be the hexagonal face in the top of $\sX_3$. We also define $\sX_{22}, \sX_{33}$ and $\sX_{42}$ be the face as in Figure~\ref{fig:6}.

Let $\sY$ be the union of $\sX_{31}$ together with three square faces sharing an edge with $\sX_{31}$ (see the shaded part in Figure~\ref{fig:6}). Define $\sX'=\cup_{i=2}^4\sX_i$.
Let $\wsX_i$, $\wsX_{31}$, $\wsY$, $\wsX'$ be the associated subcomplex of $\wsX$. Note that there is an embedding $\sY\to \Si_{\ca}$ by first embedding $\sY\subset \sX$ into $\Si_{\mathcal K}$ as a maximal subcomplex in one side of $H\subset \mathcal K$, and lifting the image of $\sY$ in $\Si_{\mathcal K}$ with respect to $\kappa:\Si_\ca\to \Si_{\mathcal K}$ in the same way as in Section~\ref{subsec:aug1}. Note that the face $\sX_{31}$ is sent to the face of $\Si_\mathcal{K}$ dual to $\theta_2$. This gives an embedding $i:\wsY\to \od_\ca$. The next lemma can be proved in the same way as Lemma~\ref{lem:BS}.

\begin{lem}
	\label{lem:BSinjective}
The group $\pi_1 \wsX$ splits as a graph of groups whose underlying graph is a linear graph with 4 vertices. The vertex groups (from left to right) are $\pi_1 \wsX_1,\pi_1 \wsX_2$, $\pi_1 \wsX_3$ and $\pi_1 \wsX_4$. In particular, the inclusion $\wsX'\to\wsX$ is $\pi_1$-injective.
\end{lem}

The proof of the next lemma is  identical to Lemma~\ref{lem:transfer1}, using Lemma~\ref{lem:BSinjective}.
\begin{lem}
	\label{lem:embed}
	Given a loop $\omega$ in $\wsY$ such that $i(\omega)$ is null-homotopic in $\od_\ca$, then $\omega$ is null-homotopic in $\wsX'$.
\end{lem}

\begin{figure}[h]
	\centering
	\includegraphics[scale=0.68]{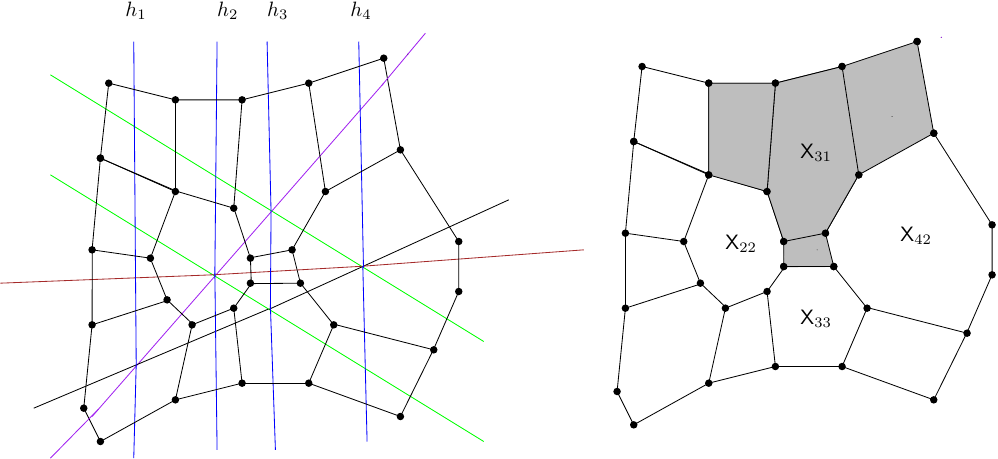}
	\caption{Dual complex.}
		\label{fig:6}
\end{figure}

Let $W_{234}$ be the Coxeter group of type $B_3$, and let $\ca_{234}$ be the arrangement of reflection hyperplanes in $\mathbb R^3$ coming from the canonical representation of $W_{234}$. Namely $\ca_{234}$ is made of the following hyperplanes: $x_i=0$ for $1\le i\le 3$, and $x_i\pm x_j=0$ for $1\le i\neq j\le 3$.
Let $\Si_{234}$ and $\od_{234}$ be the associated dual polyhedron and Salvetti complex. Note that there is an embedding $e:\sX'\to\Si_{234}$ whose image is the maximal subcomplex of $\Si_{234}$ in one side of the hyperplane $x_1=0$ of $\ca_{234}$. Thus there is an induced embedding $\hat e:\wsX'\to \widehat \Si_{234}$ be the associated embedding.

\section{One side flagness in $H_3$}
\label{sec:H3}

The main goal of this section is to prove the following.
\begin{thm}
	\label{thm:tripleH}
	Let $\Lambda$ be the Coxeter diagram of type $H_3$ with the set of nodes $\{a,b,c\}$ such that $m_{ab}=5$, $m_{bc}=3$.
	Let $\Delta$ be the Artin complex of the Artin group $A_\Lambda$ with the order on its vertex induced from $a<b<c$ (see Definition~\ref{def:order}). Then $\Delta$ is downward flag.
\end{thm}

\begin{proof}
	For $i\in\mathbb Z/3\mathbb Z$, let $x_i$ be a lower bound of $y_i$ and $y_{i+1}$. We assume $\{x_1,x_2,x_3\}$ are pairwise distinct, otherwise the theorem is clear.
	If one of $\{y_1,y_2,y_3\}$, say $y_1$, is of type $\hat a$, then we must have $y_1\le y_2$ and $y_1\le y_3$, and the theorem is clear. So we assume none of	$\{y_1,y_2,y_3\}$ is of type $\hat a$. 
	
	If each of $\{y_1,y_2,y_3\}$ is of type $\hat c$, then we can each pair of them has a lower bound which is a type $\hat a$ vertex (if a pair has a type $\hat b$ vertex as lower bound, then we can always find a type $\hat a$ vertex which is $\le$ any given type $\hat b$ vertex). Then the theorem follows immediately from Lemma~\ref{lem:triple} below.
	
	If exactly two of $\{y_1,y_2,y_3\}$ is of type $\hat c$. We assume without loss of generality that $y_1$ is of type $\hat b$. As before, we assume each $x_i$ is of type $\hat a$. 
	Let $y'_1$ be a vertex of type $\hat c$ such that $y_1\le y'_1$.  Lemma~\ref{lem:triple} gives a vertex $s$ which is a common lower bound of $\{y'_1,y_2,y_3\}$. We can assume $s$ is of type $\hat a$. If $s$ is one of $x_1,x_3$, then $s$ is common lower bound of $\{y_1,y_2,y_3\}$. It remains to consider the case $s\neq x_1$ and $s\neq x_3$. Note that $y'_1,s,y_3,x_3$ form a 4-cycle in $\widehat\Sigma$ made of type $\hat a$ and type $\hat c$ vertices. By Theorem~\ref{thm:bowtie free}, there exists a type $\hat b$ vertex $z_3$ such that $z_3$ is adjacent to each of $\{y'_1,s,y_3,x_3\}$. Similarly, there exists a type $\hat b$ vertex $z_1$ such that $z_1$ is adjacent to each of $\{x_1,y_2,s,y'_1\}$. Now we consider the 6-cycle $$x_3\to z_3\to s\to z_1\to x_1\to y_1\to x_3.$$ Each vertex in this 6-cycle is adjacent to $y'_1$. However, as $y'_1$ is of type $\hat c$, we know from \cite[Lemma 39]{crisp2005automorphisms} that the girth of Lk$(y'_1,\Delta)$ is $10$. Thus the image of this 6-cycle in Lk$(y'_1,\Delta)$ is a tree. On the other hand, as we assume $\{x_1,s,x_3\}$ is pairwise distinct, the only possibility $z_1=z_3=y_1$. Thus $s$ is adjacent to $y_1$, hence $s\le y_1$ and $s$ is a lower bound for $\{y_1,y_2,y_3\}$.
	
	If exactly one of $\{y_1,y_2,y_3\}$ is of type $\hat c$. We assume without loss of generality that $y_1$ and $y_2$ are of type $\hat b$. As before, we assume each $x_i$ is of type $\hat a$. 
	For $i=1,2$, let $y'_i$ be a vertex of type $\hat c$ such that $y_i\le y'_i$.  Lemma~\ref{lem:triple} gives a vertex $s$ which is a common lower bound of $\{y'_1,y'_2,y_3\}$. We can assume $s$ is of type $\hat a$. If $s=x_1$, then $s$ is common lower bound of $\{y_1,y_2,y_3\}$. If $s\neq x_i$ for $1\le i\le 3$, then the argument in the previous paragraph implies that $s$ is adjacent to both $y_1$ and $y_2$, hence $s$ is a lower bound for $\{y_1,y_2,y_3\}$.
	It remains to consider the case $s\neq x_1,x_2$ but $s=x_3$, and the case $s\neq x_1,x_3$ but $s=x_2$. This two cases are symmetric, so we only look at the case $s\neq x_1,x_2$ but $s=x_3$. By the argument in the previous paragraph, $s$ is adjacent to $y_2$, so $s\le y_2$. As $x_3\le y_1$, we know $s\le y_1$. Thus $s$ is common lower bound of $\{y_1,y_2,y_3\}$.
	
	If each of $\{y_1,y_2,y_3\}$ is of type $\hat b$. Let $y'_i$ be a vertex of type $\hat c$ such that $y_i\le y'_i$. Lemma~\ref{lem:triple} gives a vertex $s$ which is a common lower bound of $\{y'_1,y'_2,y'_3\}$. We can assume $s$ is of type $\hat a$. If $s\notin \{x_1,x_2,x_3\}$, as $\{x_1,x_2,x_3\}$ is pairwise distinct, then the previous argument implies that $s$ is adjacent to each of $\{y_1,y_2,y_3\}$, hence $s$ is a lower bound for them. If $s\in \{x_1,x_2,x_3\}$, then we assume without loss of generality that $s=x_3$. As $x_3$ is a lower bound for $y_3$ and $y_1$, we know $s\le y_1$ and $s\le y_3$. As $s\neq x_1$ and $s\neq x_2$, the previous argument implies that $s$ is adjacent to $y_2$, hence $s\le y_2$. Thus $s$ is a lower bound for $\{y_1,y_2,y_3\}$.
\end{proof}

We need a procedure of converting a $n$-cycle in the Artin complex $\Delta=\Delta_\Lambda$ of $A_\Lambda$ to a concatenation of $n$ words in $A_\Lambda$ (cf. \cite[Definition 6.14]{huang2023labeled}). These $n$ words are well-defined up to an appropriate notion of equivalence.

\begin{definition}
	\label{def:ncycle}
	A \emph{chamber} in $\Delta$ is a top-dimensional simplex in $\Delta$. There is a 1-1 correspondence between chambers in $\Delta$ and elements in $A_\Lambda$.	
	Let $\{x_n\}_{i=1}^4$ be consecutive vertices of an $n$-cycle $\omega$ in $\Delta$ and suppose $x_i$ has type $\hat a_i$ with $a_i\in \Lambda$. 
	For each edge of $\omega$, take a chamber of $\Delta$ containing this edge. We name these chambers by $\{\Theta_i\}_{i=1}^n$ with $\Theta_1$ containing the edge $\overline{x_1x_2}$. Each $\Theta_i$ gives an element $g_i\in A_\Lambda$. Then for $i\in \mathbb Z/n\mathbb Z$, $g_i=g_{i-1}w_{i}$ for $w_i\in A_{\hat a_i}$ (recall that $A_{\hat a_i}$ is defined to be $A_{S\setminus\{a_i\}}$). Thus $w_1w_2\cdots w_n=1$.
	The word $w_1\cdots w_n$ depends on the choice of $\{\Theta_i\}_{i=1}^n$. A different choice would lead to a word of form $u_1\cdots u_n$ such that there exist elements $q_i\in A_{S\setminus\{a_i,a_{i+1}\}}$ such that $u_i=q^{-1}_{i-1}w_i q_i$ for $i\in\mathbb Z/n\mathbb Z$. In this case we will say the words $u_1\cdots u_n$ and $w_1\cdots w_n$ are equivalent. If in addition there exists a parabolic subgroup $A'$ of $A_\Lambda$ such that $q_i\in A_{S\setminus\{a_i,a_{i+1}\}}\cap A'$, then we say $w_1\cdots w_n$ is \emph{equivalent} to $u_1\cdots u_n$ in $A'$.
\end{definition}

\begin{lem}
	\label{lem:triple}
	Let $\Delta$ be as in Theorem~\ref{thm:tripleH}.
	Take vertices $\{x_1,x_2,x_3\}\in \Delta$ of type $\hat a$ and $\{y_1,y_2,y_3\}\in\Delta$ of type $\hat c$ such that $x_i\le y_{i}$ and $x_{i+1}\le y_{i}$ for all $i\in\mathbb Z/3\mathbb Z$. We assume the edge loop in $\Delta$ formed by $\omega=y_1x_1y_2x_2y_3x_3y_1$ is a local embedding. Then there is a vertex $s\in \Delta$ such that $s$ is adjacent to each of $\{y_1,y_2,y_3\}$.
\end{lem}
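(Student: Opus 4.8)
The plan is to reduce the existence of $s$ to a filling statement for the hexagon $\omega$ that can be verified inside one of the non-positively curved complexes constructed in Section~\ref{sec:subarrangement}.

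\textbf{Step 1: translation into $\od_\ca$.} Let $\ca$ be the reflection arrangement of $W_\Lambda$, so that $\Delta\cong\bSD_\ca$. Using the description in Remark~\ref{rmk:alternative}, realise $y_1,y_2,y_3$ (resp.\ $x_1,x_2,x_3$) as lifts $Y_1,Y_2,Y_3$ (resp.\ $X_1,X_2,X_3$) of type $\hat c$ (resp.\ type $\hat a$) standard subcomplexes of $\od_\ca$ in its universal cover; adjacency becomes nonempty intersection, so $Y_i\cap X_i\ne\emptyset$ and $X_i\cap Y_{i+1}\ne\emptyset$ for $i\in\mathbb Z/3\mathbb Z$ (with $Y_4=Y_1$). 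Choosing a vertex in each of these six intersections and joining them by edge paths inside the (connected) standard subcomplexes produces a loop in the universal cover, hence a null-homotopic loop $\gamma$ in $\od_\ca$. The conclusion we want then becomes: some lift $T$ of a type $\hat a$ standard subcomplex of $\od_\ca$ satisfies $T\cap Y_i\ne\emptyset$ for $i=1,2,3$. The hypothesis that $\omega$ is a local embedding is precisely a non-degeneracy condition on $\gamma$.

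\textbf{Step 2: disposing of the degenerate cases.} If two of the $x_i$ coincide, or two of the $y_i$ coincide, then one reads off directly from the edge relations of the hexagon that one of $x_1,x_2,x_3$ is already adjacent to all of $y_1,y_2,y_3$, and we take $s$ to be that vertex. So assume $x_1,x_2,x_3$ pairwise distinct and $y_1,y_2,y_3$ pairwise distinct. Projecting $\omega$ to the Coxeter complex of $W_\Lambda$ by the quotient by the pure Artin group (a type-preserving map that collapses no edge) gives a closed edge path of length $6$ with vertex types cyclically $\hat c,\hat a,\hat c,\hat a,\hat c,\hat a$. Exploiting the small size of the relevant vertex links there (a hexagon at a type $\hat a$ vertex, a $10$-gon at a type $\hat c$ vertex, a square at a type $\hat b$ vertex), together with the girth bound $\ge 6$ for $\lk(x_i,\Delta)$ coming from Theorem~\ref{thm:bowtie free} and Lemma~\ref{lem:inherit and girth}, one reduces — up to the $W_\Lambda$-action — to a short list of configurations for the projected loop. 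All but one are again handled directly (typically $s$ can be produced inside the link of some $x_i$, or two of the $Y_i$ are forced to coincide upstairs); in the remaining \emph{generic} configuration the projected loop is a hexagon cradled around a single vertex and is supported on exactly the walls through the three (resp.\ four) distinguished vertices $\theta_i$ of Definition~\ref{def:arrangment} (resp.\ Definition~\ref{def:arrangment1}).

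\textbf{Step 3: filling inside the auxiliary complexes, and the main obstacle.} In the generic configuration the collapsing map of Section~\ref{ss:col} identifies the part of $\od_\ca$ around the $X_i$'s and $Y_i$'s with $i(\wZ)$ for the auxiliary sub-arrangement $\mathcal H$ (resp.\ with $i(\wsY)$ for $\mathcal K$), so that $\gamma=i(P)$ for a loop $P$ in $\wZ$ (resp.\ $\wsY$). By Lemma~\ref{lem:transfer1} (resp.\ Lemma~\ref{lem:embed}), $P$ is then null-homotopic in $\wX_1\cup\wX_2$ (resp.\ in $\wsX'$). For the first arrangement, pass to the development complex $\bU$, which is CAT$(0)$: the vertices $Y_1,Y_2,Y_3$ correspond to three consecutive vertices of $\bU$ of one of the shapes in Lemma~\ref{lem:piangle}, so if no common neighbour existed, the angle estimates there would make the corresponding path in $\bU$ locally geodesic at each interior vertex while it closes up into a non-constant loop — impossible in a CAT$(0)$ space; unwinding this contradiction collapses the configuration and produces the lift $T$, hence $s$. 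For the second arrangement one transfers further via $\hat e:\wsX'\to\od_{234}$ to the Salvetti complex of the type $B_3$ reflection arrangement and invokes the upward flagness of the $B_3$ Artin complex (Theorem~\ref{thm:triple}). The heart of the matter is precisely this step: organising the case analysis for the projected loop so that, in the generic case, $\gamma$ really does become a loop in $\wZ$ (or $\wsY$) of the very restricted combinatorial shape to which Lemma~\ref{lem:piangle} — or the $B_3$ input — applies, while simultaneously controlling the "mixed" configurations in which the projection degenerates only partially; and then checking that a common neighbour found downstairs or in $\bU$/$\od_{234}$ lifts back to an honest vertex $s$ of $\Delta$ adjacent to all three of $y_1,y_2,y_3$.
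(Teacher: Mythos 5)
You have correctly identified the skeleton of the argument: translate the $6$-cycle into a null-homotopic loop in $\od_\ca$, project it via the pure Artin group quotient to the Coxeter complex $\bC_\Lambda$, split into cases according to the shape of the projected image $\pi(\omega)$, and in the remaining cases transfer the loop to one of the auxiliary complexes ($\wZ$ via Lemma~\ref{lem:transfer1}, or $\wsY$ via Lemma~\ref{lem:embed}) so that the CAT$(0)$ geometry of $\bU$, the angle estimates of Lemma~\ref{lem:piangle}, or the type $B_3$ flagness (Theorem~\ref{thm:triple} transported through $\hat e:\wsX'\to\od_{234}$) can be brought to bear. This is indeed the approach taken in the paper.

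However, there is a substantial gap at exactly the point you yourself flag as "the heart of the matter." The reduction from a general $6$-cycle to a loop contained in an auxiliary subcomplex, and the case analysis by the combinatorial shape of $\pi(\omega)$, is not a short list with one hard case: the paper's proof goes through eight main cases (the image is a single edge; two edges meeting at a type $\hat a$ vertex; two edges meeting at a type $\hat c$ vertex; a linear graph with three edges, further split into four subcases by position in $\bC$; a tripod; a $4$-cycle; a $4$-cycle plus a pendant edge; a $6$-cycle), and essentially every one of them requires genuine work. Case 1 alone requires a disk-diagram argument in $\bU$ with combinatorial Gauss--Bonnet, split according to whether four or five boundary angles are right; Cases 4.1 and 5.1 require a transfer to $\od_{234}$ and three separate auxiliary claims (an inequality $z\ne x'_i$, a trimming of $w_{bc}$ to the form $b^*c^*b^*$, and a pinning-down of lift endpoints). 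In between, repeated word-rewriting via the retractions $\Pi_{\whC}$ of Definition~\ref{def:retraction} is used to reduce a case to an earlier one; these rewritings depend delicately on which standard subcomplexes appear as images of the $\Pi_{\whC_i}(\whC_j)$, which is exactly the data your generic-configuration heuristic elides. Also, your remark that Lemma~\ref{lem:piangle} gives "angle $\pi$ at each interior vertex, making the path locally geodesic, which is impossible" is too coarse: in the actual Case 6 argument the angle $\pi$ at two specific vertices is combined with the known lengths and the right angle elsewhere to force a flat rectangle, and the contradiction is extracted by chasing that rectangle's corners back to specific intersections of lifts, not by a global "no closed geodesics in CAT$(0)$" observation.

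In short: the tools are right, but the proof's content is the case analysis and the explicit reductions between cases, and none of that is supplied. Without it, there is no way to know that the projected loop always lands in a configuration handled by one of the auxiliary arrangements (for instance, Case 4.1 with $k_2\ne0$ genuinely needs the $\mathcal K$-arrangement and the $B_3$ detour, while Case 1 needs $\mathcal H$ and $\bU$ — recognising which, and verifying the loop actually factors through $\wZ$ or $\wsY$ there, is where the proof lives).
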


In the following proof we will need a simple version of combinatorial Gauss-Bonnet formula. We refer to \cite[Section 3.6]{huang2023labeled} for a quick summary.
\begin{proof}
	From Definition~\ref{def:ncycle}, the 6-cycle $\omega$ gives a word $$w=w_{ab}w_{bc}w'_{ab}w'_{bc}w''_{ab}w''_{bc},$$ where the subword $w_{ab}$ is a word only using $a$ and $b$ (similar constraints applies to other subwords of $w$). Moreover, $w$ represents the trivial element in $A_\Lambda$. Let $\Si$ be the polyhedron dual to the reflection arrangement of type $H_3$ in $\mathbb R^3$, and $\od$ be the associated Salvetti complex.
	This word gives a null-homotopic path in $\od$ as follows: $$P=P_{ab}P_{bc}P'_{ab}P'_{bc}P''_{ab}P''_{bc}.$$  We denote the sub-segments of this path by $\{P_i\}_{i=1}^6$. Let $\bC$ be the Coxeter complex of type $H_3$.
	Let $\pi:\Delta\to\bC$ be the map induced by the action of the pure Artin group on $\Delta$.
	
	\medskip
	
	\noindent	
	\underline{Case 1: the $\pi$-image of the 6-cycle $\omega$ is a single edge $\bar x\bar y$ in $\bC$.} Let $C_{\bar x}$ (resp. $C_{\bar y}$) be the 2-cell in $\Si$ dual to $\bar x$ (resp. $\bar y$). Let $\widehat C_{\bar x}$ and $\widehat C_{\bar y}$ be the associated standard subcomplexes of $\od$. Up to replacing $w=\Pi_{i=1}^6w_i$ by an equivalent word (in the sense of Definition~\ref{def:ncycle}), we can assume $P_i$ is a loop in $\widehat C_{\bar x}$ (resp. $\widehat C_{\bar y}$) for $i$ odd (resp. for $i$ even). Let $\wZ$ be the complex defined before Lemma~\ref{lem:transfer1}. Recall from Section~\ref{subsec:aug1} that there is an embedding $\wZ\to \widehat\Sigma$. Thus we can also view $\wZ$ as a subcomplex of $\od$. Recall that edges of $\od$ are oriented and labeled by $\{a,b,c\}$ (Definition~\ref{def:support}), this also gives label and orientation of edges in $\wZ$ via the inclusion of $\wZ$ into $\widehat\Sigma$.
	Up to a symmetry of $\widehat\Sigma$, we can assume $$P\subset\widehat C_{\bar x}\cup\whC_{\bar y}\subset\wZ.$$ Moreover, we can assume $\whC_{\bar x}=\whX_{11}$ and $\whC_{\bar y}=\whX_{22}$. As we are also viewing $\whZ$ as a subcomplex of $\wX_1\cup\wX_2$, by Lemma~\ref{lem:transfer1}, $P$ is null-homotopic in $\wX_1\cup\wX_2$. Assume
	\begin{center}
	 $P_1,P_3,P_5\subset \wX_{22}$ and $P_2,P_4,P_6\subset \wX_{11}$.
	\end{center}
	  As $P=\cup_{i=1}^6 P_i$ is null-homotopic in $X$, it lifts to a loop $\wtP=\cup_{i=1}^6 \wtP_i$ in the universal cover $\wtX$ of $\wX_1\cup\wX_2$. A \emph{standard subcomplex} of $\wX_1\cup\wX_2$ is an intersection of $\{\whX_{ij}\}_{1\le i,j\le 2}$. A \emph{standard subcomplex} of $\wtX$ (of type $\whX_{ij}$) is a lift of $\whX_{ij}$ in $\wtX$. 
	
	Let $\mathbb U$ be the development of the complex of group $\mathcal U$, endowed with the CAT(0) metric as in Section~\ref{subsec:aug1}. For a vertex $z\in \mathbb U$, denote the subcomplex of $\wtX$ associated with $z$ by $\wtX_z$.
	The loop $\wtP$ gives a loop $\omega_{\mathbb U}$ with consecutive vertices denoted by $\{z_i\}_{i\in \mathbb Z/6\mathbb Z}$ in $\mathbb U$ where $\wtX_{z_1},\wtX_{z_3},\wtX_{z_5}$ (resp. $\wtX_{z_2},\wtX_{z_4},\wtX_{z_6}$) correspond to the lifts of $\wX_{22}$ (resp. $\wX_{11}$) in $\wtX$ that contain $\wtP_1,\wtP_3$ and $\wtP_5$ respectively (resp. $\wtP_2,\wtP_4$ and $\wtP_6$). 
	
	Let $\bD\to \bU$ be a minimal area singular disk diagram (\cite[Section 3.6]{huang2023labeled}) for the loop $\omega_\bU$. We will slightly abuse notation and use $z_i$ to denote the point in the boundary cycle of $\bD$ mapping to $z_i\in\bU$. The metric on $\bU$ induces a metric on $\bD$, which is CAT$(0)$. For three points $x,y,z$ in a CAT$(0)$ space, we use $\angle_y(x,z)$ to denote the Alexandrov between the geodesic segments $\overline{yx}$ and $\overline{yz}$ (cf. \cite[Chapter II.3.1]{BridsonHaefliger1999}).
	
	In $\bU$ we have $\angle_{z_i}(z_{i-1},z_{i+1})\ge \pi/2$ for each $i\in\mathbb Z/6\mathbb Z$. This is because that $\omega$ is a local embedding, hence $\omega_{\mathbb U}$ is a local embedding. Thus 
	\begin{center}
$\angle_{z_i}(z_{i-1},z_{i+1})$ is a multiple of $\pi/2$ in $\bD$.
	\end{center}
For each vertex $z_i$ on the boundary cycle $\omega_\bU$ of $\bD$, the \emph{interval angle} at $z_i$ is the quantity $\alpha(z_i)$ defined in \cite[Section 3.6]{huang2023labeled}.
	By the combinatorial Gauss-Bonnet formula in \cite[Equation 3.17]{huang2023labeled}, at least four internal angles of $\omega_{\mathbb U}$ in $\mathbb D$ is $\pi/2$.

	If at least five internal angles of $\omega_{\mathbb U}$ in $\mathbb D$ is $\pi/2$, then $\mathbb D$ has exactly one interior vertex of type $\wX_{12}$ or $\wX_{21}$. 
	We only discuss the case where the internal vertex is of type $\wX_{12}$, as the other case is similar and easier. Let $T\subset\wtX$ be the lift of $\wX_{12}$ corresponding to this internal vertex. Then each $\wtX_{z_i}$ has non-empty intersection with $T$. For each $i\in\mathbb Z/6\mathbb Z$, let $t_i$ be the terminal point of $\wtP_i$ (which is the starting point of $\wtP_{i+1}$) and let $$t'_i=\wtX_{z_i}\cap \wtX_{z_{i+1}}\cap T.$$ Such triple intersection is nonempty because they corresponds to three vertices of 2-face in $\mathbb U$.
	Set $\wtQ_i$ to be an edge path in $T\cap \wtX_{z_i}$ from $t_{i-1}$ to $t_i$. Let $\Theta_i$ be a path in $\wtX_{z_i}\cap \wtX_{z_{i+1}}$ from $t_i$ to $t'_i$. Note that $\wtQ_i$ and $\Theta_i$ exist by Lemma~\ref{lem:connected}.
	Then the loop $$(\Theta^{-1}_6\wtP_1\Theta_1)(\Theta^{-1}_1\wtP_2\Theta_2)\cdots (\Theta^{-1}_5\wtP_6\Theta_6)$$ gives a word which is equivalent to $w$. As $\Theta^{-1}_{i-1}\wtP_i\Theta_i$ is homotopic to $\wtQ_i$ rel endpoints in $\wtX_{z_i}$, we know the word traced out by $\Theta^{-1}_{i-1}\wtP_i\Theta_i$ and the word traced out by $\wtQ_i$ represent the same element in the group $A_{ab}$ (i.e. the subgroup of $A$ generated by $a$ and $b$) or $A_{bc}$. Thus we assume instead that $\wtQ_i$ traces out the word $w_i$, and replace $\widetilde P$ by $\wtQ$, which is the concatenation of all the $\wtQ_i$.
	As $\wtQ_i\subset T\cap \wtX_{z_i}$ and by Lemma~\ref{lem:connected}, $T\cap \wtX_{z_i}$ is a lift of $\whX_{12}\cap \whX_{11}$ or $\whX_{12}\cap \whX_{22}$, we deduce that $w_i\in A_{ab}$ for $i$ odd, and $w_i$ is a power of $c$ for $i$ even. Moreover, $w$ still represents the trivial element in $A_{\Lambda}$.
	
	The new loop $\wtQ$ has the advantage that its projection $Q$ to $\whZ$ lies inside a smaller subcomplex, i.e. $\wtQ\subset \wX_{12}\cap (\wtX_{11}\cup \wtX_{22})$. By construction, $Q\subset \wZ$ starts and ends at $\whX_{11}\cap \whX_{12}\cap \whX_{22}$. Note that there is a unique loop $R\subset \wZ$ starting and ending at $\whX_{11}\cap \whX_{21}\cap \whX_{22}$ such that $R$ traces out the same word as $Q$. As $w$ represents the trivial element in $A_\Lambda$, Lemma~\ref{lem:transfer1} implies that $R$ is null-homotopic in $\wX_1\cup \wX_2$. Thus we will work with $R$ instead of $Q$ now. Note that $R_i\subset \wX_{22}$ for $i=1,3,5$ and $R_i\subset \wtX_{11}\cap \wtX_{21}$ for $i=2,4,6$.
	
	We lift $R$ to be a loop $\wtR\subset \wtX$. 
Let $T'_i$ be the standard subcomplex of type $\wX_{22}$ (resp. $\wX_{21}$) containing $\wtR_i$ for $i=1,3,5$ (resp. $i=2,4,6$). Let $z'_i\in\mathbb D$ be the vertex associated with $T'_i$. Then $\{z'_i\}_{i=1}^6$ forms consecutive vertices in a cycle $\omega'_{\mathbb U}\subset \mathbb U$. As each $w_i$ is a nonzero power of $c$ for $i$ even, we know for $i$ even $$\angle_{z'_i}(z'_{i-1},z'_{i+1})=\pi.$$ So the $\omega'_{\mathbb U}$ is a geodesic triangle with vertices at $z'_1,z'_3,z'_5$ with three sides having equal length. Moreover, $\angle_{z'_i}(z'_{i-1},z'_{i+1})$ is a multiple of $\pi/2$. As $\mathbb U$ is CAT(0), the only possibility is that the triangle $\Delta(z'_1,z'_3,z'_5)$ is degenerate such that the three sides share a common midpoint, i.e. $z'_2=z'_4=z'_6$. This means $\widetilde R_3$ is a path in $T'_3$ from $T'_2\cap T'_3$ to $T'_4\cap T'_3=T'_2\cap T'_3$. Thus $\wtR_3$ is homotopic in $T'_3$ to a path inside $T'_2\cap T'_3$, which means $w'_{ab}$ and $a^{k}$ represent the same element in $A_{ab}$.  The same is true for $w_{ab}$ and $w''_{ab}$ by a similar argument. Thus each $w_i$ is either a power of $a$ or a power of $c$, and the lemma follows.
	
	\begin{figure}
		\centering
		\includegraphics[scale=1.2]{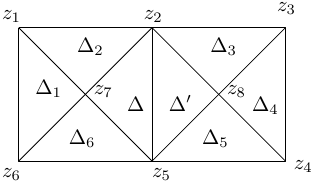}
		\caption{Disk diagram in Case 1.}
			\label{fig:case1}
	\end{figure}
	
	It remains to consider the case exactly four internal angles of $\omega_{\mathbb U}$ in $\mathbb D$ is $\pi/2$. In this case, $\mathbb D$ is a flat rectangle, cf. \cite[Theorem II.2.11]{BridsonHaefliger1999}. Thus $\mathbb D$ must be as in Figure~\ref{fig:case1}. Note that one of the four corners must be of type $\wX_{22}$. Thus up to a symmetry of $\mathbb D$ and a cyclic permutation of $\{z_i\}_{i\in\mathbb Z/6\mathbb Z}$, we can assume the vertices of $\mathbb D$ are labeled as in Figure~\ref{fig:case1}. Let $z_7$ and $z_8$ the two internal vertices of $\mathbb D$. Let $\Delta,\Delta'$ and $\{\Delta_i\}_{i=1}^6$ be the 8 triangles in Figure~\ref{fig:case1}. Each of these triangles corresponds to a vertex in $\wtX$ which is the intersection of the standard subcomplexes corresponding to the three vertices of this triangle. Let $q,q'$ and $\{q_i\}_{i=1}^6$ be the 8 vertices of $\wtX$ corresponding these 8 triangles. By a similar argument as above, up to replacing $w$ by an equivalent word, we can assume each $\wtP_i$ starts with $q_{i}$ and ends with $q_{i+1}$. Note that $q_1,q_2\in \wtX_{z_1}\cap \wtX_{z_7}$, thus $\wtP_1$ is homotopic rel its boundary points in $\wtX_{z_1}$ to a path in $\wtX_{z_1}\cap \wtX_{z_7}$ by Lemma~\ref{lem:connected}. Thus by replacing $w_{ab}$ by another word which represents the same element in $A_{ab}$, we can assume $\wtP_1\subset \wtX_{z_1}\cap \wtX_{z_7}$. By a similar argument, we can assume
	\begin{itemize}
		\item $\wtP_2$ (resp. $\wtP_5$) is a concatenation of three subpaths, the first subpath is in $\wtX_{z_2}\cap \wtX_{z_7}$ (resp. $\wtX_{z_5}\cap \wtX_{z_7}$), the second subpath is in $\wtX_{z_2}\cap \wtX_{z_5}$ (resp. $\wtX_{z_5}\cap \wtX_{z_2}$), and the third subpath is in $\wtX_{z_2}\cap \wtX_{z_8}$ (resp. $\wtX_{z_5}\cap \wtX_{z_8}$);
		\item for $i=3,4$, $\wtP_i\subset\wtX_{z_i}\cap \wtX_{z_8}$;
		\item $\wtP_6\subset\wtX_{z_6}\cap \wtX_{z_7}$.
	\end{itemize}
	Suppose one of the internal vertices of $\mathbb D$, say $z_8$, is type $\wX_{21}$. Then the above properties of $\wtP$ implies $w_4$ is a power of $c$ and $w_3$ is a power of $a$. Thus by switching the position of $w_4$ and $w_3$ (this is possible as they commute), combining powers of $c$ of $w_4$ with $w_2$, and combining powers of $a$ of $w_2$ with $w_5$, we deduce that $x_1$ and $y_3$ are adjacent. Thus the lemma follows by taking $s=z_1$.
	
	Suppose both of the internal vertices of $\mathbb D$ are of type $\wX_{12}$. Then the above properties of $\wtP$ implies
	\begin{center}
	 $w_{bc}=c^{k_1}b^{k_2}c^{k_3}$, $w'_{bc}=c^{k_4}$ and $w''_{bc}=c^{k_5}$.
	\end{center}
	 Moreover, 
	\begin{center}
	$P_1,P_3,P_5\subset \whX_{22}$, $P_2\subset \whX_{11}\cap (\whX_{12}\cup \whX_{22})$, and $P_4,P_6\subset \whX_{11}\cap \whX_{12}$. 
	\end{center}
	Also we know $P$ is a concatenation of six loops based at $\whX_{11}\cap \whX_{12}\cap \whX_{22}$. Then there is a unique loop $\mathsf P$ in $\wZ$ based at $\whX_{11}\cap \whX_{21}\cap \whX_{22}$ such that this loop trace out exactly the same word $w$ as $P$. More precisely, 
	\begin{center}
	$\mathsf P_1,\mathsf P_3,\mathsf P_5\subset \whX_{22}$, $\mathsf P_2\subset \whX_{11}\cap (\whX_{21}\cup \whX_{22})$, and $\mathsf P_4,\mathsf P_6\subset \whX_{11}\cap \whX_{21}$.
	\end{center}
	As $w$ corresponds to the trivial element in $A_\Lambda$, we know from Lemma~\ref{lem:transfer1} that $\mathsf P$ is null-homotopic in $\wX_1\cup\wX_2$. We can construct a cycle $\omega''_{\mathbb U}$ in $\mathbb U$ from $\mathsf P$ as before, with its vertices denoted by $\{z''_i\}_{i=1}^6$. Moreover, because of the specific form of $w$, we know
	\begin{center}
$\angle_{z''_6}(z''_1,z''_5)=\angle_{z''_4}(z''_5,z''_3)=\pi/2$ and $\angle_{z''_2}(z''_1,z''_3)=\pi$. 
	\end{center} 
It suffices to consider the case when $\omega''_{\mathbb U}$ bounds a flat rectangle in $\mathbb U$ as in Figure~\ref{fig:case1}. If $z_7$ is of type $\whX_{12}$, then by a similar argument as before, we know $\mathsf P_6$ is homotopic rel endpoints in $\whX_{11}$ to $\whX_{11}\cap (\whX_{12}\cup\whX_{22})$, this contradicts that $\mathcal P_6\subset \whX_{11}\cap\whX_{21}$. Thus $z_7$ of type $\whX_{21}$. Similarly we know $z_8$ is of type $\whX_{21}$. This reduces to one of the situations we studied before.
	
	
	\medskip
	
	\noindent
	\underline{Case 2: the $\pi$-image of $\omega$ is two edges $\bar x\bar y$ and $\bar y\bar z$ such that $\bar y$ is of type $\hat a$.} Then $\bar x$ and $\bar z$ is of type $\hat c$. Up to applying a cyclic permutation to the index $i$ and possibly changing the role of $\bar x$ and $\bar z$, we can assume that $P_1,P_5$ are loops in $\whC_{\bar x}$, $P_2,P_4$ are paths in $\whC_{\bar y}$, $P_3$ is a loop in $\whC_{\bar z}$ and $P_6$ is a loop in $\whC_{\bar y}$.
	
	Note that $\p_{\widehat C_{\bar z}}(\widehat C_{\bar y})$ is a standard subcomplex of type $b$ (see Definition~\ref{def:support}) and $\p_{\widehat C_{\bar z}}(\widehat C_{\bar x})$ is a standard subcomplex of type $a$. Thus if we consider the loop $\p_{\widehat C_{\bar z}}(P)$, then we can read of a word of form $$a^{k_1}b^{k_2}w'_{ab}b^{k_3}a^{k_4}b^{b_5}$$ which represents the trivial element in the group. Thus 
	$$
	w'_{ab}=b^{-k_2}a^{-k_1}b^{-k_5}a^{-k_4}b^{-k_3}.
	$$
	By combining powers of $b$'s at the beginning and end of $w'_{ab}$ with $w_{bc}$ and $w'_{bc}$, we can replace $w$ by an equivalent word such that $$w'_{ab}=a^{-k_1}b^{-k_5}a^{-k_4},$$ moreover, $P_3$ is a loop in  $\p_{\widehat C_{\bar z}}(\widehat C_{\bar y})\cup \p_{\widehat C_{\bar z}}(\widehat C_{\bar x})$. Thus up to applying a symmetry of $\widehat \Si$, we can assume $$P\subset \p_{\widehat C_{\bar z}}(\widehat C_{\bar y})\cup \p_{\widehat C_{\bar z}}(\widehat C_{\bar x})\cup \widehat C_{\bar x}\cup \widehat C_{\bar y}\subset \whZ.$$ Moreover, we can assume 
	\begin{center}
		$\whX_{11}=\whC_{\bar y}$, $\whX_{22}=\whC_{\bar x}$, $\p_{\widehat C_{\bar z}}(\widehat C_{\bar y})=\wE_2$ and $\p_{\widehat C_{\bar z}}(\widehat C_{\bar x})=\wE_4$
	\end{center}
 (see Figure~\ref{fig:2} for the definition of $\wE_2$ and $\wE_4$). As before, we deduce from Lemma~\ref{lem:transfer1} that $P$ is null-homotopic in $\wX_1\cup \wX_2$.
	
	For we assume $k_1\neq 0, k_5\neq 0$ and $k_4\neq 0$.
	We produce a cycle in $\mathbb U$ from $P$ as follows. First we lift $P$ to a loop $\wtP$ in $\wtX$, which is the universal cover of $\wX_1\cup \wX_2$. Let $T_i$ be the unique standard subcomplex of type $\wtX_{22}$ (resp. $\wtX_{11}$) containing $\wtP_i$ for $i=1,5$ (resp. for $i=2,4,6$). Note that $\wtP_3$ is a concatenation of three subsegments $$\wtP_3=\wtP_{31}\wtP_{32}\wtP_{33}$$ such that $\wtP_{31}$ and $\wtP_{33}$ arise from of powers of $a$'s in $w'_{ab}$ and $\wtP_{32}$ corresponds to powers of $b$'s in $w'_{ab}$. Let $T_{3i}$ (resp. $T_{32}$) be the unique standard subcomplex of type $\wtX_{21}$ (resp. $\wtX_{11}$) containing $\wtP_{3i}$ for $i=1,3$ (resp. for $i=2$). This gives a cycle in $\omega_{\mathbb U}$  of form
	$$
	z_1\to z_2\to z_{31}\to z_{32}\to z_{33}\to z_4\to z_5\to z_6\to z_1
	$$
	where $z_i$ (resp. $z_{3i}$) corresponds to $T_i$ (resp. $T_{3i}$).
	Let $\mathbb D$ be a minimal area singular disk diagram with boundary $\omega_{\mathbb U}$. As before we will slightly abuse notation and use $z_i$ to denote the point in the boundary cycle of $\bD$ mapping to $z_i\in\bU$. 
Since
\begin{center}
 $k_1\neq 0$, $k_5\neq 0$ and $k_4\neq 0$,
\end{center}
 we have
 \begin{center}
  $\angle_{z_{31}}(z_2,z_{32})=\angle_{z_{32}}(z_{31},z_{33})=\angle_{z_{33}}(z_{32},z_4)=\pi$ in $\mathbb U$,
 \end{center}
 so the same holds in $\mathbb D$. 
	\begin{enumerate}
		\item If $\angle_{z_2}(z_1,z_{31})\ge 3\pi/4$ and $\angle_{z_4}(z_5,z_{33})\ge 3\pi/4$ in $\mathbb D$, then from \cite[Equation 3.17]{huang2023labeled} we know all the other angles at the vertices of $\omega_{\mathbb U}$ in $\mathbb D$ has to be $\pi/2$. As $\mathbb D$ is made of right-angled isosceles triangles, by consider the sequence of right-angled turns in the edge path $z_4\to z_5\to z_6\to z_1\to z_2$, we must have $z_2=z_4$, which is impossible as $z_2\to z_{31}\to z_{32}\to z_{33}\to z_4$ is a geodesic in $\mathbb U$.
		\item If $\angle_{z_2}(z_1,z_{31})= \pi/4$ and $\angle_{z_4}(z_5,z_{33})\ge 3\pi/4$ in $\mathbb D$, then $z_2,z_{31},z_1$ form the vertices of a triangle with right-angled at $z_{31}$. Thus $z_1$ is adjacent to $z_{32}$. Now we are reduced to consider a new cycle $\omega'_{\mathbb U}$ of form $z_1\to z_{32}\to z_{33}\to z_4\to z_5\to z_6\to z_1$ and a disk diagram $\mathbb D'\subset\mathbb D$ bounded by $\omega'_{\mathbb U}$. Note that $\angle_{z_{32}}(z_1,z_{33})=3\pi/4$ in $\mathbb D'$. Thus by \cite[Equation 3.17]{huang2023labeled}, all other angles at the vertices of $\omega'_{\mathbb U}$ in $\mathbb D'$ is $\pi/2$. We deduce as before that $z_{32}=z_4$, which is impossible as $z_{32}\to z_{33}\to z_4$ is a geodesic in $\mathbb U$.
		\item Suppose $\angle_{z_2}(z_1,z_{31})= \pi/4$ and $\angle_{z_4}(z_5,z_{33})=\pi/4$ in $\mathbb D$. From $\angle_{z_2}(z_1,z_{31})= \pi/4$ we deduce that $T_1,T_2$ and $T_{31}$ has pairwise nonempty intersection. Then $T_1\cap T_2\cap T_{31}\neq\emptyset$ as $z_1,z_2,z_{31}$ form a triangle which bounds a 2-cell in $\mathbb U$. Then $T_1\cap T_2\cap T_{31}$ is a single point, which we denote by $z$.	As $T_1\cap T_2$ is a $b$-line (i.e. a copy of $\mathbb R$ made of edges which are mapped to some $b$-labeled edges in $\whZ$ under the covering map) containing $z$, and $T_2\cap T_{31}$ is a $c$-line containing $z$. Thus $P_2$ is a path in $T_2$ from a point in the $b$-line containing $z$ to a point in the $c$-lines containing $z$. Thus we can assume $w_{bc}=b^{k_7}c^{k_8}$. Similarly, we can assume $w'_{bc}=c^{k_9}b^{k_{10}}$. Then the word $w$ becomes:
	\begin{equation}
		\label{eq:word1}
		w_{ab}\cdot b^{k_7}c^{k_8}\cdot a^{-k_1}b^{-k_5}a^{-k_4}\cdot c^{k_9}b^{k_{10}}\cdot w''_{ab}w''_{bc},
	\end{equation}
		which can be rearranged as 
		\begin{equation}
			\label{eq:word2}
		w_{ab}b^{k_7}a^{-k_1}\cdot c^{k_8}b^{-k_5}c^{k_9}\cdot a^{-k_4}b^{k_{10}}w''_{ab}\cdot w''_{bc}.
		\end{equation}
	Consider a lift $\widetilde P'$ of $P$ to the universal cover of $\od$, then this rearrangement gives a way of replacing  $\widetilde P'$ by another loop $\widetilde P''$ with the same endpoint. Let $\widetilde Z$ be the standard subcomplex of type $\{b,c\}$ (cf. Definition~\ref{def:support}) containing subpath of $\widetilde P''$ corresponding to the subword $c^{k_8}b^{-k_5}c^{k_9}$ of the word \eqref{eq:word2}.
	By keeping track of the replacement, we deduce that for $i=1,3,5$, $\widetilde Z$ has nonempty intersection with the standard subcomplex of type $\{a,b\}$ containing $\widetilde P'_i$. This implies that in $\omega=y_1x_1y_2x_2y_3x_3$ which we started with, $y_1,y_3$ and $y_5$ are adjacent to a common vertex of type $\hat c$.
	\end{enumerate}
	
	Now we assume at least one of $k_1,k_5$ or $k_4$ is $0$. Then we can assume $w'_{ab}$ is a power of $a$, up to combining powers of $b$'s in $w'_{ab}$ with $w_{bc}$ or $w'_{bc}$. This is quite similar to the previous case, except in the cycle $\omega_{\mathbb U}$, we have $z_{31}=z_{33}$. Thus we set $z_3=z_{31}$, which is the vertex corresponding to the standard subcomplex of type $\wtX_{21}$ containing $\wtP_3$, and we define $\omega_{\mathbb U}$ to be $z_1z_2z_3z_4z_5z_6z_1$ in this case. Let $\mathbb D$ be a minimal area disk diagram for $\omega_{\mathbb U}$. Then $\angle_{z_3}(z_2,z_4)=\pi$ in $\mathbb D$. Note that $\angle_{z_2}(z_1,z_3)$ is either $\pi/4$ or $\ge 3\pi/4$. The same is true for $\angle_{z_4}(z_3,z_5)$. If both $\angle_{z_2}(z_1,z_3)$ and $\angle_{z_4}(z_3,z_5)$ is $\ge 3\pi/4$, then we can deduce a contradiction as before. If $\angle_{z_2}(z_1,z_3)=\pi/4$, then we deduce as before that $w_{bc}=b^{k_7}c^{k_8}$. Thus $w$ is of form $$w_{ab}\cdot b^{k_7}c^{k_8}\cdot a^{k_9}\cdot w'_{bc}w''_{ab}w''_{bc},$$ which can be rearranged as $$w_{ab}b^{k_7}a^{k_9}\cdot c^{k_8}w'_{bc}\cdot w''_{ab}w''_{bc}.$$ 
	By comparing the two loops in the universal cover of $\od$ corresponding to these two words 
	and reasoning as in the previous paragraph, we know $x_2$ is adjacent to $y_1$ in $\Delta$ and we can take $s=x_2$ in the lemma.
	
	\medskip
	
	\noindent
	\underline{Case 3: the $\pi$-image of $\omega$ is two edges $\bar x\bar y$ and $\bar y\bar z$ such that $\bar y$ is of type $\hat c$.} Then $\bar x$ and $\bar z$ are of type $\hat a$. Up to applying a cyclic permutation to the index $i$ and possibly changing the role of $\bar x$ and $\bar z$, we can assume that $P_6,P_4$ are loops in $\whC_{\bar x}$, $P_1,P_3$ are paths in $\whC_{\bar y}$, $P_2$ is a loop in $\whC_{\bar z}$ and $P_5$ is a loop in $\whC_{\bar y}$.
	
	\smallskip
	
	\noindent
	\underline{Case 3.1: there is a vertex in $\whC_{\bar x}\cap \whC_{\bar y}$ and a vertex in $\whC_{\bar y}\cap \whC_{\bar z}$ that are adjacent.}\\
	Note that $\p_{\widehat C_{\bar z}}(\widehat C_{\bar y})$ is a standard subcomplex of type $b$ (see Definition~\ref{def:support}) and $\p_{\widehat C_{\bar z}}(\widehat C_{\bar x})$ is a standard subcomplex of type $c$. Thus if we consider the loop $\p_{\widehat C_{\bar z}}(P)$, then we can read of a word of form $b^{k_2}w_{bc}b^{k_3}c^{k_4}b^{b_5}c^{k_1}$ which represents the trivial element in $A_{bc}$. Thus 
	$$
	w_{bc}=b^{-k_2}c^{-k_1}b^{-k_5}c^{-k_4}b^{-k_3}.
	$$
	By combining powers of $b$'s at the beginning and end of $w'_{bc}$ with $w_{ab}$ and $w'_{ab}$, we can replace $w$ by an equivalent word such that $$w_{bc}=c^{-k_1}b^{-k_5}c^{-k_4},$$ moreover, $P_2$ is a loop in  $\p_{\widehat C_{\bar z}}(\widehat C_{\bar y})\cup \p_{\widehat C_{\bar z}}(\widehat C_{\bar x})$. Thus up to applying a symmetry of $\widehat \Si$, we can assume $$P\subset \p_{\widehat C_{\bar z}}(\widehat C_{\bar y})\cup \p_{\widehat C_{\bar z}}(\widehat C_{\bar x})\cup \widehat C_{\bar x}\cup \widehat C_{\bar y}\subset \whZ.$$ Moreover, we can assume $\whX_{11}=\whC_{\bar x}$, $\whX_{22}=\whC_{\bar y}$, $\p_{\widehat C_{\bar z}}(\widehat C_{\bar y})=\wE_6$ and $\p_{\widehat C_{\bar z}}(\widehat C_{\bar x})=\wE_5$. 
	
First we assume $k_1\neq 0, k_5\neq 0$ and $k_4\neq 0$.
	We produce a cycle in $\mathbb U$ from $P$ as follows. First we lift $P$ to a loop $\wtP$ in $\wtX$. Let $T_i$ be the unique standard subcomplex of type $\wtX_{22}$ (resp. $\wtX_{11}$) containing $\wtP_i$ for $i=1,3,5$ (resp. for $i=6,4$). Note that $\wtP_2$ is a concatenation of three subsegments $\wtP_2=\wtP_{21}\wtP_{22}\wtP_{23}$ such that $\wtP_{21}$ and $\wtP_{23}$ arise from of powers of $c$'s in $w_{bc}$ and $\wtP_{22}$ corresponds to powers of $b$'s in $w_{bc}$. Let $T_{2i}$ (resp. $T_{22}$) be the unique standard subcomplex of type $\wtX_{21}$ (resp. $\wtX_{11}$) containing $\wtP_{2i}$ for $i=1,3$ (resp. for $i=2$). This gives a cycle in $\omega_{\mathbb U}$  of form
	$$
	z_6\to z_1\to z_{21}\to z_{22}\to z_{23}\to z_3\to z_4\to z_5\to z_6
	$$
	where $z_i$ (resp. $z_{2i}$) corresponds to $T_i$ (resp. $T_{2i}$). Now the rest of the argument is quite similar to Case 2.
	
	\smallskip
	\noindent
	\underline{Case 3.2: $\whC_{\bar x}\cap \whC_{\bar y}$ and $\whC_{\bar y}\cap \whC_{\bar z}$ do not contain adjacent vertices.}
	Then $\p_{\widehat C_{\bar z}}(\widehat C_{\bar y})$ is still a standard subcomplex of type $b$, but $\p_{\widehat C_{\bar z}}(\widehat C_{\bar x})$ is a single point, which is the main difference from Case 3.1. Thus if we consider the loop $\p_{\widehat C_{\bar z}}(P)$, then we can read of a word of form $b^{k_1}w'_{bc}b^{k_2}b^{b_3}$ which represents the trivial element in $A_{bc}$. Thus $w_{bc}=b^{k_4}$ for some integer $k_4$, which means $y_1=y_2$ and it contradicts the assumption that $\omega$ is a local embedding in $\od$.
	
	\medskip
	\noindent
	\underline{Case 4: the $\pi$-image of $\omega$ is a linear subgraph with three edges.} 
	Let $\overline x_1,\overline x_2,\overline x_3,\overline x_4$ be consecutive vertices of $\pi(\omega)$. Let $\whC_i$ be the standard subcomplex of $\widehat \Sigma$ associated with $\overline x_i$.
	We can assume without loss of generality that 
	\begin{center}
		$P_i\subset \whC_i$ for $1\le i\le 4$ and $P_i\subset \whC_{8-i}$ for $i=5,6$.
	\end{center}
 Moreover, $P_1$ and $P_4$ are loops. We define $\whC_{ij}=\Pi_{\whC_i}\whC_j$.
	We assume without loss of generality that $\bar x_1$ is of type $\hat c$ (otherwise we can switch the role of $\bar x_1$ and $\bar x_4$). There are four subcases to consider depending on the shape of $\pi(\omega)$, see Figure~\ref{fig:4}.
	
	\begin{figure}[h]
		\centering
		\includegraphics[scale=0.6]{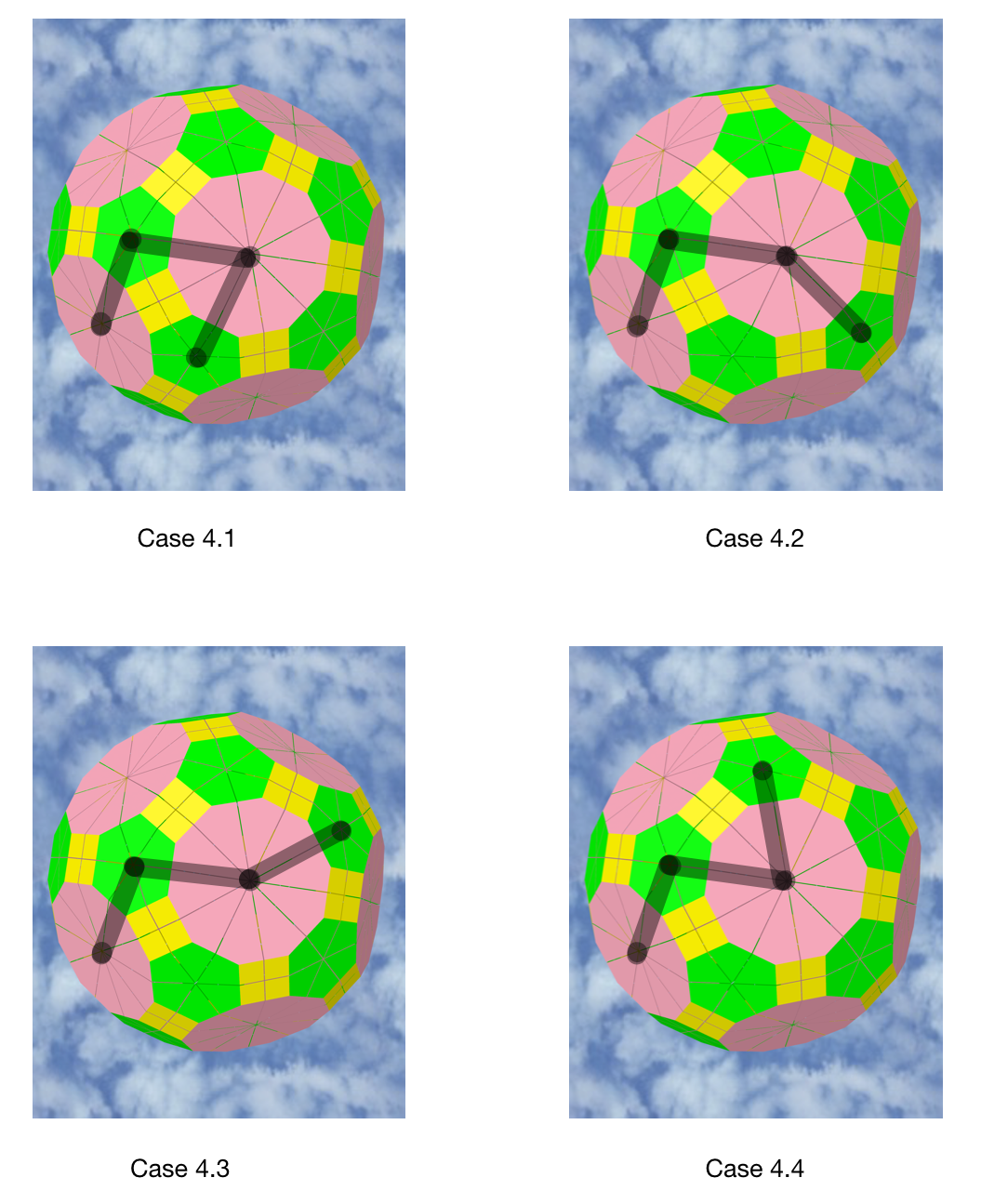}
		\caption{Subcases of Case 4. Picture made using KaleidoTile \cite{Kaleidotile}.}
			\label{fig:4}
	\end{figure}
	
	\smallskip
	\noindent
	\underline{Case 4.1.}\   In this case $\whC_{12}$ and $\whC_{14}$ are standard subcomplexes of type $b$, and $\whC_{13}$ is a standard subcomplex of type $a$. Thus by considering the loop $\Pi_{\whC_{1}}(P)$, we read of a word of form $$w_{ab}b^{k'_1}a^{k'_2}b^{k'_3}a^{k'_4}b^{k'_5}$$ which represents the trivial element in $A_{ab}$, moreover, the subwords $a^{k'_2},a^{k'_4}$ correspond to paths in $\whC_{13}$, and the subword $b^{k'_3}$ corresponds to a path $\whC_{14}$.
	Thus up to passing to a word which is equivalent to $w$, we can assume $w_{ab}=a^{k_1}b^{k_2}a^{k_3}$, and it corresponds to a path in $\whC_{13}\cup \whC_{14}$. Thus $$P\subset (\whC_{13}\cup \whC_{14})\cup \whC_2\cup \whC_3\cup\whC_4.$$ Similar by considering $\Pi_{\whC_{4}}(P)$, we deduce that
	\begin{center}
	$w'_{bc}=c^{k_4}b^{k_5}c^{k_6}$ and it corresponds to a path in $\whC_{41}\cup\whC_{42}$.
	\end{center}
	  Thus $$P\subset (\whC_{13}\cup \whC_{14})\cup \whC_2\cup \whC_3\cup(\whC_{41}\cup\whC_{42}).$$ By considering $\Pi_{\whC_{41}}(P)$ and noting that $\Pi_{\whC_{41}}(\whC_i)$ is a point for $i=1,2$, we deduce that $k_2+k_5=0$. 
	  
	  If $k_2=k_5=0$, then $w'_{bc}=c^*$.
	   we can modify $P_4$ such that it is in $\whC_2$ as follows. More precisely, we write $w'_{bc}$ as $ac^*a^{-1}$ (this is possible as $a$ and $c$ commute). This has the effect of replacing $P_4$ by a homotopic path such that it is a concatenation of a subpath in $\whC_3$ (corresponding to $a$), a subpath in $\whC_2$ (corresponding to $c^*$) and a subpath in $\whC_3$ (corresponding to $a^{-1}$). By combining the first and third subpath of $P_4$ with $P_3$ and $P_5$ respectively, we can assume $P_4\subset \whC_2$. Then $P\subset \whC_1\cup \whC_2\cup\whC_3$. In the level of the cycle $\omega$ in Lemma~\ref{lem:triple}, this has the effect of replace $x_2$ by another type $\hat a$ vertex which is adjacent to $y_2$ and $y_3$. Thus we are reduced to Case 2 or 3.

We now assume $k_2\neq 0$, hence $k_5\neq 0$. 	Let 
\begin{center}
$\wsX,\wsX'$, $\{\wsX_i\}_{i=1}^4$, $\wsX_{ij}$, $\wsY$ and $\kappa:\Si_{\ca}\to \Si_{\mathcal K}$
\end{center}
 be as in Section~\ref{subsec:AuII}. We view $\wsX$ as a subcomplex of $\od_{\mathcal K}$. Let $\hat \kappa:\od_{\ca}\to \od_{\mathcal K}$ be the map induced by $\kappa$. We will assume 
	\begin{center}
	$\hat \kappa(\whC_1)=\wsX_{22}$,	$\hat \kappa(\whC_2)=\wsX_{31}$, $\hat \kappa(\whC_3)=\wsX_{42}$ and $\hat \kappa(\whC_4)=\wsX_{33}$.
	\end{center}
Then $\hat \kappa(P)\subset \wsX'$ and $\hat \kappa(P)$ is null-homotopic in $\wsX'$ by Lemma~\ref{lem:injective} and Lemma~\ref{lem:BSinjective}. 
Let $W_{234}$ be the Coxeter group of type $B_3$ with its generating set $S=\{r,s,t\}$ such that $m_{rs}=4$ and $m_{st}=3$. Let $\od_{234}$ and $\hat e:\wsX'\to \od_{234}$ be as in Section~\ref{subsec:AuII}. Let $\Delta_{234}$ be the Artin complex of the Artin group of type $B_3$.
Then $Q=\hat e\circ \hat \kappa(P)$ is null-homotopic in $\widehat \Si_{234}$.  
Let $\widetilde \Si_{234}$ be the universal cover of $\widehat \Si_{234}$. We define a \emph{subcomplex of type $\hat r$} in $\widetilde \Si_{234}$ to be a lift of a standard subcomplex of type $\hat r$ in $\od_{234}$.

Let $Q_i=\hat e\circ \hat \kappa(P_i)$. Let $\wtQ$ be a lift of $Q$ to $\widetilde \Si_{234}$. Let $\whC'_i=\hat e\circ \hat \kappa(\whC_i)$, and $\whC'_{ij}=\hat e\circ \hat \kappa(\whC_{ij})$.
For $i=1,3,5$, let $\widetilde\Phi_i$ be the subcomplex of type $\hat t$ that contains $\wtQ_i$, and let $y'_i$ be the vertex in $\Delta_{234}$ corresponding to $\widetilde\Phi_i$. For $i=2,4,6$, let $\widetilde\Phi_i$ be the subcomplex of type $\hat r$ that contains $\wtQ_i$, and let $x'_i$ be the vertex in $\Delta_{234}$ corresponding to $\widetilde\Phi_i$.
Let $\omega'$ be the following 6-cycle in $\Delta_{234}$: $$y'_1x'_1y'_2x'_2y'_3x'_3.$$ 

We now show $\omega'$ is locally embedded. Clearly $y'_1\neq y'_i$ for $i=2,3$ as $\widetilde\Phi_1$ and $\widetilde\Phi_i$ are mapped to different standard subcomplexes in $\od_{234}$. Similar reason yields $x'_2\neq x'_i$ for $i=1,3$. From the description of $P_1$ and $P_4$, we know $\hat \kappa$ does not collapse any edges in $P_1$ and $P_4$. Thus the loop $Q_1$ represents an element in $\pi_1(\whC'_1)$ which is not contained in $\pi_1(\whC'_1\cap \whC'_2)$, which implies $x'_1\neq x'_3$. Similarly $y'_2\neq y'_3$.

By Theorem~\ref{thm:triple}, there is a vertex $z\in \Delta_{234}$ of type $\hat t$ such that $z$ is adjacent to $y'_i$ for $1\le i\le 3$.

\begin{claim}
	\label{claim:nonequal}
We have $z\neq x'_i$ for $i=1,3$.
\end{claim}

\begin{proof}
We first rule out $z=x'_1$. If $z=x'_1$, then $y'_1x'_1y'_2x'_3$ form an embedded 4-cycle in $\Delta_{234}$. By Theorem~\ref{thm:bowtie free}, there is a vertex $w$ of type $\hat s$ such that it is adjacent to each of the vertices of this 4-cycle. Let $\widetilde\Phi_w$ be the standard subcomplex in $\widetilde \Si_{234}$ associated with $w$. Then $\widetilde \Phi_1\cap \widetilde \Phi_2\cap\widetilde \Phi_w$ is exactly a vertex (as $y'_1,x'_1$ and $w$ span a triangle), denoted by $q$. Similar, define $q'=\widetilde \Phi_1\cap \widetilde \Phi_w\cap\widetilde \Phi_6$. We consider $\widetilde Q'_1$ which is a concatenation of the following path:
\begin{enumerate}
	\item $\widetilde Q'_{11}\subset \widetilde \Phi_6\cap \widetilde \Phi_1$ from the starting point of $\wtQ_1$ to $q'$;
	\item $\widetilde Q'_{12}\subset \widetilde\Phi_1\cap \widetilde\Phi_w$ from $q'$ to $q$;
	\item $\widetilde Q'_{13}\subset \widetilde\Phi_1\cap \widetilde \Phi_2$ from $q$ to the endpoint of $\wtQ_1$. 
\end{enumerate}
Then $\wtQ'_1$ is homotopic to $\wtQ_1$ rel endpoints in $\widetilde\Phi_1$. Let $\Phi_i$ be the image of $\widetilde \Phi_i$ under $\widetilde \Si_{234}\to \od_{234}$. We define $\Phi_w$ and $Q'_3$ similarly. Let $\bar w, \bar x'_i$ be the image of $w,x'_i$ under $\Delta_{234}\to \bC_{234}$ respectively. Then $\bar w$ is adjacent $\bar y'_1,\bar y'_2$ and $\bar x'_1$. This determines the position of $\bar w$. In particular, $$Q'_{12}\subset \Phi_w\cap \Phi_1=\whC'_{13}.$$ Thus $Q'_1\subset \whC'_{13}\cup \whC'_{12}$. 
Let $Q_{1i}=\hat e\circ \hat \kappa(P_{1i})$. Then $Q_{11}\cup Q_{13}\subset \whC'_{12}$ and $Q_{12}\subset \whC'_{14}$. Consider the retraction $r:\whC'_1\to \whC'_{14}$. As $Q_1$ and $Q'_1$ are homotopic rel endpoints in $\whC'_1=\Phi_1$, we know $r(Q_1)$ and $r(Q'_1)$ are homotopic rel endpoint in $\whC'_{14}$. However, this is a contradiction as $r(Q'_1)$ is a point, and $r(Q_1)=Q_{12}$ is null-homotopic as we are assuming $k_2\neq 0$ and $k_5\neq 0$. Thus $z\neq x'_1$. The proof of $z\neq x'_3$ is similar.
\end{proof}

\begin{claim}
	\label{claim:trim}
We have $w_{bc}=b^*c^*b^*$ and $w''_{bc}=b^*c^*b^*$.
\end{claim}

\begin{proof}
We only prove the claim for $w_{bc}$, as the other case is similar. Consider the 4-cycle $y'_1x'_1zx'_3$, which is embedded by Claim~\ref{claim:nonequal}. By Theorem~\ref{thm:bowtie free}, there exists a vertex $w'\in \Delta_{234}$ of type $\hat s$ such that it is adjacent of the each of the vertices of this 4-cycle. Let $\bar w'$ be the image of $w'$ under $\Delta_{234}\to \bC_{234}$. Then $\bar w'$ is the unique vertex which is adjacent to $\bar y'_1,\bar y'_2$ and $\bar x'_1$. By the same argument as in the proof of Claim 2, we know $Q_2$ is homotopic rel endpoints in $\whC'_2$ to a path $Q'_2$ which is a concatenation of $Q'_{21}\subset \whC'_{21}$, $Q'_{22}\subset \whC'_{24}$ and $Q'_{23}\subset \whC'_{23}$. As $\hat e\circ \hat \kappa$ map $\whC_2$ homeomorphically onto $\whC'_2$, we know $P_2$ is homotopic rel endpoints in $\whC_2$ to the concatenation of a path in $\whC_{21}$, a path in $\whC_{24}$ and a path in $\whC_{23}$. Hence $w_{bc}=b^*c^*b^*$.
\end{proof}

By Claim~\ref{claim:trim}, up to passing to a word equivalent to $w$, we can assume $(P_2\cup P_6) \subset \whC_{24}$, $w_{bc}=c^*$, and $w''_{bc}=c^*$. We now modify $P_2$ and $P_6$ as follows. Write $w_{bc}$ as $ac^*a^{-1}$. This has the effect of replacing $P_2$ by a homotopic path such that it is a concatenation of $P_{21}\subset \whC_1$ (corresponding to $a$), $P_{22}\subset \whC_4$ (corresponding to $c^*$), and $P_{23}\subset \whC_3$ (corresponding to $a^{-1}$). Similarly we write $w''_{bc}$ as $ac^*c^{-1}$ and replacing $P_6$ by $P_{61}P_{62}P_{63}$ with 
\begin{center}
$P_{61}\subset \whC_3$, $P_{62}\subset \whC_2$ and $P_{63}\subset \whC_1$.
\end{center}
By combining $P_{63}$ and $P_{21}$ with $P_1$, combining $P_{23}$ with $P_3$, and combining $P_{61}$ with $P_5$; we can assume $P_2\cup P_6\subset \whC_4$. Then $P\subset \whC_1\cup \whC_4\cup \whC_3$. In the level of the cycle $\omega$, this has the effect of replacing $x_1$ by another type $\hat a$ vertex that is adjacent to $y_1$ and $y_2$, and replacing $x_3$ by another type $\hat a$ vertex that is adjacent to $y_3$ and $y_1$. Thus we are reduced to Case 2.

	\smallskip
	\noindent
	\underline{Case 4.2.}\   In this case  $\whC_{41}$ is a standard subcomplex of type $c$, $\whC_{42}$ is a point and $\whC_{43}$ is a standard subcomplex of type $b$. By considering the loop $\Pi_{\whC_{4}}(P)$, we read of a word of form $$c^*b^*w'_{bc}b^*c^*$$ which represents the trivial element in $A_{bc}$.  Thus up to passing to a word which is equivalent to $w$, we can assume $w'_{bc}=c^*$ and $P_4\subset \whC_{41}$.
	
	
	Let $\whC'_4$ be the subcomplex $\whC_4$ in Case 4.1. Our next goal is to modify $P_4$ such that it is in $\whC'_4$. More precisely, we write $w'_{bc}$ as $ac^*a^{-1}$ (this is possible as $a$ and $c$ commute). This has the effect of replacing $P_4$ by a homotopic path such that it is a concatenation of a subpath in $\whC_3$ (corresponding to $a$), a subpath in $\whC'_4$ (corresponding to $c^*$) and a subpath in $\whC_3$ (corresponding to $a^{-1}$). By combining the first and third subpath of $P_4$ with $P_3$ and $P_5$ respectively, we can assume $P_4\subset \whC'_4$. Then $$P\subset \whC_2\cup \whC_3\cup\whC'_4\cup (\cup_{i=2}^4\whC_{1i}).$$ In the level of the cycle $\omega$ in Lemma~\ref{lem:triple}, this has the effect of replace $x_2$ by another type $\hat a$ vertex which is adjacent to $y_2$ and $y_3$. Thus we are reduced to Case 4.1.
	
	\smallskip
	\noindent
	\underline{Case 4.3.}\  In this case, $\whC_{41}=\whC_{43}$ is a standard subcomplex of type $a$, and $\whC_{42}$ is a point. Thus by considering the loop $\Pi_{\whC_{4}}(P)$, we deduce that $w'_{bc}$ is a power of $b$. This implies that $y_3=y_5$ in $\omega$, which contradicts that $\omega$ is a local embedding.
	
	\smallskip
	\noindent
	\underline{Case 4.4.}\  In this case $\whC_{41}=\whC_{42}$ is a standard subcomplex of type $c$, and $\whC_{43}$ is a standard subcomplex of type $b$. By considering the loop $\Pi_{\whC_{4}}(P)$, we read of a word of form $$c^*b^*w'_{bc}b^*c^*$$ which represents the trivial element in $A_{bc}$.  Thus up to passing to a word which is equivalent to $w$, we can assume $w'_{bc}=c^*$ (for possibly different value of $k_4$) and $P_4\subset \whC_{41}$. By a similar argument as in Case 4.2 (i.e. writing $w'_{bc}=ac^{k_4}a^{-1}$), we can assume $P_4\subset \whC_2$. This reduces to Case 2.

	\medskip
	\noindent
	\underline{Case 5: the $\pi$-image of $\omega$ is three edges sharing a common vertex $\bar x$.} Suppose $\pi(\omega)$ $=\bar x\bar y_1\cup\bar x\bar y_2\cup\bar x\bar y_3$. Let $\whC_0=\whC_{\bar x}$ and $\whC_i=\whC_{y_i}$ for $i=1,2,3$. Let $\whC_{ij}=\Pi_{\whC_i}(\whC_j)$.
	
	\smallskip
	\noindent
	\underline{Case 5.1: $\bar x$ is of type $\hat a$.} Then up to a cyclic permutation of the index $i$, we assume $P_i\subset \whC_{\bar x}$ for $i=2,4,6$, and $P_{2i-1}\subset \whC_{y_i}$ for $i=1,2,3$. 
	By considering the retraction $\Pi_{\whC_1}(P)$, we can assume $w_{ab}=a^*b^*a^*$ and it corresponds to a path in $\cup_{i\neq 1}\whC_{1i}$. Similarly, $w'_{ab}=a^*b^*a^*$ with the associated path in $\cup_{i\neq 2}\whC_{1i}$ and $w''_{ab}=a^*b^*a^*$ with the associated path in $\cup_{i\neq 3}\whC_{1i}$. Now we  consider $\Pi_{\whC_{12}}(P)$. Note that $\Pi_{\whC_{12}}(P_i)$ is not a constant path if and only if $i=2,4$. Thus the power of $a$s at the end of $w_{ab}$ cancels with the power of $a$s at the beginning of $w'_{ab}$. There are similar cancellations between $w'_{ab}$ and $w''_{ab}$, and between $w''_{ab}$ and $w_{ab}$. Thus we write 
	\begin{center}
		$w_{ab}=a^{k_1}b^{k_2}a^{k_3}$, $w'_{ab}=a^{-k_3}b^{k_4}a^{k_5}$ and $w''_{ab}=a^{-k_5}b^{k_6}a^{-k_1}$.
	\end{center}
	If one of $k_1,k_3,k_5$ is zero, say $k_5=0$, then we write $P_3=P'_3P''_3$ with $P'_3\subset \whC_{21}$ and $P''_3\subset \whC_0$. Note that $P'_3$ is homotopic rel endpoints to a path $P'_{31}P'_{32}P'_{33}$ with $P'_{31}\cup P'_{33}\subset \whC_0$ and $P'_{32}\subset \whC_1$. We replace $P_3$ by $P'_{31}P'_{32}P'_{33}P''_3$, combine $P'_{31}$ with $P_2$ and combine $P'_{33}P''_3$ with $P_4$. Then $$
	P\subset \whC_0\cup\whC_1\cup\whC_3.
	$$
	In the level of $\omega$, this has the effect of replacing $y_2$ by another vertex of type $\hat c$ that is adjacent to both $x_1$ and $x_2$. Now we are done by Case 2 and Claim~\ref{claim:change} below.

	\begin{claim}
	\label{claim:change}
	Let $\{x_i\}$ and $\{y_i\}$ be as in the lemma. Let $y'_2\neq y_2$ be a vertex of type $\hat c$ in $\Delta$ which is adjacent to both $x_1$ and $x_2$. Suppose there is a vertex $s'\in \Delta$ such that $s'$ is adjacent to each of $\{y_1,y'_2,y_3\}$. Then $s'$ is adjacent to each of $\{y_1,y_2,y_3\}$.
	\end{claim}

\begin{proof}
First we consider the case $s'\neq x_1$ and $s'\neq x_2$. By Theorem~\ref{thm:bowtie free}, there are vertices $q_1,q_2,q_3$ of type $\hat b$ such that
\begin{itemize}
	\item $q_1$ is adjacent to each of $y_1,x_1,y'_2,s'$;
	\item $q_2$ is adjacent to each of $y'_2,x_2,y_3,s'$;
	\item $q_3$ is adjacent to each of $x_1,y_2,x_2,y'_2$.
\end{itemize}
Then $s'q_1x_1q_3x_2q_2$ form a 6-cycle in $\lk(y'_2,\Delta)$. However, as $lk(y'_2,\Delta)$ is a copy of the Artin complex of $A_{ab}$, which has girth $\ge 10$ by \cite[Lemma 6]{appel1983artin}. Thus the 6-cycle is degenerate. As $s'\neq x_1$, $s'\neq x_2$ and $x_1\neq x_2$, we know the back-tracking vertices of this 6-cycle is a subset of $\{x_1,x_2,s'\}$, which is impossible.
Thus either $s'=x_1$ or $s'=x_2$, and the claim follows.
\end{proof}

In the rest of Case 5.1, we assume $k_1\neq 0,k_3\neq 0$ and $k_5\neq 0$. 
	
	Let $\wsX,\wsX'$, $\{\wsX_i\}_{i=1}^4$, $\wsY$ and $i:\wsY\to \od$ be as in Section~\ref{subsec:AuII}. We can assume without loss of generality that $i(\wsX_{31})=\whC_{\bar x}$. Then the discussion in the previous paragraph implies that $P\subset i(\wsY)$. Thus we can view $P$ as an edge loop in $\wsX'$, moreover, Lemma~\ref{lem:embed} implies that $P$ is null-homotopic in $\wsX'$. 
	
	Let $W_{234}$ be the Coxeter group of type $B_3$ with its generating set $S=\{r,s,t\}$ such that $m_{rs}=4$ and $m_{st}=3$. Let $\od_{234}$ and $\hat e:\wsX'\to \od_{234}$ be as in Section~\ref{subsec:AuII}. 
	Then $Q=\hat e(P)$ is null-homotopic in $\widehat \Si_{234}$.  
Recall that $\widetilde \Si_{234}$ is the universal cover of $\widehat \Si_{234}$. 
	Note that $Q$ lifts to a loop $\wtQ$ in $\widetilde \Sigma_{234}$. For $i=1,3,5$, let $\widetilde V_i$ be the subcomplex of type $\hat t$ in $\widetilde \Si_{234}$ containing $\wtQ_i$, and let $y'_i$ be the vertex  in $\Delta_{234}$ associated with $\widetilde V_i$. For $i=2,4,6$, let $\widetilde V_i$ be the subcomplex of type $\hat r$ in $\widetilde \Si_{234}$ containing $\wtQ_i$, and let $x'_i\in \Delta_{234}$ be the associated vertex. Then we have a cycle $$\omega'=y'_1x'_1y'_2x'_2y'_3x'_3$$ in $\Delta_{234}$. Note that $\omega'$ is a local embedding at $y'_i$ as we are assuming $k_1\neq 0$, $k_3\neq 0$ and $k_5\neq 0$. Moreover, $\omega'$ is local embedding at $x'_i$ as $\omega$ is a local embedding at $x_i$. 
	
	By Theorem~\ref{thm:triple}, there is a vertex $z\in\Delta_{234}$ of type $\hat t$ such that it is adjacent to $y'_i$ for $1\le i\le 3$. 
	Let $\bC_{234}$ be the Coxeter complex for $W_{234}$. Recall that there is a map $\Delta_{234}\to \bC_{234}$ induced by the quotient of the action of the pure Artin group $PA_{234}$. Let $\bar z$ be the image of $z$ under the map $\Delta_{234}\to \mathbb \bC_{234}$. We define $\bar x'_i$ and $\bar y'_i$ similarly. As $\bar z$ is adjacent to $\bar y'_1$, $\bar y'_2$ and $\bar y'_3$, we know $\bar x'_1=\bar x'_2=\bar x'_3=\bar z$.
	
	We have  $z\neq x'_i$ for $1\le i\le 3$. This can be proved in the same way as Claim~\ref{claim:nonequal}, using $k_1\neq 0,k_3\neq 0$ and $k_5\neq 0$.  Thus for $j=1,2,3$, the vertices $x'_j,y'_j,y'_{j+1},z$ form an embedded 4-cycle in $\Delta_{234}$.
	By Theorem~\ref{thm:bowtie free}, there is a vertex $w_j$ of type $\hat s$ such that $w_j$ is adjacent to each vertex of this 4-cycle (see Figure~\ref{fig:case5} below). Let $\widetilde V_{w_j}$ and $\widetilde V_z$ be the standard subcomplexes of $\widetilde \Sigma_{234}$ corresponding to $w_j$ and $z$ respectively. As $x'_1,y'_1,w_1$ span a triangle in $\Delta_{234}$, we know $\widetilde V_1\cap \widetilde V_2\cap \widetilde V_{w_1}$ is a single vertex, denoted by $q_1$. Similarly,  for $i=2,4,6$, let 
	\begin{center}
		$q_i=\widetilde V_i\cap \widetilde V_{i+1}\cap \widetilde V_{w_{\frac{i+1}{2}}}$ for $i=1,3,5$ and $q_i=\widetilde V_i\cap \widetilde V_{i+1}\cap \widetilde V_{w_{i/2}}$.
	\end{center}

	\begin{figure}
		\centering
		\includegraphics[scale=1]{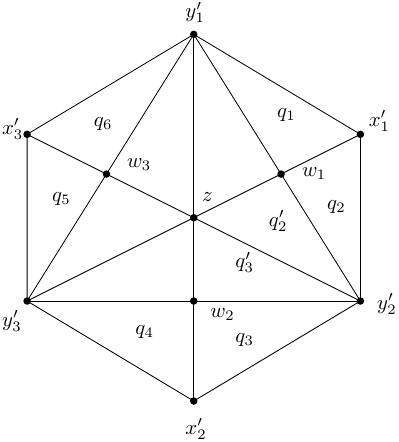}
		\caption{Case 5.}
			\label{fig:case5}
	\end{figure}
	
	\begin{claim}
		\label{claim:endpoint}
	The path $\wtQ_i$ starts at $q_{i-1}$ and ends at $q_{i}$ for $1\le i\le 6$. 
	\end{claim}

Assume Claim~\ref{claim:endpoint} for the moment, we explain how to finish Case 5.1.
  Let $\wtQ'_2$ be a shortest path in $\widetilde V_{w_1}\cap \widetilde V_2$ from $q_1$ to $q_2$. By the previous paragraph, $\wtQ'_2$ is homotopic rel endpoints to $\wtQ_2$ in $\widetilde V_2$. Let $V_i$ be the image of $\widetilde V_i$ under $\widetilde \Si_{234}\to \widehat\Si_{234}$. We define $V_{w_j},V_z$ and $Q'_2$ similarly.
  Thus $Q_2$ and $Q'_2$ are homotopic rel endpoints in $V_2$. Then $P_2$ and $\hat e^{-1}(Q'_2)$ are homotopic rel endpoints in $\wsX_{31}$. It follows that $P_2$ is homotopic rel endpoints in $\whC_0$ to a path in $i\circ \hat e^{-1}(V_{w_1}\cap V_2)$. Thus $w_{bc}=c^*$. Similarly, we know $w'_{bc}=c^*$ and $w''_{bc}=c^*$.
  Thus the word $w$ is of form
  $$
a^{k_1}b^{k_2}a^{k_3}\cdot c^*\cdot a^{-k_3}b^{k_4}a^{k_5}\cdot c^*\cdot  a^{-k_5}b^{k_6}a^{-k_1}\cdot c^*.
  $$
  For $i=1,3,5$, we write $P_i=P_{i1}P_{i2}P_{i3}$ corresponding to  $a^*b^*a^*$. As $a$ and $c$ commute, we know $P_{12}P_{13}P_2P_{31}P_{32}$ is homotopic rel endpoints in $\od$ to a path in $\whC_0$, and $P_{52}P_{53}P_6P_{11}P_{12}$ is homotopic rel endpoints in $\od$ to a path in $\whC_0$. 
  
  We consider a lift $\widetilde P$ of $P$ to the universal cover $\widetilde \Sigma$ of $\od$. For $i=1,3,5$, let $K_i$ be the standard subcomplex in $\widetilde \Si$ of type $\hat c$ that contains $\widetilde P_i$. Let $K$ be the lift of $\whC_0$ in $\widetilde \Si$ that contains $\widetilde P_{12}$. Then the previous paragraph implies that $K\cap K_i\neq \emptyset$ for $i=1,3,5$.
  Thus $y_1,y_2,y_3$ are adjacent to a common vertex of type $\hat a$ in $\Delta$.


\begin{proof}[Proof of Claim~\ref{claim:endpoint}]
	We first show the starting point and ending point of $\wtQ_3$ are $q_2$ and $q_3$ respectively. To see this, let $q'_2=\widetilde V_{w_1}\cap \widetilde V_{3}\cap \widetilde V_z$ (the intersection is non-empty as $x'_1,y'_2,w_1$ span a triangle) and $q'_3=\widetilde V_{w_2}\cap \widetilde V_3\cap \widetilde V_z$. Let $\wtQ'_3$ be a concatenation of the following path:
	\begin{enumerate}
		\item $\wtQ'_{31}\subset \widetilde V_2\cap \widetilde V_3$ from the starting point of $\wtQ_3$ to $q_2$;
		\item  $\wtQ'_{32}\subset \widetilde V_3\cap \widetilde V_{w_1}$ from $q_2$ to $q'_2$;
		\item $\wtQ'_{33}\subset \widetilde V_{z}\cap \widetilde V_3$ from $q'_2$ to $q'_3$;
		\item $\wtQ'_{34}\subset \widetilde V_{w_2}\cap \widetilde V_3$ from $q'_3$ to $q_3$;
		\item $\wtQ'_{35}\subset \widetilde V_4\cap \widetilde V_3$ from $q_3$ to the endpoint of $\wtQ_3$.
	\end{enumerate}    We can assume each $\wtQ'_{3i}$ is a geodesic path. Then $\wtQ_3$ and $\wtQ'_3$ are homotopic rel endpoints in $\widetilde V_3$. As $\bar x'_1=\bar x'_2=\bar x'_3=\bar z$, we know $$V_2=V_4=V_6=V_z.$$
 Thus $$Q'_3\subset V_3\cap (V_{w_1}\cup V_z\cup V_{w_2}).$$
 As $Q_3=\hat e(P_3)$, the form of $P_3$ implies that
 \begin{center}
$Q_3\subset V_3\cap (V_{w_1}\cup V_z\cup V_{w_2})$.
 \end{center}
  As $\bar w_j$ is adjacent to $\bar z,\bar y'_j$ and $\bar y'_{j+1}$, this uniquely determine the position of $\bar w_j$ in $\bC_{234}$. This implies that
  \begin{center}
 $V_3\cap (V_{w_1}\cup V_z\cup V_{w_2})$ is a union of three circles.
  \end{center}
	As $Q'_3(Q_3)^{-1}$ is homotopically trivial path in $V_3$, we know from Lemma~\ref{lem:injective} that $Q'_3(Q_3)^{-1}$ is homotopically trivial in $V_3\cap (V_{w_1}\cup V_z\cup V_{w_2})$ whose fundamental group is a free group. If both segments $Q_{31}$ and $Q_{35}$ are non-trivial, then $Q'_3(Q_3)^{-1}$ does not contain back-tracking, hence is a local geodesic loop in the graph $$V_3\cap (V_{w_1}\cup V_z\cup V_{w_2}),$$ which can not be null-homotopic in this graph. If one of $Q_{31}$ and $Q_{35}$ is non-trivial, then by killing all the back-tracking of the loop $Q'_3(Q_3)^{-1}$, we still have a homotopically non-trivial local-geodesic loop left, which is again not possible.
	Thus the segments $Q_{31}$ and $Q_{35}$ must be trivial, implying the claim. By a similarly argument, $\wtQ_i$ starts at $q_{i-1}$ and ends at $q_{i}$ for $i=1,3,5$. Thus $\wtQ_i$ starts at $q_{i-1}$ and ends at $q_{i}$ for $1\le i\le 6$. 
\end{proof}

	\smallskip
	\noindent
	\underline{Case 5.2: $\bar x$ is of type $\hat c$.} We write $\bar y_i\sim\bar y_j$ if they are adjacent to the same vertex of type $\hat b$ in $\bC$. Up to a permutation of $\{\bar y_i\}_{i=1}^3$, there are only two possibilities, namely either $\bar y_1\sim\bar y_2$ and $\bar y_2\sim \bar y_3$, or $\bar y_1\sim \bar y_3, \bar y_1\nsim \bar y_2$ and $y_2\nsim y_3$.  We assume $P_i\subset \whC_{\bar x}$ for $i=1,3,5$, and $P_{2i}\subset \whC_{y_i}$ for $i=1,2,3$. 
	
	First we consider the subcase when $\bar y_1\sim\bar y_2$ and $\bar y_2\sim \bar y_3$. Note that $\whC_{13}$ is a point and $\whC_{12}$ is a standard subcomplex of type $c$. By considering the loop $\Pi_{\whC_1}(P)$, we read of a word of form $$b^*w_{bc}b^*c^*b^*$$ representing the trivial element in $A_{bc}$ where the three $b^*$ subwords are associated with paths in $\whC_{10}$.
	Thus up to passing a word equivalent to $w$, we can assume $w_{bc}=c^*$ and it corresponds to a path in $\whC_{12}$. Let $\bar z$ be the vertex of type $\hat b$ that is adjacent to both $\bar y_1$ and $\bar y_2$. Using the product structure on $\whC_{\bar z}$, we know $P_2$ is homotopic rel endpoints to a path $P_{21}P_{22}P_{23}$ where $P_{2i}\subset \whC_{0}\cap \whC_{\bar z}$ for $i=1,3$ and $P_{22}\subset \whC_{2}\cap \whC_{\bar z}$. By combing $P_{21}$ with $P_1$ and combining $P_{23}$ with $P_3$, we can assume $P_2\subset \whC_2$. Then $$P\subset \whC_0\cup \whC_2\cup\whC_3.$$ In the level of the cycle $\omega$ in Lemma~\ref{lem:triple}, this has the effect of replace $x_1$ by another type $\hat a$ vertex which is adjacent to $y_1$ and $y_2$. Thus we are reduced to Case 3.
	
	Now we consider the subcase when $\bar y_1\sim \bar y_3, \bar y_1\nsim \bar y_2$ and $\bar y_2\nsim \bar y_3$. Then $\whC_{13}$ is a standard subcomplex of type $c$ and $\whC_{12}$ is a point. Thus this is similar to the previous subcase.
	
	\medskip
	\noindent
	\underline{Case 6: the $\pi$-image of $\omega$ is four edges.} In this case, the only possibility is that $\pi(\omega)$ is a 4-cycle. Let $\bar x_1,\bar x_2,\bar x_3,\bar x_4$ be consecutive vertices on this 4-cycle. Let $\whC_i=\whC_{\bar x_i}$ and $\whC_{ij}=\Pi_{\whC_i}(\whC_j)$. Up to a cyclic permutation of the index and symmetries of $\bC$, we can assume $\bar x_1$ is of type $\hat \hat c$, and one of the following holds true: 
	\begin{enumerate}
		\item $P_i\subset \whC_i$ for $1\le i\le 4$, $P_5\subset \whC_1$ and $P_6\subset \whC_4$;
		\item $P_i\subset \whC_i$ for $1\le i\le 4$, $P_5\subset \whC_1$ and $P_6\subset \whC_2$.
	\end{enumerate}
We will only treat the first situation, as the second is similar.

	 By considering the loop $\Pi_{\whC_2}(P)$, we read of a word of form $$b^*w_{bc}b^*c^*b^*c^*$$ which represents the trivial element in $A_{bc}$. Thus $w_{bc}=b^*c^*b^*c^*b^*$ where the first and second $b^*$ represent a path in $\whC_1\cap \whC_2$, the third $b^*$ represents a path in $\whC_2\cap \whC_3$, and the two $c^*$ represent paths in $\whC_{24}$. By considering the loop $\Pi_{\whC_3}(P)$ and carry out a similar analysis, we know $w'_{ab}=b^*a^*b^*a^*b^*$ where the second and third $b^*$ represent paths in $\whC_3\cap \whC_4$, the first $b^*$ represents a path in $\whC_2\cap \whC_3$, and the two $a^*$ represent paths in $\whC_{31}$. 
	 
	We consider $\Pi_{\whC_{23}}(P)$. Note that $\Pi_{\whC_{23}}(P_i)$ is a point for $i=1,4,5,6$, $\Pi_{\whC_{23}}(P_2)$ is the path corresponding to the last $b^*$ of $w_{bc}$, and $\Pi_{\whC_{23}}(P_2)$ is the path corresponding to the first $b^*$ of $w'_{ab}$. Thus the last $b^*$ of $w_{bc}$ and first $b^*$ of $w'_{ab}$ cancel. By merging the first $b^*$ of $w_{bc}$ with $w_{ab}$, and merging the last $b^*$ of $w'_{ab}$ with $w'_{bc}$, we assume
	 \begin{center}
	 	 $w_{bc}=c^{k_1}b^{k_2}c^{k_3}$ and $w'_{ab}=a^{k_4}b^{k_5}a^{k_6}$. 
	 \end{center}
	 
	 If $k_2=0$, then $w_{bc}=c^*$. By rewriting $w_{bc}=ac^*a^{-1}$ which corresponds to the concatenation of a path in $\whC_{13}$, a path in $\whC_{42}$ and a path in $\whC_{31}$, and merging $a$ with $w_{ab}$ and $a^{-1}$ with $w'_{ab}$, we can assume $P_2\subset \whC_{42}\subset \whC_4$. Then $$P\subset \whC_1\cup\whC_3\cup\whC_4,$$ and we are reduced to Case 2. 
	 
If $k_5=0$, we can perform a similar replacement of $P_3$ and arrange that $P\subset \whC_1\cup\whC_2\cup\whC_4$, at the cost of replacing $y_2$ by another vertex of type $\hat c$ which is adjacent to both $x_1$ and $x_2$. Then we are done by Case 3 and Claim~\ref{claim:change}.

Thus from now on we assume $k_2\neq 0$ and $k_5\neq 0$. We can also assume $k_1\neq 0$, otherwise we can combine $b^{k_2}$ with $w_{ab}$ and still have $w_{bc}=c^*$. Similarly, we assume $k_6\neq 0$. As $c^{k_3}$ corresponds to a path from $\whC_1$ to $\whC_3$, we know $k_3\neq 0$.

	Let $\whZ$ be defined in the end of Section~\ref{subsec:aug1}, viewed as a subcomplex of $\widehat \Sigma$ and a subcomplex of $\whX_1\cup\whX_2$ (we identify $\whX_{22}$ with $\whC_1$ and $\whX_{11}$ with $\whC_{4}$). Then $P\subset\whZ$ and by Lemma~\ref{lem:transfer1}, $P$ is null-homotopic in $\whX_1\cup\whX_2$. 
	The word $w_{bc}=c^*b^*c^*$ induces a decomposition $$P_2=P_{21}P_{22}P_{23}$$ with $P_{21},P_{23}\subset \whX_{21}$ and $P_{22}\subset \whX_{22}$. Similarly,
	\begin{center}
	 $P_{3}=P_{31}P_{32}P_{33}$ with $P_{31},P_{33}\subset \whX_{21}$ and $P_{32}\subset \whX_{11}$.
	\end{center} 
Note that $P_1,P_5\subset \whX_{22}$ and $P_4,P_6\subset \whX_{11}$. As $P$ is null-homotopic in $\whX_1\cup \whX_2$, it lifts to a loop $\wtP$ in the universal cover $\wtX$ of $\whX_1\cup \whX_2$. Let
\begin{itemize}
	\item $T_1,T_5$ be the standard subcomplexes of $\wtX$ containing $\wtP_1,\wtP_5$ of type $\whX_{22}$;
	\item $T_4,T_6$ be the standard subcomplexes of $\wtX$ containing $\wtP_4,\wtP_6$ of type $\whX_{11}$;
	\item $T_{21}$ and $T_{23}$ be the subcomplexes of $\wtX$ containing $\wtP_{21}$ and $\wtP_{23}$ of type $\whX_{21}$;
	\item $T_{22}$ be the standard subcomplex of $\wtX$ containing $\wtP_{22}$ of type $\whX_{22}$. 
\end{itemize}
 We define $T_{31},T_{32}$ and $T_{33}$ similarly. Note that $T_{23}=T_{31}$.
	Now $\wtP$ gives a loop $\omega_{\mathbb U}$ in $\mathbb U$ of form
	$$
	z_1\to z_{21}\to z_{22}\to z_{23}=z_{31} \to z_{32}\to z_{33}\to z_4\to z_5\to z_6
	$$ 
	where $z_i$ is the vertex associated with $T_i$ and $z_{ij}$ is the vertex associated with $T_{ij}$.
	
	As $k_i\neq 0$ for $i=1,2,3,5,6$, we know $\angle_{z_{21}}(z_1,z_{22})=\angle_{z_{33}}(z_{32},z_4)=\pi$, and Lemma~\ref{lem:piangle} implies that $\angle_{z_{22}}(z_{21},z_{23})=\angle_{z_{32}}(z_{31},z_{33})=\pi$. Thus the subsegment from $z_1\to z_{23}$ is a geodesic, and the subsegment from $z_{31}$ to $z_4$ is a geodesic. These two geodesics have the same length, and intersect in an angle $=\angle_{z_{23}}(z_{22},z_{32})=\pi/2$. On the other hand, the subsegment $$z_4\to z_5\to z_6\to z_1$$ has length equal to $\sqrt{2}$ times the length of the subsegment from $z_1\to z_{23}$. Thus by CAT$(0)$ geometry, the subsegment $z_4\to z_5\to z_6\to z_1$ is a geodesic, and $$\angle_{z_1}(z_{21},z_6)=\angle_{z_4}(z_{33},z_5)=\pi/4.$$ In particular $z_1,z_{21},z_6$ form a triangle in $\mathbb U$. Hence $T_1\cap T_6\cap T_{21}\neq\emptyset$. As $\wtP_1$ is a path from a point in $T_1\cap T_6$ to $T_1\cap T_{21}$, we know $\wtP_1$ is homotopic in $T_1$ rel endpoints to a path that is contained in $$(T_1\cap T_6)\cup (T_1\cap T_{21})$$ and passes through $T_1\cap T_6\cap T_{21}$. Thus $w_{ab}=b^*a^*$ in $A_{ab}$. By combining the $b^*$ part of $w_{ab}$ with $w''_{bc}$, we can assume $w_{ab}=a^*$. A similar argument implies that we can assume $w'_{bc}=c^*$.
	Now the word $w$ becomes $$a^* \cdot c^*b^*c^*\cdot  a^*b^*a^*\cdot c^*\cdot w''_{ab}w''_{bc}=1.$$ 
	We assume $w$ starts with $a^{k_1}\cdot c^{k_2}$. Replace $w_{ab}$ by $c^{k_2}a^{k_1}c^{-k_2}$, combine the $c^{k_2}$ part of $c^{k_2}a^{k_1}c^{-k_2}$ with $w''_{bc}$, and the $c^{-k_2}$ with $w_{bc}$. The new word $w$ takes form:
	$$a^*\cdot b^*c^*\cdot a^*b^*a^*\cdot c^*w''_{ab}w''_{bc}=1.$$
	On the level of $\omega$, this has the effect of replace $y_1$ by a vertex $y'_1$ of type $\hat c$ that are adjacent to both $x_1$ and $x_3$. Now we replace $w'_{bc}$ by $a^{-*}c^*a^*$, combine the $a^{-*}$ part with $w'_{ab}$ and the $a^*$ part with $w''_{ab}$. The new word $w$ takes form:
	$$a^*\cdot b^*c^*\cdot a^*b^*\cdot c^*w''_{ab}w''_{bc}=1.$$ 
	On the level of $\omega$, this has the effect of replace $x_2$ by a vertex $x'_2$ of type $\hat c$ that are adjacent to both $y_2$ and $y_3$.
By the way we handling the word \eqref{eq:word1}, we know $\{y'_1,y_3,y_5\}$ is adjacent to a common vertex in $\Delta$.
Then we are done by Claim~\ref{claim:change}.

	\medskip
	\noindent
	\underline{Case 7: the $\pi$-image of $\omega$ is five edges.} Then $\pi(\omega)$ is a 4-cycle with an extra edge. We assume vertices of this 4-cycle are $\bar x_1,\bar x_2,\bar x_3,\bar x_4$, and $\bar x_1$ is adjacent to a vertex $\bar x_0\in \pi(\omega)$ which is outside the 4-cycle. Let $\whC_i=\whC_{\bar x_i}$ and $\whC_{ij}=\Pi_{\whC_i}(\whC_j)$. 
	
	\smallskip
	\noindent
	\underline{Case 7.1: $\bar x_1$ is of type $\hat c$.} Up to a cyclic permutation of the index $i$, we assume
	\begin{center}
	 $P_i\subset\whC_i$ for $1\le i\le 4$, $P_5\subset \whC_1$ and $P_6\subset \whC_0$. 
	\end{center}
	First we consider the subcase that $\bar x_0$ and $\bar x_2$ are adjacent to a common vertex $\bar z$ of type $\hat b$. Then $\whC_{02}=\whC_{03}$ is a standard subcomplex of type $c$, and $\whC_{04}$ is a single point. As the loop $\Pi_{\whC_0}(P)$ is null-homotopic in $\whC_0$, we know that in $A_{bc}$ $$b^*c^*b^*w''_{bc}=1.$$  Thus up to passing to an equivalent $w$, we can assume $w''_{bc}=c^*$ and it corresponds to a path in $\whC_{02}$. Then $P''_{bc}$ is homotopic rel endpoints in $\widehat \Si$ to a concatenation of a path $P_{61}\subset \whC_{1}\cap \whC_{\bar z}$, a path $P_{62}\subset \whC_{\bar z}\cap\whC_{2}$, and a path  $P_{63}\subset \whC_{1}\cap \whC_{\bar z}$. By combining $P_{61}$ with $P_5$ and $P_{63}$ with $P_1$, we can assume $P_6\subset \whC_2$. This reduces to Case 6. 
	
	Second we consider the subcase that $\bar x_0$ and $\bar x_2$ are not adjacent to any common vertex of type $\hat b$. Then $\whC_{02}$ and $\whC_{04}$ are single points, and $\whC_{01}=\whC_{03}$ is a standard subcomplex of type $b$. As $\Pi_{\whC_0}(P)$ is null-homotopic loop in $\whC_0$, we conclude that $w''_{bc}=b^*$ in $A_{bc}$, which contradicts the assumption that $\omega$ is a local embedding at $x_3$.
	
	\smallskip
	\noindent
	\underline{Case 7.2: $\bar x_1$ is of type $\hat a$.} Up to a cyclic permutation of the index $i$, we assume
	\begin{center}
 $P_i\subset\whC_{i+1}$ for $i=1,2,3$, $P_4\subset \whC_1$, $P_5\subset \whC_0$ and $P_6\subset \whC_1$. 
	\end{center}
Note that $\whC_{30}=\whC_{31}$ is a standard subcomplex of type $a$. As $\Pi_{\whC_3}(P)$ is null-homotopic loop in $\whC_3$, we conclude that $b^*w_{bc}b^*a^*=1$ in $A_{bc}$. Thus up to passing to an equivalent $w$, we can assume $w_{bc}=a^*$ and it corresponds to a path in $\whC_{31}$. Then $P_2$ is homotopic rel endpoints in $\widehat\Sigma$ to the concatenation of a path $P_{21}\subset \whC_{24}$, a path $P_{22}\subset \whC_{31}$ and a path $P_{23}\subset \whC_{42}$. By combining $P_{21}$ with $P_1$, and $P_{23}$ with $P_3$, we can assume $P_2\subset \whC_4$. And this reduces to Case 5.
	
	\medskip
	\noindent
	\underline{Case 8: the $\pi$-image of $\omega$ is six edges.} Then $\pi(\omega)$ is an embedded $6$-cycle. We assume vertices of this 6-cycle are $\{\bar x_i\}_{i=1}^6$.  Let $\whC_i=\whC_{\bar x_i}$ and $\whC_{ij}=\Pi_{\whC_i}(\whC_j)$. Up to a cyclic permutation, we assume $P_i\subset \whC_i$ for $1\le i\le 6$. From the geometry of $\Sigma$, we know there is a vertex $\bar x_0$ of type $\hat a$ such that $\bar x_0$ is adjacent to $\bar x_i$ for $i=1,3,5$.
	
	We first consider the situation that $\bar x_i\neq \bar x_0$ for $i=2,4,6$. Then $\whC_{24}$ and $\whC_{26}$ are single points, and $\whC_{25}$ is a standard subcomplex of type $c$. Thus by considering the null-homotopic loop $\Pi_{\whC_2}(P)$ in $\whC_2$, we know $b^*w_{bc}b^*c^*=1$. Thus up to passing to an equivalent $w$, we can assume $w_{bc}=c^*$ and it corresponds to a path in $\whC_{25}$. By the same argument as in Case 7.2, we can perform of homotopy (rel endpoints) of $P_2$ such that it is a concatenation of a path in $\whC_1$, a path in $\whC_{\bar x_0}$ and a path in $\whC_3$. Thus up to combining suitable subpaths of $P_2$ with $P_1$ and $P_3$, we can assume $P_2\subset\whC_{\bar x_0}$. Similar arguments implies that we can assume $P_4,P_6\subset \whC_{\bar x_0}$. On the level of $\omega$, this has the effect of replacing $x_i$ by another vertex of type $\hat a$ that are adjacent to $y_i$ and $y_{i+1}$ for $i=1,2,3$. Thus we are reduced to Case 5.
	
	It remains to consider the case that one of $\bar x_2,\bar x_4,\bar x_6$ is $ \bar x_0$, say $\bar x_6=\bar x_0$. Then $\whC_{24}$ is a single point, and $\whC_{25}=\whC_{26}$ is a standard subcomplex of type $c$. Thus by considering the null-homotopic loop $\Pi_{\whC_2}(P)$ in $\whC_2$, we know $b^*w_{bc}b^*c^*=1$. By the argument in the previous paragraph, we can still assume $P_2,P_4\subset \whC_{\bar x_0}$ and it reduces to Case 5 again.
\end{proof}

\begin{cor}
	Conjecture~\ref{conj:compareB} holds true whenever $A_S$ is not of type $H_4$.
\end{cor}

\begin{proof}
	The bowtie free part of Conjecture~\ref{conj:compareB} follows from Theorem~\ref{thm:bowtie free}. For the upward flag part, then the possibility left are the case $A_S$ is of type $B_n$ follows from Theorem~\ref{thm:triple}; the case that $A_S$ is of type $F_4$ and $A_{S'}$ is of type $B_3$ follows from Proposition~\ref{prop:F4}; the case $A_S$ being type $H_3$ follows from Theorem~\ref{thm:tripleH}. This exhausts all the cases, assuming If $A_S$ is not of type $H_4$. 
\end{proof}

\section{A remark on weakly flagness}
\label{sec:weakly flag}
In this section we give a technical remark on the relation between weakly flagness and downward flagness.

The following was proved in \cite{huang2023labeled} in the case when $X$ is an appropriate relative Artin complex, however, the same proof work in the slightly more general setting of simplicial complexes of type $S$.
\begin{lem}(\cite[Lemma 6.9]{huang2023labeled})
	\label{lem:bowtie free criterion}
	Let $X$ be a simplicial complex of type $S$, with its vertex set $V$ endowed with the relation $<$ defined as above. 
	We assume that
	\begin{enumerate}
		\item $<$ is a partial order;
		\item for each $v\in V$ of type $\hat s_1$ or $\hat s_n$, the vertex set of $\lk(v,X)$ with the induced order from $(V,\le)$ is a bowtie free poset; 
		\item for any embedded 4-cycle $x_1y_1x_2y_2$ in $X$ such that $x_1,x_2$ have type $\hat s_1$ and $y_1,y_2$ have type $\hat s_n$, there is a vertex $z\in V$ such that $x_i\le z\le y_j$ for $1\le i,j\le 2$.
	\end{enumerate}
	Then $(V,\le)$ is bowtie free. 
\end{lem}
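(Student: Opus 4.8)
The plan is to verify the bowtie free condition directly: take mutually distinct $x_1,x_2,y_1,y_2\in V$ with $x_i<y_j$ for all $i,j\in\{1,2\}$, and produce $z\in V$ with $x_i\le z\le y_j$ for all $i,j$. Since $x_i<y_j$ means in particular that $x_i$ and $y_j$ are adjacent with $\type(x_i)<\type(y_j)$, all four types satisfy $\type(x_i)<\type(y_j)$, so they sit in the total order on $S$ with the $x$-types below the $y$-types. The four vertices form a (not necessarily induced) $4$-cycle $x_1y_1x_2y_2$ in $X$. First I would dispose of the degenerate cases: if $x_1$ and $x_2$ are comparable, say $x_1<x_2$ (they cannot be equal), then $z=x_2$ works, since $x_1<x_2<y_j$ for $j=1,2$ gives $x_i\le z\le y_j$; symmetrically if $y_1,y_2$ are comparable take $z$ to be the smaller of them. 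So assume $x_1\not\sim x_2$ (incomparable) and $y_1\not\sim y_2$. Then the $4$-cycle $x_1y_1x_2y_2$ has no chord between $x_1,x_2$ and none between $y_1,y_2$, i.e. it is an embedded $4$-cycle.

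Now the main case splits according to the types of the $x_i$ and $y_j$. The cleanest subcase is when $x_1,x_2$ both have type $\hat s_1$ (the smallest) and $y_1,y_2$ both have type $\hat s_n$ (the largest): this is exactly hypothesis~(3), which hands us the required $z$. The work is to reduce the general case to this one by passing to links. Here is where hypothesis~(2) enters. Suppose some $x_i$ has type strictly above $\hat s_1$; choose $x_1$ of minimal type among the four, WLOG realized by a type $\hat t$ with $t\ne s_1$. Since $X$ is of type $S$ and $t$ is not the smallest element of $S$, there is a vertex $w<x_1$; I would like to choose such a $w$ that is also $<$-below $x_2,y_1,y_2$, i.e. a common lower bound of all four, and then work inside $\lk(w,X)$, where by hypothesis~(2) (when $\type(w)=\hat s_1$) the induced poset is bowtie free — giving a common bound $z$ for the four vertices inside that link, which is then a common bound in $X$. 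The symmetric move uses a common upper bound $w'$ of type $\hat s_n$ and the $\hat s_n$ clause of (2). The dichotomy "$x$-types not all minimal, or $y$-types not all maximal, or else (3) applies" drives an induction on, say, $(\type(y_1)\!-\!\type(x_1))$ or on $|S|$.

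The hard part — and the step I would budget the most care for — is producing the common lower bound $w$ below all four vertices (or the common upper bound above all four) so that the descent into a vertex link is actually legitimate. It is not automatic that a lower bound of $x_1$ is a lower bound of $x_2,y_1,y_2$; this is precisely another instance of the bowtie phenomenon one step down, so one has to set up the induction so that the link $\lk(w,X)$ (which is again a simplicial complex of type $S\setminus\{\type(w)\}$, or of type an interval therein) inherits hypotheses (1)–(3). Concretely: (1) for the link is immediate since $<$ restricts; (2) for the link follows because links of links are links; (3) for the link of a type-$\hat s_1$ vertex is the content of hypothesis (2) itself (the link's poset is globally bowtie free, so the $4$-cycle filling in (3) is for free there). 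So the induction should be on $|S|$, with base case $|S|\le 2$ (where bowtie free is vacuous, as a $2$-typed complex has no $4$-cycles among incomparable pairs unless they collapse) and the inductive step exactly the type-analysis above, invoking (3) only at the top level when the $4$-cycle already has its $x$-vertices of type $\hat s_1$ and $y$-vertices of type $\hat s_n$. I expect the bookkeeping of "which vertex to descend through, and why its link still satisfies the hypotheses" to be the only genuinely delicate point; everything else is the routine comparability case analysis sketched above.
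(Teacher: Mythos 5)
Your handling of the degenerate cases (comparable pairs) and your identification of the extremal-type case with hypothesis~(3) are both correct, and the observation that a chord in the $4$-cycle $x_1y_1x_2y_2$ would force comparability, so the cycle is embedded, is exactly right. The problem is in the reduction of the non-extremal case, which you correctly flag as ``the hard part'' but then treat as a bookkeeping detail. It is not; as stated, the strategy fails.

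The strategy asks for a vertex $w$ of type $\hat s_1$ with $w < x_1, x_2, y_1, y_2$, so that the whole bowtie sits in $\lk(w,X)$ and (2) finishes. But such a $w$ need not exist, even under the hypotheses. If $\type(x_1) = \hat s_1$, the only element $\le x_1$ is $x_1$ itself, and $x_1 \not< x_2$ (they are incomparable by assumption), so $x_1$ and $x_2$ have no common lower bound at all. The same happens if both $x_i$ have type $\hat s_1$, or if both $y_j$ have type $\hat s_n$ on the dual side. Your case split also does not cover the mixed situation: you say ``choose $x_1$ of minimal type among the four, WLOG realized by a type $\hat t$ with $t \ne s_1$,'' but if one $x_i$ has type $\hat s_1$ and the other does not, the minimal type is $\hat s_1$ and this clause does not apply, yet hypothesis~(3) does not apply either since the $y_j$ need not both have type $\hat s_n$. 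These are precisely the cases where some new idea is required.

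The surrounding induction on $|S|$ also does not repair this. If you could find $w$ of type $\hat s_1$ below the whole bowtie, hypothesis~(2) gives the fill outright; no induction is needed. If instead you find $w$ of intermediate type, (2) says nothing about $\lk(w,X)$, and your claim that the hypotheses pass to $\lk(w,X)$ is not justified: the extremal types in $\lk(w,X)$ are $\hat s_2$ and $\hat s_n$ (say), and (2) for $X$ says nothing about links of $\hat s_2$-vertices; ``links of links are links'' does not by itself give bowtie-freeness of a sub-link, since a sub-poset of a bowtie-free poset need not be bowtie-free. (That particular gap can sometimes be repaired using transitivity to show the fill lands back in the sub-link, but you do not do this, and it does not rescue the $\hat s_2$ side.) The actual argument has to work around the possible nonexistence of a common bound, for instance by lifting one side of the $4$-cycle to extremal type along existing maximal simplices (using transitivity of $<$ to propagate the new relations), applying~(3) to the lifted embedded $4$-cycle, and then iteratively pushing the resulting fill back down through the bowtie-free links supplied by~(2). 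As written, your proposal does not contain that mechanism, so the core reduction is unproven.
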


\begin{lem}
	\label{lem:weakly flag equivalent}
	Suppose $A_\Lambda$ is an Artin group whose Coxeter diagram $\Lambda$ contains a linear admissible subgraph $\Lambda'$ with its three consecutive vertices being $\{b_2,a,b_3\}$.	
	If $\Delta=\Delta_{\Lambda,\Lambda}$ is bowtie free and weakly flag (Definition~\ref{def:bowtie free}), then the $(b_2,b_3)$-subdivision of $\Delta=\Delta_{\Lambda,\Lambda'}$ (Definition~\ref{def:subdivision}), denoted $\Delta'$, viewed as a simplicial complex of type $S$ with $S=\{1,2,3\}$, is bowtie free and downward flag.
\end{lem}

\begin{proof}
Let $t$ be the type function on the vertex set $V\Delta'$ of $\Delta'$ as in Definition~\ref{def:subdivision}.	We verify the bowtie free condition using Lemma~\ref{lem:bowtie free criterion}. For each $y\in \Delta'$ with $t(y)=1$, $\lk(y,\Delta')$ and $\lk(y,\Delta)$ are isomorphic, moreover, such isomorphism preserves the order of vertices (inherited respectively from $(V\Delta',\le)$ and from $\Delta$ with an appropriate choice of linear order on $\{b_2,a,b_3\}$). As $\Delta$ is bowtie free, $\lk(y,\Delta)$ is bowtie free, hence the same holds for $\lk(y,\Delta')$. It remains to verify Assumption 3 of Lemma~\ref{lem:bowtie free criterion}.
	Given $\{x_1,x_2,y_1,y_2\}\subset (V\Delta',\le)$ with $x_i<y_j$ for $1\le i,j\le 2$, $t(x_1)=t(x_2)=1$ and $t(y_1)=t(y_2)=3$. Then $y_1$ and $y_2$ have type $\hat a$, and $x_1$ and $x_2$ have type $\hat b_2$ or $\hat b_3$. 
	If $\type(x_1)=\type(x_2)$, then the bowtie free assumption  on $\Delta$ implies that either $x_1=x_2$ or $y_1=y_2$. If $\type(x_1)\neq \type(x_2)$, then the bowtie free assumption  on $\Delta$ implies that $x_1$ and $x_2$ are adjacent in $\Delta$. Then $x_1,x_2,y_j$ span a triangle in $\Delta$ for $j=1,2$ as $\Delta$ is flag.
	Let $z\in\Delta'$ be the vertex corresponding to the midpoint of $x_1$ and $x_2$. Then $x_i\le z\le y_j$ for $1\le i,j\le 2$ in $\Delta'$.
	
	Now we verify the downward flagness condition for $(V\Delta',\le)$.
 Take $\{x_1,x_2,x_3\}\subset (V\Delta',\le)$ such that $x_i$ and $x_{i+1}$ has a common lower bound $y_i$ for $i\in \mathbb Z/3\mathbb Z$. Assume without loss of generality that $t(y_1)=t(y_2)=t(y_3)=1$ (if $t(y_i)=2$, then we find $y'_i<y_i$ in $V\Delta'$ and replace $y_i$ by $y'_i$). Let $\omega$ be the 6-cycle in $\Delta'$ of form $x_1y_1x_2y_2x_3y_3$. We assume $\omega$ is embedded, otherwise the downward flagness is trivial.
	
	First we consider the case that $t(x_1)=t(x_2)=t(x_3)=3$. Then we can view $\omega$ as a 6-cycle in the 1-skeleton of $\Delta$. Let $P$ be the vertex set of $\Delta$ with induced order from $b_2<a<b_3$. If $y_1,y_2,y_3$ are all of type $\hat b_2$ in $\Delta$, then none of $x_i$ and $y_j$ are maximal in $P$. Hence the weakly flagness of $P$ implies that $\{y_1,y_2,y_3\}$ has a common upper bound, denoted $z$, in $P$. By considering the 4-cycle spanned by $\{z,y_1,y_2,x_1\}$ and applying the bowtie free property, we know either $x_1$ is adjacent to $z$ in $\Delta$, or $x_1=z$. The same statement holds true if we replace $x_1$ by $x_2$ or $x_3$. As $\{x_1,x_2,x_3\}$ are pairwise distinct and they have the same type in $\Delta$, we know $z$ is of type $\hat b_3$, and $z$ is adjacent to each of $\{x_1,x_2,x_3\}$ in $\Delta$, hence also in $\Delta'$. As $t(z)=1$, we $z$ is a common lower bound for $\{x_1,x_2,x_3\}$ in $(V\Delta',\le)$.
		 The case $y_1,y_2,y_3$ are all of type $\hat b_3$ is similar.
		 If two of $\{y_1,y_2,y_3\}$ have different types in $\Delta$, then we can assume without loss of generality that $y_1$ is of type $\hat b_2$ and $y_2,y_3$ are of type $\hat b_3$. Then $y_1$ and $y_i$ are adjacent in $\Delta$ for $i=2,3$. As $x_3$ is of type $\hat a$ in $\Delta$, by applying the bowtie free condition to $\{y_1,y_2,y_3,x_3\}$, we know $y_1$ and $x_3$ are adjacent in $\Delta$, hence in $\Delta'$. Thus $y_1$ is a common lower bound for $\{x_1,x_2,x_3\}$ in $V\Delta'$.

	Suppose the $t$-value of exactly two of $\{x_1,x_2,x_3\}$ is $3$. Assume without loss of generality that $t(x_1)=t(x_2)=3$ and $t(x_3)=2$. Then $y_2$ and $y_3$ are adjacent in $\Delta$ and $x_3$ is the midpoint between $y_2$ and $y_3$. We assume without loss of generality that $\type(y_2)=\hat b_2$ and $\type(y_3)=\hat b_3$. If $\type(y_1)=\hat b_3$, then $y_2$ is adjacent to both $y_1$ and $y_3$ in $\Delta$. 
	By applying the bowtie free condition of $\Delta$ to $\{y_1,y_2,y_3,x_1\}$, we know $x_1$ and $y_2$ are adjacent in $\Delta$. Thus $y_2$ is a lower bound for $\{x_1,x_2,x_3\}$ in $V\Delta'$. The case $\type(y_1)=\hat b_2$ is symmetric. 
	
	The case that at most one element in $\{x_1,x_2,x_3\}$ has $t$-value $3$ is similar and simpler, which we leave to the reader.
\end{proof}

Now the following is a combination of Proposition~\ref{prop:ori link0} and Lemma~\ref{lem:weakly flag equivalent}.
\begin{prop}
	\label{prop:ori link}
	Suppose $A_\Lambda$ is an Artin group whose Coxeter diagram $\Lambda$ contains a star shaped induced subgraph $\Lambda'$ made of three edges glued at a common node $a$. Let the other three nodes be $b_1,b_2,b_3$. 	Suppose $\Lambda'$ is an admissible subgraph of $\Lambda$.
	
	For $i=1,2,3$, let $\Lambda_i$ be the connected component of $\Lambda\setminus \{b_i\}$ that contains $a$. Let $\Lambda'_i=\Lambda'\setminus \{b_i\}$.
	Suppose that the following holds:
	\begin{enumerate}
		\item for $i=2,3$, the vertex set of the relative Artin complex $\Delta_{\Lambda_i,\Lambda'_i}$, endowed with the order induced from $b_i<a<b_1$, is a bowtie free, upward flag poset;
		\item $\Delta_{\Lambda_1,\Lambda'_1}$ is bowtie free and weakly flag.
	\end{enumerate}
	Then the $(b_2,b_3)$-subdivision of $\Delta=\Delta_{\Lambda,\Lambda'}$, denoted $\Delta'$, viewed as a simplicial complex of type $S$ with $S=\{1,2,3,4\}$, satisfies all the assumptions of Theorem~\ref{thm:contractibleII}. Hence $\Delta'$ is contractible. Thus $\Delta$ is contractible.
\end{prop}

\section{Propagation of bowtie free and flagness}
\label{sec:propagation}
We discuss several propagation results in the sense that if we know bowtie free or flagness properties on the links of some relative Artin complexes, then we can deduce that those relative Artin complexes also satisfy similar properties, under suitable assumptions.

\subsection{Case $\widetilde B_n$}

\begin{prop}
	\label{cor:propagation}
	Let $\Lambda,\Lambda',\Lambda_i,\Lambda'_i,\{b_i\}_{i=1}^n$ be as in Proposition~\ref{prop:ori link0}. Suppose all the assumptions in Proposition~\ref{prop:ori link0} holds true. Then the following holds true:
	\begin{enumerate}
		\item For $i=1,2$, the vertex set of the relative Artin complex $\Delta_{\Lambda,\Lambda'_i}$, endowed with the order induced from $b_i<b_3<\cdots<b_{n+1}$, is a bowtie free, upward flag poset.
		\item The $(b_1,b_2)$-subdivision of $\Delta_{\Lambda,\Lambda'_{n+1}}$ is bowtie free and downward flag.
		\item Assume in addition that $\Delta_{\Lambda_{n+1},\Lambda'_{n+1}}$ satisfies the labeled 4-wheel condition, then $\Delta_{\Lambda,\Lambda'}$ satisfies the labeled 4-wheel condition.
	\end{enumerate}
\end{prop}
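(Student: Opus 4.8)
The plan is to deduce Proposition~\ref{cor:propagation} as a direct consequence of the contractibility machinery already set up in Proposition~\ref{prop:ori link0} together with the link structure of relative Artin complexes. The key observation is that each of the three conclusions is a statement about a relative Artin complex $\Delta_{\Lambda,\Lambda'_i}$ (or its subdivision), and by Lemma~\ref{lem:link} the vertex links of these complexes are themselves relative Artin complexes of one of the two special forms already handled. So the whole argument is an induction on $|S|$: we assume Proposition~\ref{prop:ori link0} (hence contractibility of the relevant $\Delta'$) for all ambient diagrams of smaller size, and push bowtie-freeness and flagness upward through the links.

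\textbf{Step 1: bowtie freeness and upward flagness of $\Delta_{\Lambda,\Lambda'_i}$ for $i=1,2$.} Fix $i\in\{1,2\}$ and put the order $b_i<b_3<\cdots<b_{n+1}$ on the vertex set $V$ of $X:=\Delta_{\Lambda,\Lambda'_i}$. I would verify the hypotheses of Lemma~\ref{lem:bowtie free criterion} and of the flag portion of the argument simultaneously. For the bowtie-free part: condition (1) is Lemma~\ref{lem:poset structure} (admissibility of $\Lambda'$ descends to $\Lambda'_i$); condition (2) asks that the link of each vertex of type $\hat b_i$ or $\hat b_{n+1}$ is a bowtie-free poset — by Lemma~\ref{lem:link} such a link is a smaller relative Artin complex, $\Delta_{\Lambda_i,\Lambda'_i}$ or $\Delta_{\Lambda_{n+1},\Lambda'_{n+1}}$ respectively (more precisely the linear subgraph obtained by deleting the extreme vertex), and these are bowtie-free by Assumption 2 of Proposition~\ref{prop:ori link0} and by Lemma~\ref{lem:inherit and girth}(1); condition (3), the 4-wheel-type filling statement for embedded $4$-cycles with extreme types, is exactly what contractibility of $\Delta'$ buys us — by Lemma~\ref{lem:big lattice} applied to the complex $\Delta'$ of Proposition~\ref{prop:ori link0} (whose hypotheses hold), the vertex poset of $\Delta'$ is bowtie free and flag, and an embedded $4$-cycle in $X$ with vertices of type $\hat b_i$ and $\hat b_{n+1}$ lifts to such a configuration in $\Delta'$, producing the required common upper/lower bound. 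Upward flagness then follows the same way: a triple of pairwise-upper-bounded vertices in $X$ yields, via the lift to $\Delta'$ and the flagness of $(V\Delta',\le)$ from Lemma~\ref{lem:big lattice}, a common upper bound, and one checks the bound can be taken inside $X$ by the same link argument (if its type is too large, descend into $\lk$ of that vertex, which is again of the handled form, and repeat finitely often — this is the same descent used in the proof of Theorem~\ref{thm:weaklyflagA}).

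\textbf{Step 2: the $(b_1,b_2)$-subdivision of $\Delta_{\Lambda,\Lambda'_{n+1}}$.} Here I would repeat Step 1 but for the subdivided complex: by Lemma~\ref{lem:poset} the relation $<$ on the subdivision is a partial order (admissibility of $\Lambda'_{n+1}$ in $\Lambda$ descends from that of $\Lambda'$), and by Lemma~\ref{lem:link} the links of the extreme-type vertices of this subdivision are $(b_1,b_2)$-subdivisions of smaller relative Artin complexes $\Delta_{\Lambda_{n+1},\Lambda'_{n+1}}$, which are bowtie free and downward flag by Assumption 1 of Proposition~\ref{prop:ori link0}. The filling condition for $4$-cycles with extreme types again comes from contractibility of $\Delta'$ via Lemma~\ref{lem:big lattice}. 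This gives bowtie freeness and downward flagness by Lemma~\ref{lem:bowtie free criterion} and the flag argument of Lemma~\ref{lem:big lattice}.

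\textbf{Step 3: the labeled 4-wheel condition.} Given that $\Delta_{\Lambda_{n+1},\Lambda'_{n+1}}$ satisfies the labeled 4-wheel condition, I want the same for $\Delta_{\Lambda,\Lambda'}$. By Lemma~\ref{lem:4wheel} it suffices to check $\Delta_{\Lambda,\Lambda''}$ is bowtie free for every maximal linear subgraph $\Lambda''\subset\Lambda'$. Since $\Lambda'$ has type $\widetilde B_n$, its maximal linear subgraphs are precisely the two ``halves'' obtained by removing one of the two short branches, i.e. the linear subgraphs running from $b_1$ through $b_{n+1}$ and from $b_2$ through $b_{n+1}$. But $\Delta_{\Lambda,\Lambda'_i}$ for $i=1,2$ is, on the vertices of the corresponding linear subgraph, exactly $\Delta_{\Lambda,\Lambda''}$ — and we proved in Step 1 that its vertex poset is bowtie free. (One also needs the linear subgraph through both $b_1,b_2$ if it exists, but in type $\widetilde B_n$ those two vertices are not adjacent, so no such linear subgraph arises; the remaining maximal linear subgraphs all sit inside one of the two already handled, and bowtie freeness is inherited by Lemma~\ref{lem:inherit and girth}(1).) Hence all maximal linear subgraphs give bowtie-free relative Artin complexes, and Lemma~\ref{lem:4wheel} finishes the proof.

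\textbf{Main obstacle.} The delicate point is Step 1, specifically arranging that the common bound produced by the lattice/flagness argument in $\Delta'$ (Lemma~\ref{lem:big lattice}) actually lies in the relative Artin complex $\Delta_{\Lambda,\Lambda'_i}$ rather than merely in the ambient $\Delta_{\Lambda}$ — the bound may a priori have ``type'' outside $\{b_i,b_3,\dots,b_{n+1}\}$. The resolution is the finite descent into vertex links (each link being again a relative Artin complex of one of the two controlled shapes, by Lemma~\ref{lem:link}), exactly as in the last paragraph of the proof of Theorem~\ref{thm:weaklyflagA}; making sure this descent terminates and that admissibility is preserved at each stage is where the care is needed.
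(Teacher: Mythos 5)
Your Steps 1 and 2 rely on the same underlying mechanism as the paper (embed $\Delta_{\Lambda,\Lambda'_i}$ into the subdivided complex $\Delta'$, invoke Lemma~\ref{lem:big lattice} to get a common bound in $(V\Delta',\le)$, then pull it back), though your justification that the bound lies in the subcomplex is off. The issue is not that its $t$-value could be ``too large'' — the types present in $\Delta_{\Lambda,\Lambda'_i}$ are $\{1,3,\dots,n+1\}$, so the maximal type is already available and there is nothing to descend through. The only type that is missing is $t=2$ (the midpoint type), and a descent into links does not eliminate a middle type. What actually works is the paper's counting observation: a type-$2$ vertex of $\Delta'$ has exactly two vertices below it, namely the two endpoints of the subdivided edge, which have types $\hat b_1$ and $\hat b_2$ respectively; since your three pairwise-distinct $x_j$ (for upward flagness) or your pair $x_1\ne x_2$ of a fixed type $\hat b_i$ (for the bowtie) cannot all sit below such a vertex, $t(z)\ne2$. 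This is a fixable slip, but the reason you give is not the right one.

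Step 3 contains a genuine gap. You assert that the only maximal linear subgraphs of $\Lambda'$ (type $\widetilde B_n$) are the two ``halves'' from $b_1$ to $b_{n+1}$ and from $b_2$ to $b_{n+1}$, and that any other linear subgraph sits inside one of these. That is false: the path $b_1$--$b_3$--$b_2$, denoted $\Lambda'_0$ in the paper, is a maximal linear subgraph (neither end can be extended), and it is contained in neither $\Lambda'_1$ nor $\Lambda'_2$ since it uses both short legs. So Lemma~\ref{lem:4wheel} leaves you with the obligation to prove $\Delta_{\Lambda,\Lambda'_0}$ is bowtie free, and your argument never touches it. This is not a technicality: $\Lambda'_0$ is precisely the case that requires the extra hypothesis of assertion (3) — the labeled $4$-wheel condition on $\Delta_{\Lambda_{n+1},\Lambda'_{n+1}}$ — and the paper devotes the bulk of the proof (the argument with the Helly thickening $Y$, the balls $B_Y(x_i,2)$, the descent to a vertex $z$ of appropriate type, and finally the appeal to the $4$-wheel condition in $\lk(z',\Delta)$) to exactly this. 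The fact that your Step 3 never uses that extra assumption should have flagged the omission: assertions (1) and (2) hold without it, so whatever in (3) genuinely needs it is exactly what is missing from your proof.
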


\begin{proof}
Let $\Delta'$ be the subdivision of $\Delta$ as in Definition~\ref{def:subdivision}, whose vertex set is endowed with the partial order as in Definition~\ref{def:subdivision}.	
For the first assertion, note that the embedding $\Delta_{\Lambda,\Lambda'_i}\to \Delta'$ preserves the order on the respective vertex sets.	Take pairwise distinct vertices $\{x_1,x_2,x_3\}$ in $\Delta_{\Lambda,\Lambda'_i}$ such that $x_i$ and $x_{i+1}$ has a common upper bound $y_i$ in $\Delta_{\Lambda,\Lambda'_i}$ for $i\in \mathbb Z/3\mathbb Z$. Viewing $x_i$ and $y_j$ as vertices in $\Delta'$ and applying Proposition~\ref{prop:ori link0} and Lemma~\ref{lem:big lattice}, we know $\{x_1,x_2,x_3\}$ have a common upper bound $z$ in $V\Delta'$. Note that $t(z)\neq 2$, as $t(z)=2$ implies that there are at most two elements in $V\Delta'$ which are $<z$, contradicting $\{x_1,x_2,x_3\}$ being pairwise distinct. Hence $z\in \Delta_{\Lambda,\Lambda'_i}$, which implies $z$ is a common upper bound for $\{x_1,x_2,x_3\}$ in $\Delta_{\Lambda,\Lambda'_i}$. Similarly, we can deduce Assertion 2 from Lemma~\ref{lem:big lattice} and Proposition~\ref{prop:ori link0}.

For Assertion 3, by Assertion 1 and Lemma~\ref{lem:4wheel}, it suffices to show $\Delta_{\Lambda,\Lambda'_0}$ is bowtie free, where $\Lambda'_0=\overline{b_1b_3}\cup\overline{b_2b_3}$.
We use Lemma~\ref{lem:bowtie free criterion}. Let $\Delta'_{\Lambda,\Lambda'_0}$ be the subdivision of $\Delta_{\Lambda,\Lambda'_0}$ induced by $\Delta'$. Then $\Delta'_{\Lambda,\Lambda'_0}$ is the full subcomplex of $\Delta'$ spanned by vertices of type $1,2$ and $3$.
Take a vertex $v\in \Delta_{\Lambda,\Lambda'_0}$ of type $\hat b_i$ for $i=1$ or $2$. Then $\lk(v,\Delta_{\Lambda,\Lambda'_0})$ and $\lk(v,\Delta'_{\Lambda,\Lambda'_0})$ are isomorphic. As $\lk(v,\Delta'_{\Lambda,\Lambda'_0})$ is bowtie free by Proposition~\ref{prop:ori link0} and Lemma~\ref{lem:big lattice}, we know $\lk(v,\Delta_{\Lambda,\Lambda'_0})$ is bowtie free.

It remains to verify Assumption 3 of Lemma~\ref{lem:bowtie free criterion} for $\Delta_{\Lambda,\Lambda'_0}$. We refer to Figure~\ref{fig:8cycle}. Take a 4-cycle $x_1x_2x_3x_4$ in $\Delta_{\Lambda,\Lambda'_0}$ such that $x_i$ is of type $\hat b_1$ (resp. $\hat b_2$) for $i=1,3$ (resp. $i=2,4$). Let $Y$ be the thickening of $\Delta'$, in the sense defined in Theorem~\ref{thm:contractibleII}. By Proposition~\ref{prop:ori link0} and Theorem~\ref{thm:contractibleII}, $Y$ is a Helly graph. We also view $x_i$ as a vertex in $Y$. Let $y_i$ be the vertex in $\Delta'_{\Lambda,\Lambda'_0}$ which is the middle point between $x_i$ and $x_{i+1}$. For $i=2,3$, let $y'_i$ be a vertex of $\Delta'$ with $t(y'_i)=n+1$ such that $y'_i$ is adjacent in $\Delta'$ to each of $\{x_i,x_{i+1},y_i\}$.
 Let $d_Y$ denotes the combinatorial distance between vertices in $Y$, and let $B_Y(x,m)$ denotes the combinatorial balls in $Y$ centered at $x$ with radius $m$. For two vertices $x,y\in Y$, we write $x\sim_Y y$ if either $x=y$ or $x$ and $y$ are adjacent in $Y$. 

\begin{figure}
	\centering
	\includegraphics[scale=0.68]{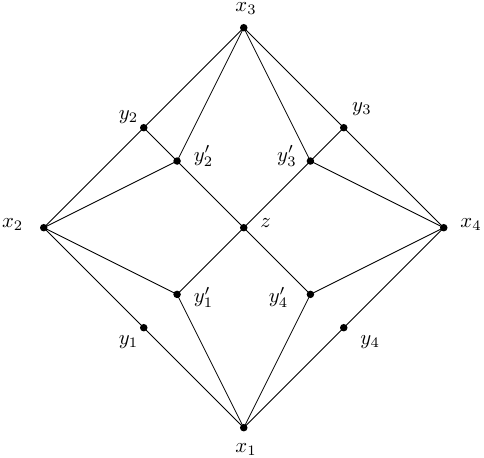}
		\caption{Proof of Proposition~\ref{cor:propagation}.}
	\label{fig:8cycle}
\end{figure}
 
Note that $B_Y(y'_2,1)$, $B_Y(y'_3,1)$ and $B_Y(x_1,2)$ pairwise intersect, thus they have a common intersection in $Y$, denoted by $z$. Thus $z\sim_Y y'_i$ for $i=2,3$. As $t(y'_i)=n+1$, it follows from the definition of edges in $Y$ that $z\sim_{\Delta'} y'_i$. Now we consider $B_Y(x_2,1)$, $B_Y(z,1)$, and $B_Y(x_1,1)$, which pairwise intersect, and let $y'_1$ be a vertex in the common intersection. We claim that we can choose $y'_1$ such that $t(y'_1)=n+1$ and $$y'_1\in B_{\Delta'}(x_2,1)\cap B_{\Delta'}(z,1)\cap B_{\Delta'}(x_1,1).$$
Indeed, if $t(y'_1)\neq n+1$, then the definition of edges in $Y$ implies that there are vertices $w_1,w_2\in \Delta'$ with $t(w_1)=1$ and $t(w_2)=n+1$ such that $w_1\le \{y'_1,z\}\le w_2$ in $(V\Delta',\le)$. In particular, $w_2\sim_{\Delta'} z$. As $t(x_2)=t(x_1)=1$, we know $y'_1\sim_{\Delta'} x_i$ for $i=1,2$. Thus $x_i\le y'_1$ in $(V\Delta',\le)$ for $i=1,2$. Since $y'_1\le w_2$, we have $x_i\le w_2$ for $i=1,2$ in $(V\Delta',\le)$. Thus the claim is proved if we replace $y'_1$ by $w_2$. Similarly, we can choose $y'_4$ such that $t(y'_4)=n+1$ and $$y'_4\in B_{\Delta'}(x_4,1)\cap B_{\Delta'}(z,1)\cap B_{\Delta'}(x_1,1).$$
If $t(z)=n+1$, then $z=y'_1=y'_2=y'_3=y'_4$. Thus $x_i\in \lk(z,\Delta)$ for $1\le i\le 4$. As $\lk(z,\Delta)\cong \Delta_{\Lambda_{n+1},\Lambda'_{n+1}}$ satisfies labeled 4-wheel condition, there is $z'\in \lk(z,\Delta)$ of type $\hat b_3$ such that $z'$ is adjacent to each of $x_i$ in  $\lk(z,\Delta)$ (hence in $\Delta_{\Lambda,\Lambda'_{n+1}}$), as desired. 
Now we assume $t(z)\le n$. As $y'_i>z$ for each $i$, by replacing $z$ by an element which is less than $z$ in $(V\Delta',\le)$, we can assume that $t(z)=1$ and $z$ is adjacent to each $y'_i$ in $\Delta'$. Then we can view each $x_i$ and $z$ as vertices in $\Delta$.

First we assume $z\notin\{x_1,x_2,x_3,x_4\}$. We claim there is a vertex $z'$ with $t(z')=n+1$ such that $z'$ is adjacent in $\Delta$ to at least three of $\{x_1,x_2,x_3,x_4\}$. Now we prove the claim. As each pair from $\{y'_2,y_1,y'_4\}$ have a lower bound, there is a common lower bound $w$ of $\{y'_2,y_1,y'_4\}$ by Lemma~\ref{lem:big lattice}. We can assume $t(w)=1$. As $w<y_1$, and in $\Delta'$ there is only two vertices with $t$-value $1$ that are below $y_1$, we know $w=x_2$ or $x_1$.
If $w=x_2$, then $y'_4$ is adjacent in $\Delta$ to each of $\{x_2,x_1,x_4\}$ and the claim follows. If $w=x_1$, then $y'_2$ is adjacent in $\Delta$ to each of $\{x_2,x_1,x_3\}$ and the claim follows.

Now we claim the same $z'$ is adjacent to each of $\{x_1,x_2,x_3,x_4\}$.
Assume without loss of generality that $z'$ is adjacent to $\{x_1,x_2,x_3\}$. As each pair from $\{z',y_3,y_4\}$ have a lower bound, there is a common lower bound $w$ of $\{z',y_3,y_4\}$ by Lemma~\ref{lem:big lattice}. We can assume $t(w)=1$. As $w<y_3$, the argument in the previous paragraph implies that $w\in \{x_3,x_4\}$. Similarly, $w<y_4$ implies $w\in \{x_4,x_1\}$. Thus $w=x_4$, which implies that $z'$ is adjacent to $x_4$ in $\Delta$. This claim implies that the 4-cycle $x_1x_2x_3x_4$ are contained in $\lk(z',\Delta)$. As $\lk(z',\Delta)\cong \Delta_{\Lambda_{n+1},\Lambda'_{n+1}}$ are assumed to satisfy labeled 4-wheel condition, we know from Lemma~\ref{lem:4wheel} that there is a vertex $z''\in \lk(z',\Delta)$ of type $\hat b_3$ such that $z''$ is adjacent in $\Delta$ to each of $x_i$ for $1\le i\le 4$. 

Suppose $z\in \{x_1,x_2,x_3,x_4\}$, say $z=x_2$. Then $y'_3$ is adjacent in $\Delta$ to each of $\{x_2,x_3,x_4\}$, and we finish as before.
\end{proof}

\subsection{Case $\widetilde D_n$}

\begin{prop}
	\label{cor:propagation2}
	Let $\Lambda,\Lambda',\{\Lambda_{a_i}\}_{i=1}^2,\{\Lambda_{c_i}\}_{i=1}^2,\{\Lambda'_{a_i}\}_{i=1}^2,\{\Lambda'_{c_i}\}_{i=1}^2$ be as in Proposition~\ref{prop:ori link2}. Suppose all the assumptions in Proposition~\ref{prop:ori link2} holds true. 
	Then the following holds true.
	\begin{enumerate}
		\item For $i=1,2$, the $(a_1,a_2)$-subdivision of $\Delta_{\Lambda,\Lambda'_{c_i}}$ is bowtie free and downward flag.
		\item For $i=1,2$, the $(c_1,c_2)$-subdivision of $\Delta_{\Lambda,\Lambda'_{a_i}}$ is bowtie free and downward flag.
		\item Let $\Lambda''=\Lambda'_{c_j}\cap \Lambda'_{a_i}$ for $1\le i,j\le 2$. Then $\Delta_{\Lambda,\Lambda''}$ is bowtie free.
	\end{enumerate}
\end{prop}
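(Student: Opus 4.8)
\textbf{Proof proposal for Proposition~\ref{cor:propagation2}.}

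The plan is to follow the same template as the proof of Proposition~\ref{cor:propagation}, now using Proposition~\ref{prop:ori link2} and its associated $\widetilde D_m$-subdivision in place of Proposition~\ref{prop:ori link0} and the $\widetilde B_n$-subdivision. Throughout, let $\Delta=\Delta_{\Lambda,\Lambda'}$ and let $\Delta'$ be the subdivision of $\Delta$ from Definition~\ref{def:subdivisionD}, with type function $t:V\Delta'\to\{1,\dots,n+4\}$; by the admissibility hypothesis on $\Lambda'$, the relation $<$ on $V\Delta'$ is a partial order (the analogue of Lemma~\ref{lem:poset} noted in Definition~\ref{def:subdivisionD}), and by Proposition~\ref{prop:ori link2} the poset $(V\Delta',<)$ satisfies all assumptions of Theorem~\ref{thm:contractibleII}. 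Consequently Lemma~\ref{lem:big lattice} applies: $(V\Delta',<)$ is bowtie free and flag, and the thickening $Y$ of $\Delta'$ is a Helly graph.

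For assertions (1) and (2): by symmetry (swapping the roles of the $a$-vertices and the $c$-vertices) it suffices to treat, say, the $(a_1,a_2)$-subdivision of $\Delta_{\Lambda,\Lambda'_{c_i}}$. First I would observe that the natural inclusion of the $(a_1,a_2)$-subdivision of $\Delta_{\Lambda,\Lambda'_{c_i}}$ into $\Delta'$ (obtained by forgetting the $c_i$-vertex, so that only vertices of types corresponding to $a_1,a_2,a,b_1,\dots,b_n,c$ remain --- exactly the full subcomplex of $\Delta'$ on vertices of type $\le n+3$ when $i=2$, and similarly for $i=1$) preserves the relation $<$. Bowtie-freeness then follows: given pairwise distinct $\{x_1,x_2,x_3\}$ in this subdivision that are pairwise upper-bounded (resp. for downward flagness, pairwise lower-bounded), view them in $\Delta'$, apply Lemma~\ref{lem:big lattice} to get a common bound $z\in V\Delta'$, and then argue that $t(z)$ cannot be one of the ``forbidden'' small values (e.g. $t(z)=2$, corresponding to the midpoint type $\hat a$, forces at most two vertices below $z$, contradicting distinctness) --- so $z$ lies in the subdivision of $\Delta_{\Lambda,\Lambda'_{c_i}}$ and is the required common bound. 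This is essentially verbatim the argument for Proposition~\ref{cor:propagation}(1)--(2), with the role of $t(z)=2$ played by whichever midpoint type needs to be excluded.

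The substantive part is assertion (3): showing $\Delta_{\Lambda,\Lambda''}$ is bowtie free, where $\Lambda''=\Lambda'_{c_j}\cap\Lambda'_{a_i}$ is the full subgraph on $\{a_i,a,b_1,\dots,b_n,c,c_j\}$-types (a ``linear core'' missing one $a$-leaf and one $c$-leaf). Here I would invoke Lemma~\ref{lem:bowtie free criterion}: one must check (a) $<$ is a partial order on $V\Delta_{\Lambda,\Lambda''}$ (immediate from admissibility and Lemma~\ref{lem:link}); (b) for each vertex $v$ of type $\hat a_i$ (the smallest) or $\hat c_j$ (the largest), the vertex set of $\lk(v,\Delta_{\Lambda,\Lambda''})$ is a bowtie-free poset --- this reduces via Lemma~\ref{lem:link} to $\lk(v,\Delta)\cong\Delta_{\Lambda_{a_i},\Lambda'_{a_i}\cap\Lambda''}$ (resp. $\Delta_{\Lambda_{c_j},\Lambda'_{c_j}\cap\Lambda''}$), whose bowtie-freeness follows from assertion (2) (resp. (1)) together with Lemma~\ref{lem:inherit and girth}(1), since passing from a $\widetilde B$-type subdivision down to a linear subgraph preserves bowtie-freeness; and (c) for any embedded $4$-cycle $x_1y_1x_2y_2$ in $\Delta_{\Lambda,\Lambda''}$ with $x_1,x_2$ of type $\hat a_i$ and $y_1,y_2$ of type $\hat c_j$, there is a common ``filler'' vertex $z$ with $x_k\le z\le y_l$. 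For (c) I would pass to $\Delta'$ and use that $Y$, being Helly, satisfies the $4$-wheel condition (as in \cite[Proposition 3.25]{weaklymodular}), producing $z\in Y$ adjacent to all four; since the types $\hat a_i$ and $\hat c_j$ are the extreme types, the definition of edges in $Y$ forces $z$ to be adjacent in $\Delta'$ to all four, and then (possibly after replacing $z$ by a smaller element) $z$ gives the filler. I expect step (c) --- reconciling the Helly-graph $4$-wheel vertex in $\Delta'$ with an honest vertex of the right type in $\Delta_{\Lambda,\Lambda''}$, handling the cases where $z$ lands on a midpoint type --- to be the main obstacle, exactly as the corresponding ``Assumption 3 of Lemma~\ref{lem:bowtie free criterion}'' verification was the crux of Proposition~\ref{cor:propagation}(3); the bookkeeping is more intricate here because the $\widetilde D_n$-subdivision introduces two separate midpoint types ($\hat a$ and $\hat c$) rather than one, so several subcases for $t(z)$ must be treated. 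The proof would conclude by noting, as in Proposition~\ref{cor:propagation2}'s statement, that these three assertions are precisely what feed into the downward-flag/bowtie-free hypotheses needed downstream.
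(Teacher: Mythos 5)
Your treatment of assertions (1) and (2) matches the paper in spirit---restrict to the corresponding full subcomplex of $\Delta'$, apply Lemma~\ref{lem:big lattice} to $(V\Delta',\le)$ to get a common bound, and then show the bound lies in the subcomplex---but your worked example of the excluded type is wrong. In the $(a_1,a_2)$-subdivision $\Delta'_{a_1,a_2}$ of $\Delta_{\Lambda,\Lambda'_{c_i}}$, the vertices of $\Delta'$ present are those of types $1,\dots,n+2$ together with those of type $n+4$ having $\Delta$-type $\hat c_i$. So type $2$ (the $a$-midpoint) \emph{is} a vertex type of $\Delta'_{a_1,a_2}$; the type that must be excluded is $n+3$ (the $c$-midpoint). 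For downward flagness this is handled by simply passing to some $v'<v$ of type $n+2$, while for bowtie-freeness one observes that a type-$(n+3)$ vertex is below exactly two type-$(n+4)$ vertices, one of each $c$-type, contradicting that $v_3,v_4$ are two distinct type-$\hat c_i$ vertices above $v$.

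Your route for assertion (3) is much more elaborate than the paper's, and in part it rests on a confusion. The paper proves (3) in a few lines: given a bowtie $\{v_1,v_2\}<\{v_3,v_4\}$ in $\Delta_{\Lambda,\Lambda''}\subset\Delta'$, bowtie-freeness of $(V\Delta',\le)$ (from Lemma~\ref{lem:big lattice}) gives $v\in V\Delta'$ with $\{v_1,v_2\}\le v\le\{v_3,v_4\}$; then $t(v)\notin\{2,n+3\}$ because if $t(v)=n+3$, say, then $v_3,v_4$ are both forced to have type $n+4$ hence are of $\Delta$-type $\hat c_j$, but a type-$(n+3)$ vertex has exactly two upper neighbours, of different $c$-types, a contradiction (and symmetrically for $t(v)=2$); so $v\in V\Delta_{\Lambda,\Lambda''}$. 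No appeal to Lemma~\ref{lem:bowtie free criterion}, to the Helly thickening $Y$, or to the $4$-wheel condition is needed. Your plan instead invokes Lemma~\ref{lem:bowtie free criterion}, and in verifying its hypothesis (b) you propose using assertions (1) and (2); but (1) and (2) are statements about relative Artin complexes of $A_\Lambda$, whereas the link of a type-$\hat a_i$ vertex in $\Delta_{\Lambda,\Lambda''}$ is, via Lemma~\ref{lem:link}, a relative Artin complex of the \emph{smaller} group $A_{\Lambda_{a_i}}$. What is available there is the hypothesis of Proposition~\ref{prop:ori link2} (phrased over $\Lambda_{a_i}$), not the conclusions (1)--(2). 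Your concern that (3) is "the main obstacle" with "more intricate bookkeeping" is also misplaced: in the paper it is the shortest of the three arguments, precisely because bowtie-freeness of $(V\Delta',\le)$ does all the work at once.
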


\begin{proof}
	Let the complex $\Delta'$ and the poset $(V\Delta',\le)$ be as in Definition~\ref{def:subdivisionD}.	
	By Lemma~\ref{lem:big lattice}, $(V\Delta',\le)$ is bowtie free and flag. To prove Assertion (1), we view the $(a_1,a_2)$-subdivision of $\Delta_{\Lambda,\Lambda'_{c_i}}$ (denoted by $\Delta'_{a_1,a_2}$) as the full subcomplex of $\Delta'$ span by vertices of types $1,2,\ldots,n+2$, and vertices of type $n+4$ that are also of type $\hat c_i$ in $\Delta_{\Lambda,\Lambda'}$. The order of vertices on  $\Delta'_{a_1,a_2}$ as in Definition~\ref{def:subdivision} coincides with the order inherit from $(V\Delta',\le)$. 
	
	Take three vertices $\{v_1,v_2,v_3\}$ of $\Delta'_{a_1,a_2}$ such that each pair of them have a lower bound. Viewing them as vertices of $V\Delta'$ and using the flagness of $V\Delta'$, we can find a common lower bound $v$ of $\{v_1,v_2,v_3\}$ in $V\Delta'$. If $v\in \Delta'_{a_1,a_2}$, then we are done, otherwise $v$ is of type $n+3$, in which case we find $v'$ of type $n+2$ with $v'<v$, then $v'\in \Delta'_{a_1,a_2}$ and $v'$ is a lower bound of $\{v_1,v_2,v_3\}$. 
	
	For the bowtie free property, take pairwise distinct vertices $\{v_1,v_2,v_3,v_4\}$ of $\Delta'_{a_1,a_2}$ with $v_i\le v_3$ and $v_i\le v_4$ for $i=1,2$. Viewing them as vertices in $V\Delta'$, by Lemma~\ref{lem:big lattice}, we find $v\in V\Delta'$ with $\{v_1,v_2\}\le v\le \{v_3,v_4\}$. 
	If $v\in \Delta'_{a_1,a_2}$, then we are done, otherwise $v$ is of type $n+3$. Each type $n+3$ vertices $<$ exactly two type $n+4$ vertices in $\Delta'$, one with type $\hat c_1$ in $\Delta$ and another one with type $\hat c_2$ in $\Delta$. Thus $v$ can not be of type $n+3$ as $v_3$ and $v_4$ are two different vertices in $\Delta$ with type $\hat c_i$. Thus $\Delta'_{a_1,a_2}$ is bowtie free. This proves Assertion (1). Assertion (2) can be proved similarly.
	
	For Assertion 3, up to symmetry, it suffices to consider $\Lambda''=\Lambda'_{c_1}\cap \Lambda'_{a_1}$. We endow $\Delta_{\Lambda,\Lambda''}$ with the order induced from $a_1<b_1<\cdots<b_n<c_1$. Then $V\Delta_{\Lambda,\Lambda''}$ with this order coincides with the order inherit from $(V\Delta',\le)$. Take pairwise distinct vertices $\{v_1,v_2,v_3,v_4\}$ of $\Delta_{\Lambda,\Lambda''}$ with $v_i<v_3$ and $v_i<v_4$ for $i=1,2$. As $\Delta'$ is bowtie free, there is a vertex $v\in V\Delta'$ with $\{v_1,v_2\}\le v\le \{v_3,v_4\}$. Then $1<t(v)<n+4$. If $t(v)=n+3$, then there are exactly two vertices in $V\Delta'$ which is bigger than $v$, one has type $\hat c_1$ in $\Delta$ and one has type $\hat c_2$ in $\Delta$. As $v_3,v_4$ have the same type in $\Delta$, we can not have $t(v)=n+3$. Similarly $t(v)\neq 2$. Thus $3\le t(v)\le n+2$ and $v\in \Delta_{\Lambda,\Lambda'}$, as desired.
\end{proof}

\subsection{Downward flagness with respect to different subdivisions}
Throughout this subsection, let $\Lambda,\Lambda',\{\Lambda_i\}_{i=1}^3,\{\Lambda'_i\}_{i=1}^3$, $\Delta$ and $\Delta'$ be as in Proposition~\ref{prop:ori link}. Our main goal in this subsection is to prove:
\begin{prop}
	\label{prop:different subdivision}
	Let $\Lambda,\Lambda',\{\Lambda_i\}_{i=1}^3,\{\Lambda'_i\}_{i=1}^3$ be as in Proposition~\ref{prop:ori link}. Suppose all the assumptions in Proposition~\ref{prop:ori link} holds true. Then for any $i\neq j$, the $(b_i,b_j)$-subdivision of $\Delta_{\Lambda,\Lambda'}$ is bowtie free and downward flag.
\end{prop}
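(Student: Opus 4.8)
The plan is to follow the same architecture as the proof of Proposition~\ref{prop:different subdivision4}, transporting each step from the $\widetilde B_4$ (i.e.\ two-leaf star after subdivision) setting to the three-leaf star setting of Proposition~\ref{prop:ori link}. First I would record the structural facts that come for free from the hypotheses of Proposition~\ref{prop:ori link}: by Proposition~\ref{prop:ori link0} (applied to the subdivided complex $\Delta'$) together with Lemma~\ref{lem:big lattice}, the poset $(V\Delta',\le)$ is bowtie free and flag; and by Lemma~\ref{lem:poset} the $(b_i,b_j)$-subdivision $\Delta''$ of $\Delta=\Delta_{\Lambda,\Lambda'}$ carries a genuine partial order with type function $t''$, since $\Lambda'$ is admissible in $\Lambda$. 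I would also establish, analogous to Lemma~\ref{lem:4and6cycles}, that $\Delta_{\Lambda,\Lambda'}$ satisfies the labeled 4-wheel condition --- this should follow from Proposition~\ref{cor:propagation}(3) once one checks the base case on $\Delta_{\Lambda_1,\Lambda'_1}$ using the weakly-flag hypothesis and Lemma~\ref{lem:4wheel}; bowtie freeness of $(V\Delta'',\le)$ then follows from the 4-wheel condition via Lemma~\ref{lem:bowtie free criterion} exactly as in the first paragraph of the proof of Proposition~\ref{prop:different subdivision4}.

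The heart of the argument is downward flagness of $(V\Delta'',\le)$: given three pairwise-distinct vertices $\{x_1,x_3,x_5\}$ of top type in $\Delta''$ that are pairwise lower-bounded, produce a common lower bound. As there, I would reduce to the case where $x_2,x_4,x_6$ are the pairwise lower bounds, forming an embedded 6-cycle $\omega$ in $\Delta''$ (hence in $\Delta$), and do a case analysis on how many of $x_2,x_4,x_6$ have type $\hat b_i$ versus $\hat b_j$ (versus, if relevant, the third leaf). In each case the tools are the same three lemmas that drive Proposition~\ref{prop:different subdivision4}: Lemma~\ref{lem:6cycle} (pushing a 6-cycle with a type-$2$ midpoint down using bowtie-free-and-flag of $V\Delta'$), Lemma~\ref{lem:8cycle1} (the ``octagon'' alternative: either two of the $x$'s share a common type-$(n{+}1)$ upper neighbor, or one of them dominates), and the labeled 4-wheel condition of $\Delta$ plus Lemma~\ref{lem:link}(3) to absorb 4-cycles through a $\hat b_3$ vertex. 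The outcome in every branch is that either $\{x_1,x_3,x_5\}$ acquires a common upper bound $z$ with $t(z)$ large, in which case --- after possibly replacing the $x_{2i}$'s by other vertices of the same type that still witness the adjacencies --- the whole 6-cycle $\omega$ lies in $\lk(z,\Delta_{\Lambda,\Lambda'})\cong\Delta_{\Lambda_1,\Lambda'_1}$ (using Lemma~\ref{lem:link}), and one invokes the \emph{weakly flag} hypothesis on $\Delta_{\Lambda_1,\Lambda'_1}$ from Proposition~\ref{prop:ori link}(2) to finish; or else one directly finds that one of $\{x_1,x_3,x_5\}$ is adjacent in $\Delta$ to the ``opposite'' $x_{2i}$, giving the desired common lower bound in $\Delta''$. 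The analogue of Claim~\ref{claim:change}-type bookkeeping (swapping one $x$ for a same-type vertex preserving adjacencies) is what makes the reduction to the link legitimate.

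The main obstacle I anticipate is precisely the interface between ``downward flagness of the subdivision'' and ``weak flagness of the link''. In Proposition~\ref{prop:different subdivision4} the link $\Delta_{\Lambda_{n+1},\Theta}$ was assumed \emph{downward flag after subdivision}, which matches the conclusion being propagated; here the link hypothesis in Proposition~\ref{prop:ori link} is only \emph{weakly flag} (on the unsubdivided $\Delta_{\Lambda_1,\Lambda'_1}$), so when the 6-cycle $\omega$ gets pushed into the link one must check that its three relevant vertices and their pairwise upper bounds are all non-maximal in the relevant poset --- i.e.\ that the hypotheses of Definition~\ref{def:weak flag} are met --- before one can extract a common upper bound there, and then translate that upper bound back through the isomorphism $\lk(z,\Delta)\cong\Delta_{\Lambda_1,\Lambda'_1}$ and through the subdivision. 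This is exactly the subtlety that Proposition~\ref{prop:ori link} itself navigates (compare the detailed case split in its proof, where the $\widehat E_1$-type degeneracies and the various $\type(y_i)=\hat b_2$ versus $\hat b_3$ subcases are handled), so I expect the bulk of the work to be a careful adaptation of those sub-subcases rather than any new idea. The remaining cases (where two or three of $x_2,x_4,x_6$ have the \emph{same} leaf type) should go through essentially verbatim from Cases 1--4 of the proof of Proposition~\ref{prop:different subdivision4}, with $\widetilde B_4$ replaced by the three-leaf star and ``$\Delta_{\Lambda_{n+1},\Theta}$ downward flag'' replaced by ``$\Delta_{\Lambda_1,\Lambda'_1}$ weakly flag.''
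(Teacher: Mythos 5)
The paper gives no detailed proof of this proposition: it consists of the single sentence that the statement ``can be proved in a similar [way] to Proposition~\ref{prop:different subdivision4},'' so your proposal is at least as explicit as the text. Your architecture---take the $(b_2,b_3)$-subdivision $\Delta'$ as the privileged one (where Proposition~\ref{prop:ori link} together with Lemma~\ref{lem:big lattice} give bowtie-freeness and flagness outright), derive the labeled 4-wheel condition and hence bowtie-freeness of the other two subdivisions, and reduce downward flagness to an embedded 6-cycle case split driven by Lemma~\ref{lem:6cycle}, Lemma~\ref{lem:8cycle1}, and the 4-wheel condition---is exactly what ``similar to Proposition~\ref{prop:different subdivision4}'' means, and your identification of the weakly-flag/downward-flag interface as the nontrivial point is the right thing to flag.

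One correction to the way you phrase that interface. You describe the adaptation as ``replace `$\Delta_{\Lambda_{n+1},\Theta}$ downward flag after subdivision' by `$\Delta_{\Lambda_1,\Lambda'_1}$ weakly flag,''' but that is not a drop-in substitution: when the 6-cycle is pushed into $\lk(z,\Delta)\cong\Delta_{\Lambda_1,\Lambda'_1}$, what the argument of Proposition~\ref{prop:different subdivision4} actually consumes is a subdivision-level downward-flag statement, not the raw weakly-flag hypothesis. The step that closes this gap is already carried out inside the proof of Proposition~\ref{prop:ori link}, in the verification of Assumption~4 of Theorem~\ref{thm:contractibleII} for a top-type vertex $x$: that paragraph shows, with the full $\type(y_i)\in\{\hat b_2,\hat b_3\}$ case split, that the $(b_2,b_3)$-subdivision of $\Delta_{\Lambda_1,\Lambda'_1}$ is bowtie free and downward flag, given only bowtie-freeness plus weak flagness of $\Delta_{\Lambda_1,\Lambda'_1}$. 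It is this \emph{derived} statement that plays the role of the hypothesis on $\Delta_{\Lambda_{n+1},\Theta}$ in Cases 1--4 of Proposition~\ref{prop:different subdivision4}. You should also be explicit about a second asymmetry: in the $(b_1,b_2)$- or $(b_1,b_3)$-subdivision, some of the $t''=1$ vertices (the $x_{2i}$'s of leaf type) sit at level $t=4$ in $(V\Delta',\le)$ rather than at the bottom, so the bookkeeping that replaces each $x_{2i}$ by a same-type vertex adjacent to the common bound $z$ (the ``$x'_2\le x''_2\le z$'' step) is not verbatim; the correct replacements may need to change leaf type $\hat b_1\leftrightarrow\hat b_2$ so as to survive in $\lk(z,\Delta)$, and you need the labeled 4-wheel condition plus Lemma~\ref{lem:link}(3) to preserve the required adjacencies under that swap. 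None of this is a new idea, but it is where the work hides, and it is worth naming more precisely than ``should go through essentially verbatim.''
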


We first establish several auxiliary lemmas before we prove Proposition~\ref{prop:different subdivision}.
\begin{lem}
	\label{lem:4and6cycles}
 Let $x_1x_2x_3x_4$ be an embedded four cycle in $\Delta$ such that $\type(x_1)=\type(x_3)=\hat b_i$, $\type(x_2)=\hat b_j$, $\type(x_4)=\hat b_k$ with $\{i,j,k\}$ being pairwise distinct. Then $x_2\sim_\Delta x_4$.
\end{lem}

\begin{proof}
By Proposition~\ref{cor:propagation} and \cite[Proposition 6.15]{huang2023labeled}, the complex $\Delta_{\Lambda,\Lambda'}$ satisfies the labeled four wheel condition in the sense of \cite[Definition 6.12]{huang2023labeled}. Thus if $x_2$ and $x_4$ are not adjacent, then there is a vertex $y\in \Delta$ of type $\hat a$ such that $y$ is adjacent to each of $x_i$. As $\Lambda'$ is admissible in $\Lambda$, Corollary~\ref{cor:adj} implies $x_2\sim_\Delta x_4$, contradiction.	
\end{proof}


\begin{lem}
	\label{lem:8cycle1}
Let $x_1y_1x_2y'_2x_3y'_3x_4y_4$ be consecutive vertices of a 8-cycle in $\Delta'$ such that $\type(x_1)=\hat b_3$, $t(x_3)=1$; $\type(x_i)=\hat b_2$ for $i=2,4$; $t(y_i)=2$ for $i=1,4$ and $t(y'_i)=4$ for $i=2,3$. Then at least one of the following holds true:
\begin{enumerate}
	\item there exists a vertex $z\in \Delta$ with $t(z)=4$ such that $z$ is adjacent in $\Delta$ to both $x_2$ and $x_4$;
	\item  $x_1$ is adjacent in $\Delta$ to both $y'_2$ and $y'_3$;
	\item there exists a vertex $z\in \Delta$ of type $\hat b_3$ such that it is adjacent in $\Delta$ to each of $\{x_2,y'_2,y'_3,x_4\}$.
\end{enumerate}
\end{lem}

\begin{proof}
We will assume $x_2\neq x_3$ and $x_4\neq x_3$, otherwise we are in Case 1. Assume $x_1\neq x_3$, otherwise we are in Case 2. 
Assume $y'_2\neq y'_3$, otherwise we are in case 1. Repeating the argument in the proof of Proposition~\ref{cor:propagation} (2) (see Figure~\ref{fig:8cycle}, though we caution the reader that now we are not assuming $x_3$ is adjacent in $\Delta$ to $x_2$ and $x_4$), we can find $y'_1$ with $t(y'_1)=4$ such that $y'_1$ is adjacent in $\Delta$ to both $x_1$ and $x_2$, $y'_4$ with $t(y'_4)=4$ such that $y'_4$ is adjacent in $\Delta$ to both $x_1$ and $x_4$, and $z$ with $t(z)=1$ such that $z$ is adjacent in $\Delta$ to each of $\{y'_1,y'_2,y'_3,y'_4\}$.
 We assume $y'_1\neq y'_4$, otherwise we are in case 1.  
 
 If $y'_1=y'_2$, then $\{x_1,x_3,x_4\}$ pairwise have a upper bound in $(V\Delta',<)$. Thus they have a common upper bound by Lemma~\ref{lem:big lattice}, denoted by $w$. We can assume $t(w)=4$. If $w=y'_2$, then we are in case 1. Now suppose $w\neq y'_2$. If $\type(x_3)=\hat b_2$, then by applying Lemma~\ref{lem:4and6cycles} to $x_1y'_2x_3w$, we know $x_1\sim_\Delta x_3$. Applying Lemma~\ref{lem:4and6cycles} to $x_1x_3y'_3x_4$ (which is embedded as $x_3\neq x_4$), we know $x_1\sim_{\Delta} y'_3$, hence we are in case 2. Suppose $\type(x_3)=\hat b_3$.  By considering the $4$-cycle $x_1y'_2x_3w$ (which is embedded as $x_1\neq x_3$) and apply Proposition~\ref{cor:propagation}, we know there exists a vertex $z'$ of type $\hat a$ such that $z'$ is adjacent in $\Delta$ to each vertex of this 4-cycle. If $w=y'_3$, then we are in Case 2. If $w\neq y'_3$, then by applying Lemma~\ref{lem:4and6cycles} to $wx_3y'_3x_4$, we know $x_3\sim_\Delta x_4$. By considering the 4-cycle $x_1z'x_3x_4$ and Proposition~\ref{cor:propagation} (1), we know $z'\sim_\Delta x_4$. Thus $y'_2\sim_\Delta x_4$ and we are in case 1. A similar analysis can be applied if $y'_3=y'_4$. Thus we assume $y'_i\neq y'_{i+1}$ for all $i$ from now on.

If $\type(z)=\hat b_2$, then by Lemma~\ref{lem:4and6cycles}, $x_1\sim_\Delta z$. Assume $z\neq x_2$ and $z\neq x_4$, otherwise we are in case 1.
By applying Lemma~\ref{lem:4and6cycles} to the embedded 4-cycles $x_1zy'_2x_2$ and $x_1zy'_3x_4$ in $\Delta$, we know that $x_1\sim_\Delta y'_i$ for $i=2,3$. Suppose $\type(z)=\hat b_3$. Applying Lemma~\ref{lem:4and6cycles} to the embedded 4-cycles $zy'_2x_2y'_1$ and $zy'_2x_4y'_4$, we know $z\sim_\Delta x_i$ for $i=2,4$, hence we are in case 3. 
\end{proof}

\begin{proof}[Proof of Proposition~\ref{prop:different subdivision}]
The case of $(b_2,b_3)$-subdivision follows from Lemma~\ref{lem:big lattice}. The cases of $(b_1,b_2)$-subdivision and $(b_1,b_3)$-subdivision are symmetric, so we only treat the $(b_1,b_3)$-subdivision $\Delta''$ of $\Delta$. Let $t''$ be the type function on $V\Delta''$. By Lemma~\ref{lem:poset}, $(V\Delta'',<)$ is a poset. The bowtie free property of $(V\Delta'',<)$ follows from that $\Delta_{\Lambda,\Lambda'}$ satisfies the labeled four wheel condition (see Lemma~\ref{lem:4and6cycles}).

Take three different vertices $\{x_1,x_3,x_5\}$ in $\Delta''$ such that they pairwise have a lower bound in $(V\Delta'',<)$. We need to show they have a common lower bound. We will only treat the case when $t''(x_i)=4$ for $i=1,3,5$; as the other cases are similar and much simpler. Let $x_{i+1}$ be a lower bound for $\{x_i,x_{i+2}\}$, with $i=1,3,5\in \mathbb Z/6\mathbb Z$. We can assume $t''(x_i)=1$ for $i=2,4,6$. Then  the vertices $\{x_i\}_{i=1}^6$ form a 6-cycle in $\omega$. We can assume this 6-cycle is embedded, otherwise we are already done. Note that $\omega$ is also a 6-cycle in $\Delta$. Let $m_i\in \Delta'$ be the midpoint of $\overline{x_ix_{i+1}}$. Note that $\type(x_i)=\hat b_2$ for $i$ odd. And $\type(x_i)=\hat b_1$ or $\hat b_3$ for $i$ even.


\smallskip
\noindent
\underline{Case 1: all of $\{x_2,x_4,x_6\}$ has type $\hat b_1$}.
Then $\{x_1,x_3,x_5\}$ are pairwisely upper bounded in the poset $(V\Delta',<)$. By Lemma~\ref{lem:big lattice} and Proposition~\ref{prop:ori link}, there is a vertex $z\in V\Delta'$ which is a common upper bound for $\{x_1,x_3,x_5\}$ in $(V\Delta',<)$. We can assume $t(z)=4$. Then $t''(z)=1$ and $z$ is a common lower bound for $\{x_1,x_3,x_5\}$ in $(V\Delta'',<)$. 

\smallskip
\noindent
\underline{Case 2: exactly two of $\{x_2,x_4,x_6\}$, say $x_2$ and $x_6$, have type $\hat b_1$}. Then Lemma~\ref{lem:8cycle1} applies to the $8$-cycle $x_5x_6x_1x_2x_3m_3x_4m_4$ in $\Delta'$. If we are in case 1 of Lemma~\ref{lem:8cycle1}, then we reduce to the previous paragraph. If we are in case 2 of Lemma~\ref{lem:8cycle1}, then $x_4$ is adjacent in $\Delta$ to $x_6$ and $x_2$. Applying Lemma~\ref{lem:4and6cycles} to the $4$-cycle $x_4x_6x_1x_2$ in $\Delta$, we know $x_4\sim_\Delta x_1$. Thus $x_4$ is a common lower bound of $\{x_1,x_3,x_5\}$ in $(V\Delta'',<)$. If we are in case 3 of Lemma~\ref{lem:8cycle1}, then let $z$ be the vertex of type $\hat b_3$ such that $z\sim_\Delta \{x_5,x_6,x_2,x_3\}$. Applying Lemma~\ref{lem:4and6cycles} to the $4$-cycle $zx_6x_1x_2$ in $\Delta$, we know $z\sim_\Delta x_1$. Thus $z$ is a common lower bound of $\{x_1,x_3,x_5\}$ in $(V\Delta'',<)$.

Before we discuss the remaining cases, we need an extra observation.
Let $Y$ be the graph as in the proof of Proposition~\ref{cor:propagation}. As consecutive vertices of $\omega$ has distance $\le 2$ in $\Delta'$, hence the same holds in $Y$. Thus $\{B_Y(x_i,2)\}_{i=1,3,5}$ pairwise intersects. Let $z\in Y$ be a vertex in their common intersection. Then for $i=1,3,5$, either $z\sim_Y x_i$ or there is $z_i\in Y$ with $z_i\notin \{z,x_i\}$ such that $z_i\sim_Y x_i$ and $z_i\sim_Y z$. 
If $t(z)=4$, then the definition of $Y$ implies that for $i=1,3,5$, either $z\sim_{\Delta'} x_i$, or $z_i\sim_{\Delta'} x_i$ and $z_i\sim_{\Delta'} z$. In the latter case, $t(x_i)<t(z_i)<t(z)$, hence $z\sim_{\Delta'} x_i$ as well. Thus $z$ is a common lower bound for $\{x_1,x_3,x_5\}$ in $(V\Delta'',<)$. Now we assume $t(z)<4$. Then by an argument in the proof of Proposition~\ref{cor:propagation} (2), for each $i=1,3,5$, we can assume $t(z_i)=4$ such that $z_i\sim_{\Delta'} x_i$ and $z_i\sim_{\Delta'} z$; moreover, up to replacing $z$, we can assume $t(z)=1$. Then $\type(z)=\hat b_2$ or $\hat b_3$.

\smallskip
\noindent
\underline{Case 3: exactly one of $\{x_2,x_4,x_6\}$, say $x_6$, has type $\hat b_1$.} First we look at the situation that there exists a vertex $x'_6$ of type $\hat b_1$ such that $x'_6$ is adjacent in $\Delta$ to $\{x_5,z,x_1\}$. See Figure~\ref{fig:12cycle} (I). Now we apply Lemma~\ref{lem:8cycle1} to the 8-cycles:
$$
\omega=x'_6zz_3x_3m_2x_2m_1x_1,\ \omega'=x'_6zz_3x_3m_3x_4m_4x_5.
$$
If at least one of $\omega$ and $\omega'$ are in  Lemma~\ref{lem:8cycle1} (1), then we are reduced to the previous cases. If none of the $\omega$ and $\omega'$ are in Lemma~\ref{lem:8cycle1}, then up to possibly replacing $x_2$ and $x_4$ by different vertices of type $\hat b_3$, we can assume $x_2\sim_{\Delta} \{x_1,x'_6,z_3,x_3\}$ and $x_4\sim_{\Delta} \{x_3,x'_6,z_3,x_5\}$. As $x_2\neq x_4$ (otherwise we are done), by applying Lemma~\ref{lem:4and6cycles} to the 4-cycle $x'_6x_2x_3x_4$ in $\Delta$, we know $x'_6\sim_\Delta x_3$. Hence $x'_6$ is common lower bound for $\{x_1,x_3,x_5\}$ in $(V\Delta'',<)$.

\begin{figure}[h]
	\label{fig:12cycle}
	\centering
	\includegraphics[scale=1.2]{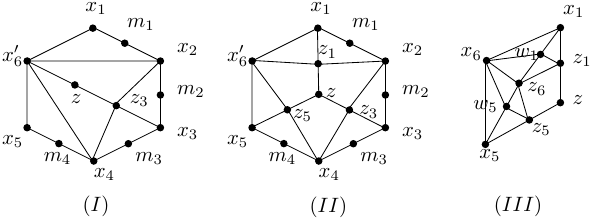}
	\caption{Proof of Proposition~\ref{prop:different subdivision}}
\end{figure}

Now we assume there does not exists any vertex of type $\hat b_1$ such that it is adjacent in $\Delta$ to $\{x_5,z,x_1\}$. In particular, $z_1\neq x_6$, $z_5\neq x_6$ and $z_1\neq z_5$. We claim there exists a vertex $x'_6$ of type $\hat b_3$ such that it is adjacent in $\Delta$ to each of $\{z_1,z_5,x_1,x_5\}$. Assuming the claim for the moment, then we apply Lemma~\ref{lem:8cycle1} to the 8-cycles:
$$
\omega''=zz_1x_1m_1x_2m_2x_3z_3,\ \omega'''=zz_3x_3m_3x_4m_4x_5z_5.
$$
See Figure~\ref{fig:12cycle} (II).
If one of these two cycles are in Lemma~\ref{lem:8cycle1} (1), then we are reduced to previous cases. Otherwise up to replacing $x_2$ and $x_4$ by different vertices of type $\hat b_3$, we can assume $x_2\sim_\Delta\{x_1,z_1,z_3,x_3\}$ and $x_4\sim_\Delta\{x_3,z_3,z_5,x_5\}$. Considering the $6$-cycle $x'_6z_1x_2z_3x_4z_5$ in $\Delta$, we know $\{x'_6,x_2,x_4\}$ are pairwisely upper bounded in $(V\Delta',<)$, hence they have a common upper bound, denoted by $w$, in $(V\Delta',<)$. We can assume $t(w)=4$. If $\{x'_6,x_2,x_4\}$ are pairwise distinct, then by applying Lemma~\ref{lem:4and6cycles} to the 4-cycles $x'_6x_1x_2w$, $wx_2x_3x_4$ and $wx_4x_5x'_6$ in $\Delta$, we know $w$ is adjacent in $\Delta$ to $x_i$ for $i=1,3,5$. Thus $w$ is a common lower bound for $\{x_1,x_3,x_5\}$ in $(V\Delta'',<)$. If at least two of $\{x'_6,x_2,x_4\}$ is the same, say $x_2=x_4$, then $x_2$ is a common lower bound for $\{x_1,x_3,x_5\}$ in $(V\Delta'',<)$.

It remains to prove the claim. See Figure~\ref{fig:12cycle} (III). As $\{x_6,z_1,z_5\}$ is pairwisely lower bounded in $(V\Delta',<)$, Lemma~\ref{lem:big lattice} and Proposition~\ref{prop:ori link} imply they have a common lower bound in the poset $(V\Delta',<)$, denoted by $z_6$. Moreover, we can require $t(z_6)=1$. As there does not exist any vertex of type $\hat b_1$ adjacent in $\Delta$ to each of $\{x_5,x_1,z\}$, we know $z_6\neq z$, $z_6\neq x_1$, $z_6\neq x_5$, $z_1\neq z_5$, $z_1\neq x_6$ and $z_5\neq x_6$.
If $\type(z_6)=\hat b_3$, by applying Lemma~\ref{lem:4and6cycles} to the embedded 4-cycles $z_6x_6x_1z_1$, $z_6z_1zz_5$ and $z_6z_5x_5x_6$, we know $z_6$ is adjacent in $\Delta$ to each of $\{x_1,x_5\}$. Thus the claim follows. Now assume $\type(z_6)=\hat b_2$. We will show this is impossible. By considering the 4-cycles $z_6x_6x_1z_1$ and $z_6z_5x_5x_6$, we deduce from Proposition~\ref{cor:propagation} (1) that there is a vertex $w_1$ of type $\hat a$ adjacent in $\Delta$ to each vertex of $z_6x_6x_1z_1$, and a vertex $w_5$ of type $\hat a$ adjacent in $\Delta$ to each vertex of $z_6z_5x_5x_6$. Now $\{w_1,w_5,z\}$ is pairwisely upper bounded in $(V\Delta',<)$, then they have a common upper bound, denoted $x'_6$. As $w_1\neq w_5$ (otherwise $x_1\sim_\Delta z_5$, hence $z_5$ is adjacent to each of $x_1,x_5,z$, which was forbidden), we know $t(x'_6)=4$. Applying the bowtie free property of $(V\Delta',<)$ to $\{x_6,w_1,x'_6,w_5\}$, we know $x_6=x'_6$. As $x'_6\sim_\Delta z$, we have $x_6\sim_\Delta z$. Hence $x_6$ is adjacent to each of $x_1,x_5,z$, which is forbidden.

\smallskip
\noindent
\underline{Case 4: $\type(x_i)=\hat b_3$ for $i=2,4,6$.}
Now we consider the following three 8-cycles:
$$\omega_2=zz_1x_1m_1x_2m_2x_3z_3,\  \omega_4=zz_3x_3m_3x_4m_4z_5z_5,\ \omega_6=zz_5x_5m_5x_6m_6x_1z_1.$$
If one of these 8-cycles belong to Lemma~\ref{lem:8cycle1} (1), then we are reduced to previous cases. If none of them belong to Lemma~\ref{lem:8cycle1} (1), then for $i=2,4,6$, up to possibly replacing $x_i$ by another vertex of type $\hat b_3$, we can assume $x_i\sim_\Delta\{x_{i-1},z_{i-1},x_{i+1},z_{i+1}\}$. Now we repeat the argument in Case 3 to finish the proof.
\end{proof}

\section{$K(\pi,1)$-conjecture for some 3D Artin groups}
\label{sec:3-dim}

The goal of this section is to prove Corollary~\ref{cor:3hyperbolic}. We start with a preparatory lemma.
\begin{lem}(\cite[Lemma 6.6]{huang2023labeled})
	\label{lem:inherit and girth}
	Suppose $\Lambda'$ is an admissible linear subgraph of the Coxeter diagram $\Lambda$ of $A_S$ and suppose the consecutive nodes of $\Lambda'$ are $S'=\{s_i\}_{i=1}^n$.  Let $\Delta$ be the $(\Lambda,\Lambda')$-relative Artin complex. Let $V$ be the vertex set of $\Delta$. We fix an order on $S'$ such that it is compatible with one of the two linear orders on $\Lambda'$, and endow $V$ with the induced order. Let $\Lambda''$ be a linear subgraph of $\Lambda'$. Then the following holds.
	\begin{enumerate}
		\item If $\Delta_{\Lambda,\Lambda'}$ is bowtie free, then $\Delta_{\Lambda,\Lambda''}$ is bowtie free. 
		\item If $\Lambda''$ contains the largest (resp. smallest) node of $\Lambda'$ and $\Delta_{\Lambda,\Lambda'}$ is upward (resp. downward) flag, then $\Delta_{\Lambda,\Lambda''}$ is upward (resp. downward) flag.
		\item Suppose $\Lambda'$ only have two nodes.  If $\Delta_{\Lambda,\Lambda'}$ is bowtie free, then it is a graph with girth $\ge 6$. If in addition $\Delta_{\Lambda,\Lambda'}$ is upward flag or downward flag, then it is a graph with girth $\ge 8$. 
	\end{enumerate}
\end{lem}

\begin{proof}
	Assertions (1) and (2) follow from definition. Now we prove assertion (3).  Suppose there is a 4-cycle in $\Delta_{\Lambda,\Lambda'}$ with consecutive its vertices $x_1,y_1,x_2,y_2$. Then up to a cyclic permutation of these vertices, we know  $x_i\le y_j$ for $1\le i,j\le 2$. By the bowtie free condition, there exists $z\in P$ such that $x_i\le z\le y_j$ for $1\le i,j\le 2$. If $z$ has the same type as $x_1$, then $x_1=x_2$. If $z$ has the same type as $y_1$, then $y_1=y_2$. Thus there are no embedded 4-cycles in $\Delta_{\Lambda,\Lambda'}$. Similarly we deduce no embedded 6-cycles under the additional assumption of one side flagness.
\end{proof}
\begin{prop}
	\label{prop:not compact}
	Let $A_S$ be an Artin group with $|S|=4$ which is free-of-infinity (i.e. its Coxeter diagram does not have $\infty$-labeled edges). If there exists $S'\subset S$ with $|S'|=3$ such that $A_{S'}$ is not spherical, then the Artin complex for $A_S$ is contractible.
\end{prop} 

\begin{proof}
	Let $S=\{a,b,c,d\}$ and $S'=\{a,b,c\}$. Let $\Delta_S$ (resp. $\Delta_{S'}$) be the Artin complex for $A_{S}$ (resp. $A_{S'}$). By Lemma~\ref{lem:link}, the link of each vertex of type $\hat d$ in $\Delta_S$ is isomorphic to $\Delta_{S'}$. As $A_{S'}$ is not spherical, it is a 2-dimensional Artin group. It is known the modified Deligne complex, defined in \cite{CharneyDavis}, for any 2-dimensional Artin group is contractible \cite{CharneyDavis}; and the modified Deligne complex isomorphic to the barycentric subdivision of $\Delta_{S'}$. Thus the link of each vertex of type $\hat d$ is contractible. Then $\Delta_S$ is homotopic equivalent to the relative Artin complex $\Delta'=\Delta_{S,S'}$ by Lemma~\ref{lem:dr}.
	
	Take a vertex $w\in\Delta'$ of type $\hat a$. We claim the girth of $\lk(w,\Delta')$ is $\ge 2 m_{bc}$ if $m_{bc}\neq 5$, and the girth is $\ge 8$ if $m_{bc}=5$. Assuming the claim is true, we now deduce the theorem as follows. The quotient of $\Delta'$ by $A_S$ is a triangle $T$, with three vertices of type $\hat a,\hat b$ and $\hat c$ respectively. As $A_{S'}$ is not spherical, we know $\frac{1}{m_{ab}}+\frac{1}{m_{bc}}+\frac{1}{m_{ac}}\le 1$. Let $m'_{ab}=m_{ab}$ if $m_{ab}\neq 5$ and $m'_{ab}=4$ if $m_{ab}=5$. Similarly, we define $m'_{bc}$ and $m'_{ac}$. Then we still have $\frac{1}{m'_{ab}}+\frac{1}{m'_{bc}}+\frac{1}{m'_{ac}}\le 1$. Thus we can realize $T$ as a geodesic triangle in $\mathbb E^2$ or $\mathbb H^2$ such that the angle at $\hat a$ is $\frac{\pi}{m'_{bc}}$, the angle at $\hat b$ is $\frac{\pi}{m'_{ac}}$ and the angle at $\hat c$ is $\frac{\pi}{m'_{ab}}$. This metric on $T$ induces a piecewise Euclidean (or hyperbolic) metric on $\Delta'$. The claim implies that $\Delta'$ is locally CAT$(0)$ (or CAT$(-1)$) with such a metric. On the other hand, $\Delta'$ is simply-connected (cf. Lemma~\ref{lem:sc}). Thus $\Delta'$ is CAT$(0)$ (or CAT$(-1)$), hence contractible.
	
	It remains to prove the claim. By Lemma~\ref{lem:link} $\lk(w,\Delta_S)\cong \Delta_{\{b,c,d\}}$ and $\lk(w,\Delta')\cong \Delta_{\{b,c,d\},\{b,c\}}$. The claim is clear if $m_{bc}=2$, as $\lk(w,\Delta')$ is a bipartite graph. We assume $m_{bc}\neq 2$ from now on.
	First we consider the case when $A_{bcd}$ is spherical. Note that the claim follows \cite[Lemma 39]{crisp2005automorphisms} if $m_{bd}=m_{cd}=2$. Now we assume $A_{bcd}$ is irreducible. Then $m_{bc}=3,4$ or $5$. If $m_{bc}=3$, then by Theorem~\ref{thm:bowtie free} and Lemma~\ref{lem:inherit and girth}, $\lk(w,\Delta')$ has girth $\ge 6$. If $m_{bc}=4$ or $5$, then $A_{bcd}$ is either of type $B_3$ or $H_3$. By Theorem~\ref{thm:tripleH}, Theorem~\ref{thm:triple} and Lemma~\ref{lem:inherit and girth}, $\lk(w,\Delta')$ has girth $\ge 8$.

	Now we look at the case $A_{bcd}$ is not spherical. Then $\lk(w,\Delta_S)$ quotiented by the action of $A_{bcd}$ is a triangle $T'$, whose vertices are denoted by $\hat b,\hat c,\hat d$. As $\frac{1}{m_{bc}}+\frac{1}{m_{cd}}+\frac{1}{m_{bd}}\le 1$, we can realize $T'$ as a geodesic triangle in $\mathbb E^2$ or $\mathbb H^2$ such that the angle at $\hat b$ is $\frac{\pi}{m_{cd}}$, the angle at $\hat c$ is $\frac{\pi}{m_{bd}}$ and the angle at $\hat d$ is $\frac{\pi}{m_{bc}}$. This induces a metric on $\lk(w,\Delta_S)$ which is known to be CAT$(0)$ (cf. \cite{CharneyDavis}). As $\lk(w,\Delta')$ is the full subcomplex of $\lk(w,\Delta)$ spanned by vertices of type $\hat b$ and $\hat c$, we know $\lk(w,\Delta')$ has girth $2m_{bc}$ by \cite[Lemma 9.8 (1)]{huang2023labeled}. Thus the claim is proved.
\end{proof}

\begin{thm}
	\label{thm:4generator}
	Suppose $A_S$ is an Artin group with $|S|\le 4$. Assume that its Coxeter diagram is not a $(3,5,3)$-linear diagram. Then $A_S$ satisfies the $K(\pi,1)$-conjecture.
\end{thm}

\begin{proof}
	We assume $A_S$ is irreducible. By \cite{ellis2010k}, it suffices to consider the case when $A_S$ is free-of-infinity. We also assume $A_S$ is not spherical, otherwise the result follows from \cite{deligne}.
When $|S|\le 3$, $A_S$ is 2-dimensional and the theorem follows from \cite{CharneyDavis}. Now we assume $|S|=4$. If there exists a subset $S'\subset S$ such that $|S'|=3$ and $A_{S'}$ is not spherical, then the theorem follows from Proposition~\ref{prop:not compact}. It remains to consider the case when $|S|=4$ and $A_{S'}$ is spherical whenever $S'$ is a 3-element subset of $S$.  By a result of Lanner \cite{MR42129}, $A_S$ is either an affine Artin group, or a 3-dimensional hyperbolic cocompact tetrahedron group, i.e. its associated Coxeter group acts on $\mathbb H^3$ properly and cocompactly by isometries such that its fundamental domain is a tetrahedron. The $K(\pi,1)$-conjecture for affine Artin groups are proved in \cite{paolini2021proof}. For the 3-dimensional hyperbolic tetrahedron groups, there are only nine of them, five has Coxeter diagram being a cycle, which is treated in \cite{Garside,huang2023labeled}. Then remaining four groups have Coxeter diagrams as in Figure~\ref{fig:remain}. By Theorem~\ref{thm:kpi1}, it suffices to show the associated Artin complex is contractible.
	\begin{figure}[h!]
		\centering
		\includegraphics[scale=0.85]{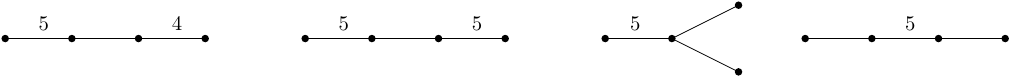}
		\caption{Four remaining cases.}
		\label{fig:remain}
	\end{figure}
	
	By Theorem~\ref{thm:kpi1}, it suffices to show the Artin complexes associated with these Coxeter diagrams are contractible.  First we look at the case of $(5,3,4)$ and $(5,3,5)$ linear diagrams. Assume the nodes in the Coxeter diagram are ordered from left to right, and we endow the vertex set of the associated Artin complex $\Delta$ with the induced order. Then the contractibility of $\Delta$ follows from Theorem~\ref{thm:contractibleII}, Theorem~\ref{thm:tripleH}, Theorem~\ref{thm:triple}, Theorem~\ref{thm:bowtie free} and Lemma~\ref{lem:inherit and girth}. In the case when $\Lambda$ is a tripod, we deduce the contractibility of the Artin complex from Proposition~\ref{prop:ori link}, Theorem~\ref{thm:tripleH}, Theorem~\ref{thm:weaklyflagA}, and Theorem~\ref{thm:bowtie free}.
\end{proof}

\begin{cor}
	\label{cor:3hyperbolic}
	Let $W_S$ be a reflection group acting properly on $\mathbb H^3$ with finite volume fundamental domain. Assume the Coxeter diagram $\Lambda$ of $W_S$ is not the linear graph with consecutive edges labeled by $(3,5,3)$. Then the $K(\pi,1)$-conjecture holds true for the associated Artin group $A_S$.
\end{cor}

\begin{proof}
	By \cite{ellis2010k}, it suffices to show if $\Lambda'$ is an induced subgraph of $\Lambda$ which is free-of-infinity, then the $K(\pi,1)$-conjecture holds for $A_{\Lambda'}$. We claim $\Lambda'$ has at most 4 nodes. The corollary follows from this claim and Theorem~\ref{thm:4generator}, as if $\Lambda$ contain the Coxeter diagram $[3,5,3]$ as an induced subgraph and $W_\Lambda$ is a 3-dimensional hyperbolic reflection group, then $\Lambda=[3,5,3]$. It remains to show the claim. Let $P\subset \mathbb H^3$ be a fundamental domain with respect to the reflection group $W_S$ such that $W_S$ is generated by orthogonal reflection according the codimension 1 faces $\{P_i\}_{i=1}^k$ of $P$ (some of the vertices of $P$ might be in the boundary at infinity of $\mathbb H^3$). The \emph{support} of $P_i$ is defined to be the hyperplane in $\mathbb H^3$ that contains $P_i$. We define another graph $\Gamma$, whose vertices are in 1-1 correspondence with $\{P_i\}_{i=1}^k$, and two vertices are adjacent if the associated codimension 1 faces intersect in a codimension 2 face. Then $\Gamma$ is planar. 
	By \cite{andreev1970intersection}, two vertices of $\Gamma$ are adjacent if and only the support of the associated codimension 1 faces have non-empty intersection. It follows that nodes of $\Lambda'$ span a complete subgraph of $\Gamma$, as $\Lambda'$ is free-of-infinity. Thus the claim follows from the planarity of $\Gamma$.
\end{proof}

\section{Artin groups with complete bipartite Coxeter diagrams}
The goal of this section is to prove $K(\pi,1)$-conjecture for Artin groups with complete bipartite Coxeter diagrams. We recall an additional tool from \cite{huang2023labeled} in Section~\ref{subsec:folded} and prove the main theorem in Section~\ref{subsec:main}.

\subsection{Folded Artin complexes}
\label{subsec:folded}
We recall a variation of Artin complexes and relative Artin complexes introduced in \cite{huang2023labeled}.
Let $\Lambda,\Lambda'$ be a Coxeter diagram. A \emph{special folding} is a surjective graph morphism $f:\Lambda\to \Lambda'$ (i.e. it map nodes to nodes and edges to edges) such that if nodes $x$ and $y$ are adjacent in $\Lambda'$, then each node in $f^{-1}(x)$ is adjacent in $\Lambda$ to every node in $f^{-1}(y)$. 

An induced subgraph $\Lambda_1$ is $f$-folded if $f$ restricted to $\Lambda_1$ is not injective.

Given a special folding $f:\Lambda\to \Lambda'$, we also use $f:S\to S'$ to denote the induced map on the set of nodes.
We define the folded Artin complex $\Delta_{\Lambda,f}$ as follows. Vertices of $\Delta_{\Lambda,f}$ are in 1-1 correspondence with left cosets of $A_{S\setminus \{f^{-1}(s')\}}$ in $A_S$. A collection of vertices span a simplex if the intersection of the associated collection of left cosets is non-empty. By \cite[Lemma 4.7 and Proposition 4.5]{godelle2012k}, $\Delta_{\Lambda,f}$ is a flag complex. 

One can also define relative version of folded Artin complex, i.e. for any induced subgraph $\Lambda''\subset \Lambda'$, the associated relative folded Artin complex $\Delta_{\Lambda,f,\Lambda''}$ is defined to be a simplicial complex whose vertices are in 1-1 correspondence with left cosets of $A_{S\setminus \{f^{-1}(s'')\}}$ in $A_S$ with $s''\in \Lambda''$, and simplices in $\Delta_{\Lambda,f,\Lambda''}$ corresponds to non-empty intersection of associated left cosets. We label vertices of $\Delta_{\Lambda,f,\Lambda''}$ by vertices in $\Lambda''$, i.e. the vertex corresponding to $gA_{S\setminus \{f^{-1}(s'')\}}$ with $s''\in \Lambda''$ is defined to be of type $\hat s''$. In particular, $\Delta_{\Lambda,f,\Lambda''}$ is viewed as a simplicial complex of type $V\Lambda''$, where $V\Lambda''$ denotes the vertex set of $\Lambda''$. If $\Lambda''$ is a tree, then it makes sense to talk about labeled four wheel condition on $\Delta_{\Lambda,f,\Lambda''}$.

Let $S''=f^{-1}(S')$. Then there is a natural piecewise linear embedding $$i:\Delta_{\Lambda,f,\Lambda''}\to \Delta_{S,S''}$$ as follows. Given a vertex $x\in \Delta_{\Lambda,f,\Lambda''}$ corresponds to a coset of form $gA_{S\setminus \{f^{-1}(s'')\}}$ with $s''\in \Lambda''$, $i$ sends $x$ to the barycenter of the simplex of $\Delta_{S,S''}$ spanned by vertices of form $gA_{S\setminus \{s\}}$ with $s\in f^{-1}(s'')$. Note that $i$ sends vertices inside a simplex to points inside a simplex. Thus we can extend $i$ linearly. 

\begin{lem}\cite[Lemma 10.3]{huang2023labeled}
	\label{lem:order fold}
	Let $f:\Lambda\to\Lambda'$ be a special folding with $\Lambda'$ connected. Suppose $\Lambda''\subset \Lambda'$ is an induced linear subgraph with consecutive nodes $\{s'_i\}_{i=1}^n$. For simplicity we will say a vertex of $Y=\Delta_{\Lambda,f,\Lambda''}$ has type $i$ if it has type $\hat s'_i$. We define a relation in the vertex set $V$ of $Y$ as follows. For two vertices $x,y\in Y$, we put $x<y$ if they are adjacent in $Y$ and $\type(x)<\type(y)$. 
	
	Suppose $\Lambda''$ is an admissible subgraph of $\Lambda'$. Then $(V,\le)$ is an order.
\end{lem}

\begin{lem}\cite[Lemma 10.5]{huang2023labeled}
	\label{lem:tree fold}
	Suppose $f:\Lambda\to \Lambda'$ be a special folding between two trees. If $\Delta_{\Lambda}$ satisfies the labeled 4-wheel condition, then $\Delta_{\Lambda,f}$ satisfies the labeled 4-wheel condition.
\end{lem}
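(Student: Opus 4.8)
The plan is to transfer the question from $\Delta_{\Lambda,f}$ to the Artin complex $\Delta_{\Lambda}$, where the hypothesis applies, and then fold the resulting wheel center back. Since $f$ is surjective we have $f^{-1}(S')=S$, so the piecewise linear embedding constructed just before Lemma~\ref{lem:order fold} becomes an embedding $i\colon\Delta_{\Lambda,f}=\Delta_{\Lambda,f,\Lambda'}\hookrightarrow\Delta_{S,S}=\Delta_{\Lambda}$. Under $i$ a vertex $x$ of $\Delta_{\Lambda,f}$ of type $\hat s'$ corresponding to a coset $gA_{S\setminus f^{-1}(s')}$ is identified with the barycentre of the simplex $\sigma_x$ of $\Delta_{\Lambda}$ spanned by $\{gA_{\hat s}\}_{s\in f^{-1}(s')}$; this simplex exists because $\bigcap_{s\in f^{-1}(s')}gA_{\hat s}=gA_{S\setminus f^{-1}(s')}\neq\emptyset$ (an intersection of standard parabolics of $A_{\Lambda}$ is the standard parabolic on the intersection of types). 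Using flagness of $\Delta_{\Lambda}$ one records the basic dictionary: $x\sim y$ in $\Delta_{\Lambda,f}$ if and only if every vertex of $\sigma_x$ is adjacent in $\Delta_{\Lambda}$ to every vertex of $\sigma_y$, i.e.\ $\sigma_x\cup\sigma_y$ spans a simplex.

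First I would take an induced $4$-cycle $x_1x_2x_3x_4$ in $\Delta_{\Lambda,f}$, with $x_i$ of type $\hat s'_i$ ($s'_i\in\Lambda'$), and let $\sigma_1,\dots,\sigma_4$ be the corresponding simplices of $\Delta_{\Lambda}$. Adjacent vertices of a simplicial complex of type $V\Lambda'$ have distinct types, so $s'_i\neq s'_{i+1}$ and hence $f^{-1}(s'_i)\cap f^{-1}(s'_{i+1})=\emptyset$; consequently any choice of vertices $u_i\in\sigma_i$ of type $\hat a_i$ with $a_i\in f^{-1}(s'_i)$ yields a closed edge-path $u_1u_2u_3u_4$ in $\Delta_{\Lambda}$ with consecutive vertices adjacent. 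The first technical point is to choose the $u_i$ so that this path is an \emph{induced} $4$-cycle of $\Delta_{\Lambda}$, and so that the smallest subtree of $\Lambda$ containing $\{a_1,a_2,a_3,a_4\}$ maps under $f$ into the smallest subtree $T'$ of $\Lambda'$ containing $\{s'_1,s'_2,s'_3,s'_4\}$; here one uses that $f$ is a graph morphism of trees (so paths go to walks that still visit the relevant convex hull once one picks the $a_i$ appropriately) and disposes of degenerate configurations — opposite cycle vertices of equal type, or the $\sigma_i$ sharing vertices — directly from the dictionary above, since equal or disjoint opposite fibres force either $x_1=x_3$ (impossible in a $4$-cycle) or an unwanted coincidence.

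Next, apply the labeled $4$-wheel condition of $\Delta_{\Lambda}$ (the hypothesis) to $u_1u_2u_3u_4$, obtaining a vertex $z=mA_{\hat c}$ of $\Delta_{\Lambda}$ adjacent to every $u_i$, with $c$ in the smallest subtree of $\Lambda$ containing $\{a_1,a_2,a_3,a_4\}$. Then push $z$ forward: set $s'=f(c)$ and let $x$ be the vertex of $\Delta_{\Lambda,f}$ of type $\hat s'$ corresponding to $mA_{S\setminus f^{-1}(s')}$. By the first step $s'\in T'$, which is exactly the constraint required on the type of $x$ in Definition~\ref{def:labeled 4-wheel} (relative to the tree $\Lambda'$). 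It remains to verify $x\sim x_i$ for each $i$, i.e.\ that $\sigma_x=\{mA_{\hat s}\}_{s\in f^{-1}(s')}$ is completely joined to $\sigma_i$ in $\Delta_{\Lambda}$. Only $z\sim u_i$ is given, which constrains the single vertex $u_i$ of $\sigma_i$, so this is a genuine strengthening; the idea is to propagate adjacency across all of $\sigma_x$ and $\sigma_i$ using the defining property of a \emph{special} folding — that $f^{-1}$ of adjacent vertices of $\Lambda'$ are completely joined in $\Lambda$ — together with flagness of $\Delta_{\Lambda}$ and, where the fibres over $f(c)$ and $s'_i$ are not adjacent in $\Lambda'$, a further application of the labeled $4$-wheel condition inside suitable vertex links of $\Delta_{\Lambda}$.

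I expect this last promotion — from "the vertex $z$ is adjacent to the vertex $u_i$" to "$\sigma_z$ and $\sigma_i$ span a simplex of $\Delta_{\Lambda}$" — to be the main obstacle, and it is precisely where the hypothesis that $f$ is a special (not merely a surjective) folding is indispensable. An alternative organization, paralleling Lemma~\ref{lem:4wheel}, would instead first reduce the labeled $4$-wheel condition for $\Delta_{\Lambda,f}$ to showing that $\Delta_{\Lambda,f,\Lambda''}$ is bowtie free for every maximal linear subgraph $\Lambda''\subset\Lambda'$ — using Lemma~\ref{lem:order fold} to get a poset and the bowtie-free reformulation of the $4$-wheel condition (as in Lemma~\ref{lem:connect}) — and then verify bowtie-freeness via the criterion of Lemma~\ref{lem:bowtie free criterion}; this localizes the difficulty to links but does not avoid the same propagation step.
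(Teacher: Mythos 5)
Your framework — embed $\Delta_{\Lambda,f}$ into $\Delta_\Lambda$ via $i$, record the dictionary ($x\sim y$ iff $\sigma_x\cup\sigma_y$ spans a simplex), pick representatives $u_i\in\sigma_i$ forming an induced $4$-cycle of $\Delta_\Lambda$, apply the hypothesis there, and push the center $z$ back down — is a natural way to attack this. The part you set up carefully is sound: $u_1\not\sim u_3$ and $u_2\not\sim u_4$ can be arranged independently since each $\sigma_j\cup\sigma_{j+2}$ fails to be a simplex; and $f(T)\subset T'$ for the convex hull $T$ of $\{a_1,\dots,a_4\}$ does hold, because $f^{-1}(T')$ is a connected (hence subtree of $\Lambda$) by the special-folding property, so $T\subset f^{-1}(T')$.

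The genuine gap is exactly where you flag it: you get $z\sim u_i$ for a \emph{single} vertex $u_i$ of $\sigma_i$, but you need $\sigma_z\cup\sigma_i$ to span a simplex. You do not give an argument for this promotion, only a direction (``propagate adjacency \dots using \dots flagness \dots and a further application of the labeled $4$-wheel condition inside suitable vertex links''), and the direction is not obviously convergent: the obvious $4$-cycle to feed back into the $4$-wheel hypothesis, namely $u_{i-1},z,u_{i+1},g_iA_{\hat a}$ with $a\in f^{-1}(s'_i)\setminus\{a_i\}$, produces a new center whose type is only constrained to a subtree that need not lie in $f^{-1}(T')$ and whose adjacencies you again have no control over, so this does not obviously close.

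There is also a structural fact about special foldings between trees that your proposal does not use and that I believe is essential. Since $f$ is a graph morphism, each fiber $f^{-1}(s')$ is discrete; since every vertex in a fiber has the same neighborhood (by the definition of special folding) and $\Lambda$ is a tree, a fiber with $\ge2$ elements must consist of leaves of $\Lambda$ sharing a single common parent, that parent has a singleton fiber, and $s'$ is a leaf of $\Lambda'$. This immediately shows $\sigma_z=\{z\}$: the wheel center $z$ cannot have a type in $\{a_1,\dots,a_4\}$ (else $z=u_i$ for some $i$ and the cycle was not induced), $c$ lies in $T$, and every vertex of $T$ that is not among the $a_i$ has degree $\ge 2$ in $T$ and so cannot be a leaf of $\Lambda$; hence $|f^{-1}(f(c))|=1$. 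So one half of the promotion is for free, but the remaining half — that the \emph{single} vertex $z$ is adjacent to every vertex of $\sigma_i$, not merely to $u_i$ — still requires an argument you have not supplied. Without it, the proof is incomplete. (Your alternative organization via bowtie-freeness of $\Delta_{\Lambda,f,\Lambda''}$ localizes but, as you say, does not remove this step.)
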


\subsection{Proof of contractibility}
\label{subsec:main}
\begin{thm}
	\label{thm:downflagD4}
	Suppose	$\Lambda$ is the Coxeter diagram of type $D_4$, and take two leaf nodes $a$ and $b$ from $\Lambda$.
	The $(a,b)$-subdivision of $\Delta_\Lambda$ with its vertex set endowed with the order introduced in Definition~\ref{def:subdivision}, is a bowtie free and downward flag poset.
\end{thm}

\begin{proof}
	First we show the bowtie free part by verifying the assumptions of Lemma~\ref{lem:bowtie free criterion}. Let $\Delta'$ be the $(a,b)$-subdivision of $\Delta_\Lambda$. 
	If $v\in \Delta'$ is a vertex with $t(x)=4$ (the quantity $t(x)$ is defined in Definition~\ref{def:subdivision}), then $\lk(v,\Delta')$ is isomorphic in an order-preserving way to the $(a,b)$-subdivision $\Delta'_{a,d,b}$ of $\Delta_{a,d,b}$. It follows from Theorem~\ref{thm:4 wheel} that $\Delta'_{a,d,b}$ satisfies the assumptions of Lemma~\ref{lem:bowtie free criterion}. Thus $\lk(v,\Delta')$ is bowtie free. 
	If $v\in \Delta'$ is a vertex with $t(x)=1$, then $v$ is also a vertex of $\Delta_\Lambda$, and it is of type $\hat a$ or $\hat b$. Moreover, $\lk(v,\Delta')\cong \lk(v,\Delta_\Lambda)$, and this is an isomorphism of posets, where $\lk(v,\Delta')$ is endowed with the induced order from $\Delta'$ and $\lk(v,\Delta_\Lambda)\cong \Delta_{b,d,c}$ or $\Delta_{a,d,c}$ is endowed with the order induced from $b<d<c$ or $a<d<c$ as in Lemma~\ref{lem:poset structure}. By Theorem~\ref{thm:bowtie free}, $\lk(v,\Delta_\Lambda)$ is bowtie free, thus $\lk(v,\Delta')$ is bowtie free. 
	
	Take an embedded 4-cycle $x_1y_1x_2y_2$ in $\Delta'$ with $t(x_i)=1$ for $i=1,2$ and $t(y_i)=4$ for $i=1,2$. Then both $y_1$ and $y_2$ are of type $\hat c$, and each $x_i$ is of type $\hat a$ or $\hat b$. If $x_1$ and $x_2$ have different types, then it follows from \cite[Lemma 4.4]{huang2024Dn} that $x_1$ and $x_2$ are adjacent in $\Delta_\Lambda$. Then there is a vertex of type $m$ in $\Delta'$ which is adjacent to each of $\{x_1,x_2,y_1,y_2\}$ in $\Delta'$, as desired. If $x_1$ and $x_2$ have same type, then Theorem~\ref{thm:4 wheel} implies there is a vertex of type $\hat d$ which is adjacent to each of $\{x_1,x_2,y_1,y_2\}$ in $\Delta_\Lambda$, hence in $\Delta'$, as desired. Thus Lemma~\ref{lem:bowtie free criterion} (3) is verified and the bowtie free part of the lemma follows.
	
The downward flagness follows from \cite[Corollary 7.7]{huang2024Dn}.
\end{proof}

\begin{prop}
	\label{prop:tripod}
Suppose $\Lambda$ is a tripod Coxeter diagram, with a central node $a$ and three leave nodes $b_1,b_2,b_3$. Then for any $i\neq j$, then $(b_i,b_j)$-subdivision of $\Delta_\Lambda$ is bowtie free and downward flag.
\end{prop}

\begin{proof}
The case all edges of $\Lambda$ are labeled 3 follows from Theorem~\ref{thm:downflagD4}.

Now we assume at least one edge of $\Lambda$, say $\overline{ab_1}$, is labeled by a number $\ge 4$. By Proposition~\ref{prop:different subdivision}, it suffices to verify the the two assumptions of Proposition~\ref{prop:ori link}. Assumption 1 follows from Lemma~\ref{lem:2dimensional} below. Now we verify Assumption 2. If both $\overline{ab_2}$ and $\overline{ab_3}$ are labeled by $3$, then we are done by Theorem~\ref{thm:weaklyflagA}. If at least one of these two edges, say $\overline{ab_2}$, has label $\ge 4$, then Lemma~\ref{lem:2dimensional} below implies that $\Delta_{\Lambda_1}$ with induced order from $b_2<a<b_3$, where $\Lambda_1=\Lambda\setminus\{b_1\}$, is bowtie free and downward flag. In order to proof $\Lambda_1$ is weakly flag, it suffices to prove:
\begin{enumerate}
	\item if $\{x_i\}_{i=1}^3$ are type $\hat b_2$ elements such that $x_i$ and $x_{i+1}$ has a common upper bound $y_i$ of type $\hat a$ for $i\in \mathbb Z/3\mathbb Z$, then $\{x_i\}_{i=1}^3$ has a  common upper bound;
	\item if $\{x_i\}_{i=1}^3$ are type $\hat b_3$ elements such that $x_i$ and $x_{i+1}$ has a common lower bound $y_i$ of type $\hat a$ for $i\in \mathbb Z/3\mathbb Z$, then $\{x_i\}_{i=1}^3$ has a  common lower bound.
\end{enumerate}
For (1), we assume $\{y_1,y_2,y_3\}$ are pairwise distinct, otherwise it is trivial. By the downward flagness, we know there is a common lower bound $x$ for $\{y_1,y_2,y_3\}$. Then $x$ is of type $\hat a$. By the bowtie free property, we must have $x=x_1=x_2=x_3$. (2) follows directly from downward flagness.
\end{proof}

\begin{lem}
	\label{lem:2dimensional}
Let $\Lambda$ be a linear Coxeter diagram with its nodes $\{a,b,c\}$. Let $\Delta=\Delta_\Lambda$ be the associated Artin complex, with the order on its vertex set induced from $a<b<c$. Then 
\begin{enumerate}
	\item if $m_{ab}\ge 4$ and $m_{bc}= 3$, then $\Delta$ is bowtie free and downward flag;
	\item if $m_{ab}\ge 4$ and $m_{bc}\ge 4$, then $\Delta$ is bowtie free and flag.
\end{enumerate}

\end{lem}

\begin{proof}
For Assertion (1), the case of $m_{ab}=4$ and $m_{ab}=5$ follows from Theorem~\ref{thm:bowtie free}, Theorem~\ref{thm:triple} and Theorem~\ref{thm:tripleH}. We assume $m_{ab}\ge 6$. The bowtie free condition follows from \cite[Corollary 9.13]{huang2023labeled} and \cite[Lemma 6.13]{huang2023labeled}. For the downward flagness, by the same argument as in the proof of Theorem~\ref{thm:tripleH}, it suffices to show if we have $\{x_i\}_{i=1}^3$ of type $\hat c$ which are pairwisely lower bounded, then they have a common lower bound. Also we can assume for $i\in \mathbb Z/2\mathbb Z$, a lower bound of $x_i$ and $x_{i+1}$ is $y_i$ and $y_i$ is of type $\hat a$.
We metrize triangles in $\Delta_\Lambda$ as flat triangles with angle $\pi/6$ at vertices of type $\hat c$, angle $\pi/2$ at vertices of type $\hat b$ and angle $\pi/3$ at vertices of type $\hat a$. By \cite[Lemma 6]{appel1983artin}, $\Delta_\Lambda$ is locally CAT$(0)$, hence CAT$(0)$. 

Let $\omega$ be the loop $x_1y_1x_2y_2x_3y_3$ in $\Delta$. We can assume without loss of generality that $\omega$ is an embedded $6$-cycle.
Let $\bD\to \Delta$ be a minimal area singular disk diagram (\cite[Section 3.6]{huang2023labeled}) for $\omega$. We endow $\bD$ be the induced metric from $\Delta$, and by slightly abusing the notation, we use $x_i$ (resp. $y_i$) to denote the point in the boundary cycle of $\bD$ mapping to $x_i$ (resp. $y_i$). For $v\in \bD^{(0)}$, let $\kappa(v)$ be the quantity defined in \cite[Section 3.6]{huang2023labeled}. Then $\kappa(y_i)\le \pi/3$ for $1\le i\le 3$, and $\kappa(v)\le 0$ for any interior vertex $x\in \bD$. Thus \cite[Equation 3.17]{huang2023labeled} implies that $\sum_{i=1}^3\kappa(x_i)\ge \pi$. 

Note that if $\kappa(x_i)$ is positive, then the only possible value for $\kappa(x_i)$ is $\pi/3$ and $2\pi/3$. If $\kappa(x_i)=2\pi/3$, then there is a vertex $x'_i$ of type $\hat b$ such that $x_i$ is adjacent to both $y_i$ and $y_{i+1}$, and $\overline{y_ix'_i}$ and $x_iy_{i+1}$ fit together to form a geodesic in $\Delta$. So if $\kappa(x_i)=2\pi/3$ for all $i$, then $x'_1y_1x'_2y_2x'_3y_3$ form a geodesic triangle in $\Delta$ with apexes at $\{y_i\}_{i=1}^3$, which is impossible. If $\kappa(x_i)=2\pi/3$ for exactly two of $\{x_i\}_{i=1}^3$, say $x_1$ and $x_2$, then $x'_1y_1x'_2y_2x_3y_3$ form a geodesic 4-gon in $\Delta$, with apexes at $y_1,y_2,x_3$ and $y_3$. Now we consider the subdiagram $\mathbb D'$ of $\mathbb D$ for this 4-gon. Then for $\mathbb D'$, $\kappa(x_3)\le \pi/3$, $\kappa(y_i)\le 2\pi/3$ for $i=2,3$, $\kappa(x'_i)=0$ for $i=1,2$ (as the link of type $\hat b$ vertices in $\Delta$ is complete bipartite), and $\kappa(y_1)\le \pi/3$. Thus by \cite[Equation 3.17]{huang2023labeled}, we must $\kappa(y_i)=2\pi/3$ for $i=2,3$ and $\kappa(y_1)=\pi/3$. This implies that $x_3$ is adjacent in $\Delta$ to $y_1$. Thus $y_1$ is a common lower bound for $\{x_1,x_2,x_3\}$. If $\kappa(x_i)=2\pi/3$ for exactly one of $\{x_i\}_{i=1}^3$, say $x_1$, then $\sum_{i=1}^3\kappa(x_i)\le 4\pi/3$, which implies that $\sum_{i=1}^3\kappa(y_i)\ge 2\pi/3$. Thus $\kappa(y_i)=\pi/3$ for at least two of $\{y_i\}_{i=1}^3$. Then there is some $y_i$ adjacent to $x_1$, say $y_1$, satisfying $\kappa(y_1)=\pi/3$. Thus $x_2$ is adjacent in $\Delta$ to $y_3$, and we finish as before. It remains to consider that $\kappa(x_i)\le \pi/3$ for each $i$. Then \cite[Equation 3.17]{huang2023labeled} implies that $\kappa(x_i)=\kappa(y_i)=\pi/3$ for each $i$ and the disk diagram $\bD$ is flat. It follows from the combinatorial structure of flat diagram that there is a vertex $z\in \Delta$ of type $\hat a$ such that $z$ is adjacent to $x_i$ for $1\le i\le 3$, as desired.

For Assertion (2), by \cite[Lemma 3]{appel1983artin}, $\lk(x,\Delta)$ has girth $\ge 8$ if $x$ is of type $\hat a$ or $\hat c$. Now Assertion (2) follows from Lemma~\ref{lem:inherit and girth}, Theorem~\ref{thm:contractibleII} and Lemma~\ref{lem:big lattice}.
\end{proof}

\begin{lem}
	\label{lem:star0}
	Suppose $\Lambda$ is a Coxeter diagram which is a star (i.e. it is a union of edges emanating from the same node) with $\ge 4$ nodes. Suppose each tripod subgraph of $\Lambda$ satisfies the conclusion of Proposition~\ref{prop:tripod}. Suppose the central node of $\Lambda$ is $a$. Denote the leaf nodes of $\Lambda$ by $\{b_i\}_{i=1}^n$ for $n\ge 3$. For $1\le i,j,k\le n$, let $\Lambda_{ijk}$ be the subgraph of $\Lambda$ spanned by $a,b_i,b_j,b_k$. We claim for any pairwise distinct $\{i,j,k\}$, the $(i,j)$-subdivision, $(i,k)$-subdivision and $(j,k)$-subdivision of $\Delta_{\Lambda,\Lambda_{ijk}}$ are all bowtie free and downward flag.
\end{lem}

\begin{proof}
 We define subgraphs $\Lambda_{ij}$ and $\Lambda_{ijk\ell}$ of $\Lambda$ in a similar way as in the statement of the lemma.
We prove by induction on $n$. The base case is $n=3$, which follows directly from the assumption. Take $\ell\notin \{i,j,k\}$ - this is possible as $n\ge 4$. We apply Proposition~\ref{prop:ori link2} with $\Lambda'=\Lambda_{ijk\ell}$ and $\{b_i,b_j,a,b_k,b_\ell\}$ playing the roles of $\{a_1,a_2,b,c_1,c_2\}$ in Proposition~\ref{prop:ori link2}. Note that induction assumption implies that Assumptions (1) and (2) of Proposition~\ref{prop:ori link2} are met. Thus $\Delta_{\Lambda,\Lambda_{ijk\ell}}$ is contractible, and Proposition~\ref{cor:propagation2} implies that the $(i,j)$-subdivision of $\Delta_{\Lambda,\Lambda_{ijk}}$ is bowtie free and downward flag. Proposition~\ref{cor:propagation2} (3) also implies that $\Delta_{\Lambda,\Lambda_{ik}}$ and $\Delta_{\Lambda,\Lambda_{jk}}$ are bowtie free. By replacing the role of $(i,j)$ by $(i,k)$ or $(j,k)$, 
We can treat the $(i,k)$-subdivision and $(j,k)$-subdivision in a similar way.
\end{proof}

\begin{lem}
\label{lem:star}
Suppose $\Lambda$ is a Coxeter diagram which is a star (i.e. it is a union of edges emanating from the same node) with $\ge 5$ nodes. Suppose each tripod subgraph of $\Lambda$ satisfies the conclusion of Proposition~\ref{prop:tripod}. Then
\begin{enumerate}
	\item $\Delta_\Lambda$ is contractible;
	\item for each $\Lambda'\subset\Lambda$ which is a union of two edges, $\Delta_{\Lambda,\Lambda'}$ is bowtie free.
\end{enumerate}
\end{lem}

\begin{proof}
Assertion (2) follows from Lemma~\ref{lem:star0}.
For Assertion (1), let $\mathcal C_2$ be the class of Coxeter diagrams in Lemma~\ref{lem:star}. Let $\mathcal C_1=\{\widetilde D_4\}$. It suffices to verify the first two assumptions of \cite[Proposition 7.2]{huang2023labeled}. Assumption 1 of \cite[Proposition 7.2]{huang2023labeled} follows from Lemma~\ref{lem:star0} and Proposition~\ref{prop:ori link2}. Assumption 2 is clear.
\end{proof}

We need two auxiliary results (Lemmas~\ref{lem:4-cycle} and \ref{lem:connect}) before moving forward. 

\begin{lem}(\cite[Lemma 4.8]{huang2023labeled})
	\label{lem:4-cycle}
	Let $X$ be as in Theorem~\ref{thm:contractible}, with all the three assumptions there satisfied. Then for any induced 4-cycle in the 1-skeleton of $X$, there is a vertex $x\in X$ such that $x$ is adjacent to each vertex of this 4-cycle.
\end{lem}

The following is a small variation of \cite[Lemma 6.13]{huang2023labeled}.
\begin{lem}
	\label{lem:connect}
	Suppose $\Lambda$ is a linear Coxeter diagram with its consecutive nodes being	 $S=\{s_i\}_{i=1}^n$.
	Let $X$ be a simplicial complex of type $S$. We endow $S$ with a linear order $s_1<s_2<\cdots<s_n$. Assume the vertex set of $X$ equipped with the relation in Definition~\ref{def:order} is a poset. Then $X$ satisfies bowtie free condition if and only if it satisfies the labeled 4-wheel condition.
\end{lem}

\begin{proof}
	Suppose $X$ satisfies bowtie free condition. Take an induced 4-cycle $x_1y_1x_2y_2$ in $X$. We can not have $x_1<y_1<x_2$ or $x_1>y_1>x_2$, otherwise $x_1$ is adjacent to $x_2$ by our assumption, contradicting that we have an induced 4-cycle. Thus $\{x_1,y_1,x_2,y_2\}$ forms a bowtie and the labeled 4-wheel condition follows immediately. Now suppose $X$ satisfies the labeled 4-wheel condition. If we have $\{x_1,y_1,x_2,y_2\}$ satisfying $x_i< y_j$ for $1\le i,j\le 2$, then these 4 vertices form a 4-cycle in $X$. If $x_1$ and $x_2$ are comparable, or $y_1$ and $y_2$ are comparable, then the bowtie free condition clearly holds for $\{x_1,y_1,x_2,y_2\}$. Now we assume $x_1$ and $x_2$ are not comparable, and $y_1$ and $y_2$ are not comparable. Then the 4-cycle is an induced 4-cycle. Suppose $x_1$ has type $\hat s_{x_1}$ for node $s_{x_1}\in \Lambda$. Similarly we define $s_{x_2},s_{y_1}$ and $s_{y_2}$. We assume without loss of generality that the segment $\Lambda'$ from $s_{x_1}$ to $s_{y_2}$ contain all of $\{s_{x_1},s_{x_2},s_{y_1},s_{y_2}\}$. Then the labeled 4-wheel condition implies that there is a vertex $z$ adjacent of each vertex of the 4-cycle such that $z$ has type $\hat s_z$ with node $s_z\in \Lambda'$. Clearly $x_1<z<y_2$. Now we show $x_2<z$. If this is not true, as $x_2$ and $z$ are adjacent, we must have $x_2>z$, then $x_1<x_2$, contradicting to the assumption that $x_1$ and $x_2$ are not comparable. Similarly, $z<y_1$. Thus the bowtie free condition is satisfied. 
\end{proof}

\begin{lem}
	\label{lem:bowtie free}
Let $\Lambda=\Lambda_{m,n}$ be a Coxeter diagram which is a complete bipartite graph that is the join of $m$ nodes and $n$ nodes. Assume $m>1$ and $n>1$. Take a 4-cycle $C$ with its consecutive vertices $\{x_i\}_{i=1}^4$.
Let $f:\Lambda=\Lambda_{m,n}\to C$ be a special folding from $\Lambda_{m,n}$ such that $f^{-1}(x_i)$ is a single node for $i=1,4$, $f^{-1}(x_2)$ has $n-1$ nodes and $f^{-1}(x_3)$ has $m-1$ nodes. We view the folded Artin complex $\Delta_{\Lambda,f}$ as a complex of type $S=\{x_1,x_2,x_3,x_4\}$, whose vertex set is endowed with a relation induced from the cyclic order $x_1<x_2<x_3<x_4<x_1$ as explained before Theorem~\ref{thm:contractible}.
Then the link of each vertex of the folded Artin complex $\Delta_{\Lambda,f}$ is bowtie free. 
\end{lem}

\begin{proof}
Let $C_i$ be the component of $C\setminus \{x_i\}$, and $\Lambda_i=f^{-1}(C_i)$. 
We prove by induction on $m+n$. The base case of the induction is that $m=2$ and $n=2$, where $f$ is an isomorphism, and $\Delta_{\Lambda,f}\cong \Delta_\Lambda$. Then bowtie free property of each link follows from Lemma~\ref{lem:link} and either Lemma~\ref{lem:2dimensional} or Theorem~\ref{thm:bowtie free}.

Now we assume at least one of $m$ and $n$, say $n$, is $>2$. Take $v\in \Delta_{\Lambda,f}$ be a vertex of type $\hat x_3$. Then $\lk(v,\Delta_{\Lambda,f})\cong \Delta_{\Lambda_3,f}$.  Note that $\Lambda_3$ is a star. By Lemma~\ref{lem:star} and \cite[Proposition 6.15]{huang2023labeled}, $\Delta_{\Lambda_3}$ satisfies the labeled four wheel condition. Hence the same holds for $\Delta_{\Lambda_3,f}$ by Lemma~\ref{lem:tree fold}. Thus $\Delta_{\Lambda_3,f}$ is bowtie free by Lemma~\ref{lem:connect} and Lemma~\ref{lem:order fold}. Similarly, we know $\lk(v,\Delta_{\Lambda,f})$ is bowtie free if $v$ is of type $\hat x_2$.

It remains to consider the case $v$ is of type $\hat x_4$ or $\hat x_1$. We will only treat $v$ being of type $\hat x_4$, as the other case is similar. Note that $\lk(v,\Delta_{\Lambda,f})\cong \Delta_{\Lambda_4,f}$. Consider a special folding $f':\Lambda_4\to C$ such that $(f')^{-1}(x_i)=f^{-1}(x_i)$ for $i=1,3$, $(f')^{-1}(x_4)$ is one node and $(f')^{-1}(x_2)$ has $n-2$ nodes. This is possible as $n>2$. By induction, $\Delta_{\Lambda_4,f'}$ satisfies the conclusion of the claim. Then $\Delta_{\Lambda_4,f'}$, viewed as a complex of type $S=\{x_1,x_2,x_3,x_4\}$, satisfies the assumption of Theorem~\ref{thm:contractible}. Hence Lemma~\ref{lem:4-cycle} applies to $\Delta_{\Lambda_4,f'}$. Now we define an embedding 
$$
\iota: \Delta_{\Lambda_4,f}\to \Delta_{\Lambda_4,f'}
$$
as follows. Given a vertex $w\in \Delta_{\Lambda_4,f}$ corresponding to a coset of form $gA_{\Lambda_4\setminus \{f^{-1} (s)\}}$ (with $g\in A_{\Lambda_4}$ and $s\in \{x_1,x_2,x_3\}$), $\iota(w)$ is defined to be the barycenter of the simplex in $\Delta_{\Lambda_4,f'}$ spanned by vertices corresponding to cosets of form $gA_{\Lambda_4\setminus \{(f')^{-1}(s')\}}$ with $s'\in C$ ranging over all vertices such that $(f')^{-1}(s')\subset f^{-1}(s)$. Then we extend $\iota$ linearly. By definition, for $i=1,3$, $\iota$ induces a 1-1 correspondence between vertices of type $\hat x_i$ in $\Delta_{\Lambda_4,f}$ and vertices of the same type in $\Delta_{\Lambda_4,f'}$; and a 1-1 correspondence between vertices of type $\hat x_2$ in  $\Delta_{\Lambda_4,f}$ and the barycenters of edges in $\Delta_{\Lambda_4,f'}$ spanned by a vertex of type $\hat x_2$ and a vertex of type $\hat x_4$. Let $\omega$ be a 4-cycle in $\Delta_{\Lambda_4,f}$. We will show that
\begin{enumerate}
	\item if consecutive vertices of $\omega$ have type $\hat x_1,\hat x_2,\hat x_1,\hat x_2$, or type $\hat x_2,\hat x_3,\hat x_2,\hat x_3$, then the 4-cycle is not embedded;
	\item if consecutive vertices of $\omega$ have type $\hat x_1,\hat x_3,\hat x_1,\hat x_3$, and the 4-cycle is embedded, then there is a vertex of type $\hat x_2$ adjacent to each vertices of $\omega$.
\end{enumerate}
By Lemma~\ref{lem:bowtie free criterion}, once these two properties are established, we know $\Delta_{\Lambda_4,f}$ is bowtie free. If $\omega$ is of type $\hat x_1,\hat x_2,\hat x_1,\hat x_2$, then $\iota(\omega)$ gives a 4-cycle $\omega'$ in $\Delta_{\Lambda_4,f'}$ by replacing points in $\iota(\omega)$ which are the midpoint of an edge by the vertex of type $\hat x_2$ in that edge. Then $\omega$ is embedded if and only if $\omega'$ is embedded. If $\omega'$ is embedded, it must be induced as two vertices in $\Delta_{\Lambda_4,f'}$ are not adjacent. Thus Lemma~\ref{lem:4-cycle} implies that there is a vertex $w'$ of $\Delta_{\Lambda_f'}$ which is adjacent to each vertex of $\omega'$. Thus $\omega'$ is an embedded 4-cycle in $\lk(w',\Delta_{\Lambda_4,f'})$, which contradicts the induction hypothesis that $\lk(w',\Delta_{\Lambda_4,f'})$ is bowtie free. Thus $\omega$ is not embedded. The case $\omega$ is of type $\hat x_2,\hat x_3,\hat x_2,\hat x_3$ can be treated similarly. Now we assume $\omega$ is of type $\hat x_1,\hat x_3,\hat x_1,\hat x_3$ and $\omega$ is embedded. Then $\iota(\omega)$ is an embedded 4-cycle in $\Delta_{\Lambda_4,f'}$, which must be induced. Lemma~\ref{lem:4-cycle} implies that there is a vertex $w'$ of $\Delta_{\Lambda_4,f'}$ which is adjacent to each vertex of $\iota(\omega)$. Then $w'$ is of type $\hat x_2$ or $\hat x_4$. We assume without loss of generality that $w'$ of type $\hat x_2$. Then $\iota(\omega)$ is an induced 4-cycle in $\lk(w',\Delta_{\Lambda_4,f'})$. As $\lk(w',\Delta_{\Lambda_4,f'})$ is bowtie free, there is a vertex $w''\in \lk(w',\Delta_{\Lambda_4,f'})$ adjacent to each vertex of $\iota(\omega)$. Let $u$ be the vertex in $\Delta_{\Lambda_4,f}$ such that $\iota(u)$ is the barycenter of the edge $\overline{w'w''}$. Then $u$ is adjacent to each vertex in $\omega$, as desired.
\end{proof}

\begin{lem}
	\label{lem:contractible}
Suppose $\Lambda$ is a Coxeter diagram which is a complete bipartite graph, and suppose $\Lambda$ has $\ge 5$ nodes if $\Lambda$ is a star. Suppose each tripod subgraph of $\Lambda$ satisfies the conclusion of Proposition~\ref{prop:tripod}. Then $\Delta_\Lambda$ is contractible.
\end{lem}

\begin{proof}
Suppose $\Lambda=\Lambda_{m,n}$ which is the join of $m$ nodes and $n$ nodes. The case that one of $m$ and $n$ is 1 follows from Lemma~\ref{lem:star}. Now we assume $m>1$ and $n>1$. Let $f:\Lambda\to C$ be as in Lemma~\ref{lem:bowtie free}. We claim whenever $f$ is a composition of two special foldings $f':\Lambda\to \Lambda'$ and $f'':\Lambda'\to C$ with $\Lambda'$ being complete bipartite, then $\Delta_{\Lambda,f'}$ is contractible. The lemma follows from this claim by taking $f''$ to be identity. 

It remains to prove the claim. We induct on $m+n$.
Consider a sequence of special foldings:
$$
\Lambda\stackrel{f_1}{\to} \Lambda_1\stackrel{f_2}{\to}\Lambda_2\stackrel{f_3}{\to}\cdots \stackrel{f_n}{\to}\Lambda_n=C
$$
such that $f=f_1\circ f_2\circ\cdots\circ f_n$, each $\Lambda_i$ is complete bipartite and $\Lambda_{i+1}$ has exactly one node less than $\Lambda_i$. Let $g_i=f_1\circ\cdots\circ f_i$ and $\Delta_i=\Delta_{\Lambda,g_i}$. The sequence can be arranged such that $\Delta_{\Lambda,f'}=\Delta_i$ for some $i$. So it suffices to show $\Delta_i$ is contractible for any $i$. We set $\Delta_0=\Delta_\Lambda$.

For each $i$, let $v_i\in \Lambda_i$ be the unique node such that $f^{-1}_i(v_i)$ has more than one nodes. For each edge in $\Delta_i$ whose vertices are of type $f^{-1}_{i+1}(v_{i+1})$, we add a new vertex in this edge which is the midpoint of this edge, and say this new vertex has type $\hat m_i$. Cut each top dimensional simplex in $\Delta_i$ along the new vertex into two simplices, and let the resulting complex by $\Delta'_i$. Then there is a natural embedding $\iota_{i+1}:\Delta_{i+1}\to \Delta_i$ mapping vertices of type $\hat v$ to vertices of type $\hat f^{-1}_{i+1}(v)$ for $v\in \Lambda_{i+1}\setminus\{v_{i+1}\}$, and mapping vertices of type $\hat v_{i+1}$ to vertices of type $\hat m_i$. The image of $\iota_{i+1}$ is the full subcomplex of $\Delta_i$ spanned by vertices whose types are either $\hat m_i$ or inside $f^{-1}_{i+1}(\Lambda_{i+1}\setminus\{v_{i+1}\})$.

By Lemma~\ref{lem:bowtie free}, Theorem~\ref{thm:contractible} and \cite[Lemma 10.3]{huang2023labeled}, we know $\Delta_n$ is contractible. Next we will show $\Delta_i$ and $\Delta_{i+1}$ are homotopic equivalent for $0\le i\le n-1$. By the description of how $\Delta_{i+1}$ sits as a subcomplex of $\Delta'_i$ via $\iota_{i+1}$ in the previous paragraph, it suffices to show for any $v\in f^{-1}_{i+1}(v_{i+1})$ and any vertex $x\in \Delta_i$ of type $\hat v$, $\lk(x,\Delta_i)$ is contractible, as this would imply $\Delta'_i$ deformation retracts onto $\Delta_{i+1}$. As $\Lambda_i$ is complete bipartite, $f^{-1}_{i+1}(v_{i+1})$ is contained in a join factor of $\Lambda'_i$ of $\Lambda_i$. As $f^{-1}_{i+1}(v_{i+1})$ has two elements, we know $\Lambda_i\setminus\{v\}$ is connected.
Let $\Lambda_{i,v}$ be the unique component of $\Lambda_i\setminus\{v\}$. Then $\Lambda_{i,v}$ is also complete bipartite. Let $\Theta=g^{-1}_i(\Lambda_{i,v})$.
 By \cite[Lemma 10.4]{huang2023labeled}, $$\lk(x,\Delta_i)\cong \Delta_{\Theta,g_i}.$$
By choice of $v$, we know $f_i(\Lambda_i)=f_i(\Lambda_{i,v})$. Thus $f_{i+1}\circ\cdots\circ f_n$ still maps $\Lambda_{i,v}$ onto $C$.
As $\Theta$ is a strictly smaller complete bipartite graph compared to $\Lambda$, by induction assumption, we know $\Delta_{\Theta,g_i}$ is contractible. Hence $\lk(x,\Delta_i)$ is contractible, as desired.
\end{proof}

\begin{thm}
	\label{thm:complete bipartite}
Let $\Lambda$ be a complete bipartite Coxeter diagram. Then $A_\Lambda$ satisfies the $K(\pi,1)$-conjecture.
\end{thm}

\begin{proof}
Note that when $\Lambda$ has $\le 4$ nodes, $A_\Lambda$ satisfies $K(\pi,1)$-conjecture by Theorem~\ref{thm:4generator}. Now the theorem follows by induction on the number of nodes in $\Lambda$, using Proposition~\ref{prop:tripod}, Lemma~\ref{lem:contractible} and Theorem~\ref{thm:kpi1}.
\end{proof}

\begin{cor}
	\label{cor:tree}
Let $\Lambda$ be a tree Coxeter diagram with a collection of open edges $E$ with label $\ge 6$ such that each component of $\Lambda\setminus(\cup_{e\in E}\{e\})$ is either spherical or a star. Then $A_\Lambda$ satisfies the $K(\pi,1)$-conjecture. 
\end{cor}

\begin{proof}
By Theorem~\ref{thm:bipartite} and \cite[Proposition 9.12]{huang2023labeled}, it suffices to show $\Delta_{\Lambda}$ satisfies labeled 4-wheel condition whenever $\Lambda$ is a star. Take a maximal linear subgraph $\Lambda'\subset \Lambda$. Then $\Delta_{\Lambda,\Lambda'}$ is bowtie free - the case when $\Lambda$ has $\le 3$ nodes follows from Theorem~\ref{thm:bowtie free} and Lemma~\ref{lem:2dimensional}, the case when $\Lambda$ has 4 nodes follows from Proposition~\ref{prop:tripod} and the case when $\Lambda$ has $\ge 5$ nodes follows from Lemma~\ref{lem:star}. Then Lemma~\ref{lem:4wheel} implies that $\Delta_\Lambda$ satisfies labeled 4-wheel condition.
\end{proof}

\section{$K(\pi,1)$ for some higher-dimensional families}

\label{sec:high}

\begin{prop}
	\label{prop:reduction}
Let $\Lambda$ be a connected Coxeter diagram with an induced sub-diagram $\Lambda'\subset \Lambda$ such that
\begin{enumerate}
	\item $\Lambda'$ is the Coxeter diagram of a 3-dimensional irreducible affine Coxeter group;
	\item for any node $s\in \Lambda'$, each component of $\Lambda\setminus\{s\}$ is either spherical, or has type in $\{\widetilde A_3,\widetilde B_3,\widetilde C_n\}$.
\end{enumerate}	
Then $A_\Lambda$ satisfies the $K(\pi,1)$-conjecture.
\end{prop}

\begin{proof}
Let $\mathcal C$ be the class of Coxeter diagrams satisfying the assumptions of Proposition~\ref{prop:reduction}. Let $\Lambda_0$ be a 5-cycle with its consecutive edges labeled by $\{3,3,4,3,4\}$.
Let $\mathcal C_T$ be the subclass made of members of $\mathcal C$ that are trees. Let $\mathcal C'$ be the collection of all $\Lambda\in\mathcal C$ such that there exists  $\Lambda'\subset\Lambda$ of type $\widetilde A_3$ with Assumption 2 satisfied. We first show that element of $\mathcal C=\mathcal C_T\sqcup\mathcal C'\sqcup\{\Lambda_0\}$. Indeed, take $\Lambda\in\mathcal C$ and let $\Lambda'\subset \Lambda$ be as in Assumption 2. First we consider the case $\Lambda'$ has type $\widetilde C_3$. Let $\{s_i\}_{i=1}^4$ be consecutive nodes in $\Lambda'$.
By our assumption and the classification of spherical and Euclidean Coxeter diagrams, for $i=1,4$, the only possibility types for each component of $\Lambda\setminus\{s_i\}$ are $\{B_n,F_4,\widetilde B_3,\widetilde C_n\}$. If $\Lambda$ is not a tree, then it must contain an embedded cycle $C$. As each component of $\Lambda\setminus\{s_i\}$ is a tree for $i=1,4$, we know $\{s_1,s_4\}\subset C$. Note that $s_2\in C$, otherwise $C\cup \overline{s_3s_4}$ is contained in a component of $\Lambda\setminus\{s_2\}$, which is not possible by our assumption (as the label of $\overline{s_3s_4}$ is $4$). Similarly $s_3\in C$. Thus $\Lambda'\subset C$. Let $e_i$ be the edge of $C$ that is outside $\Lambda'$ and contains $s_i$ for $i=1,4$.
As $\Lambda'$ is an induced subgraph of $\Lambda$, $s_1\notin e_4$ and $s_4\notin e_1$. Thus $e_1\cup\overline{s_1s_2}\cup\overline{s_2s_3}$ is contained in a component of $\Lambda\setminus\{s_4\}$, and by our assumption, the only possibility of this component is $F_4$. Similarly, the component of $\Lambda\setminus\{s_1\}$ containing $\overline{s_2s_3}\cup\overline{s_3s_4}\cup e_4$ is of type $F_4$. It follows from Assumption 2 of the proposition that $\Lambda=\Lambda_0$.
Now we consider the case that $\Lambda'$ has type $\widetilde B_3$. Let $a$ be the center node and $\{b_i\}_{i=1}^3$ be leave nodes of $\Lambda'$ with $m_{a,b_1}=4$. For $1\le i\le 3$, let $\Lambda_i$ be the component of $\Lambda\setminus\{b_i\}$ containing $a$. Then for $i=2,3$, $\Lambda_i$ contains an edge labeled by 4, hence is a tree by the classification of spherical and Euclidean Coxeter diagrams. If  $\Lambda_1$ is a tree, then $\Lambda$ is a tree. Otherwise $\Lambda_1$ is of type $\widetilde A_3$. Then for $i=2,3$, $\Lambda_i$ contains a path of length 3, with its edges labeled by $4,3,3$. Thus $\Lambda_i$ is of type $B_n$ or $\widetilde C_n$ for $i=2,3$. However, only type $B_n$ and $n=4$ is possible, otherwise $\Lambda_1$ is not of type $\widetilde A_3$. This implies that $\Lambda$ is obtained from a diagram of type $\widetilde A_3$ by adding an extra edge of label $4$. Thus $\Lambda\in \mathcal C'$. 

Next we show $\mathcal C'$ satisfy the assumptions of \cite[Corollary 7.3]{huang2023labeled} with $\mathcal C_2=\mathcal C'$, and $\mathcal C_1$ be the class containing only the diagram of type $\widetilde A_3$, hence each element in $\mathcal C'$ satisfies the $K(\pi,1)$-conjecture. Note that Assumptions $2,3,4$ of \cite[Corollary 7.3]{huang2023labeled} are clear. It remains to show $\Delta_{\Lambda,\Lambda'}$ is contractible. For node $s\in\Lambda'$, Let $\Lambda_s$ be the component of $\Lambda\setminus\{s\}$ containing the rest of the nodes of $\Lambda'$. If $\Lambda_s$ is not a tree for some $s$, then $\Lambda_s$ is of type $\widetilde A_3$. Then $\Lambda$ is a complete bipartite graph $K_{2,3}$. By \cite[Theorem 10.7 and Corollary 10.10]{huang2023labeled}, $\Delta_\Lambda$ is contractible. Hence $\Delta_{\Lambda,\Lambda'}$ is contractible by Lemma~\ref{lem:dr}. Now we assume $\Lambda_s$ is a tree for each $s$. By Theorem~\ref{thm:contractible} and Lemma~\ref{lem:link}, it suffices to show $\Delta_{\Lambda_s,\Lambda_s\cap\Lambda'}$ is bowtie free. However, this follows from Theorem~\ref{thm:bowtie free} if $\Lambda_s$ is spherical, and Corollary~\ref{cor:widetildeC_n} if $\Lambda_s$ is of type $\widetilde C_n$. If $\Lambda_s$ is of type $\widetilde B_3$, then by Theorem~\ref{thm:triple} and Theorem~\ref{thm:weaklyflagA}, $\Delta_{\Lambda_s}$ satisfies the assumptions of Proposition~\ref{prop:ori link}. Thus by Proposition~\ref{cor:propagation} $\Delta_{\Lambda_s,\Lambda_s\cap\Lambda'}$ is bowtie free. 


As the $K(\pi,1)$-conjecture for Artin group with diagram $\Lambda_0$ follows from \cite[Theorem 10.9]{huang2023labeled}, it remains to show diagrams in $\mathcal C_T$ satisfies $K(\pi,1)$-conjecture.
We will show $\mathcal C_T$ satisfy the assumptions of \cite[Corollary 7.3]{huang2023labeled} with $\mathcal C_2=\mathcal C_T$ and $\mathcal C_1=\{\widetilde C_3,\widetilde B_3\}$. Again Assumptions $2,3,4$ of \cite[Corollary 7.3]{huang2023labeled} are clear, and it suffices to show $\Delta_{\Lambda,\Lambda'}$ is contractible with $\Lambda'\in \mathcal C_1$ and $\Lambda\in \mathcal C_2$.

\smallskip
\noindent
\underline{Case 1: $\Lambda'$ is of type $\widetilde C_3$.} Let consecutive nodes of $\Lambda'$ be $\{s_i\}_{i=1}^4$. Let $\Lambda_1$ be the component of $\Lambda\setminus\{s_1\}$ containing $\Lambda'\setminus\{s_1\}$. By Assumption 2, $\Lambda_1$ is either spherical or irreducible 3-dimensional Euclidean. We claim the vertex set of $\Delta_{\Lambda_1,\Lambda_1\cap \Lambda'}$, endowed with the order induced from $s_2<s_3<s_4$ is a bowtie free and upward flag poset. As $\Lambda_1$ is a tree, $\Delta_{\Lambda_1,\Lambda_1\cap\Lambda'}$ is indeed a poset. If $\Lambda_1$ is spherical, then either $\Lambda_1$ is of type $B_n$ or type $F_4$. Then $\Delta_{\Lambda_1,\Lambda_1\cap \Lambda'}$ being bowtie free and upward flag follows from Theorem~\ref{thm:bowtie free}, Theorem~\ref{thm:triple} and Proposition~\ref{prop:F4}. If $\Lambda_1$ is of type $\widetilde C_n$, then the claim follows from Corollary~\ref{cor:widetildeC_n}. If $\Lambda_1$ is of type $\widetilde B_3$, then as before we know $\Delta_{\Lambda_1}$ satisfies the assumptions of Proposition~\ref{prop:ori link}, and Proposition~\ref{cor:propagation} (1) implies that $\Delta_{\Lambda_1,\Lambda_1\cap \Lambda'}$ is bowtie free and upward flag. Thus the claim is proved. Similarly,  if $\Lambda_4$ is the component of $\Lambda\setminus\{s_4\}$ containing $\Lambda'\setminus\{s_4\}$, then the vertex set of $\Delta_{\Lambda_4,\Lambda_4\cap \Lambda'}$, endowed with the order induced from $s_1<s_2<s_3$ is a bowtie free and downward flag poset. As $\Lambda$ is a tree, $\Delta_{\Lambda,\Lambda'}$ with its vertices endowed with the order induced from $s_1<s_2<s_3<s_4$ is a poset. Now Theorem~\ref{thm:contractibleII} implies that $\Delta_{\Lambda,\Lambda'}$ is contractible.

\smallskip
\noindent
\underline{Case 2: $\Lambda'$ is of type $\widetilde B_3$.} Let $a$ be the center node and $\{b_i\}_{i=1}^3$ be leave nodes of $\Lambda'$ with $m_{a,b_1}=4$. For $1\le i\le 3$, let $\Lambda_i$ be the component of $\Lambda\setminus\{b_i\}$ containing $a$. Now we verify the assumptions of Proposition~\ref{prop:ori link} hold for $\Delta_{\Lambda,\Lambda'}$. As $\Lambda_i$ contains an edge labeled $4$ for $i=2,3$, we know the type of $\Lambda_i$ belongs to $\{F_4,B_n,\widetilde C_n,\widetilde B_3\}$. Then Assumption 1 of Proposition~\ref{prop:ori link} follows from Proposition~\ref{prop:F4} if $\Lambda_i$ is of type $F_4$, Proposition~\ref{thm:triple} if $\Lambda_i$ is of type $B_n$, Corollary~\ref{cor:widetildeC_n} if $\Lambda_i$ is of type $\widetilde C_n$, and Proposition~\ref{cor:propagation} (1) if $\Lambda_i$ is of type $\widetilde B_3$. For Assumption 2 of Proposition~\ref{prop:ori link}, note that the type of $\Lambda_1$ belongs to $\{A_n,B_n,D_4,\widetilde C_n,\widetilde B_3\}$ (otherwise we will have a contradiction with the fact that the type of $\Lambda_i$ belongs to $\{F_4,B_n,\widetilde C_n,\widetilde B_3\}$ for $i=2,3$). Thus Assumption 2 of Proposition~\ref{prop:ori link} follows from Theorem~\ref{thm:weaklyflagA} if $\Lambda_1$ is of type $A_n$,  Proposition~\ref{cor:propagation} (2) if $\Lambda_1$ is of type $\widetilde B_3$, Theorem~\ref{thm:weakflagD} if $\Lambda_1$ is of type $D_4$, and Lemma~\ref{lem:weakflag} below if $\Lambda_1$ is of type $B_n$ or $\widetilde C_n$.
\end{proof}


\begin{lem}
	\label{lem:weakflag}	
Suppose $\Lambda$ is a Coxeter diagram of type $B_n,\widetilde C_n,H_3$ or $F_4$. Let $\Lambda'$ be a linear subgraph with three nodes. Then $\Delta_{\Lambda,\Lambda'}$ is weakly flag.
\end{lem}
\begin{proof}
First assume $\Lambda$ is of type $B_n$. We label consecutive nodes of $\Lambda$ by $\{s_i\}_{i=1}^n$ with $m_{s_{n-1},s_n}=4$. Then vertices of $\Delta_{\Lambda}$ with the order induced from $s_1<s_2<\cdots<s_n$ is upward flag. Suppose $\Lambda'$ has nodes $\{s_i,s_{i+1},s_{i+2}\}$. Then we need to show:
\begin{enumerate}
	\item if $\{x_i\}_{i=1}^3$ are type $\hat s_i$ elements such that $x_i$ and $x_{i+1}$ have a common upper bound $y_i$ of type $\hat s_{i+1}$ for $i\in \mathbb Z/3\mathbb Z$, then $\{x_i\}_{i=1}^3$ have a  common upper bound of type $\hat s_{i+2}$;
	\item if $\{x_i\}_{i=1}^3$ are type $\hat s_{i+2}$ elements such that $x_i$ and $x_{i+1}$ have a common lower bound $y_i$ of type $\hat s_{i+1}$ for $i\in \mathbb Z/3\mathbb Z$, then $\{x_i\}_{i=1}^3$ have a  common lower bound of type $\hat s_i$.
\end{enumerate}
We will assume without loss of generality that $\{x_i\}_{i=1}^3$ and $\{y_i\}_{i=1}^3$ are pairwise distinct.
For (1), note that the upward flagness implies that $\{x_i\}_{i=1}^3$ has a common upper bound, say $z$, of type $s_j$. By the pairwise distinct assumption, $j\ge i+2$. If $j=i+2$, then we are done. If $j>i+2$, let $\mathcal P$ be the collection of vertices in $\Delta_\Lambda$ which is $<z$. As $y_i$ is the join of $x_i$ and $x_{i+1}$, we know $y_i\in \mathcal P$ for all $i$. Let $\Lambda_j$ be the subgraph of $\Lambda$ spanned by all $s_i$ with $i<j$. Then $\mathcal P$ can be identified with the vertex set of $\Delta_{\Lambda_j}$, endowed with the induced order from $s_1<\cdots<s_j$. As $\Lambda_j$ is of type $A_{j-1}$, by Theorem~\ref{thm:weaklyflagA}, $\{x_i\}_{i=1}^3$ has a common upper bound of type $\hat s_{i+2}$. For (2), by upward flagness, $\{y_i\}_{i=1}^3$ has a common upper bound $z$. Let $\mathcal P$ be the collection of vertices in $\Delta_\Lambda$ which is $<z$. Again we have $\{x_i\}_{i=1}^3\subset \mathcal P$, and we are done by Theorem~\ref{thm:weaklyflagA}. The case $\Lambda$ is of type $\widetilde C_n$ is similar, using Corollary~\ref{cor:widetildeC_n}, and the $B_n$ version of Lemma~\ref{lem:weakflag}. The case of $F_4$ or $H_3$ is similar as well, using Proposition~\ref{prop:F4} and Theorem~\ref{thm:tripleH}.
\end{proof}

\begin{cor}
The downward flag part of Conjecture~\ref{conj:compareD} holds true when $n=3$, and $A_S$ is of type $A_n,B_n,H_3,F_4$. 
\end{cor}

\begin{proof}
This is a combination of Lemma~\ref{lem:weakly flag equivalent}, Lemma~\ref{lem:weakflag}, Theorem~\ref{thm:weaklyflagA}, Theorem~\ref{thm:tripleH} and Proposition~\ref{prop:F4}. 
\end{proof}

\begin{cor}
	\label{cor:seven families}
Suppose $\Lambda$ is a Coxeter diagram belonging to one of the following families in Figure~\ref{fig:families}. Then $A_\Lambda$ satisfies the $K(\pi,1)$-conjecture.
\end{cor}
\begin{figure}[h]
	\label{fig:families}
	\centering
	\includegraphics[scale=0.83]{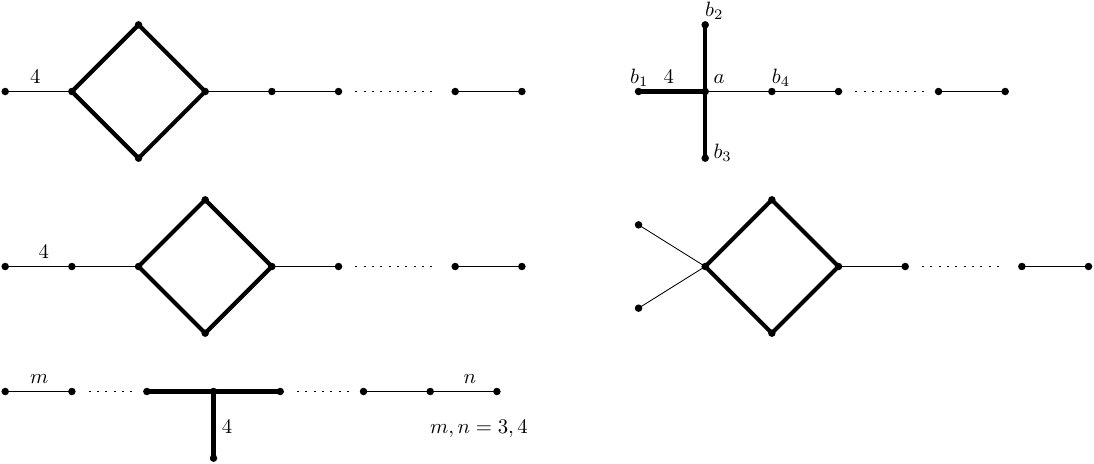}
	\caption{Several families.}
\end{figure}
\begin{proof}
Let $\mathcal C_2,\mathcal C_3,\mathcal C_4$ be the families in first row right, second row left, and second two right respectively in Figure~\ref{fig:families}. 	All families in Figure~\ref{fig:families} except $\mathcal C_2,\mathcal C_3$ and $\mathcal C_4$ follow from Proposition~\ref{prop:reduction} (the choice of $\Lambda'$ are indicated in the thickened subgraphs). Let $\mathcal C_2$ be the remaining family of Coxeter diagrams. We will verify the assumptions of \cite[Corollary 7.3]{huang2023labeled} with $\mathcal C_1=\{\widetilde B_3\}$. Assumptions $2,3$ of \cite[Corollary 7.3]{huang2023labeled} are clear. 
Take $\Lambda\in\mathcal C_2$ and let $\Lambda'$ be the thickened subgraph of $\Lambda$. Then $\Lambda\setminus\{s\}$ is spherical for $s=b_1$ or $a$, and $\Lambda\setminus\{s\}$ belongs to the family on the left side of second row for $s=b_2$ or $b_3$. Thus Assumption $4$ of \cite[Corollary 7.3]{huang2023labeled} follows. It remains to show $\Delta_{\Lambda,\Lambda'}$ is contractible. For $i=1,2$, let $\Lambda_i=\Lambda\setminus\{b_i\}$. Let $\Lambda'_i$ be the subgraph spanned by $\Lambda_i\cap \Lambda'$ and $\{b_4\}$. Then by the same argument as in the proof of Proposition~\ref{prop:reduction}, we know $\Delta_{\Lambda_i,\Lambda'_i}$ satisfies the assumptions of Proposition~\ref{prop:ori link}. Thus Proposition~\ref{cor:propagation} (1) implies that $\Delta_{\Lambda_i,\Lambda_i\cap\Lambda'}$ is a bowtie free, upward flag poset with its vertex set endowed with the order induced from $b_i<a<b_1$. Thus Assumption 1 of Proposition~\ref{prop:ori link} holds for $\Delta_{\Lambda,\Lambda'}$. Assumption 2 also holds, by Theorem~\ref{thm:weakflagD}. Thus $\Delta_{\Lambda,\Lambda'}$ is contractible by Proposition~\ref{prop:ori link}.

Treating families $\mathcal C_3$ and $\mathcal C_4$ reduces to showing $\Delta_{\Lambda,\Lambda'}$ is contractible, where $\Lambda$ is a diagram in $\mathcal C_3$ or $\mathcal C_4$, and $\Lambda'$ is the thickened $\widetilde A_3$ subdiagram in Figure~\ref{fig:families}. For a node $s\in \Lambda'$, let $\Lambda_s$ be the component of $\Lambda\setminus\{s\}$ containing $\Lambda'\setminus\{s\}$. By Theorem~\ref{thm:contractibleII} and Lemma~\ref{lem:sc}, it suffices to show $\Delta_{\Lambda_s,\lambda_s\cap\Lambda'}$ is bowtie free for each $s\in \Lambda'$. Note that $\Lambda_s$ is of type $B_n,D_n,\widetilde B_4$ or $\widetilde D_4$. The $B_n$ and $D_n$ case follows from Theorem~\ref{thm:bowtie free}.
If $\Lambda_s$ is of type $\widetilde B_4$, then by Theorem~\ref{thm:downflagD4} and Theorem~\ref{thm:triple}, $\Delta_{\Lambda_s}$ satisfies the assumptions of Proposition~\ref{prop:ori link0}. By Proposition~\ref{cor:propagation} (3) (the ``in addition'' assumption follows from Theorem~\ref{thm:downflagD4}) and Lemma~\ref{lem:4wheel}, $\Delta_{\Lambda_s,\Lambda_s\cap\Lambda'}$ is bowtie free.  If $\Lambda_3$ is of type $\widetilde D_4$, then by Theorem~\ref{thm:downflagD4}, $\Delta_{\Lambda_s}$ satisfies the assumptions of Proposition~\ref{prop:ori link2}. Thus by Proposition~\ref{cor:propagation2} $\Delta_{\Lambda_s,\Lambda_s\cap\Lambda'}$ is bowtie free.
\end{proof}

\begin{cor}
	\label{cor:quasilanner}
Let $W_\Lambda$ be a reflection group acting on $\mathbb H^n$ with $n\le 4$ such that the fundamental domain is a finite volume non-compact simplex, and $\Lambda$ is the Coxeter diagram. Then the $K(\pi,1)$-conjecture holds for $A_\Lambda$.
\end{cor}
	
\begin{figure}[h]
	\label{fig:quasilanner}
	\centering
	\includegraphics[scale=0.83]{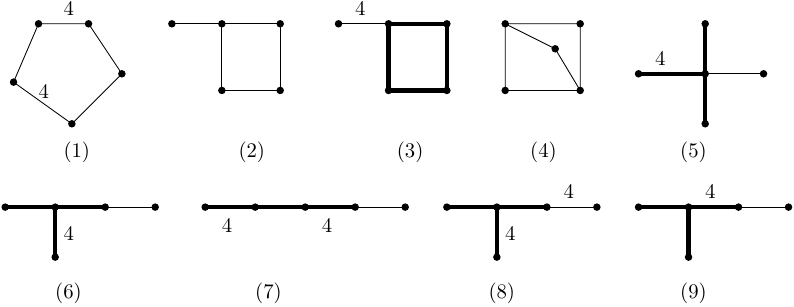}
	\caption{Coxeter diagrams for 4-dimensional quasi-Lanner groups.}
\end{figure}
\begin{proof}
The $n\le 2$ case follows from \cite{CharneyDavis}. The $n=3$ case follows from Corollary~\ref{cor:3hyperbolic}. For $n=4$, the only possible Coxeter diagrams are shown in Figure~\ref{fig:quasilanner}. The $K(\pi,1)$-conjecture for diagrams (1), (2) and (4) are proved in \cite{huang2023labeled}. The remaining diagrams are consequences of Proposition~\ref{prop:reduction} (the choice of $\Lambda'$ is the thickened subgraphs in Figure~\ref{fig:quasilanner}).
\end{proof}

\appendix

\section{Injective metric on type $B_n$ Falk complex}

In this section, $\Lambda$ will be the Coxeter diagram of type $B_n$.
Let consecutive nodes in $\Lambda$ be $\{s_1,\ldots, s_n\}$ with $m_{s_{n-1},s_n}=4$.
Let $\ca$ be the reflection arrangement of type $B_n$ in $\mathbb R^n$. Then hyperplanes of $\ca$ are of form $\{x_i=0\}_{i=1}^n$ and $\{x_i=x_j\}_{1\le i\neq j\le n}$.
Let $H$ be the hyperplane $x_1=0$. Then a simple calculation implies that deconing of $\ca$ with respect to $H$ gives the affine arrangement $\cb$ in $\mathbb R^{n-1}$ made of $y_i=-1,0,1$ for $1\le i\le n-1$, and $y_i=\pm y_j$ for $1\le i\neq j\le n-1$.

Let $S_\ca$ and $D_\cb$ be defined in Section~\ref{subsec:deligne complex}. Then $S_\ca$ is isomorphic to the Coxeter complex of the associated Coxeter group, hence each vertex of $S_\ca$ has a type $\hat s_i$ for some $i$. The complex $D_\cb$ is a subdivision of $[-1,1]^{n-1}$ as follows: first we subdivide into $2^{n-1}$ unit cubes, then we subdivide each of these unit cubes into $(n-2)!$ copies of $n$-dimensional orthoschemes, such that each of the orthoschemes contain $(0,\ldots,0)$. 
Note that $D_\cb$ can be realized as the maximal subcomplex of $S_\ca$ which is contained in the interior of a hemisphere bounded by $H\cap S_\ca$. Thus it makes sense to talk about types of vertices of $D_\cb$, using this embedding $D_\cb\to S_\ca$. It follows that the vertex $(0,\ldots,0)$ is of type $\hat s_1$. In general a vertex of $D_\cb$ is of type $\hat s_i$ if and only if the coordinate of this vertex has $i-1$ nonzero entries.

Let $\bD_\cb$ and $\bSD_\ca$ be the Falk complex and spherical Deligne complex defined in Section~\ref{subsec:deligne complex}. Then types of vertices in $D_\cb$ and $S_\ca$ pull back to type of vertices in $\bD_\cb$ and $\bSD_\ca$. As in Section~\ref{subsec:injective}, we can view $\bD_\cb$ as a subcomplex of $\bSD_\ca$ which is a connected component of the inverse image of $D_\cb$ (viewed as a subcomplex of $S_\ca$) under the map $\bSD_\ca\to S_\ca$.


\begin{prop}
	\label{prop:Bn Helly}
	The vertex set of $\bD_\cb$, endowed with the relation $<$ induced from $s_1<s_2<\cdots<s_n$ as in Definition~\ref{def:order}, is a poset satisfying all the assumptions of Theorem~\ref{thm:contractibleII}.
\end{prop}

\begin{proof}
Similar to the proof of Proposition~\ref{prop:An Helly}, it suffices to verify Theorem~\ref{thm:contractibleII} (3) for type $\hat s_1$ vertices and Theorem~\ref{thm:contractibleII} (4) for type $\hat s_n$ vertices. If $x$ is of type $\hat s_1$, then $x$ maps to $\bar x=(0,0,\ldots,0)$ under $\bD_\cb\to D_\cb$. Note that the local arrangement of $\cb$ at $\bar x$ (as defined in Section~\ref{subsec:deligne complex}) is an arrangement of type $B_{n-1}$. Then we are done by Theorem~\ref{thm:triple}. If $x$ is of type $\hat s_n$, then up to symmetry, we can assume $x$ maps to $\bar x=(-1,-1,\ldots,-1)$ under $\bD_\cb\to D_\cb$. The local arrangement of $\cb$ at $\bar x$ is made of hyperplanes $\{y_i=-1\}_{i=1}^{n-1}$ and $\{y_i=y_j\}_{1\le i\neq j\le n-1}$, which is of type $A_{n-1}$ (up to a linear transformation). Then we can finish in the same way as the proof of Proposition~\ref{prop:An Helly}, using Lemma~\ref{lem:link An}, Lemma~\ref{lem:link deligne}, and Theorem~\ref{thm:triple}. 
\end{proof}

\bibliographystyle{alpha}
\bibliography{mybib}

\end{document}